\newcommand{\bZ}{{\mathbb Z}}
\newcommand{\s}{\mathop{\mathrm{S}}\nolimits}
\newtheorem{thm}{Theorem}[section]
\newtheorem{prop}[thm]{Proposition}
\newtheorem{lemma}[thm]{Lemma}
\newtheorem{cor}[thm]{Corollary}
\newtheorem{thm'}{Theorem}[subsection]
\newtheorem{prop'}[thm']{Proposition}
\newtheorem{lemma'}[thm']{Lemma}
\newtheorem{cor'}[thm']{Corollary}
\theoremstyle{definition}
\newtheorem{defi}[thm]{Definition}
\newtheorem{defi'}[thm']{Definition}
\theoremstyle{remark}
\newtheorem{prob}[thm]{Problem}
\newtheorem{rem}[thm]{Remark}
\newtheorem{rem'}[thm']{Remark}
\newtheorem{exam'}[thm']{Example}
\newtheorem{prob'}[thm']{Problem}
\newtheorem{quest'}[thm']{Question}
\numberwithin{equation}{section}
\author{Hideaki \=Oshima}
\address[H. \=Oshima]{Professor Emeritus, Ibaraki University, Mito, Ibaraki 310-8512, Japan}
\email{hideaki.ooshima.mito@vc.ibaraki.ac.jp}
\author{Katsumi \=Oshima}
\address[K. \=Oshima]{1-4001-5 Ishikawa, Mito, Ibaraki 310-0905, Japan}
\email{k-oshima@mbr.nifty.com}
\subjclass[2010]{Primary 55P99; Secondary 55Q05}
\keywords{Higher Toda bracket, Subscripted higher Toda bracket, Subscripted stable higher Toda bracket, Iterated mapping cone}
\begin{document}
\allowdisplaybreaks
\title{Unstable higher Toda brackets II}
\date{18 May, 2017; revised 4 October, 2020}
\maketitle
\begin{abstract}
We define and study the subscripted $n$-fold Toda bracket 
$\{\vec{\bm f}\}^{(\star)}_{\vec{\bm m}}$ which is a subset of $[\Sigma^{n-2}\Sigma^{|\vec{\bm m}|}X_1,X_{n+1}]$ and possibly empty, 
where $n$ is an integer $\ge 3$, $\vec{\bm m}=(m_n,\dots,m_1)$ is 
a sequence of non negative integers, $|\vec{\bm m}|=m_n+\cdots+m_1$, $\Sigma^i$ is the $i$-fold suspension, 
$\vec{\bm f}=(f_n,\dots,f_1)$ is a sequence of pointed maps 
$f_i:\Sigma^{m_i}X_i\to X_{i+1}$ between well pointed spaces, 
and $\star$ is any one of twelve symbols defined in our previous paper. 
We define also the subscripted stable $n$-fold Toda bracket. 
\end{abstract}

\tableofcontents

\section{Introduction}
This is the continuation of our previous paper \cite{OO}. 

Let $n\ge 3$ be an integer, $(X_{n+1},\dots,X_1)$ a sequence of well-pointed spaces, and $\vec{\bm m}=(m_n,\dots,m_1)$ a sequence of non negative integers. 
We denote by $(\vec{\bm f};\vec{\bm m})$ or simply $\vec{\bm f}$
($=(f_n,\dots,f_1)$) a sequence 
of pointed maps $f_i:\Sigma^{m_i}X_i\to X_{i+1}\,(1\le i\le n)$, where $\Sigma^kX=X\wedge \s^k$ is the $k$-fold suspension of a pointed space $X$. 
We will define the subscripted $n$-fold Toda bracket 
$\{\vec{\bm f}\,\}^{(\star)}_{\vec{\bm m}}\subset[\Sigma^{n-2}\Sigma^{|\vec{\bm m}|}X_1,X_{n+1}]$, 
where $|\vec{\bm m}|=m_n+\cdots+m_1$, $[X,Y]$ is the set of homotopy classes of pointed 
maps $X\to Y$, and $\star$ is any one of twelve symbols $q$, $aq$, $q_2$, $qs_2$, 
$q\dot{s}_2$, $q\ddot{s}_2$, $aqs_2$, $aq\dot{s}_2$, $aq\ddot{s}_2$, $s_t$, $\dot{s}_t$, 
$\ddot{s}_t$ which were defined in \cite{OO} and three of them $aq$, $aq\ddot{s}_2$, 
$\ddot{s}_t$ are essential. 
When $\vec{\bm m}=(0,\dots,0)$, $\{\vec{\bm f}\,\}^{(\star)}_{\vec{\bm m}}$ 
is equal to $\{\vec{\bm f}\,\}^{(\star)}$ which is the object defined and studied in \cite{OO}. 
We will use notations of \cite{OO} freely with the exception: 
we denote the group of all homotopy classes of stable maps from $X$ to $Y$ by 
$\langle X,Y\rangle$ instead of $\{X,Y\}$ i.e. $\langle X,Y\rangle=\lim_{k\to\infty}[\Sigma^kX,\Sigma^kY]$. 
In addition, we will use the following notations: 
$\Sigma^{m_{[k,\ell]}}=\Sigma^{m_k}\cdots \Sigma^{m_\ell}\ i.e.\ \Sigma^{m_{[k,\ell]}}X=X\wedge\s^{m_\ell}\wedge\cdots\wedge\s^{m_k}$ and 
$\s^{m_{[k,\ell]}}=\s^{m_\ell}\wedge\cdots\wedge\s^{m_k}$ for $n\ge k\ge\ell\ge 1$, 
and for convenience $\Sigma^{m_{[n,n+1]}}=\Sigma^0$ i.e. $\Sigma^{m_{[n,n+1]}}X=X$. 
Our main results are (1.1)--(1.16) below; (1.2)--(1.15) are direct generalizations of results in \cite[Section 6 (except \S 6.5)]{OO}; (1.16) relates with \cite[Example 6.2.5]{OO}. 
\begin{enumerate}
\item[(1.1)] 
$\{\vec{\bm f}\,\}^{(\star)}_{\vec{\bm m}}\subset\{f_n,\Sigma^{m_n}f_{n-1},\Sigma^{m_{[n,n-1]}}f_{n-2},\dots,\Sigma^{m_{[n,2]}}f_1\}^{(\star)}$, where the inclusion is the identity when $\vec{\bm m}=(0,\dots,0)$.
\item[(1.2)]
$\{\vec{\bm f}\,\}^{(\ddot{s}_t)}_{\vec{\bm m}}=\{\vec{\bm f}\,\}^{(\dot{s}_t)}_{\vec{\bm m}}=\{\vec{\bm f}\,\}^{(s_t)}_{\vec{\bm m}}$. 
\item[(1.3)] We have
\begin{align*}\{\vec{\bm f}\,\}^{(aqs_2)}_{\vec{\bm m}}&=\{\vec{\bm f}\,\}^{(aq\dot{s}_2)}_{\vec{\bm m}}\\
&=\{\vec{\bm f}\,\}^{(aq\ddot{s}_2)}_{\vec{\bm m}}\circ 
\Sigma^{n-3}\big((1_{\Sigma^{m_1}X_1}\wedge\tau(\s^1,\s^{m_{[n,2]}}))
\circ \Sigma^{m_{[n,2]}}\mathscr{E}(\Sigma \Sigma^{m_1}X_1)\\
&\hspace{5cm}\circ(1_{\Sigma^{m_1}X_1}\wedge\tau(\s^{m_{[n,2]}},\s^1))\big)\\
&\subset \{\vec{\bm f}\,\}^{(aq\ddot{s}_2)}_{\vec{\bm m}}\circ 
(1_{\Sigma^{m_{[n-2,1]}}X_1}\wedge\tau(\s^{n-2},\s^{m_{[n,n-1]}}))\\
&\hspace{1.5cm}\circ \Sigma^{m_{[n,n-1]}}\mathscr{E}(\Sigma^{n-2}\Sigma^{m_{[n-2,1]}}X_1)\circ (1_{\Sigma^{m_{[n-2,1]}}X_1}\wedge\tau(\s^{m_{[n,n-1]}},\s^{n-2}))\\
&=\{\vec{\bm f}\,\}^{(qs_2)}_{\vec{\bm m}}=\{\vec{\bm f}\,\}^{(q\dot{s}_2)}_{\vec{\bm m}};\\
\{\vec{\bm f}\,\}^{(q\dot{s}_2)}_{\vec{\bm m}}&=\{\vec{\bm f}\,\}^{(q\ddot{s}_2)}_{\vec{\bm m}}\ \text{for}\ n\ge 4;\  
\{\vec{\bm f}\,\}^{(q\ddot{s}_2)}_{\vec{\bm m}}=\{\vec{\bm f}\,\}^{(aq\ddot{s}_2)}_{\vec{\bm m}}\ \text{for}\ n=3.
\end{align*}
\item[(1.4)] We have
\begin{align*}
\{\vec{\bm f}\,\}^{(q)}_{\vec{\bm m}}&=\{\vec{\bm f}\,\}^{(aq)}_{\vec{\bm m}}\circ(1_{\Sigma^{m_{[n-2,1]}}X_1}\wedge\tau(\s^{n-2},\s^{m_{[n,n-1]}}))\\
&\hspace{1cm}\circ \Sigma^{m_{[n,n-1]}}\mathscr{E}(\Sigma^{n-2}\Sigma^{m_{[n-2,1]}}X_1)\circ (1_{\Sigma^{m_{[n-2,1]}}X_1}\wedge\tau(\s^{m_{[n,n-1]}},\s^{n-2})).
\end{align*}
\item[(1.5)] $\{\vec{\bm f}\,\}^{(\ddot{s}_t)}_{\vec{\bm m}}\circ\Gamma=
\{\vec{\bm f}\,\}^{(aq\ddot{s}_2)}_{\vec{\bm m}}\circ\Gamma$, where $\Gamma$ is 
the subgroup of $\mathscr{E}(\Sigma^{n-2}\Sigma^{|\vec{\bm m}|}X_1)$ defined by 
\begin{align*}
\Gamma=\big(1_{\Sigma^{m_{[n-1,1]}}X_1}\wedge\tau(\s^{n-2},\s^{m_n})\big)
&\circ \Sigma^{m_n}\mathscr{E}(\Sigma^{n-2}\Sigma^{m_{[n-1,1]}}X_1)\\
&\circ \big(1_{\Sigma^{m_{[n-1,1]}}X_1}\wedge\tau(\s^{m_n},\s^{n-2})\big).
\end{align*} 
\item[(1.6)] If $\alpha\in\{\vec{\bm f}\,\}^{(q)}_{\vec{\bm m}}$, 
then there are $\theta,\theta'\in\Gamma'$ such that 
$\alpha\circ\theta\in\{\vec{\bm f}\,\}^{(aq\ddot{s}_2)}_{\vec{\bm m}}$ and 
$\alpha\circ\theta'\in\{\vec{\bm f}\,\}^{(\ddot{s}_t)}_{\vec{\bm m}}$, 
where $\Gamma'$ is the subgroup of $[\Sigma^{n-2}\Sigma^{|\vec{\bm m}|}X_1, 
\Sigma^{n-2}\Sigma^{|\vec{\bm m}|}X_1]$ defined by 
\begin{align*}
\Gamma'=\big(1_{\Sigma^{m_{[n-1,1]}}X_1}\wedge\tau(\s^{n-2},\s^{m_n})\big)
&\circ \Sigma^{m_n}[\Sigma^{n-2}\Sigma^{m_{[n-1,1]}}X_1, \Sigma^{n-2}\Sigma^{m_{[n-1,1]}}X_1]\\
&\circ \big(1_{\Sigma^{m_{[n-1,1]}}X_1}\wedge\tau(\s^{m_n},\s^{n-2})\big).
\end{align*}  
\item[(1.7)] If $\{\vec{\bm f}\,\}^{(\star)}_{\vec{\bm m}}$ is not empty for some $\star$, then $\{\vec{\bm f}\,\}^{(\star)}_{\vec{\bm m}}$ is not empty for all $\star$. 
\item[(1.8)] If $\{\vec{\bm f}\,\}^{(\star)}_{\vec{\bm m}}$ contains $0$ for some $\star$, then $\{\vec{\bm f}\,\}^{(\star)}_{\vec{\bm m}}$ contains $0$ for all $\star$.
\item[(1.9)] Given maps $f_{n+1}:\Sigma^{m_{n+1}}X_{n+1}\to X_{n+2}$ and 
$f_0:\Sigma^{m_0}X_0\to X_1$, where $X_{n+2}, X_0$ are well-pointed spaces and $m_{n+1}, m_0$ are non negative integers, we have
\begin{align*}
&f_{n+1}\circ \Sigma^{m_{n+1}}\{f_n,\dots,f_1\}^{(\star)}_{\vec{\bm m}}\\
&\qquad \subset(-1)^{n\cdot m_{n+1}}\{f_{n+1}\circ \Sigma^{m_{n+1}}f_n,f_{n-1},\dots,f_1\}^{(\star)}_{(m_{n+1}+m_n,m_{n-1},\dots,m_1)};\\
&\{f_{n+1}\circ \Sigma^{m_{n+1}}f_n,f_{n-1},\dots,f_1\}^{(\star)}_{(m_{n+1}+m_n,m_{n-1},\dots,m_1)}\\
&\qquad \subset\{f_{n+1},f_n\circ \Sigma^{m_n}f_{n-1},f_{n-2},\dots,f_1\}^{(\star)}_{(m_{n+1},m_n+m_{n-1},m_{n-2},\dots,m_1)};\\
&\{f_n,\dots,f_2,f_1\circ \Sigma^{m_1}f_0\}^{(\star)}_{\vec{\bm m}}\\
&\qquad \subset\{f_n,\dots,f_3,f_2\circ \Sigma^{m_2}f_1,f_0\}^{(\star)}_{(m_n,\dots,m_3,m_2+m_1,m_0)}\ (\star=aq\ddot{s}_2,\ddot{s}_t);\\
&\{f_n,\dots,f_1\}^{(\star)}_{\vec{\bm m}}\circ \Sigma^{|\vec{\bm m}|+n-2}f_0\subset\{f_n,\dots,f_2,f_1\circ \Sigma^{m_1}f_0\}^{(\star)}_{\vec{\bm m}}\ (\star=aq\ddot{s}_2, \ddot{s}_t).
\end{align*}
\item[(1.10)] For a non negative integer $\ell$, we have 
$$
\Sigma^\ell \{\vec{\bm f}\,\}^{(\star)}_{\vec{\bm m}}\subset
\{\widetilde{\Sigma}^\ell\vec{\bm f}\,\}^{(\star)}_{\vec{\bm m}}
\circ(1_{X_1}\wedge\tau(\s^{|\vec{\bm m}|+n-2},\s^\ell))
=(-1)^{(|\vec{\bm m}|+n)\ell}\{\widetilde{\Sigma}^\ell\vec{\bm f}\,\}^{(\star)}_{\vec{\bm m}},
$$
where 
\begin{align*} 
\widetilde{\Sigma}^\ell f_k&=\Sigma^\ell f_k\circ(1_{X_k}\wedge\tau(\s^\ell,\s^{m_k})):\Sigma^{m_k}\Sigma^\ell X_k
\to \Sigma^\ell X_{k+1},\\
\widetilde{\Sigma}^\ell\vec{\bm f}&=(\widetilde{\Sigma}^\ell f_n,\dots,\widetilde{\Sigma}^\ell f_1). 
\end{align*}
\item[(1.11)] $\{\vec{\bm f}\,\}^{(\star)}_{\vec{\bm m}}$ depends only on the homotopy classes of $f_i\ (1\le i\le n)$ for all $\star$.
\item[(1.12)] When $n=3$, we have $\{\vec{\bm f}\,\}^{(aq\ddot{s}_2)}_{\vec{\bm m}}=\{\vec{\bm f}\,\}^{(\ddot{s}_t)}_{\vec{\bm m}}=\{f_3,f_2,\Sigma^{m_2}f_1\}_{m_3}$, where 
the last bracket is the classical Toda bracket which was denoted by 
$\{f_3,\Sigma^{m_3}f_2,\Sigma^{m_3}\Sigma^{m_2}f_1\}_{m_3}$ in \cite{T} (see \cite[Remark 3.1]{OO1}).
\item[(1.13)] When $n=4$, we have 
\begin{align*}
\{f_4,f_3,f_2,f_1\}^{(\ddot{s}_t)}_{\vec{\bm m}}&=\bigcup\{f_4,[f_3,A_2,\Sigma^{m_3}f_2],(\Sigma^{m_3}f_2,\widetilde{\Sigma}^{m_3}A_1,\Sigma^{m_{[3,2]}}f_1)\}_{m_4}\\
&\hspace{3cm}\circ \Sigma(1_{\Sigma^{m_{[3,1]}}X_1}\wedge\tau(\s^{m_4},\s^1))\\
&=(-1)^{m_4}\bigcup \{f_4,[f_3,A_2,\Sigma^{m_3}f_2],(\Sigma^{m_3}f_2,\widetilde{\Sigma}^{m_3}A_1,\Sigma^{m_{[3,2]}}f_1)\}_{m_4},
\end{align*}
where the unions $\bigcup$ are taken over all triples $\vec{\bm A}=(A_3,A_2,A_1)$ of null homotopies $A_i:f_{i+1}\circ \Sigma^{m_{i+1}}f_i\simeq *\ (i=1,2,3)$ such that 
$$[f_{i+1},A_i, \Sigma^{m_{i+1}}f_i]\circ (\Sigma^{m_{i+1}}f_i,\widetilde{\Sigma}^{m_{i+1}}A_{i-1},\Sigma^{m_{[i+1,i]}}f_{i-1})\simeq *\ (i=2,3).$$
\item[(1.14)] \begin{enumerate}
\item If $\{f_{n-1},\dots,f_1\}^{(q)}_{(m_{n-1},\dots,m_1)}\ni 0$ and $\{f_n,\dots,f_k\}^{(aq\ddot{s}_2)}_{(m_n,\dots,m_k)}=\{0\}$ for all $k$ with $2\le k<n$, then 
$\{\vec{\bm f}\,\}^{(\star)}_{\vec{\bm m}}$ is not empty for all $\star$.
\item If $\{f_n,\dots,f_2\}^{(q)}_{(m_n,\dots,m_2)}\ni 0$ and $\{f_k,\dots,f_1\}^{(aq\ddot{s}_2)}_{(m_k,\dots,m_1)}=\{0\}$ for all $k$ with $2\le k<n$, then 
$\{\vec{\bm f}\,\}^{(\star)}_{\vec{\bm m}}$ is not empty for all $\star$.
\end{enumerate}
\item[(1.15)] Given a sequence $\vec{\bm \theta}=(\theta_n,\dots,\theta_1)$ 
of stable elements $\theta_i\in\langle \Sigma^{m_i}X_i,X_{i+1}\rangle$, 
we can define the stable higher Toda bracket 
$\langle \Sigma^{\vec{\bm m}}\vec{\bm \theta}\rangle^{(\star)}$ and 
the subscripted stable higher Toda bracket 
$\langle \vec{\bm \theta}\rangle_{\vec{\bm m}}^{(\star)}$ such that 
$
\langle \Sigma^{|\vec{\bm m}|+n-2}X_1,X_{n+1}\rangle\supset  
\langle \Sigma^{\vec{\bm m}}\vec{\bm \theta}\rangle^{(\star)}\supset 
\langle \vec{\bm \theta}\rangle_{\vec{\bm m}}^{(\star)}
$, where the second containment is a stable analogy to (1.1). 
\item[(1.16)] 
Suppose that $p$ is an odd prime, $\alpha_1\in \langle \Sigma^{2p-3}\s^3,\s^3\rangle$ is of order $p$, $\overrightarrow{\bm \alpha_1}=(\overbrace{\alpha_1,\dots,\alpha_1}^p)$, and $m_i=2p-3$ for $1\le i\le p$. 
Then, for all $\star$, the stable bracket 
$\langle\overrightarrow{\bm \alpha_1}\rangle^{(\star)}_{\vec{\bm m}}$ 
contains an element of order $p$ and the order of every element of 
$\langle \Sigma^{\vec{\bm m}}\overrightarrow{\bm \alpha_1}\rangle^{(\star)}$ 
is a multiple of $p$. 
\end{enumerate}

In Section 2, we state a known result on loop spaces of a well-pointed space. 
In Section 3, we define $\{\vec{\bm f}\,\}^{(\star)}_{\vec{\bm m}}$ and prove (1.1). 
In Section 4, we prove (1.2)--(1.9). 
For k=5,6,7,8,9,10,11 we prove (1.k+5) in Section k. 
Appendix A is an addendum to \cite{OO}, that is, we give two lemmas which are used in 
Sections 10 and 11, and we define inductively a system of unstable higher Toda brackets in $\mathrm{TOP}^*$. 

In Section 3--Section 11, we work in $\mathrm{TOP}^w$, the category of well-pointed spaces ($w$-spaces for short). We can develop our consideration similarly 
in $\mathrm{TOP}^{clw}$, the category of $w$-spaces with closed base point ($clw$-spaces for short).

\section{Preliminaries}
For spaces $X$ and $Y$, let $\mathrm{Map}(Y,X)$ denote the space of all maps from $Y$ to $X$ having the compact-open topology, 
 that is, the topology (i.e. the set of open sets) 
 is generated by the set of all subsets of $\mathrm{Map}(Y,X)$ of the form 
 $W(K,U)=\{f\in\mathrm{Map}(Y,X)\,|\, f(K)\subset U\}$, where $K$ is a compact subset of $Y$ and $U$ is an open subset of $X$.  
If $X,Y$ are pointed, then $\mathrm{Map}^*(Y,X)$ denotes the subspace of $\mathrm{Map}(Y,X)$ consisting of pointed maps. 

\begin{prop}[cf. Lemma 4 of \cite{S3}]
Let $j:A\to X$ be a free cofibration and $Y$ a compact space. 
Then the following hold. 
\begin{enumerate}
\item[\rm(1)]  The map $j_* : \mathrm{Map}(Y,A)\to \mathrm{Map}(Y,X),\ f\mapsto j\circ f $, is a free cofibration. 
If $X$ is a $w$-space (resp.\,$clw$-space), then $\mathrm{Map}(Y,X)$ is a $w$-space 
(resp.\,$clw$-space) provided we assign to $\mathrm{Map}(Y,X)$ 
the constant map $c_{x_0} : Y\to \{x_0\}\subset X$ as the base point. 
\item[\rm(2)] If $j$ and $Y$ are pointed, then the map $j_*:\mathrm{Map}^*(Y,A)\to \mathrm{Map}^*(Y,X)$ 
is a free cofibration. 
If in addition $X$ is a $w$-space (resp.\,$clw$-space), then $\mathrm{Map}^*(Y,X)$ is a $w$-space 
(resp.\,$clw$-space) with $c_{x_0}$ the base point. 
\end{enumerate}
 \end{prop}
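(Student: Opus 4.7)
The plan is to transfer a Str\o{}m structure on $(X,A)$ to one on $(\mathrm{Map}(Y,X),\mathrm{Map}(Y,A))$, using the compactness of $Y$. Recall that $j:A\to X$ is a free cofibration iff there exist a continuous $u:X\to I$ with $A\subseteq u^{-1}(0)$ and a homotopy $H:X\times I\to X$ satisfying $H_0=1_X$, $H_t|_A=1_A$ for all $t$, and $H(x,t)\in A$ whenever $t>u(x)$ (when $A$ is closed one can further arrange $A=u^{-1}(0)$).

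Given such a pair $(u,H)$, I would define
\[
u_*(f)=\sup_{y\in Y}u(f(y)),\qquad H_*(f,t)(y)=H(f(y),t).
\]
Continuity of $u_*$ in the compact-open topology on $\mathrm{Map}(Y,X)$ is a consequence of the compactness of $Y$: the induced map $\mathrm{Map}(Y,X)\to\mathrm{Map}(Y,I)$, $f\mapsto u\circ f$, is continuous, and on $\mathrm{Map}(Y,I)$ the compact-open topology agrees with the uniform topology, under which $\sup$ is $1$-Lipschitz. Continuity of $H_*$ follows from the exponential correspondence applied to its adjoint $\mathrm{Map}(Y,X)\times I\times Y\to X$, which factors through evaluation (continuous for compact $Y$) and $H$. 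The Str\o{}m axioms for $(u_*,H_*)$ then translate directly: $u_*(f)=0$ forces $f(Y)\subseteq u^{-1}(0)$, so $u_*^{-1}(0)\supseteq\mathrm{Map}(Y,A)$; $H_{*,0}=1$; $H_*$ fixes $\mathrm{Map}(Y,A)$ pointwise in $t$; and $t>u_*(f)\geq u(f(y))$ forces $H_*(f,t)(y)\in A$ for every $y$. This gives the first assertion of (1).

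For the $w$-space and $clw$-space claims in (1), apply the preceding to the inclusion $\iota_{x_0}:\{x_0\}\hookrightarrow X$, which is a free cofibration precisely because $X$ is a $w$-space. Since $\mathrm{Map}(Y,\{x_0\})=\{c_{x_0}\}$, it follows that $\{c_{x_0}\}\hookrightarrow\mathrm{Map}(Y,X)$ is a free cofibration, making $\mathrm{Map}(Y,X)$ a $w$-space based at $c_{x_0}$. When $\{x_0\}$ is additionally closed in $X$, the set $\{c_{x_0}\}=\bigcap_{y\in Y}\mathrm{ev}_y^{-1}(x_0)$ is closed in $\mathrm{Map}(Y,X)$, so the $clw$-space version also holds.

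Part (2) is obtained by restricting the construction to pointed mapping spaces. When $j$ and $Y$ are pointed, $x_0\in A$ and hence $H$ fixes $x_0$, so $H_*(f,t)(y_0)=H(x_0,t)=x_0$; the Str\o{}m pair $(u_*,H_*)$ therefore preserves $\mathrm{Map}^*(Y,X)$ and exhibits $j_*:\mathrm{Map}^*(Y,A)\to\mathrm{Map}^*(Y,X)$ as a free cofibration, with the $w$-space and $clw$-space conclusions following as in (1). The main technical obstacle is the continuity verifications for $u_*$ and $H_*$; both hinge essentially on compactness (and the implicit Hausdorff-ness) of $Y$.
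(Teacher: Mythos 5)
Your proof is correct and follows essentially the same route as the paper: both transfer a Str{\o}m structure $(\alpha,h)$ on $(X,A)$ to $(\mathrm{Map}(Y,X),\mathrm{Map}(Y,A))$ via $\hat\alpha(f)=\sup_{y\in Y}\alpha(f(y))$ and $\hat h(f,t)(y)=h(f(y),t)$, using compactness of $Y$ for continuity, and then specialize to $j:\{x_0\}\subset X$ for the $w$-space claim and check closedness of $\{c_{x_0}\}$ for the $clw$-space claim. The only step the paper makes explicit that you elide is the preliminary reduction to the case where $j_*$ is an inclusion (by verifying $j_*W(K,A\cap U)=W(K,U)\cap\mathrm{Im}(j_*)$), but this is a minor technical point.
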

 \begin{proof}
We can assume that $j:A\subset X$ by \cite[Theorem 1]{S1}. 

(1) Note that $j_*:\mathrm{Map}(Y,A)\to \mathrm{Im}(j_*)$ is a homeomorphism.  
 In fact for a compact subset $K$ of $Y$ and an open subset $U$ of $X$ 
 we have $j_*W(K,A\cap U)=W(K,U)\cap\mathrm{Im}(j_*)$. 
 For simplicity we assume that $j_*$ is an inclusion. 
 Let $\alpha : X\to I$ and $h:X\times I\to X$ be a Str{\o}m structure \cite[Lemma 4]{S2} on $(X,A)$, that is, 
$A\subset\alpha^{-1}(0)$, $h(x,0)=x\ (x\in X)$, $h(a,t)=a\ (a\in A, t\in I)$, and $h(x,t)\in A$ whenever $t>\alpha(x)$. 
 Then the pair $(\hat{\alpha},\hat{h})$ defined by 
 \begin{gather*}
 \hat{\alpha}:\mathrm{Map}(Y,X)\to I,\quad \hat{\alpha}(f)=\sup \{\alpha\circ f(y)\,|\,y\in Y\},\\
 \hat{h}: \mathrm{Map}(Y,X)\times I\to\mathrm{Map}(Y,X),\quad \hat{h}(f,t)(y)=h(f(y),t)
 \end{gather*}
 is a Str{\o}m structure on $(\mathrm{Map}(Y,X),\mathrm{Map}(Y,A))$ (here we have used the hypothesis that $Y$ is  compact). 
Hence $j_*$ is a free cofibration. 
In particular, if $X$ is a $w$-space, that is, $i:\{x_0\}\subset X$ is a free cofibration, 
then $i_*:\mathrm{Map}(Y,\{x_0\})=\{c_{x_0}\}\subset\mathrm{Map}(Y,X)$ 
is a free cofibration so that $\mathrm{Map}(Y,X)$ is a $w$-space. 

Suppose that $X$ is a $clw$-space. 
To prove $\{c_{x_0}\}$ is closed in $\mathrm{Map}(Y,X)$, let $f\in\mathrm{Map}(Y,X)-\{c_{x_0}\}$. 
Take $y\in Y$ with $f(y)\ne x_0$. 
Then $f\in W(\{y\},X-\{x_0\})\subset \mathrm{Map}(Y,X)-\{c_{x_0}\}$. 
Hence $\mathrm{Map}(Y,X)-\{c_{x_0}\}$ is open so that $\{c_{x_0}\}$ is closed in $\mathrm{Map}(Y,X)$. 
Therefore $\mathrm{Map}(Y,X)$ is a $clw$-space. 
 
(2) As is easily seen, the Str{\o}m structure $(\hat{\alpha},\hat{h})$ on $(\mathrm{Map}(Y,X),\mathrm{Map}(Y,A))$ gives a Str{\o}m structure on $(\mathrm{Map}^*(Y,X),\mathrm{Map}^*(Y,A))$. 
\end{proof}

\begin{cor}
If $X$ is a $w$-space (resp.\,$clw$-space), then $\Omega^\ell X=\mathrm{Map}^*(\s^\ell,X)$, 
the $\ell$-fold loop space of $X$, is a $w$-space (resp.\,$clw$-space) (cf. \cite[p.180]{DKP}) for any non negative integer $\ell$. 
(Note that $\Omega^0X=X$.)
\end{cor}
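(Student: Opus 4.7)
The plan is to obtain the corollary as an essentially immediate consequence of Proposition 2.1(2), applied with $Y=\s^\ell$. First I would dispose of the trivial case $\ell=0$: by convention $\Omega^0X=X$, so the hypothesis that $X$ is a $w$-space (resp.\ $clw$-space) is the conclusion. For $\ell\ge 1$, the point is that $\s^\ell$ is a compact pointed space, so Proposition 2.1(2) applies.

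Concretely, I would take $j:\{x_0\}\hookrightarrow X$ to be the basepoint inclusion. Since $X$ is a $w$-space, this $j$ is a (pointed) free cofibration by definition, and $Y=\s^\ell$ is compact. Part (2) of Proposition 2.1 then yields that the induced map
\[
j_*:\mathrm{Map}^*(\s^\ell,\{x_0\})=\{c_{x_0}\}\hookrightarrow \mathrm{Map}^*(\s^\ell,X)=\Omega^\ell X
\]
is a free cofibration, which is exactly the statement that $\Omega^\ell X$ is a $w$-space with base point $c_{x_0}$. In the $clw$-case, the same proposition gives in addition that $\{c_{x_0}\}$ is closed in $\Omega^\ell X$, so $\Omega^\ell X$ is a $clw$-space.

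No serious obstacle is expected here; the only thing to check is that $\s^\ell$ is compact and pointed, and that the conventions $\Omega^\ell X=\mathrm{Map}^*(\s^\ell,X)$ and $\Omega^0 X=X$ match the hypothesis of Proposition 2.1(2). If one wanted an inductive formulation instead, one could alternatively write $\Omega^\ell X=\Omega(\Omega^{\ell-1}X)$ and iterate the $\ell=1$ case of Proposition 2.1(2), but the direct application with $Y=\s^\ell$ is shorter and sufficient.
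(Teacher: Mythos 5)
Your proposal is correct and follows exactly the paper's own argument, which reads in full: ``By taking $j:\{x_0\}\subset X$ and $Y=\s^\ell$ in Proposition 2.1(2), we have the assertion.'' Your additional remarks on the $\ell=0$ case and the closedness of $\{c_{x_0}\}$ in the $clw$-case are harmless elaborations of the same route.
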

\begin{proof}
By taking $j:\{x_0\}\subset X$ and $Y=\s^\ell$ in Proposition 2.1(2), we have the assertion. 
\end{proof}

Given a pointed homeomorphism $h:X\approx Y$, we define
$$
h_\#:\mathscr{E}(X)\to\mathscr{E}(Y),\ \varepsilon\mapsto h\circ\varepsilon\circ h^{-1}.
$$
Then, as is easily seen, $h_\#$ is an isomorphism with ${h^{-1}}_\#$ an inverse. 

\section{Definition of $\{\vec{\bm f}\,\}^{(\star)}_{\vec{\bm m}}$}
As mentioned in the introduction, 
we will work throughout Sections 3--11 in $\mathrm{Top}^w$. 
(We can develop our consideration similarly in $\mathrm{Top}^{clw}$.)  

Let $\mathscr{S}=(X_1,\Sigma X_2,\Sigma^2X_3,\dots,\Sigma^{n-1}X_n;C_1,\dots,C_{n+1};g_1,\dots,g_n;j_1,\dots,j_n)$ be a quasi 
iterated mapping cone with a quasi-structure $\Omega=\{\omega_i:C_{i+1}\cup_{j_i} CC_i\simeq \Sigma^iX_i\,|\,1\le i\le n\}$ 
of depth $n$ in $\mathrm{Top}^w$. 
Let $\ell$ be any non negative integer. 
It follows from \cite[Corollary 2.3]{OO} that $\Sigma^\ell \Sigma^{i-1}X_i, \Sigma^\ell C_i \in \mathrm{Top}^w$  
and $\Sigma^\ell j_i$ is a free cofibration for $1\le i\le n$. 
Set 
\begin{align*}
&\widetilde{\Sigma}^\ell g_i=\Sigma^\ell g_i\circ(1_{X_i}\wedge\tau(\s^\ell,\s^{i-1})):\Sigma^{i-1}\Sigma^\ell X_i\xrightarrow{1_{X_i}\wedge\tau(\s^\ell,\s^{i-1})} \Sigma^\ell \Sigma^{i-1}X_i\xrightarrow{\Sigma^\ell g_i}\Sigma^\ell C_i,\\
&\widetilde{\Sigma}^\ell\mathscr{S}=(\Sigma^\ell X_1,\Sigma\Sigma^{\ell}X_2,\Sigma^2\Sigma^\ell X_3,\dots,\Sigma^{n-1}\Sigma^\ell X_n;\Sigma^\ell C_1,\Sigma^\ell C_2,\dots,\Sigma^\ell C_{n+1}\\
&\hspace{4cm};\widetilde{\Sigma}^\ell g_1,\dots,\widetilde{\Sigma}^\ell g_n;\Sigma^\ell j_1,\dots,\Sigma^\ell j_n),\\
&\widetilde{\Sigma}^\ell\omega_i=(1_{\Sigma^{i-1}X_i}\wedge\tau(\s^i,\s^\ell))\circ \Sigma^\ell\omega_i\circ\psi^\ell_{j_i}:\Sigma^\ell C_{i+1}\cup_{\Sigma^\ell j_i} C\Sigma^\ell C_i\xrightarrow[\approx]{\psi^\ell_{j_i}}\Sigma^\ell(C_{i+1}\cup_{j_i} CC_i)\\
&\hspace{6cm}\xrightarrow[\simeq]{\Sigma^\ell\omega_i}\Sigma^\ell \Sigma^iX_i\xrightarrow[\approx]{1_{X_i}\wedge\tau(\s^i,\s^\ell)}\Sigma^i\Sigma^\ell X_i\ (1\le i\le n),\\
&\widetilde{\Sigma}^\ell\Omega=\{\widetilde{\Sigma}^\ell\omega_i\,|\,1\le i\le n\},\quad
\omega_0=1_{C_1},\ \widetilde{\Sigma}^\ell\omega_0=\Sigma^\ell\omega_0=1_{\Sigma^\ell C_1}.
\end{align*} 
We call $\widetilde{\Sigma}^\ell\mathscr{S}$ the $\ell$-{\it fold suspension} of $\mathscr{S}$. 

\begin{prop}
Under the situation above, we have
\begin{enumerate}
\item[\rm(1)] $\widetilde{\Sigma}^\ell\mathscr{S}$ is a quasi iterated mapping cone of depth $n$ with a quasi-structure $\widetilde{\Sigma}^\ell\Omega$. 
\item[\rm(2)] If $\mathscr{S}$ is an iterated mapping cone of 
depth $n$ with $\mathscr{A}=\{a_i\,|\,1\le i\le n\}$ a structure, 
then $\widetilde{\Sigma}^\ell\mathscr{S}$ is an iterated mapping cone of depth $n$ with a structure 
$\widetilde{\Sigma}^\ell\mathscr{A}=\{\widetilde{\Sigma}^\ell a_i\,|\,1\le i\le n\}$ and 
$\Omega(\widetilde{\Sigma}^\ell\mathscr{A})=\widetilde{\Sigma}^\ell(\Omega(\mathscr{A}))$, 
where 
\begin{align*}
&\widetilde{\Sigma}^\ell a_i=(1_{\Sigma^\ell C_i}\cup C(1_{X_i}\wedge\tau(\s^{i-1},\s^\ell)))\circ(\psi^{\ell}_{g_i})^{-1}\circ \Sigma^\ell a_i\\
& : \Sigma^\ell C_{i+1}\xrightarrow[\simeq]{\Sigma^\ell a_i} \Sigma^\ell (C_i\cup_{g_i}C\Sigma^{i-1}X_i)
\xrightarrow[\approx]{(\psi^\ell_{g_i})^{-1}} \Sigma^\ell C_i\cup_{\Sigma^\ell g_i}C\Sigma^\ell \Sigma^{i-1}X_i\\
&\hspace{2cm}
\xrightarrow[\approx]{1_{\Sigma^\ell C_i}\cup C(1_{X_i}\wedge\tau(\s^{i-1},\s^\ell))} \Sigma^\ell C_i \cup_{\widetilde{\Sigma}^\ell g_i}C\Sigma^{i-1}\Sigma^\ell X_i. 
\end{align*}
\end{enumerate}
\end{prop}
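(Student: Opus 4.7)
The proof is essentially a formal verification: since $\widetilde{\Sigma}^\ell$ is built by composing $\Sigma^\ell$ with the canonical homeomorphism $\psi^\ell_f\colon\Sigma^\ell(Y\cup_f CZ)\approx \Sigma^\ell Y\cup_{\Sigma^\ell f}C\Sigma^\ell Z$ and the coordinate twists $\tau(\s^a,\s^b)$, every axiom satisfied by $\mathscr{S}$ transports to $\widetilde{\Sigma}^\ell\mathscr{S}$ via naturality of these two isomorphisms together with the fact that $\Sigma^\ell$ preserves homotopy equivalences.

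For part (1), I would first invoke \cite[Corollary 2.3]{OO} (as the paper already does) to conclude that $\Sigma^\ell\Sigma^{i-1}X_i$ and $\Sigma^\ell C_i$ are $w$-spaces and that each $\Sigma^\ell j_i$ is a free cofibration. The given three-fold factorization then exhibits $\widetilde{\Sigma}^\ell\omega_i$ as the composite of a homeomorphism $\psi^\ell_{j_i}$, the homotopy equivalence $\Sigma^\ell\omega_i$ (suspension of a homotopy equivalence), and the homeomorphism $1_{X_i}\wedge\tau(\s^i,\s^\ell)$, so $\widetilde{\Sigma}^\ell\omega_i$ is itself a homotopy equivalence onto $\Sigma^i\Sigma^\ell X_i$. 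The defining compatibility between $\widetilde{\Sigma}^\ell\omega_i$, $\widetilde{\Sigma}^\ell g_i$, and $\Sigma^\ell j_i$ demanded of a quasi-structure is then inherited from the corresponding compatibility for $\omega_i$, $g_i$, $j_i$ by tracing the naturality squares for $\psi^\ell$ and unfolding the definition of $\widetilde{\Sigma}^\ell g_i$.

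For part (2), a parallel argument shows that each $\widetilde{\Sigma}^\ell a_i$, built from $\Sigma^\ell a_i$, $(\psi^\ell_{g_i})^{-1}$, and $1_{\Sigma^\ell C_i}\cup C(1_{X_i}\wedge\tau(\s^{i-1},\s^\ell))$, is a homotopy equivalence $\Sigma^\ell C_{i+1}\simeq\Sigma^\ell C_i\cup_{\widetilde{\Sigma}^\ell g_i}C\Sigma^{i-1}\Sigma^\ell X_i$, giving a structure on $\widetilde{\Sigma}^\ell\mathscr{S}$. To verify $\Omega(\widetilde{\Sigma}^\ell\mathscr{A})=\widetilde{\Sigma}^\ell(\Omega(\mathscr{A}))$ on the nose, I would expand both composites using the recipe from \cite{OO} that produces a quasi-structure from a structure, and reduce them to a common normal form by repeatedly cancelling adjacent inverse pairs $\tau(\s^a,\s^b)\circ\tau(\s^b,\s^a)=1$ and inserting naturality squares for $\psi^\ell$ and $\tau$. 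The main obstacle is precisely this bookkeeping: the twist maps must be tracked through several coordinate swaps, and the asserted equality of quasi-structures is strict rather than just up to homotopy, so the comparison must be drawn out as a single commutative diagram whose cells are naturality squares rather than established by a homotopy argument.
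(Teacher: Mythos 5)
Your plan for part (1) misses the actual content of the statement. Being a quasi iterated mapping cone is not a diagrammatic compatibility between $\omega_i$, $g_i$, $j_i$ that one can chase through naturality squares for $\psi^\ell$ and $\tau$: it is (besides the cofibration conditions and the $\omega_i$ being homotopy equivalences, which you do handle correctly) an \emph{exactness} condition, namely that for every well-pointed $Z$ the sequence $[\Sigma^{i-1}X_i,Z]\xleftarrow{g_i^*}[C_i,Z]\xleftarrow{j_i^*}[C_{i+1},Z]$ is exact. Exactness of such a represented sequence does not transport along suspension by naturality of $\psi^\ell$ and $\tau$, nor does it follow from $\Sigma^\ell$ preserving homotopy equivalences. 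The paper's proof identifies the suspended sequence $[\Sigma^{i-1}\Sigma^\ell X_i,Z]\leftarrow[\Sigma^\ell C_i,Z]\leftarrow[\Sigma^\ell C_{i+1},Z]$ with $[\Sigma^{i-1}X_i,\Omega^\ell Z]\leftarrow[C_i,\Omega^\ell Z]\leftarrow[C_{i+1},\Omega^\ell Z]$ via the adjunction isomorphisms, and then applies the hypothesis on $\mathscr{S}$ to the test space $\Omega^\ell Z$. The crucial ingredient making this legitimate is Corollary 2.2 (resting on Proposition 2.1): $\Omega^\ell Z$ is again well-pointed, so the hypothesis applies to it. This is the entire reason Section 2 of the paper exists, and your proposal never invokes it; you only cite the cofibration/$w$-space facts from \cite[Corollary 2.3]{OO}. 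Without the loop-space adjunction step your argument cannot establish that $\widetilde{\Sigma}^\ell\mathscr{S}$ satisfies the defining exactness property, so part (1) is not proved.

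Your treatment of the homotopy-equivalence claims (that $\widetilde{\Sigma}^\ell\omega_i$ and $\widetilde{\Sigma}^\ell a_i$ are homotopy equivalences, being composites of homeomorphisms with suspensions of homotopy equivalences) is fine and agrees with the paper. For part (2) the paper omits the proof as easy, and your outline --- expanding $\Omega(\widetilde{\Sigma}^\ell\mathscr{A})$ and $\widetilde{\Sigma}^\ell(\Omega(\mathscr{A}))$ and reducing to a common form by cancelling inverse twists and using naturality of $\psi^\ell$ --- is a reasonable plan for that part, where the conditions really are diagrammatic.
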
 
\begin{proof}
(1). 
Given a well-pointed space $Z$, we have the following commutative diagram
$$
\xymatrix{
[\Sigma^{i-1}\Sigma^\ell X_i,Z] &[\Sigma^\ell C_i,Z]\ar[l]_-{{\widetilde{\Sigma}g_i}^*} \ar[ld]^-{{\Sigma^\ell g_i}^*}\ar[dd]^-b_\cong&[\Sigma^\ell C_{i+1},Z]\ar[dd]^-a_\cong \ar[l]_-{{\Sigma^\ell j_i}^*}\\
[\Sigma^\ell \Sigma^{i-1}X_i,Z]\ar[u]^-{{1_{X_i}\wedge\tau(\s^\ell,\s^{i-1})}^*}_\cong \ar[d]^-c_\cong & &\\
[\Sigma^{i-1}X_i,\Omega^\ell Z] &[C_i,\Omega^\ell Z]\ar[l]_-{{g_i}^*} &[C_{i+1},\Omega^\ell Z]\ar[l]_-{{j_i}^*}
}
$$
where $a,b,c$ are adjoint isomorphisms. 
Since $\Omega^\ell Z$ is well-pointed by Corollary 2.2, and since $\mathscr{S}$ is a quasi iterated mapping cone, 
the lower sequence is exact so that the upper sequence is exact. 
Since $\omega_i$ is a homotopy equivalence, 
$\widetilde{\Sigma}^\ell\omega_i=(1_{\Sigma^{i-1}X_i}\wedge\tau(\s^i,\s^\ell))\circ \Sigma^\ell\omega_i\circ\psi^\ell_{j_i}$ is a homotopy equivalence for $1\le i\le n$. 
Thus $\widetilde{\Sigma}^\ell\mathscr{S}$ is a quasi iterated mapping cone of depth $n$ with a quasi-structure 
$\widetilde{\Sigma}^\ell\Omega$. 

We omit the proof of (2), since it is easy.
\end{proof}

Under the situation above, suppose that a map $f:\Sigma^\ell C_1\to Y$ is given and 
has an extension $\overline{f}:\Sigma^\ell C_{n+1}\to Y$. 
Then $(\widetilde{\Sigma}^\ell\mathscr{S})(\overline{f},\widetilde{\Sigma}^\ell \Omega)$ 
is a reduced iterated mapping cone with a reduced quasi-structure 
$\widetilde{\widetilde{\Sigma}^\ell\Omega}
=\{\widetilde{\widetilde{\Sigma}^\ell \omega_i}\,|\,0\le i\le n\}$, where 
\begin{align*}
&\widetilde{\widetilde{\Sigma}^\ell \omega_0}=q_{f}':(Y\cup_f C\Sigma^\ell C_1)\cup_{i_f} CY\to \Sigma\Sigma^\ell C_1,\\ 
&\widetilde{\widetilde{\Sigma}^\ell \omega_i}=\Sigma\widetilde{\Sigma}^\ell \omega_i\circ q_{\overline{f}^{i+1}\cup C\overline{f}^i}\circ\xi_{i+1}:
(Y\cup_{\overline{f}^{i+1}}C\Sigma^\ell C_{i+1})\cup_{1_Y\cup C\Sigma^\ell j_i} C(Y\cup_{\overline{f}^i} C\Sigma^\ell C_i)\\
&\hspace{2cm}\xrightarrow[\approx]{\xi_{i+1}} (Y\cup CY)\cup_{\overline{f}^{i+1}\cup C\overline{f}^i} C(\Sigma^\ell C_{i+1}\cup_{\Sigma^\ell j_i} C\Sigma^\ell C_i)\\
&\hspace{2cm}\xrightarrow[\simeq]{q} \Sigma(\Sigma^\ell C_{i+1}\cup_{\Sigma^\ell j_i} C\Sigma^\ell C_i)\\
&\hspace{2cm}\xrightarrow[\simeq]{\Sigma\widetilde{\Sigma}^\ell\omega_i} \Sigma\Sigma^i\Sigma^\ell X_i\qquad(1\le i\le n)\quad (\text{see \cite[(5.3)]{OO}}).
\end{align*}  
We have 
\begin{align*}
&(\widetilde{\Sigma}^\ell\mathscr{S})(\overline{f},\widetilde{\Sigma}^\ell\Omega)\\
&=
(\Sigma^\ell C_1,\Sigma\Sigma^\ell X_1,\Sigma^2\Sigma^\ell X_2,\dots,\Sigma^n\Sigma^\ell X_n; Y,Y\cup_{\overline{f}^1} C\Sigma^\ell C_1,\dots,Y\cup_{\overline{f}^{n+1}} C\Sigma^\ell C_{n+1};\\
&\hspace{2cm}\widetilde{g}_1,\widetilde{g}_2,\dots,\widetilde{g}_n;i_f,1_Y\cup C\Sigma^\ell j_1,\dots,1_Y\cup C\Sigma^\ell j_n),
\end{align*} 
where $\widetilde{g}_1=f$ and $\widetilde{g}_i=(\overline{f}^i\cup C1_{\Sigma^\ell C_{i-1}})\circ(\widetilde{\Sigma}^\ell \omega_{i-1})^{-1}:\Sigma^{i-1}\Sigma^\ell X_{i-1}\to Y\cup C\Sigma^\ell C_{i-1}$ for $i\ge 2$. 
If $\mathscr{S}$ has a structure $\mathscr{A}$, then 
$(\widetilde{\Sigma}^\ell\mathscr{S})(\overline{f},\widetilde{\Sigma}^\ell\mathscr{A})$ 
is a reduced iterated mapping cone with a reduced quasi-structure 
$\widetilde{\widetilde{\Sigma}^\ell\Omega(\mathscr{A})}$. 

Let $n\ge 3$ be an integer. 
Given a sequence of non-negative integers $\vec{\bm m}=(m_n,\dots,m_2,m_1)$, we set $|\vec{\bm m}|=m_n+\cdots+m_1$ and, for $n\ge k\ge \ell\ge 1$, we denote 
the iterated suspension $\Sigma^{m_k}\cdots \Sigma^{m_{\ell+1}}\Sigma^{m_\ell}$ by $\Sigma^{m_{[k,\ell]}}$, 
that is, $\Sigma^{m_{[k,\ell]}}X=X\wedge\s^{m_\ell}\wedge\cdots\wedge\s^{m_k}$, 
and $\s^{m_{[k,\ell]}}=\Sigma^{m_{[k,\ell]}}\s^0=\s^{m_\ell}\wedge\cdots\wedge\s^{m_k}$. 
We denote by $(\vec{\bm \alpha};\vec{\bm m})$ ($\vec{\bm \alpha}$ for short) 
a sequence of homotopy classes $\alpha_i\in[\Sigma^{m_i}X_i,X_{i+1}]\ (1\le i\le n)$, that is, 
$\vec{\bm\alpha}=(\alpha_n,\dots,\alpha_1)$. 
If $f_i:\Sigma^{m_i}X_i\to X_{i+1}$ represents $\alpha_i$, then the sequence $\vec{\bm f}=(f_n,\dots,f_1)$ is called a representative of $(\vec{\bm \alpha};\vec{\bm m})$. 
We denote by $\mathrm{Rep}(\vec{\bm\alpha};\vec{\bm m})$ ($\mathrm{Rep}(\vec{\bm\alpha})$ for short) the set of representatives of $(\vec{\bm \alpha};\vec{\bm m})$. 
Given $\vec{\bm f}\in\mathrm{Rep}(\vec{\bm \alpha};\vec{\bm m})$, 
we will define $\{\vec{\bm f}\,\}_{\vec{\bm m}}^{(\star)}\subset[\Sigma^{n-2}\Sigma^{|\vec{\bm m}|}X_1,X_{n+1}]$. 
We consider collections 
$\{\mathscr{S}_r,\overline{f_r},\Omega_r\,|\,2\le r\le n\}$, $\{\mathscr{S}_2,\overline{f_2},\Omega_2\}\cup\{\mathscr{S}_r,\overline{f_r},\mathscr{A}_r\,|\,3\le r\le n\}$, and $\{\mathscr{S}_r,\overline{f_r},\mathscr{A}_r\,|\,2\le r\le n\}$ 
(provided $\mathscr{S}_2$ is an iterated mapping cone) which satisfy (i)--(iii) below. 
\begin{enumerate}
\item[(i)] $\mathscr{S}_2$ is a quasi iterated mapping cone of depth $1$ as displayed in
$$
\begin{CD}
\Sigma^{m_1}X_1 @.\\
@Vf_1VV @.\\
X_2 @>j_{2,1}>> C_{2,2}
\end{CD}
$$
with $\Omega_2$ a quasi-structure and $\mathscr{A}_2$ a structure provided $\mathscr{S}_2$ is an iterated mapping cone. (Note that $\widetilde{\Sigma}^{m_2}\mathscr{S}_2=\Sigma^{m_2}\mathscr{S}_2$.)
\item[(ii)] $\mathscr{S}_r$ is an iterated mapping cone of depth $r-1$ for $3\le r\le n$ as displayed in 
$$
\xymatrix{
\Sigma^{m_{r-1}}X_{r-1} \ar[d]^-{f_{r-1}=g_{r,1}} & \Sigma\Sigma^{m_{[r-1,r-2]}}X_{r-2} \ar[d]^-{g_{r,2}} 
&\dots &\Sigma^{r-2}\Sigma^{m_{[r-1,1]}}X_1 \ar[d]^-{g_{r,r-1}}& \\
X_r \ar[r]^-{j_{r,1}} & C_{r,2} \ar[r]^-{j_{r,2}} &\dots \ar[r]^-{j_{r,r-2}} & C_{r,r-1} 
\ar[r]^-{j_{r,r-1}} & C_{r,r}
}
$$
with $\Omega_r=\{\omega_{r,s}\,|\,1\le s<r\}$ a quasi-structure and $\mathscr{A}_r$ a structure. 
(Notice that $\omega_{r,s}:C_{r,s+1}\cup_{j_{r,s}}CC_{r,s}\simeq \Sigma^s\Sigma^{m_{[r-1,r-s]}}X_{r-s}$.) 
\item[(iii)] $\overline{f_r}:\Sigma^{m_r}C_{r,r}\to X_{r+1}$ is an extension of 
$f_r$ to $\Sigma^{m_r}C_{r,r}$ for $2\le r\le n-1$, 
and $\overline{f_n}:\Sigma^{m_n}C_{n,n-1}\to X_{n+1}$ is an extension of $f_n$ to 
$\Sigma^{m_n}C_{n,n-1}$. 
\end{enumerate}

\begin{defi}
Various presentations of $\vec{\bm f}$ are defined as follows. 
\begin{enumerate}
\item A collection $\{\mathscr{S}_r,\overline{f_r},\Omega_r\,|\,2\le r\le n\}$ is 
\begin{enumerate}
\item a $q$-{\it presentation} if $\mathscr{S}_{r+1}=(\widetilde{\Sigma}^{m_r}\mathscr{S}_r)(\overline{f_r},\widetilde{\Sigma}^{m_r}\Omega_r)$ 
for $2\le r\le n-1$;
\item[(a$'$)] a $qs_2$-{\it presentation} if it is a $q$-presentation and $\mathscr{S}_2$ is an iterated mapping cone;
\item[(b)] a {\it $q\dot{s}_2$-presentation} if it is a $qs_2$-presentation and $\mathscr{S}_2$ is reduced;
\item[(c)] a {\it $q\ddot{s}_2$-presentation} if it is a $q\dot{s}_2$-presentation and $\Omega_2$ is reduced;
\item[(d)] an {\it $aq$-presentation} if $\mathscr{S}_{r+1}=(\widetilde{\Sigma}^{m_r}\mathscr{S}_r)(\overline{f_r},\widetilde{\Sigma}^{m_r}\Omega_r)$ and $\Omega_{r+1}=\widetilde{\widetilde{\Sigma}^{m_r}\Omega_r}$ for $2\le r< n$;
\item[(d$'$)] an {\it $aqs_2$-presentation} if it is an $aq$-presentation and $\mathscr{S}_2$ is an iterated mapping cone;
\item[(e)] an {\it $aq\dot{s}_2$-presentation} if it is an $aqs_2$-presentation and $\mathscr{S}_2$ is reduced; 
\item[(f)] an {\it $aq\ddot{s}_2$-presentation} if it is an $aq\dot{s}_2$-presentation and $\Omega_2$ is reduced.
\end{enumerate}
Notice that $\widetilde{\widetilde{\Sigma}^{m_r}\Omega_r}=\{\widetilde{\widetilde{\Sigma}^{m_r}\omega_{r,s}}\,|\,0\le s<r\}$, 
where 
\begin{align*}
&\widetilde{\widetilde{\Sigma}^{m_r}\omega_{r,s}}=\Sigma\widetilde{\Sigma}^{m_r}\omega_{r,s}\circ q\circ\xi\\
&\ : C_{r+1,s+2}\cup CC_{r+1,s+1}=(X_{r+1}\cup_{\overline{f_r}^{s+1}}C\Sigma^{m_r}C_{r,s+1})\cup C(X_{r+1}\cup_{\overline{f_r}^{s}}C\Sigma^{m_r}C_{r,s})\\
&\hspace{4cm}\xrightarrow[\approx]{\xi} (X_{r+1}\cup CX_{r+1})\cup C(\Sigma^{m_r}C_{r,s+1}\cup C\Sigma^{m_r}C_{r,s})\\
&\hspace{4cm}\xrightarrow[\simeq]{q} \Sigma(\Sigma^{m_r}C_{r,s+1}\cup C\Sigma^{m_r}C_{r,s})\\
&\hspace{4cm}\xrightarrow[\simeq]{\Sigma\widetilde{\Sigma}^{m_r}\omega_{r,s}}\Sigma\Sigma^s\Sigma^{m_{[r,
r-s]}}X_{r-s}
\end{align*}
for $0\le s<r$. 
\item[(2)] A collection 
$\{\mathscr{S}_2,\overline{f_2},\Omega_2\}\cup\{\mathscr{S}_r,\overline{f_r},\mathscr{A}_r\,|\,3\le r\le n\}$ is a 
{\it $q_2$-presentation} if $\mathscr{S}_3=(\widetilde{\Sigma}^{m_2}\mathscr{S}_2)(\overline{f_2},\widetilde{\Sigma}^{m_2}\Omega_2)$, 
$\mathscr{S}_{r+1}=(\widetilde{\Sigma}^{m_r}\mathscr{S}_r)(\overline{f_r},\widetilde{\Sigma}^{m_r}\mathscr{A}_r)\ (3\le r<n)$, and $\mathscr{A}_{r}$ is reduced for 
$3\le r\le n$. 
\item[(3)] A collection $\{\mathscr{S}_r,\,\overline{f_r},\,\mathscr{A}_r\,|\,2\le r\le n\}$ is
\begin{enumerate}
\item[(g)] an {\it $s_t$-presentation} if  
$\mathscr{S}_{r+1}=(\widetilde{\Sigma}^{m_r}\mathscr{S}_r)(\overline{f_r},\widetilde{\Sigma}^{m_r}\mathscr{A}_r)\ (2\le r<n)$ and $\mathscr{A}_r$ is reduced for $3\le r\le  n$; 
\item[(h)] an {\it $\dot{s}_t$-presentation} if it is an $s_t$-presentation and $\mathscr{S}_2$ is reduced; 
\item[(i)] an {\it $\ddot{s}_t$-presentation} if it is an $\dot{s}_t$-presentation and $\mathscr{A}_2$ is reduced. 
\end{enumerate}
\item[(4)] Let $\star$ denote one of the following: 
$$
q,\ aq,\ qs_2,\ q\dot{s}_2,\ q\ddot{s}_2,\ aqs_2,\ aq\dot{s}_2,\ aq\ddot{s}_2,\ s_t,\ \dot{s}_t,\ \ddot{s}_t,\ q_2.
$$ 
$\vec{\bm f}$ is {\it $\star$-presentable} if it has a $\star$-presentation, and 
$\vec{\bm \alpha}$ is {\it $\star$-presentable} if it has a $\star$-presentable representative. 
\end{enumerate}
\end{defi}

\begin{defi}
We denote by $\{\vec{\bm f}\,\}^{(\star)}_{\vec{\bm m}}$ the set of homotopy classes of 
\begin{align*}
&\overline{f_n}\circ\widetilde{\Sigma}^{m_n}g_{n,n-1}=\overline{f_n}\circ \Sigma^{m_n}g_{n,n-1}\circ (1_{\Sigma^{m_{[n-1,1]}}X_1}\wedge\tau(\s^{m_n},\s^{n-2}))\\
&:\Sigma^{n-2}\Sigma^{m_{[n,1]}}X_1\to 
\Sigma^{m_n}\Sigma^{n-2}\Sigma^{m_{[n-1,1]}}X_1 \xrightarrow{\Sigma^{m_n}g_{n,n-1}}\Sigma^{m_n}C_{n,n-1}\xrightarrow{\overline{f_n}} X_{n+1}
\end{align*} 
for all $\star$-presentations of 
$\vec{\bm f}$. 
It is a subset of $[\Sigma^{n-2}\Sigma^{|\vec{\bm m}|}X_1,X_{n+1}]$.
\end{defi}

\begin{rem}
\begin{enumerate}
\item We abbreviate $\{\vec{\bm f}\,\}^{(\star)}_{(0,\dots,0)}$ to 
$\{\vec{\bm f}\,\}^{(\star)}$, because they are the same by definitions. 
\item By replacing $\Sigma^{m_1}X_1$ with $X_1$, we can suppose $m_1=0$ without loosing any generality. 
\item It follows from definitions that if $\vec{\bm \alpha}$ is $q$-presentable, then $\alpha_{r+1}\circ \Sigma^{m_{r+1}}\alpha_r=0$ 
for $1\le r\le n-1$, and that we have 
the commutative diagram 
\begin{equation}
\begin{split}
\xymatrix{
\{\vec{\bm f}\,\}^{(q\ddot{s}_2)}_{\vec{\bm m}} \ar[r]^-{!!} &\{\vec{\bm f}\,\}^{(q\dot{s}_2)}_{\vec{\bm m}} \ar[drr]^-{!}& &\{\vec{\bm f}\,\}^{(\ddot{s}_t)}_{\vec{\bm m}} \ar[r]^-{!} & \{\vec{\bm f}\,\}^{(\dot{s}_t)}_{\vec{\bm m}} 
\ar[d]^-{!}\\
\{\vec{\bm f}\,\}^{(aq\ddot{s}_2)}_{\vec{\bm m}} \ar[r]  \ar[u] & \{\vec{\bm f}\,\}^{(aq\dot{s}_2)}_{\vec{\bm m}}  \ar[r]^-{!} \ar[u] & \{\vec{\bm f}\,\}^{(aqs_2)}_{\vec{\bm m}} \ar[d]^-{\#} \ar[r] & \{\vec{\bm f}\,\}^{(qs_2)}_{\vec{\bm m}} \ar[d]^-{\#} & \{\vec{\bm f}\,\}^{(s_t)}_{\vec{\bm m}} \ar[l] \ar[d]\\
& &   \{\vec{\bm f}\,\}^{(aq)}_{\vec{\bm m}} \ar[r] & \{\vec{\bm f}\,\}^{(q)}_{\vec{\bm m}} & \{\vec{\bm f}\,\}^{(q_2)}_{\vec{\bm m}} \ar[l]
}
\end{split}
\end{equation}
where arrows are inclusions, $!!$ is $=$ for $n\ge 4$, and 
four $!$'s are $=$ as will be shown in Theorem 4.1. 
Notice that if Problem 5.6 of \cite{OO} is affirmative, two $\#$'s will be $=$. 
\end{enumerate}
\end{rem}

The following two propositions are easy consequences of definitions.  

\begin{prop}
Let $\{\mathscr{S}_r,\overline{f_r},\Omega_r\,|\,2\le r\le n\}$ be a $q$-presentation of $\vec{\bm f}$. 
Then 
\begin{align*}
C_{r,2}&=X_r\cup_{\overline{f_{r-1}}^1}C\Sigma^{m_{r-1}}X_{r-1}\ (3\le r\le n),\quad C_{3,3}=X_3\cup_{\overline{f_2}}C\Sigma^{m_2}C_{2,2},\\ 
C_{r,s}&=X_r\cup_{\overline{f_{r-1}}^{s-1}}C\Sigma^{m_{r-1}}(X_{r-1}\cup_{\overline{f_{r-2}}^{s-2}}C\Sigma^{m_{r-2}}(X_{r-2}\cup\cdots\\
&\hspace{1cm} \cup_{\overline{f_{r-s+2}}^2}C\Sigma^{m_{r-s+2}}(X_{r-s+2}\cup_{\overline{f_{r-s+1}}^1}C\Sigma^{m_{r-s+1}}X_{r-s+1})\cdots))\\
&\hspace{4cm} (3\le s<r\le n),\\
C_{r,r}&=X_r\cup_{\overline{f_{r-1}}^{r-1}}C\Sigma^{m_{r-1}}(X_{r-1}\cup_{\overline{f_{r-2}}^{r-2}} C\Sigma^{m_{r-2}}\\
&\hspace{1cm}( \cdots \cup_{\overline{f_3}^3}C\Sigma^{m_3}(X_3\cup_{\overline{f_2}^2}C\Sigma^{m_2}C_{2,2})\cdots ))\ (4\le r\le n).
\end{align*}
\end{prop}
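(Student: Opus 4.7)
The plan is to argue by induction on $r$, using the recursive definition of a $q$-presentation, namely $\mathscr{S}_{r+1}=(\widetilde{\Sigma}^{m_r}\mathscr{S}_r)(\overline{f_r},\widetilde{\Sigma}^{m_r}\Omega_r)$ for $2\le r\le n-1$. The explicit description of $(\widetilde{\Sigma}^\ell\mathscr{S})(\overline{f},\widetilde{\Sigma}^\ell\Omega)$ recorded earlier in this section shows that, whenever $\mathscr{S}$ has spaces $C_1,\ldots,C_{n+1}$ and $\overline{f}$ extends $f:\Sigma^\ell C_1\to Y$, the new iterated mapping cone has spaces $Y$ together with $Y\cup_{\overline{f}^s}C\Sigma^\ell C_s$ for $1\le s\le n+1$. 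Applied with $Y=X_{r+1}$, $\mathscr{S}=\mathscr{S}_r$ (whose spaces are $C_{r,1}=X_r$ and $C_{r,2},\ldots,C_{r,r}$), $\overline{f}=\overline{f_r}$, and $\ell=m_r$, this yields the core identity
$$
C_{r+1,s+1}=X_{r+1}\cup_{\overline{f_r}^s}C\Sigma^{m_r}C_{r,s}\qquad(1\le s\le r),
$$
which I shall refer to as $(\ast)$ and which will drive every case.

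I would first read off the formula for $C_{r,2}$ by applying $(\ast)$ with $r$ replaced by $r-1$ and $s=1$, using $C_{r-1,1}=X_{r-1}$. Setting $r=3$, $s=2$ in $(\ast)$ with $r$ replaced by $2$, and noting that $\overline{f_2}^2=\overline{f_2}$ because $\overline{f_2}$ is already defined on $\Sigma^{m_2}C_{2,2}$, one obtains $C_{3,3}=X_3\cup_{\overline{f_2}}C\Sigma^{m_2}C_{2,2}$, establishing the second formula.

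The general formulas for $C_{r,s}$ with $3\le s<r$ and for $C_{r,r}$ with $4\le r$ then follow by induction on $r$; the base case $r=3$ covers $s=2,3$, already handled. For the inductive step, $(\ast)$ gives $C_{r+1,s+1}=X_{r+1}\cup_{\overline{f_r}^s}C\Sigma^{m_r}C_{r,s}$, and substituting the induction hypothesis for $C_{r,s}$ produces the claimed nested expression. The only thing to verify is that the innermost cell of $C_{r,s}$, namely $X_{r-s+2}\cup_{\overline{f_{r-s+1}}^1}C\Sigma^{m_{r-s+1}}X_{r-s+1}$, is left formally unchanged by the joint shift $(r,s)\mapsto (r+1,s+1)$, which is immediate since $r-s$ is preserved. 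The formula for $C_{r+1,r+1}$ is obtained the same way by taking $s=r$ in $(\ast)$, the recursion terminating at $C_{3,3}=X_3\cup_{\overline{f_2}}C\Sigma^{m_2}C_{2,2}$ rather than at some $X_\bullet$ term, because $\mathscr{S}_2$ has depth $1$ and its top cone is $C_{2,2}$, not $X_2$.

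The whole argument is index bookkeeping once $(\ast)$ is in hand, so the principal obstacle is purely notational: displaying the nested mapping cones cleanly and keeping track of which superscript of $\overline{f_{\bullet}}$ is paired with which cone. No substantial homotopical content beyond the earlier construction of $(\widetilde{\Sigma}^\ell\mathscr{S})(\overline{f},\widetilde{\Sigma}^\ell\Omega)$ enters the proof.
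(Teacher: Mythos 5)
Your argument is correct and is exactly the unwinding of the definition that the paper has in mind: the paper gives no proof, declaring Proposition 3.5 an "easy consequence of definitions," and your identity $(\ast)$, $C_{r+1,s+1}=X_{r+1}\cup_{\overline{f_r}^s}C\Sigma^{m_r}C_{r,s}$, is read off directly from the displayed description of $(\widetilde{\Sigma}^\ell\mathscr{S})(\overline{f},\widetilde{\Sigma}^\ell\Omega)$ in Section 3. The induction, the observation that $\overline{f_2}^2=\overline{f_2}$, and the termination of the $C_{r,r}$ recursion at $C_{2,2}$ (rather than at an $X_\bullet$) are all handled correctly.
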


\begin{prop}
If $\{\vec{\bm f}\}^{(\star)}_{\vec{\bm m}}$ is not empty, then $\{f_k,\dots,f_\ell\}^{(\star)}_{m_k,\dots,m_{\ell}}$ contains $0$ for $1\le \ell<k\le n,\,(k,\ell)\ne(n,1)$.
\end{prop}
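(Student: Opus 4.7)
The plan is to show that any $\star$-presentation of $\vec{\bm f}$ restricts, by truncating from the bottom when $\ell > 1$ and from the top when $k < n$, to a $\star$-presentation of $(f_k,\dots,f_\ell)$ whose associated bracket element is null-homotopic. The hypothesis $(k,\ell)\ne(n,1)$ guarantees that the extension $\overline{f_k}$ in the ambient presentation is defined on one mapping-cone layer higher than is required for the top extension of the sub-presentation, and this extra room is exactly what forces nullity.

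Let $\{\mathscr{S}_r,\overline{f_r},\Omega_r\mid 2\le r\le n\}$ (with $\mathscr{A}_r$ in place of $\Omega_r$ for the structured variants) be the given $\star$-presentation. For $\ell+1\le j\le k$, let $\mathscr{S}'_j$ denote the sub-iterated mapping cone of $\mathscr{S}_j$ consisting of the layers $X_j=C_{j,1},C_{j,2},\dots,C_{j,j-\ell+1}$, attaching maps $g_{j,s}$ for $1\le s\le j-\ell$, and quasi-structure $\{\omega_{j,s}\mid 1\le s\le j-\ell\}$ (together with $\{a_{j,s}\mid 1\le s\le j-\ell\}$ where appropriate). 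By Proposition 3.4 the space $C_{j,s}$ with $s\le j-\ell+1$ is constructed only from $X_j,\dots,X_\ell$ and $f_{j-1},\dots,f_\ell$, so $\mathscr{S}'_j$ is a bona fide iterated mapping cone for the sub-sequence. For the extensions, put
\[
\overline{f_j}'=\overline{f_j}\circ\Sigma^{m_j}\iota_j\quad(\ell+1\le j\le k-1),\qquad\overline{f_k}'=\overline{f_k}\circ\Sigma^{m_k}\iota_k,
\]
where $\iota_j:C_{j,j-\ell+1}\hookrightarrow C_{j,j}$ and $\iota_k$ is the inclusion of $C_{k,k-\ell}$ into $C_{k,k}$ (if $k<n$) or into $C_{k,k-1}$ (if $k=n$, which uses $k-\ell\le k-1$ because $\ell>1$ in that case). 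The collection $\{\mathscr{S}'_j,\overline{f_j}',\Omega'_j\mid\ell+1\le j\le k\}$ is then a $\star$-presentation for every $\star$: the $q$-type equality $\mathscr{S}'_{j+1}=(\widetilde{\Sigma}^{m_j}\mathscr{S}'_j)(\overline{f_j}',\widetilde{\Sigma}^{m_j}\Omega'_j)$ descends from the same equality for the full presentation by comparing the corresponding layers $C_{j+1,s+1}$ for $s\le j-\ell+1$; the reducedness clauses on $\mathscr{S}_2,\Omega_2,\mathscr{A}_2$ translate into the reducedness of $\mathscr{S}'_{\ell+1}$, which is automatic because $\mathscr{S}'_{\ell+1}$ is the reduced mapping cone of $f_\ell$ obtained from $(\widetilde{\Sigma}^{m_\ell}\mathscr{S}_\ell)(\overline{f_\ell},\dots)$; and the $aq$-axiom $\Omega_{j+1}=\widetilde{\widetilde{\Sigma}^{m_j}\Omega_j}$ restricts to its truncated counterpart.

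By Definition 3.3 the bracket element associated to this sub-presentation is $\overline{f_k}'\circ\widetilde{\Sigma}^{m_k}g_{k,k-\ell}$, which unfolds as $\overline{f_k}\circ\Sigma^{m_k}(\iota_k\circ g_{k,k-\ell})\circ(1\wedge\tau)$. Since $\iota_k$ factors through $j_{k,k-\ell}:C_{k,k-\ell}\to C_{k,k-\ell+1}$, the composite $\iota_k\circ g_{k,k-\ell}$ factors through $j_{k,k-\ell}\circ g_{k,k-\ell}$, which is null-homotopic as a cell attaching map followed by the inclusion into its own mapping cone. Hence the element is null-homotopic and $0\in\{f_k,\dots,f_\ell\}^{(\star)}_{(m_k,\dots,m_\ell)}$. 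The main obstacle is verifying the truncation compatibility uniformly across all twelve $\star$-types, especially for the $aq$ variants, whose quasi-structures are defined inductively, and for the $q_2,s_t,\dot{s}_t,\ddot{s}_t$ variants, in which additional structures $\mathscr{A}_r$ must be preserved; for these a careful inductive unpacking of the formulas from \cite{OO} is required, but no new homotopy-theoretic input is needed.
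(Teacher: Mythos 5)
The paper offers no proof of this proposition (it is dismissed as ``an easy consequence of definitions''), and your truncation argument --- restricting the given presentation to the layers $C_{j,1},\dots,C_{j,j-\ell+1}$, restricting the extensions along the inclusions, and observing that the resulting top composite factors through $j_{k,k-\ell}\circ g_{k,k-\ell}\simeq *$ --- is exactly the natural fleshing-out of that remark, with the exclusion $(k,\ell)\ne(n,1)$ used correctly to guarantee the extra mapping-cone layer. The argument is correct; the only point deserving one more line is that for the variants requiring the bottom quasi-structure or structure to be reduced, the restricted $\omega_{\ell+1,1}$ (resp.\ $a_{\ell+1,1}$) is either automatically reduced (in the $aq$ cases, by the formula $\widetilde{\widetilde{\Sigma}^{m_\ell}\omega_{\ell,0}}=q'_{f_\ell}$) or may simply be replaced by the reduced one without affecting the spaces $C'_{j,s}$ or the nullity of the top composite.
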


\begin{defi}
If there exist null-homotopies $A_i:f_{i+1}\circ \Sigma^{m_{i+1}}f_i\simeq *\ (1\le i\le n-1)$ 
such that $[f_{i+2},A_{i+1},\Sigma^{m_{i+2}}f_{i+1}]\circ(\Sigma^{m_{i+2}}f_{i+1},\widetilde{\Sigma}^{m_{i+2}}A_i,\Sigma^{m_{[i+2,i+1]}}f_i)\simeq *\ (1\le i\le n-2)$, then we call $\vec{\bm f}$ and 
$(\vec{\bm f};\vec{\bm A})$ {\it admissible}, where $\vec{\bm A}=(A_{n-1},\dots,A_1)$. 
We call $\vec{\bm \alpha}$ {\it admissible} if it has an admissible representative. 
\end{defi}

It follows from Proposition 2.11 of \cite{OO1} that if $\vec{\bm \alpha}$ 
is admissible, then every representative of $\vec{\bm \alpha}$ is admissible. 
From results in forthcoming sections, we can prove the following without difficulties: 
when $n=3$, $\vec{\bm \alpha}$ is admissible if and only if 
$\{\vec{\bm \alpha}\}^{(\star)}_{\vec{\bm m}}$ contains $0$ for some and hence 
all $\star$; when $n=4$, $\vec{\bm \alpha}$ is admissible if and only if 
$\vec{\bm \alpha}$ is $\star$-presentable for some and hence all $\star$; 
when $n\ge 5$, if $\vec{\bm \alpha}$ is $\star$-presentable for some and 
hence all $\star$, then $\vec{\bm \alpha}$ is admissible. 

\begin{prop}
$\{f_n,\dots, f_1\}^{(\star)}_{\vec{\bm m}}\subset\{f_n,\Sigma^{m_n}f_{n-1},\Sigma^{m_{[n,n-1]}}f_{n-2},\dots,\Sigma^{m_{[n,2]}}f_1\}^{(\star)}$. 
\end{prop}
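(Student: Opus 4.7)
The plan is to produce, from any $\star$-presentation $P=\{\mathscr{S}_r,\overline{f_r},\Omega_r\mid 2\le r\le n\}$ of $\vec{\bm f}$ (with structures $\mathscr{A}_r$ attached when $\star$ demands them) that represents $\alpha\in\{\vec{\bm f}\,\}^{(\star)}_{\vec{\bm m}}$, a $\star$-presentation $P'$ of the unsubscripted sequence $\vec{\bm f}\,'=(f_n,\Sigma^{m_n}f_{n-1},\dots,\Sigma^{m_{[n,2]}}f_1)$ whose associated element is the same $\alpha$. The $k$-th term of $\vec{\bm f}\,'$ is $\Sigma^{m_{[n,k+1]}}f_k$ regarded as a map $\Sigma^{m_{[n,k]}}X_k\to\Sigma^{m_{[n,k+1]}}X_{k+1}$ with zero suspension subscript, so $\widetilde{\Sigma}^{0}=\Sigma^{0}$ drops out of the defining recursion and $P'$ is built entirely from ordinary suspensions and cones.

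To construct $P'$ I apply the functor $\Sigma^{m_{[n,r+1]}}$ layer by layer: set $C'_{r,s}=\Sigma^{m_{[n,r+1]}}C_{r,s}$, $j'_{r,s}=\Sigma^{m_{[n,r+1]}}j_{r,s}$, $\overline{f_r}'=\Sigma^{m_{[n,r+1]}}\overline{f_r}$ for $r<n$ and $\overline{f_n}'=\overline{f_n}$, and let $g'_{r,s}$ be $\Sigma^{m_{[n,r+1]}}g_{r,s}$ precomposed with the canonical twist $1_{X_{r-s}}\wedge\tau$ that identifies the unsubscripted source $\Sigma^{s-1}\Sigma^{m_{[n,r-s+1]}}X_{r-s}$ with the shuffled source $\Sigma^{m_{[n,r+1]}}\Sigma^{s-1}\Sigma^{m_{[r-1,r-s]}}X_{r-s}$; obtain $\Omega'_r$ from $\Omega_r$ by composition with the homeomorphism $\psi$ and a matching twist on the target. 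Proposition 3.1(1), applied repeatedly, certifies that each $\mathscr{S}'_r$ is a quasi iterated mapping cone of depth $r-1$ for the truncated new sequence. Applying $\Sigma^{m_{[n,r+1]}}$ to the defining recursion $\mathscr{S}_{r+1}=(\widetilde{\Sigma}^{m_r}\mathscr{S}_r)(\overline{f_r},\widetilde{\Sigma}^{m_r}\Omega_r)$ and absorbing the twists and cone-suspension homeomorphisms yields $\mathscr{S}'_{r+1}=(\mathscr{S}'_r)(\overline{f_r}',\Omega'_r)$, showing $P'$ is a $q$-presentation.

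The structural refinements corresponding to the other eleven decorations (iterated-mapping-cone status and reducedness of $\mathscr{S}_2$, reducedness of $\Omega_2$ and of each $\mathscr{A}_r$, and the $\widetilde{\widetilde{\Sigma^{m_r}\Omega_r}}$-construction for the ``$a$'' decorations) are preserved one by one by the suspension functor, so $P'$ inherits the type $\star$ of $P$. The element represented by $P'$ is $\overline{f_n}'\circ g'_{n,n-1}$, which after unwinding the twist in $g'_{n,n-1}$ coincides with $\overline{f_n}\circ\widetilde{\Sigma}^{m_n}g_{n,n-1}=\alpha$; when $\vec{\bm m}=\vec{0}$ every $\Sigma^{m_{[n,r+1]}}$ collapses to the identity, $P'=P$, and the inclusion is literally the identity, in accordance with the parenthetical remark in (1.1).

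The main obstacle is the combinatorial bookkeeping of the twist maps $1\wedge\tau(\s^{a},\s^{b})$ threading through the recursion: one must verify that the twist accumulated in $g'_{n,n-1}$ after $n-1$ iterations matches the single twist hidden inside $\widetilde{\Sigma}^{m_n}g_{n,n-1}=\Sigma^{m_n}g_{n,n-1}\circ(1\wedge\tau(\s^{m_n},\s^{n-2}))$. This reshuffling of the suspension coordinates is where the genuine work lies, and it has to be checked uniformly across all twelve decorations of $\star$.
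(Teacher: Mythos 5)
Your proposal follows essentially the same route as the paper's proof: starting from a $\star$-presentation of $\vec{\bm f}$ representing $\alpha$, one manufactures a $\star$-presentation of the unsubscripted sequence $(f_n,\Sigma^{m_n}f_{n-1},\dots,\Sigma^{m_{[n,2]}}f_1)$ by suspending each layer and conjugating by explicit homeomorphisms (the paper's $\theta_{r,s}\colon C^*_{r,s}\approx\Sigma^{m_{[n,r]}}C_{r,s}$ subject to the compatibilities (3.2)--(3.5)), then checks that the represented element is unchanged, with the twelve decorations handled by the same three-way case split. One correction to your bookkeeping: the suspension attached to the level-$r$ cones must be $\Sigma^{m_{[n,r]}}$, not $\Sigma^{m_{[n,r+1]}}$ --- for instance $C'_{r,r}$ has to be $\Sigma^{m_{[n,r]}}C_{r,r}$ so that $\Sigma^{m_{[n,r+1]}}\overline{f_r}$, whose domain is $\Sigma^{m_{[n,r+1]}}\Sigma^{m_r}C_{r,r}=\Sigma^{m_{[n,r]}}C_{r,r}$, can serve as the new extension $\overline{f'_r}$ (and at $r=n$ your convention would give $C'_{n,s}=C_{n,s}$, which is the wrong space); with that index fixed, the argument is the paper's.
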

\begin{proof}
Given a map $f:X\to Y$ and integers $k,\ell$ with $n\ge k\ge\ell\ge 1$, let 
$$
\psi^{m_{[k,\ell]}}_f:\Sigma^{m_{[k,\ell]}}Y\cup_{\Sigma^{m_{[k,\ell]}}f} C\Sigma^{m_{[k,\ell]}}X\approx \Sigma^{m_{[k,\ell]}}(Y\cup_f CX)
$$ 
be the canonical homeomorphism. 
We set 
\begin{gather*}
X_{n+1}^*=X_{n+1},\quad X_i^*=\Sigma^{m_{[n,i]}}X_i\ (1\le i\le n),\\
f_n^*=f_n:X_n^*\to X_{n+1}^*,\quad f_i^*=\Sigma^{m_{[n,i+1]}}f_i:X_i^*\to X_{i+1}^*\ (1\le i<n).
\end{gather*}
Then the assertion is 
$\{f_n,\dots, f_1\}^{(\star)}_{\vec{\bm m}}\subset\{f_n^*,f_{n-1}^*,\dots,f_1^*\}^{(\star)}$. 
Let $\alpha\in\{f_n,\dots, f_1\}^{(\star)}_{\vec{\bm m}}$ and 
$\{\mathscr{S}_r,\overline{f_r},\Omega_r\ (\text{or }\mathscr{A}_r)\,|\,2\le r\le n\}$ a $\star$-presentation of $\vec{\bm f}$ such that 
$\alpha=\overline{f_n}\circ \widetilde{\Sigma}^{m_n}g_{n,n-1}
$. 
We will define a $\star$-presentation $\{\mathscr{S}_r^*,\overline{f_r^*},\Omega_r^*\ (\text{or }\mathscr{A}_r^*)\,|\,2\le r\le n\}$ of $\vec{\bm f^*}$ 
and homeomorphisms 
$\theta_{r,s}:C_{r,s}^*\approx \Sigma^{m_{[n,r]}}C_{r,s}$ such that 
\begin{align}
&\theta_{r,s+1}\circ j_{r,s}^*=\Sigma^{m_{[n,r]}}j_{r,s}\circ\theta_{r,s},\ \theta_{r,1}=1_{X_r^*},\\
&\overline{f_r^*}=\begin{cases}\Sigma^{m_{[n,r+1]}}\overline{f_r}\circ\theta_{r,r}:C_{r,r}^*\to X_{r+1}^*& 2\le r<n\\ \overline{f_n}\circ\theta_{n,n-1}:C_{n,n-1}^*\to X_{n+1}^*& r=n\end{cases},\\
&\theta_{r,s}=\psi^{m_{[n,r]}}_{\overline{f_{r-1}}^{s-1}}\circ(1_{X_{r}^*}\cup C\theta_{r-1,s-1})\ (2\le s<r\le n),\\
\begin{split}
&\omega_{r,s}^*=\widetilde{\Sigma}^{m_{[n,r]}}\omega_{r,s}\circ\psi^{m_{[n,r]}}_{j_{r,s}}\circ
(\theta_{r,s+1}\cup C\theta_{r,s})\\
&\hspace{1cm}:C_{r,s+1}^*\cup CC_{r,s}^*\to \Sigma^sX_{r-s}^*=\Sigma^s\Sigma^{m_{[n,r-s]}}X_{r-s},
\end{split}
\end{align}
where $\widetilde{\Sigma}^{m_{[n,r]}}\omega_{r,s}$ is the composition of 
\begin{align*}
\Sigma^{m_{[n,r]}}(C_{r,s+1}\cup CC_{r,s})&\xrightarrow{\Sigma^{m_{[n,r]}}\omega_{r,s}}\Sigma^{m_{[n,r]}}\Sigma^s\Sigma^{m_{[r-1,r-s]}}X_{r-s}\\
&\xrightarrow{1_{\Sigma^{m_{[r-1,r-s]}}X_{r-s}}\wedge\tau(\s^s,\s^{m_{[n,r]}})} \Sigma^s\Sigma^{m_{[n,r-s]}}X_{r-s}.
\end{align*}
If this is done, then the assertion is proved as follows. 
Consider the following diagram for $1\le s<r<n$. 
$$
\xymatrix{
C^*_{r,s+1}\cup CC^*_{r,s} \ar[d]_-{\theta_{r,s+1}\cup C\theta_{r,s}} \ar[r]^-{\omega^*_{r,s}} & \Sigma^s \Sigma^{m_{[n,r-s]}}X_{r-s} \ar[dd]^-{1_{\Sigma^{m_{[r-1,r-s]}}X_{r-s}}\wedge\tau(\s^{[n,r]},\s^s)}
\\
\Sigma^{m_{[n,r]}}C_{r,s+1]}\cup C\Sigma^{m_{[n,r]}}C_{r,s} \ar[d]_-{\psi^{m_{[n,r]}}} & \\
\Sigma^{m_{[n,r]}}(C_{r,s+1}\cup CC_{r,s}) \ar[d]_-{\Sigma^{m_{[n,r+1]}}(\psi^{m_r}_{j_{r,s}})^{-1}} 
\ar[r]^-{\Sigma^{m_{[n,r]}}\omega_{r,s}} & \Sigma^{m_{[n,r]}}\Sigma^s\Sigma^{m_{[r-1,r-s]}}X_{r-s} 
\ar[d]^-{
\Sigma^{m_{[n,r+1]}}
(1_{\Sigma^{m_{[r-1,r-s]}}X_{r-s}}\wedge\tau(\s^s,\s^{m_r}))
}
\\
\Sigma^{m_{[n,r+1]}}(\Sigma^{m_r}C_{r,s+1}\cup C\Sigma^{m_r}C_{r,s}) \ar[d]^-{\Sigma^{m_{[n,r+1]}}(\overline{f}_r^{s+1}\cup C1_{\Sigma^{m_r}C_{r,s}})} 
\ar[r]^-{\Sigma^{m_{[n,r+1]}}\widetilde{\Sigma}^{m_r}\omega_{r,s}} & \Sigma^{m_{[n,r+1]}}\Sigma^s\Sigma^{m_{[r,r-s]}}X_{r-s} \ar[d] ^-{\Sigma^{m_{[n,r+1]}}g_{r+1,s+1}}\\
\Sigma^{m_{[n,r+1]}}C_{r+1,s+1} \ar[r]^-= \ar[d]^-{\theta_{r+1,s+1}^{-1}} & \Sigma^{m_{[n,r+1]}}(X_{r+1}\cup C \Sigma^{m_r}C_{r,s}) \ar[d]^-{h_{r+1,s+1}}\\
C^*_{r+1,s+1}=X_{r+1}^*\cup CC^*_{r,s} \ar[r]_-{\overline{f_{r+1}^*}^{s+1}}& Y_{r+1,s+1}
}
$$
Here $Y_{r+1,s+1}=\begin{cases} \Sigma^{m_{[n,r+2]}}X_{r+2} & r\le n-2\\ X_{n+1} & r=n-1\end{cases}$ and $h_{r+1,s+1}=\begin{cases} \Sigma^{m_{[n,r+2]}}\overline{f_{r+1}}^{s+1} & r\le n-2\\ \overline{f_n}^{s+1} & r=n-1\end{cases}$. 
The first and last squares are commutative by (3.5) and (3.3), respectively, and the second square is commutative and the third square is homotopy commutative by the definitions. 
We are going to show the following two assertions:
\begin{enumerate}
\item[(i)] the composite of the maps on the left vertical line, say $e_{r,s}$, is $\overline{f^*_r}^{s+1}\cup C1_{C^*_{r,s}}$,
\item[(ii)] $\theta_{r+1,s+1}\circ g^*_{r+1,s+1}\simeq \widetilde{\Sigma}^{m_{[n,r+1]}}g_{r+1,s+1}$.
\end{enumerate} 
To prove (i), take $x^*\in C^*_{r,s+1}$ and represent $\theta_{r,s+1}(x^*)=x\wedge s_r\wedge\cdots\wedge s_n$ where $x\in C_{r,s+1}$ and $s_i\in\s^{m_i}$. 
Then 
\begin{align*}
&\Sigma^{m_{[n,r+1]}}(\overline{f_r}^{s+1}\cup C1_{\Sigma^{m_r}C_{r,s}})\circ \Sigma^{m_{[n,r+1]}}(\psi^{m_r}_{j_{r,s}})^{-1}\circ\psi^{m_{[n,r]}}_{j_{r,s}}
\circ(\theta_{r,s+1}\cup C\theta_{r,s})(x^*)\\
&=\overline{f_r}^{s+1}(x\wedge s_r)\wedge s_{r+1}\wedge\cdots\wedge s_n.
\end{align*}
On the other hand
\begin{align*}
\overline{f^*_r}^{s+1}(x^*)&=\Sigma^{m_{[n,r+1]}}\overline{f_r}^{s+1}\circ\theta_{r,s+1}(x^*)\quad(\text{by (3.2) and (3.3)})\\
&=\Sigma^{m_{[n,r+1]}}\overline{f_r}^{s+1}(x\wedge s_r\wedge\cdots\wedge s_n)=\overline{f_r}^{s+1}(x\wedge s_r)\wedge s_{r+1}\wedge\cdots\wedge s_n.
\end{align*}
Hence $e_{r,s}(x^*)=\overline{f^*_r}^{s+1}(x^*)$. 
Take $z\in C^*_{r,s}$ and represent $\theta_{r,s}(z)=y\wedge s_r\wedge\cdots\wedge s_n$ where $y\in C_{r,s}$ and $s_i\in\s^{m_i}$. 
Let $t\in I$. 
Then 
\begin{align*}
&\Sigma^{m_{[n,r+1]}}(\overline{f_r}^{s+1}\cup C1_{\Sigma^{m_r}C_{r,s}})\circ \Sigma^{m_{[n,r+1]}}(\psi^{m_r}_{j_{r,s}})^{-1}\circ\psi^{m_{[n,r]}}_{j_{r,s}}
\circ(\theta_{r,s+1}\cup C\theta_{r,s})(z\wedge t)\\
&=y\wedge s_r\wedge t\wedge s_{r+1}\wedge\cdots\wedge s_n.
\end{align*}
On the other hand
\begin{align*}
\theta_{r+1,s+1}(z\wedge t)&=\psi^{m_{[n,r+1]}}_{\overline{f_r}^s}\circ(1_{X^*_{r+1}}\cup C\theta_{r,s})(z\wedge t)\quad(\text{by (3.4)})\\
&=\psi^{m_{[n,r+1]}}_{\overline{f_r}^s}(\theta_{r,s}(z)\wedge t)=\psi^{m_{[n,r+1]}}_{\overline{f_r}^s}(y\wedge s_r\wedge\cdots\wedge s_n\wedge t)\\
&=y\wedge s_r\wedge t\wedge s_{r+1}\wedge\cdots\wedge s_n.
\end{align*}
Hence $e_{r,s}(z)=z$. 
Therefore $e_{r,s}=\overline{f^*_r}^{s+1}\cup C1_{C^*_{r,s}}$. 
This proves (i). 

We have (ii) from (i) and the diagram above as follows: 
\begin{align*}
g^*_{r+1,s+1}&\simeq (\overline{f^*_r}^{s+1}\cup_{\overline{f^*_r}^s} C1_{C^*_{r,s}})\circ(\omega^*_{r,s})^{-1}\\
&\simeq \theta^{-1}_{r+1,s+1}\circ \Sigma^{m_{[n,r+1]}}g_{r+1,s+1}\circ \Sigma^{m_{[n,r+1]}}(1_{\Sigma^{m_{[r-1,r-s]}}X_{r-s}}\wedge\tau(\s^s,\s^{m_r}))\\
&\hspace{1cm}\circ(1_{\Sigma^{m_{[r-1,r-s]}}X_{r-s}}\wedge\tau(\s^{m_{[n,r]}},\s^s))\\
&=\theta^{-1}_{r+1,s+1}\circ \Sigma^{m_{[n,r+1]}}g_{r+1,s+1}\circ (1_{\Sigma^{m_{[r,r-s]}}X_{r-s}}\wedge\tau(\s^{m_{[n,r+1]}},\s^s))\\
&=\theta^{-1}_{r+1,s+1}\circ\widetilde{\Sigma}^{m_{[n,r+1]}}g_{r+1,s+1}.
\end{align*}
This proves (ii). 

Let $r=n-1, s=n-2$. 
Then 
\begin{align*}
\Sigma^{m_n}(1_{\Sigma^{m_{[n-2,1]}}X_1}\wedge\tau(\s^{n-2},\s^{m_{n-1}}))&\circ 
(1_{\Sigma^{m_{[n-2,1]}}X_1}\wedge\tau(\s^{m_{[n,n-1]}},\s^{n-2}))\\
&=1_{\Sigma^{[n-1,1]}X_1}\wedge\tau(\s^{m_n},\s^{n-2}).
\end{align*}
Hence 
\begin{align*}
&\alpha\circ\omega^*_{n-1,n-2}\\
&=\overline{f_n}\circ \Sigma^{m_n}g_{n,n-1}\circ(1_{\Sigma^{[n-1,1]}X_1}\wedge\tau(\s^{m_n},\s^{n-2})) \circ\omega_{n-1,n-2}^*\\
&=\overline{f_n}\circ \Sigma^{m_n}g_{n,n-1}\circ \Sigma^{m_n}(1_{\Sigma^{[n-2,1]}X_1}\wedge\tau(\s^{n-2},\s^{m_{n-1}}))\\
&\hspace{4cm} \circ(1_{\Sigma^{m_{[n-2,1]}}X_1}\wedge\tau(\s^{m_{[n,n-1]}})\circ\omega^*_{n-1,n-2}\\
&= \overline{f_n^*}\circ \Sigma^{m_n}(\overline{f_{n-1}}\cup C1_{\Sigma^{m_{n-1}}C_{n-1,n-2}})\circ \Sigma^{m_n}(\psi^{m_{n-1}}_{j_{n-1,n-2}})^{-1}\\
&\hspace{4cm} \circ\psi^{m_{n,n-1]}}\circ (\theta_{n-1,n-1}\cup C\theta_{n-1,n-2})\\
&=\overline{f_n^*}\circ(\overline{f^*_{n-1}}\cup C1_{C^*_{n-1,n-2}}).
\end{align*}
Hence 
$\alpha=\overline{f_n^*}\circ(\overline{f_{n-1}^*}\cup C1_{C_{n-1,n-2}^*})\circ (\omega_{n-1,n-2}^*)^{-1}=\overline{f_n^*}\circ g^*_{n,n-1}$. 
Thus $\alpha\in\{f_n^*,\dots,f_1^*\}^{(\star)}$ so that the assertion follows. 

First we consider the case $\star=aq\ddot{s}_2$. 
We set 
\begin{equation}
\left\{\begin{array}{@{\hspace{0.4mm}}l}
\mathscr{S}_2^*=(X_1^*;X_2^*,X_2^*\cup_{f_1^*}CX_1^*;f_1^*;i_{f_1^*})\ (\text{hence }
C_{2,2}^*=\Sigma^{m_{[n,2]}}X_2\cup_{f_1^*}C\Sigma^{m_{[n,1]}}X_1),\\
\omega_{2,1}^*=q'_{f_1^*}:C_{2,2}^*\cup CC_{2,1}^*\to \Sigma X_1^*,\ g^*_{2,1}=f_1^*,\\
\theta_{2,1}=1_{X_2^*}:C_{2,1}^*\to \Sigma^{m_{[n,2]}}C_{2,1},\\
\theta_{2,2}=\psi^{m_{[n,2]}}_{f_1}:C_{2,2}^*\to \Sigma^{m_{[n,2]}}C_{2,2},\\ 
\overline{f_2^*}=\Sigma^{m_{[n,3]}}\overline{f_2}\circ\theta_{2,2}:C_{2,2}^*\to X_3^*.
\end{array}\right.
\end{equation}
Then (3.2)--(3.5) hold for $r=2$. 
We set $\mathscr{S}_3^*=\mathscr{S}_2^*(\overline{f_2^*},\Omega_2^*),\ 
\Omega_3^*=\widetilde{\Omega_2^*}$. 
Then 
$$
\left\{\begin{array}{@{\hspace{0.4mm}}l}
C_{3,1}^*=X_3^*,\\
C_{3,2}^*=X_3^*\cup_{f_1^*} CC_{2,1}^*=\Sigma^{m_{[n,3]}}X_3\cup C\Sigma^{m_{[n,2]}}X_2,\\
C_{3,3}^*=X_3^*\cup_{\overline{f_2^*}} CC_{2,2}^*=\Sigma^{m_{[n,3]}}X_3\cup C(\Sigma^{m_{[n,2]}}X_2\cup_{f_1^*}C\Sigma^{m_{[n,1]}}X_1).
\end{array}\right.
$$
We define
$$
\left\{\begin{array}{@{\hspace{0.4mm}}l}
\theta_{3,1}=1_{X_3^*}:C_{3,1}^*\to \Sigma^{m_{[n,3]}}C_{3,1},\\
\theta_{3,2}=\psi^{m_{[n,3]}}_{f_2}\circ(1_{X_3^*}\cup C\theta_{2,1})=\psi^{m_{[n,3]}}_{f_2}:C_{3,2}^*\to \Sigma^{m_{[n,3]}}C_{3,2},\\
\theta_{3,3}=\psi^{m_{[n,3]}}_{\overline{f_2}}\circ(1_{X_3^*}\cup C\theta_{2,2}):C_{3,3}^*\to \Sigma^{m_{[n,3]}}C_{3,3},\\
\overline{f_3^*}=\begin{cases}\Sigma^{m_{[n,4]}}\overline{f_3}\circ\theta_{3,2}:C_{3,2}^*\to X_4^* & n=3\\ \Sigma^{m_{[n,4]}}\overline{f_3}\circ\theta_{3,3}:C_{3,3}^*\to X_4^* & n\ge 4\end{cases}.
\end{array}\right.
$$
Then (3.2)--(3.4) holds for $r=3$. 
We give a proof of (3.5) for $r=3, s=2$ as follows (the case of $r=3,s=1$ is easier). 
Note that $C^*_{3,3}\cup CC^*_{3,2}=(X^*_3\cup C(X^*_2\cup CX^*_1))\cup (X^*_3\cup CX^*_2)$. 
It suffices to show that both sides of (3.5) are the same map on $CCX^*_1$. 
Let $x_1\wedge s_1\wedge\cdots\wedge s_n\wedge u\wedge v$ be any element 
of $CCX_1^*=CC\Sigma^{m_{[n,1]}}X_1$, where 
$x_1\in X_1,\ s_i\in\s^{m_i}\,(1\le i\le n),\ u, v\in I$. 
Then  
\begin{align*}
&\omega^*_{3,2}(x_1\wedge s_1\wedge\cdots\wedge s_n\wedge u\wedge v)=x_1\wedge s_1\wedge\cdots\wedge s_n\wedge\overline{u}\wedge\overline{v}\\
&=(1_{\Sigma^{m_{[2,1]}}X_1}\wedge\tau(\s^2,\s^{m_{[n,3]}}))(x_1\wedge s_1\wedge s_2\wedge\overline{u}\wedge\overline{v}\wedge s_3\wedge\cdots\wedge s_n)\\
&=\widetilde{\Sigma}^{m_{[n,3]}}\omega_{3,2}\circ \psi^{m_{[n,3]}}_{j_{3,2}}\circ(\theta_{3,3}\cup C\theta_{3,2})(x_1\wedge s_1\wedge\cdots\wedge s_n\wedge u\wedge v).
\end{align*}
Hence we obtain (3.5) for $r=3,s=2$. 

When $n=3$, $\{\mathscr{S}^*_r,\overline{f_r^*},\Omega_r^*\,|\,r=2,3\}$ is a desired $aq\ddot{s}_2$-presentation of $\vec{\bm f^*}$. 
Let $n\ge 4$. 
We define $\mathscr{S}_4^*=\mathscr{S}_3^*(\overline{f_3},\Omega_3^*),\ \Omega_4^*=\widetilde{\Omega_3^*}$. 
By proceeding the construction we obtain a desired $aq\ddot{s}_2$-presentation of $\vec{\bm f^*}$. 
Similarly we have a $\star$-presentation of $\vec{\bm f^*}$ for $\star=aq, aqs_2, aq\dot{s}_2$. 

Secondly we consider the case $\star=\ddot{s}_t$. 
Let $\alpha\in\{f_n,\dots, f_1\}^{(\ddot{s}_t)}_{\vec{\bm m}}$ and $\{\mathscr{S}_r,\overline{f_r},\mathscr{A}_r\,|\,2\le r\le n\}$ an $\ddot{s}_t$-presentation of $\vec{\bm f}$ with $\alpha=\overline{f_n}\circ\widetilde{\Sigma}^{m_n}g_{n,n-1}$. 
We will define an $\ddot{s}_t$-presentation $\{\mathscr{S}_r^*,\overline{f_r^*},\mathscr{A}_r^*\,|\,2\le r\le n\}$ of $\vec{\bm f^*}$ and homeomorphisms 
$\theta_{r,s}:C_{r,s}^*\approx \Sigma^{m_{[n,r]}}C_{r,s}$ such that (3.2)--(3.4) 
and the following (3.7)--(3.8) hold ($\Omega_r^*=\Omega(\mathscr{A}^*_r)$).
\begin{align}
&\theta_{r,s}\circ g^*_{r,s}=\widetilde{\Sigma}^{m_{[n,r]}}g_{r,s},\\
&a_{r,s}^*=(\theta_{r,s}^{-1}\cup C1_{\Sigma^{s-1}\Sigma^{m_{[n,r-s]}}X_{r-s}})\circ 
\widetilde{\Sigma}^{m_{[n,r]}}a_{r,s}\circ\theta_{r,s+1}.
\end{align}
If this is done, then (3.5) holds 
so that $\alpha=\overline{f_n^*}\circ\widetilde{\Sigma}^{m_n}g^*_{n,n-1}\in\{\vec{\bm f}\}^{(\ddot{s}_t)}_{\vec{\bm m}}$ as desired. 

Suppose that, given $3\le r\le n$, there are reduced iterated mapping cones $\mathscr{S}_i^*$ with a structure $\mathscr{A}_i^*$, 
an extension $\overline{f_i^*}:C_{i,i}^*\to X_{i+1}^*$ of $f_i^*$, and a homeomorphism $\theta_{i,s}:C_{i,s}^*\approx \Sigma^{m_{[n,i]}}C_{i,s}$ 
such that (3.2)--(3.4), (3.7), (3.8) hold for $s<i<r$.  
Set $\mathscr{S}_r^*=\mathscr{S}^*_{r-1}(\overline{f_{r-1}^*},\mathscr{A}_{r-1}^*)$. 
Define $\theta_{r,s}$, $\overline{f_r^*}$, $g_{r,s}^*$ and $a_{r,s}^*$ by (3.4), (3.3), (3.7) and (3.8), respectively. 
Then (3.2) holds. 

By the consideration above it suffices to define $\mathscr{S}^*_2$, $\overline{f^*_2}$, $\mathscr{A}^*_2$, and $\theta_{2,s}$.  
This is easily done by setting (3.6) and $a^*_{2,1}=1_{C^*_{2,2}}$. 
Then (3.2)--(3.5), (3.7), and (3.8) hold for $r=2$. 
This completes the construction of a desired $\ddot{s}_2$-presentation of $\vec{\bm f^*}$. 
Similarly we have a $\star$-presentation of $\vec{\bm f^*}$ for $\star=s_t, \dot{s}_t$. 

Thirdly we consider the case $\star=qs_2$. 
We set 
\begin{align*}
&\mathscr{S}_2^*=(X_1^*;X_2^*, \Sigma^{m_{[n,2]}}C_{2,2};f^*_1;\Sigma^{m_{[n,2]}}j_{2,1}),\\
&a^*_{2,1}=(\psi^{m_{[n,2]}}_{f_1})^{-1}\circ \Sigma^{m_{[n,2]}}a_{2,1}\\
&\hspace{1cm} :C^*_{2,2}=\Sigma^{m_{[n,2]}}C_{2,2}\xrightarrow{\Sigma^{m_{[n,2]}}a_{2,1}} \Sigma^{m_{[n,2]}}(X_2\cup_{f_1}C\Sigma^{m_1}X_1)\xrightarrow{(\psi^{m_{[n,2]}}_{f_1})^{-1}} X_2^*\cup_{f^*_1} CX^*_1,\\ 
&\mathscr{A}_2^*=\{a^*_{2,1}\},\ \theta_{2,1}=1_{X_2^*}:C^*_{2,1}\to \Sigma^{m_{[n,2]}}X_2,\ \theta_{2,2}=1_{\Sigma^{m_{[n,2]}}C_{2,2}}:C^*_{2,2}\to \Sigma^{m_{[n,2]}}C_{2,2},\\
&\omega^*_{2,1}=\widetilde{\Sigma}^{m_{[n,2]}}\omega_{2,1}\circ\psi^{m_{[n,2]}}_{j_{2,1}}\\
&\hspace{1cm}:C^*_{2,2}\cup CC^*_{2,1}\xrightarrow{\psi^{m_{[n,2]}}_{j_{2,1}}}\Sigma^{m_{[n,2]}}(C_{2,2}\cup CC_{2,1})\xrightarrow{\Sigma^{m_{[ n,2]}}\omega_{2,1}}\Sigma^{m_{[n,2]}}\Sigma\Sigma^{m_1}X_1\\
&\hspace{2cm}\xrightarrow{1_{\Sigma^{m_1}X_1}\wedge\tau(\s^1,\s^{m_{[n,2]}})}\Sigma\Sigma^{m_{[n,1]}}X_1=\Sigma X^*_1,\\
& \Omega^*_2=\{\omega^*_{2,1}\},\ \overline{f_2^*}=\Sigma^{m_{[n,3]}}\overline{f_2},\\
&\theta_{2,1}=1_{\Sigma^{m_{[n,2]}}C_{2,1}}:C_{2,1}^*=X_2^*\to \Sigma^{m_{[n,2]}}C_{2,1},\ \theta_{2,2}=1_{\Sigma^{m_{[n,2]}}C_{2,2}}:C^*_{2,2}\to \Sigma^{m_{[n,2]}}C_{2,2}.
\end{align*}
Then (3.2)--(3.5) hold for $r=2$. 
Set $\mathscr{S}_3^*=\mathscr{S}^*_2(\overline{f_2^*},\Omega_2^*)$. 
Then $C^*_{3,1}=X_3^*$, $C^*_{3,2}=X^*_3\cup_{f^*_2}CX^*_2$ and $C^*_{3,3}=X^*_3\cup_{\overline{f^*_2}}CC^*_{2,2}$. 
Define 
\begin{align*}
&a^*_{2,1}=(\psi^{m_{[n,2]}}_{j_{2,1}})^{-1}\circ \Sigma^{m_{[n,2]}}a_{2,1}:C^*_{2,2}=\Sigma^{m_{[n,2]}}C_{2,2}\to X^*_2\cup_{f^*_1}CX^*_1,\ \mathscr{A}^*_2=\{a^*_{2,1}\},\\
&\theta_{3,1}=1_{X^*_3}:C^*_{3,1}\to \Sigma^{m_{[n,3]}}X_3,\ \theta_{3,2}=\psi^{m_{[n,3]}}_{f_2}:C^*_{3,2}\to \Sigma^{m_{[n,3]}}C_{3,2},\\
&\theta_{3,3}=\psi^{m_{[n,3]}}_{\overline{f_2}}:C^*_{3,3}\to \Sigma^{m_{[n,3]}}(X_3\cup_{\overline{f_2}}C\Sigma^{m_2}C_{2,2})=\Sigma^{m_{[n,3]}}C_{3,3},\\
&\omega^*_{3,1}=q'_{f^*_2}:C^*_{3,2}\cup CC^*_{3,1}\to \Sigma X_1^*,\\ 
&\omega^*_{3,2}=\widetilde{\Sigma}^{m_{[n,3]}}\omega_{3,2}\circ\psi^{m_{[n,3]}}_{j_{3,2}}\circ(\theta_{3,3}\cup C\theta_{3,1}):C^*_{3,3}\to \Sigma^2\Sigma^{m_{[n,1]}}X_1,
\ \Omega^*_3=\{\omega^*_{3,1},\omega^*_{3,2}\},\\
&\overline{f^*_3}=\begin{cases} \Sigma^{m_{[n,4]}}\overline{f_3}\circ\theta_{3,3} & n\ge 4\\ \Sigma^{m_{[n,4]}}\overline{f_3}\circ\theta_{3,2} & n=3\end{cases}.
\end{align*}
Then (3.2)--(3.5) hold for $r=3$. 
By proceeding the construction we obtain a desired $qs_2$-presentation of $\vec{\bm f^*}$. 
Similarly we have a $\star$-presentation of $\vec{\bm f^*}$ for $\star=q_2,q,q\dot{s}_2,q\ddot{s}_2$.
\end{proof}

The proposition above is a partial solution of the following problem.

\begin{prob}
Given $\vec{\bm m}$, $\vec{\bm f}$, and $\vec{\bm \ell}$ with 
$0\le\ell_i\le m_i\ (1\le i\le n)$, we set
\begin{align*}
X_{n+1}^*&=X_{n+1},\ X_i^*=\Sigma^{m_n-l_n}\cdots \Sigma^{m_i-l_i}X_i\ (1\le i\le n),\\
f_n^*&=f_n:\Sigma^{l_n}X_n^*=\Sigma^{l_n}\Sigma^{m_n-l_n}X_n=\Sigma^{m_n}X_n\to X_{n+1}=X_{n+1}^*,\\
f_i^*&=\Sigma^{m_n-l_n}\cdots \Sigma^{m_{i+1}-l_{i+1}}f_i\circ(1_{\Sigma^{m_i-l_i}X_i}\wedge\tau(\s^{m_{i+1}-l_{i+1}}\wedge\cdots\wedge\s^{m_n-l_n},\s^{l_i}))\\
& : \Sigma^{l_i}X_i^*=\Sigma^{l_i}\Sigma^{m_n-l_n}\cdots \Sigma^{m_i-l_i}X_i
\approx \Sigma^{m_n-l_n}\cdots \Sigma^{m_{i+1}-l_{i+1}}\Sigma^{l_i}\Sigma^{m_i-l_i}X_i\\
&=\Sigma^{m_n-l_n}\cdots \Sigma^{m_{i+1}-l_{i+1}}\Sigma^{m_i}X_i\to 
\Sigma^{m_n-l_n}\cdots \Sigma^{m_{i+1}-l_{i+1}}X_{i+1}=X_{i+1}^*\  (1\le i<n).
\end{align*}
Does the containment 
$
\{f_n,\dots, f_1\}^{(\star)}_{\vec{\bm m}}\subset\{f_n^*,f_{n-1}^*,\dots,f_1^*\}_{\vec{\bm \ell}}^{(\star)}
$ hold up to sign for $\star=aq\ddot{s}_2, \ddot{s}_t$? 
\end{prob}

Note that the problem above is affirmative when $n=3$ by Theorem 7.1 below. 
In fact we have 
$
\{f_3,f_2, f_1\}^{(\star)}_{\vec{\bm m}}\subset
(-1)^{\ell_1(m_2-\ell_2+m_3-\ell_3)+\ell_2(m_3-\ell_3)}\{f_3^*,f_2^*,f_1^*\}_{\vec{\bm \ell}}^{(\star)}
$
for $\star=aq\ddot{s}_2, \ddot{s}_t$. 

\section{Properties of brackets}
\begin{thm}
\begin{enumerate}
\item[\rm(1)] $\{\vec{\bm f}\,\}^{(\ddot{s}_t)}_{\vec{\bm m}}=\{\vec{\bm f}\,\}^{(\dot{s}_t)}_{\vec{\bm m}}
=\{\vec{\bm f}\,\}^{(s_t)}_{\vec{\bm m}}$. 
\item[\rm(2)] We have 
\begin{align*}
\{\vec{\bm f}\,\}^{(aqs_2)}_{\vec{\bm m}}&=\{\vec{\bm f}\,\}^{(aq\dot{s}_2)}_{\vec{\bm m}}\\
&=\{\vec{\bm f}\,\}^{(aq\ddot{s}_2)}_{\vec{\bm m}}\circ 
\Sigma^{n-3}\big((1_{\Sigma^{m_1}X_1}\wedge\tau(\s^1,\s^{m_{[n,2]}}))
\circ \Sigma^{m_{[n,2]}}\mathscr{E}(\Sigma\Sigma^{m_1}X_1)\\
&\hspace{5cm}\circ(1_{\Sigma^{m_1}X_1}\wedge\tau(\s^{m_{[n,2]}},\s^1))\big)\\
&\subset \{\vec{\bm f}\,\}^{(aq\ddot{s}_2)}_{\vec{\bm m}}\circ 
(1_{\Sigma^{m_{[n-2,1]}}X_1}\wedge\tau(\s^{n-2},\s^{m_{[n,n-1]}}))\\
&\hspace{1cm}\circ \Sigma^{m_{[n,n-1]}}\mathscr{E}(\Sigma^{n-2}\Sigma^{m_{[n-2,1]}}X_1)\circ (1_{\Sigma^{m_{[n-2,1]}}X_1}\wedge\tau(\s^{m_{[n,n-1]}},\s^{n-2}))\\
&=\{\vec{\bm f}\,\}^{(qs_2)}_{\vec{\bm m}}
=\begin{cases}\{\vec{\bm f}\,\}^{(q\dot{s}_2)}_{\vec{\bm m}}&n=3\\
\{\vec{\bm f}\,\}^{(q\dot{s}_2)}_{\vec{\bm m}}=\{\vec{\bm f}\,\}^{(q\ddot{s}_2)}_{\vec{\bm m}}&n\ge 4\end{cases};\ 
\{\vec{\bm f}\,\}^{(aq\ddot{s}_2)}_{\vec{\bm m}}=\{\vec{\bm f}\,\}^{(q\ddot{s}_2)}_{\vec{\bm m}}\ \text{when }n=3. 
\end{align*}
If the suspension 
$\Sigma^{n-3}\Sigma^{m_{[n,n-1]}}:\mathscr{E}(\Sigma\Sigma^{m_1}X_1) \to\mathscr{E}(\Sigma^{|\vec{\bm m}|+n-2}X_1)
$ 
is surjective, for example if $X_1$ is a sphere, then 
$\{\vec{\bm f}\}^{(aq\dot{s}_2)}_{\vec{\bm m}}=\{\vec{\bm f}\}^{(aq\ddot{s}_2)}_{\vec{\bm m}}\circ\mathscr{E}(\Sigma^{|\vec{\bm m}|+n-2}X_1)$. 
If the suspension $\Sigma^{m_{[n,n-1]}}:\mathscr{E}(\Sigma^{n-2}\Sigma^{m_{[n-2,1]}}X_1)\to 
\mathscr{E}(\Sigma^{|\vec{\bm m}|+n-2}X_1)$ is surjective, for example if $X_1$ is a sphere, then $\{\vec{\bm f}\}^{(qs_2)}_{\vec{\bm m}}=\{\vec{\bm f}\}^{(aq\ddot{s}_2)}_{\vec{\bm m}}\circ\mathscr{E}(\Sigma^{|\vec{\bm m}|+n-2}X_1)$. 
\item[\rm(3)] We have 
\begin{align*}
\{\vec{\bm f}\,\}^{(q)}_{\vec{\bm m}}=\{\vec{\bm f}\,\}^{(aq)}_{\vec{\bm m}}&\circ(1_{\Sigma^{m_{[n-2,1]}}X_1}\wedge\tau(\s^{n-2},\s^{m_{[n,n-1]}}))
\circ \Sigma^{m_{[n,n-1]}}\mathscr{E}(\Sigma^{n-2}\Sigma^{m_{[n-2,1]}}X_1)\\
&\circ(1_{ \Sigma ^{m_{[n-2,1]}}X_1}\wedge\tau(\s^{m_{[n,n-1]}},\s^{n-2})).
\end{align*}
If the suspension 
$ \Sigma ^{m_n+m_{n-1}}:\mathscr{E}( \Sigma ^{m_{n-2}+\cdots+m_1+n-2}X_1)\to\mathscr{E}( \Sigma ^{|\vec{\bm m}|+n-2}X_1)
$ 
is surjective, for example if $X_1$ is a sphere, then 
$
\{\vec{\bm f}\}^{(aq)}_{\vec{\bm m}}\circ\mathscr{E}(\Sigma^{|\vec{\bm m}|+n-2}X_1)
=\{\vec{\bm f}\}^{(q)}_{\vec{\bm m}}.
$
\item[\rm(4)] We have $\{\vec{\bm f}\,\}^{(\ddot{s}_t)}_{\vec{\bm m}}\circ\Gamma=
\{\vec{\bm f}\,\}^{(aq\ddot{s}_2)}_{\vec{\bm m}}\circ\Gamma$, where $\Gamma$ is 
the subgroup of $\mathscr{E}(\Sigma^{|\vec{\bm m}|+n-2}X_1)$ defined by 
\begin{align*}
\Gamma=\big(1_{\Sigma^{m_{[n-1,1]}}X_1}\wedge\tau(\s^{n-2},\s^{m_n})\big)&
\circ \Sigma^{m_n}\mathscr{E}(\Sigma^{n-2}\Sigma^{m_{[n-1,1]}}X_1)\\
&\circ 
\big(1_{\Sigma^{m_{[n-1,1]}}X_1}\wedge\tau(\s^{m_n},\s^{n-2})\big),
\end{align*}
and so $\{\vec{\bm f}\,\}^{(\ddot{s}_t)}_{\vec{\bm m}}\circ
\mathscr{E}( \Sigma ^{|\vec{\bm m}|+n-2}X_1)
=\{\vec{\bm f}\,\}^{(aq\ddot{s}_2)}_{\vec{\bm m}}\circ
\mathscr{E}( \Sigma ^{|\vec{\bm m}|+n-2}X_1)$. 
\item[\rm(5)] If $\alpha\in\{\vec{\bm f}\,\}^{(q)}_{\vec{\bm m}}$, then there are 
$\theta, \theta'\in \Gamma'$ 
such that $\alpha\circ \theta\in\{\vec{\bm f}\,\}^{(aq\ddot{s}_2)}_{\vec{\bm m}}$ and 
$\alpha\circ \theta'\in\{\vec{\bm f}\,\}^{(\ddot{s}_t)}_{\vec{\bm m}}$, where 
$\Gamma'$ is the subset (in fact subgroup) of $[\Sigma^{|\vec{\bm m}|+n-2}X_1,\Sigma^{|\vec{\bm m}|+n-2}X_1]$ defined by 
\begin{align*}
\Gamma'= (1_{\Sigma^{m_{[n-1,1]}}X_1}\wedge\tau(\s^{n-2},\s^{m_n}))
&\circ \Sigma^{m_n}[\Sigma^{n-2}\Sigma^{m_{[n-1,1]}}X_1,\Sigma^{n-2}\Sigma^{m_{[n-1,1]}}X_1]\\
&\circ (1_{\Sigma^{m_{[n-1,1]}}X_1}\wedge\tau(\s^{m_n},\s^{n-2})).
\end{align*}
\end{enumerate}
\end{thm}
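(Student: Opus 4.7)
The plan is to establish each part by direct generalization of the corresponding unsubscripted statement in \cite[Section 6]{OO}, inserting the symmetry maps $\tau(\s^a,\s^b)$ required by the definition of $\widetilde{\Sigma}^\ell$ whenever a smash factor has to be moved past the classical suspension coordinate. The common framework is to start with a $\star$-presentation, produce a $\star'$-presentation by a canonical modification, and compare the resulting values of $\overline{f_n}\circ\widetilde{\Sigma}^{m_n}g_{n,n-1}$.

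For (1), given an $s_t$-presentation $\{\mathscr{S}_r,\overline{f_r},\mathscr{A}_r\}$, I would replace $\mathscr{S}_2$ by its reduced form (built from the mapping cylinder of $f_1$) and adjust $\mathscr{S}_r,\mathscr{A}_r$ for $r\ge 3$ along the resulting homeomorphism; then reduce $\mathscr{A}_2$ similarly. A verification essentially identical to the one in \cite{OO} shows that $\overline{f_n}\circ\widetilde{\Sigma}^{m_n}g_{n,n-1}$ is unchanged up to homotopy, so all three sets coincide. For (2) and (3), the upward inclusions $aq\ddot{s}_2\subset aq\dot{s}_2\subset aqs_2$ and $aq\subset q$ are immediate from Definition 3.1. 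The reverse passages insert an arbitrary self-equivalence $\varepsilon\in\mathscr{E}(\Sigma\Sigma^{m_1}X_1)$ (when the ambiguity lies in choosing $\omega_{2,1}$) or $\varepsilon\in\mathscr{E}(\Sigma^{n-2}\Sigma^{m_{[n-1,1]}}X_1)$ (when it lies in choosing $\Omega_{r+1}$ versus $\widetilde{\widetilde{\Sigma}^{m_r}\Omega_r}$ for $r\ge 2$), since any two quasi-structures on a reduced iterated mapping cone differ by such a self-equivalence. The transposition prefactors in the statement arise from writing $\widetilde{\Sigma}^{m_{[n,2]}}\varepsilon$ or $\widetilde{\Sigma}^{m_{[n,n-1]}}\varepsilon$ in the form prescribed by the definition of $\widetilde{\Sigma}^\ell$ on self-maps, namely as $(1\wedge\tau)\circ\Sigma^{\cdot}\varepsilon\circ(1\wedge\tau)$.

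For (4), an $aq\ddot{s}_2$-presentation and an $\ddot{s}_t$-presentation agree on $\mathscr{S}_r,\overline{f_r}$ up to $r=n-1$; they differ only in how the top quasi-structure $\omega_{n,n-1}$ is determined, the $aq$-case forcing $\omega_{n,n-1}=\widetilde{\widetilde{\Sigma}^{m_{n-1}}\omega_{n-1,n-2}}$, while in the $\ddot{s}_t$-case only a reduced structure $a_{n,n-1}$ is required. The discrepancy between the two $\omega_{n,n-1}$'s is a self-equivalence of $\Sigma^{n-1}\Sigma^{m_{[n-1,1]}}X_1$, which after commuting the trailing $\Sigma^{m_n}$-factor past the classical suspension coordinate via $\tau(\s^{m_n},\s^{n-2})$ and its inverse lies exactly in $\Gamma$. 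Since $\Gamma$ is a subgroup, composing on the right with it absorbs this discrepancy in both directions and yields the stated equality. Part (5) follows by the same mechanism applied to an arbitrary $q$-presentation: the conversion into $aq\ddot{s}_2$- or $\ddot{s}_t$-form modifies $\omega_{n,n-1}$ not by a self-equivalence but by an arbitrary self-map $\theta$, which after the same conjugation by $\tau(\s^{m_n},\s^{n-2})$ lies in $\Gamma'$.

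The main technical obstacle is the bookkeeping of the transposition maps: at several points one must compute $\widetilde{\Sigma}^\ell$ applied to a composite involving $\omega_{r,s}$ or $j_{r,s}$, which requires commuting $\s^\ell$ past $\s^s$ via $\tau(\s^\ell,\s^s)$ and then reabsorbing the resulting factor via $\psi^\ell_{j_{r,s}}$. Keeping the order of smash factors consistent with the conventions of Section 3 and matching the two sides of each identity literally, rather than up to unspecified symmetries, is delicate but is in principle a mechanical consequence of the naturality of $\tau$ together with the commutative diagrams already established in the proof of Proposition 3.5.
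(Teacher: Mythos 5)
Your overall strategy---convert one kind of presentation into another and track the discrepancy as a self-equivalence (or, for (5), an arbitrary self-map) conjugated by the transposition $\tau(\s^{m_n},\s^{n-2})$---is exactly the paper's, and your diagnosis of the main burden (the $\tau$/$\psi$ bookkeeping) is accurate. But two points in your sketch would derail an actual write-up. First, in (4) you locate the discrepancy between an $aq\ddot{s}_2$- and an $\ddot{s}_t$-presentation in ``the top quasi-structure $\omega_{n,n-1}$.'' That map never enters the bracket value: the representative is $\overline{f_n}\circ\widetilde{\Sigma}^{m_n}g_{n,n-1}$, and $g_{n,n-1}=(\overline{f_{n-1}}\cup C1)\circ(\widetilde{\Sigma}^{m_{n-1}}\omega_{n-1,n-2})^{-1}$ is built from $\omega_{n-1,n-2}$, one level down. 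The relevant discrepancy is $(\widetilde{\Sigma}^{m_{n-1}}\omega_{n-1,n-2})\circ(\widetilde{\Sigma}^{m_{n-1}}\omega_{n-1,n-2}')^{-1}\in\mathscr{E}(\Sigma^{n-2}\Sigma^{m_{[n-1,1]}}X_1)$ (not of $\Sigma^{n-1}\Sigma^{m_{[n-1,1]}}X_1$ as you wrote); conjugating by $\tau(\s^{m_n},\s^{n-2})$ is precisely what produces the $\Gamma$ of the statement, so your off-by-one would give a group that does not match. The same correction applies to (3), where the paper's $\gamma=\omega_{n-1,n-2}\circ\omega_{n-1,n-2}'^{-1}$ lives in $\mathscr{E}(\Sigma^{n-2}\Sigma^{m_{[n-2,1]}}X_1)$, not $\mathscr{E}(\Sigma^{n-2}\Sigma^{m_{[n-1,1]}}X_1)$.

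Second, in (5) the $\ddot{s}_t$ half does not follow ``by the same mechanism.'' The conversion of a $q$-presentation produces directly only $\alpha\circ\theta\in\{\vec{\bm f}\}^{(aq\ddot{s}_2)}_{\vec{\bm m}}$ with $\theta\in\Gamma'$; to get $\theta'$ with $\alpha\circ\theta'\in\{\vec{\bm f}\}^{(\ddot{s}_t)}_{\vec{\bm m}}$ you must invoke part (4): from $\{\vec{\bm f}\}^{(aq\ddot{s}_2)}_{\vec{\bm m}}\subset\{\vec{\bm f}\}^{(\ddot{s}_t)}_{\vec{\bm m}}\circ\Gamma$ write $\alpha\circ\theta=\beta\circ\gamma$ and set $\theta'=\theta\circ\gamma^{-1}$, which stays in $\Gamma'$ only because $\Gamma'$ is closed under right composition by $\Gamma$ --- and the subgroup property of $\Gamma'$ itself needs the separate observation that suspension and the two $\tau$-induced maps are homomorphisms. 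Finally, for the second equality in (2), the ``mechanical'' propagation of $\theta_1\in\mathscr{E}(\Sigma\Sigma^{m_1}X_1)$ up the tower is an honest induction (the paper's Lemma 4.4, with an explicit coordinate check that the induced map at level $r$ is $\Sigma^{r-1}\theta_r$); it is worth isolating as a lemma rather than absorbing into naturality.
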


Obviously $\Gamma\subset\Gamma'$, while $\Gamma$ is not necessarily 
a subgroup of $\Gamma'$. 

\begin{cor} 
\begin{enumerate}
\item[\rm(1)] $\{\vec{\bm f}\,\}^{(\star)}_{\vec{\bm m}}\circ\varepsilon=\{\vec{\bm f}\,\}^{(\star)}_{\vec{\bm m}}$ for every $\star\in\{q,qs_2,q\dot{s}_2\}$ and every homotopy equivalence
\begin{align*}
\varepsilon\in & 
(1_{ \Sigma ^{m_{[n-2,1]}}X_1}\wedge\tau(\s^{n-2},\s^{m_{[n,n-1]}}))\\
&\hspace{5mm} \circ \Sigma ^{m_{[n,n-1]}}\mathscr{E}(\Sigma^{n-2}\Sigma^{m_{[n-2,1]}}X_1)\circ (1_{ \Sigma ^{m_{[n-2,1]}}X_1}\wedge\tau(\s^{m_{[n,n-1]}},\s^{n-2})).
\end{align*}
Also $\{\vec{\bm f}\,\}^{(\star)}_{\vec{\bm m}}\circ \Sigma^{n-3}\varepsilon=\{\vec{\bm f}\,\}^{(\star)}_{\vec{\bm m}}$ for every $\star\in\{aqs_2, aq\dot{s}_2\}$ and every homotopy equivalence 
$$
\varepsilon \in (1_{\Sigma^{m_1}X_1}\wedge\tau(\s^1,\s^{m_{[n,2]}}))\circ \Sigma^{m_{[n,2]}}\mathscr{E}(\Sigma\Sigma^{m_1}X_1)\circ(1_{\Sigma^{m_1}X_1}\wedge\tau(\s^{m_{[n,2]}},\s^1)).
$$
In particular, $ -\{\vec{\bm f}\,\}^{(\star)}_{\vec{\bm m}}=\{\vec{\bm f}\,\}^{(\star)}_{\vec{\bm m}}$ for every $\star\in\{q,qs_2,q\dot{s}_2,aqs_2,aq\dot{s}_2\}$. 
\item[\rm(2)] $\{\vec{\bm f}\,\}^{(aq)}_{\vec{\bm m}}\circ F(\gamma)=\{\vec{\bm f}\,\}^{(aq)}_{\vec{\bm m}}$ for every $\gamma\in\mathscr{E}(\Sigma\Sigma^{m_1}X_1)$, 
where 
\begin{align*}F(\gamma)&=(1_{\Sigma^{m_1}X_1}\wedge\tau(\s^{n-2},\s^{m_{[n,2]}}))\\
&\hspace{1cm}
\circ \Sigma^{m_{[n,2]}}\Sigma^{n-3}\gamma 
\circ (1_{\Sigma^{m_1}X_1}\wedge\tau(\s^{m_{[n,2]}},\s^{n-2})),
\end{align*} 
and $-\{\vec{\bm f}\,\}^{(aq)}_{\vec{\bm m}}=\{\vec{\bm f}\,\}^{(aq)}_{\vec{\bm m}}$.
\item[\rm(3)] If the suspension $\Sigma^{m_{[n,2]}}\Sigma^{n-3}:\mathscr{E}(\Sigma\Sigma^{m_1}X_1)\to\mathscr{E}(\Sigma^{|\vec{\bm m}|+n-2}X_1)$ is surjective, 
for example if $X_1$ is a sphere, then 
$\{\vec{\bm f}\,\}^{(q)}_{\vec{\bm m}}=\{\vec{\bm f}\,\}^{(aq)}_{\vec{\bm m}}
\circ \mathscr{E}(\Sigma^{|\vec{\bm m}|+n-2}X_1)$. 
\item[\rm(4)] If $\{\vec{\bm f}\,\}^{(\star)}_{\vec{\bm m}}$ is not empty for some $\star$, then 
$\{\vec{\bm f}\,\}^{(\star)}_{\vec{\bm m}}$ is not empty for all $\star$. 
\item[\rm(5)] If $\{\vec{\bm f}\,\}^{(\star)}_{\vec{\bm m}}$ contains $0$ for some $\star$, then 
$\{\vec{\bm f}\,\}^{(\star)}_{\vec{\bm m}}$ contains $0$ for all $\star$. 
\end{enumerate}
\end{cor}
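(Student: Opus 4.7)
The corollary is a bundle of formal consequences of Theorem 4.1, and the strategy is to unpack each identity in that theorem into an invariance statement or a bookkeeping assertion about non-emptyness. Only (2) demands genuine calculation; the rest is group-theoretic manipulation.

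For (1), set
$$
H=(1\wedge\tau(\s^{n-2},\s^{m_{[n,n-1]}}))\circ\Sigma^{m_{[n,n-1]}}\mathscr{E}(\Sigma^{n-2}\Sigma^{m_{[n-2,1]}}X_1)\circ(1\wedge\tau(\s^{m_{[n,n-1]}},\s^{n-2}))
$$
and similarly
$$
H'=(1\wedge\tau(\s^1,\s^{m_{[n,2]}}))\circ\Sigma^{m_{[n,2]}}\mathscr{E}(\Sigma\Sigma^{m_1}X_1)\circ(1\wedge\tau(\s^{m_{[n,2]}},\s^1)).
$$
First I would observe that $H$ and $\Sigma^{n-3}H'$ are subgroups of $\mathscr{E}(\Sigma^{|\vec{\bm m}|+n-2}X_1)$: $\Sigma$ is a monoid homomorphism on self-maps, and the outer $\tau$'s mutually cancel when two elements compose. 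Theorem 4.1(2,3) identifies each of $\{\vec{\bm f}\}^{(q)}_{\vec{\bm m}},\{\vec{\bm f}\}^{(qs_2)}_{\vec{\bm m}},\{\vec{\bm f}\}^{(q\dot{s}_2)}_{\vec{\bm m}}$ with $\{\vec{\bm f}\}^{(aq\ddot{s}_2)}_{\vec{\bm m}}\circ H$ and each of $\{\vec{\bm f}\}^{(aqs_2)}_{\vec{\bm m}},\{\vec{\bm f}\}^{(aq\dot{s}_2)}_{\vec{\bm m}}$ with $\{\vec{\bm f}\}^{(aq\ddot{s}_2)}_{\vec{\bm m}}\circ\Sigma^{n-3}H'$; right multiplication of a coset by a group element returns the coset, which gives the invariance under $\varepsilon$. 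For the $-1$ consequence, I would exhibit $-1_{\Sigma^{|\vec{\bm m}|+n-2}X_1}$ (reversing the $\s^{n-2}$-coordinate) as the $\tau$-conjugate of $\Sigma^{m_{[n,n-1]}}(-1_{\Sigma^{n-2}\Sigma^{m_{[n-2,1]}}X_1})$, placing $-1\in H$; the analogous membership $-1\in\Sigma^{n-3}H'$ comes from reversing the $\s^1$-coordinate.

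For (2), the strategy is direct modification of the one non-rigid piece in an $aq$-presentation. The iterative rules $\mathscr{S}_{r+1}=(\widetilde{\Sigma}^{m_r}\mathscr{S}_r)(\overline{f_r},\widetilde{\Sigma}^{m_r}\Omega_r)$ and $\Omega_{r+1}=\widetilde{\widetilde{\Sigma^{m_r}\Omega_r}}$ fix everything from stage $3$ onward, so the only freedom beyond $\mathscr{S}_2$ itself is the quasi-structure $\omega_{2,1}:C_{2,2}\cup CC_{2,1}\simeq\Sigma\Sigma^{m_1}X_1$. Replacing $\omega_{2,1}$ by $\gamma^{-1}\circ\omega_{2,1}$ for $\gamma\in\mathscr{E}(\Sigma\Sigma^{m_1}X_1)$ yields another $aq$-presentation, and I would track how $\gamma$ cascades through $\widetilde{\widetilde{\Sigma^{m_r}\omega_{r,s}}}=\Sigma\widetilde{\Sigma}^{m_r}\omega_{r,s}\circ q\circ\xi$ up to the terminal step $r=n-1$, $s=n-2$: at each stage the modification acquires a $\Sigma^{m_r}$ and a $\tau$-twist, and the net effect on $\overline{f_n}\circ\widetilde{\Sigma}^{m_n}g_{n,n-1}$ is precomposition by $F(\gamma)$. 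Both inclusions then yield $\{\vec{\bm f}\}^{(aq)}_{\vec{\bm m}}\circ F(\gamma)=\{\vec{\bm f}\}^{(aq)}_{\vec{\bm m}}$, and the $-1$ consequence follows as in (1). Part (3) is then immediate: when $\Sigma^{m_{[n,2]}}\Sigma^{n-3}$ is surjective, $F(\mathscr{E}(\Sigma\Sigma^{m_1}X_1))$ covers $\mathscr{E}(\Sigma^{|\vec{\bm m}|+n-2}X_1)$ (the outer $\tau$-conjugation is a bijection on self-equivalences), and combining with Theorem 4.1(3) gives $\{\vec{\bm f}\}^{(q)}_{\vec{\bm m}}=\{\vec{\bm f}\}^{(aq)}_{\vec{\bm m}}\circ\mathscr{E}(\Sigma^{|\vec{\bm m}|+n-2}X_1)$.

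Parts (4) and (5) are formal consequences of Theorem 4.1 and the diagram (3.1). Each bracket is either contained in another via an arrow of (3.1) or related to $\{\vec{\bm f}\}^{(aq\ddot{s}_2)}_{\vec{\bm m}}$ by an identity $A\circ\Gamma=B\circ\Gamma$ (from Theorem 4.1(4)) or by the implication $\alpha\in A\Rightarrow\exists\theta\in\Gamma'\colon\alpha\circ\theta\in B$ (from Theorem 4.1(5)). Since $\Gamma$ consists of homotopy equivalences, $A\circ\Gamma\neq\emptyset\Leftrightarrow A\neq\emptyset$; Theorem 4.1(5) then bridges $\{\vec{\bm f}\}^{(q)}_{\vec{\bm m}}\neq\emptyset\Rightarrow\{\vec{\bm f}\}^{(aq\ddot{s}_2)}_{\vec{\bm m}}\neq\emptyset$, after which non-emptyness propagates throughout (3.1). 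For (5) note additionally that $0\circ\theta=0$ for every self-map, so the identities and inclusions preserve ``contains $0$'' in both directions. The main obstacle throughout is the cascade calculation in (2): verifying that the successive $\Sigma^{m_r}$-suspensions and $\tau$-twists of $\gamma$ accumulated through $\widetilde{\widetilde{\Sigma^{m_r}\Omega_r}}$ collapse to precisely $F(\gamma)$ requires careful bookkeeping of the transpositions $\tau(\s^a,\s^b)$ inserted by each $\widetilde{\Sigma}^{m_i}g_i$.
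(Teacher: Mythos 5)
Your proposal is correct and follows essentially the same route as the paper: (1) is the coset argument that the paper compresses into ``this follows Theorem 4.1(2)'' (together with 4.1(3) for the $\star=q$ case), (2) is exactly the paper's construction of replacing $\omega_{2,1}$ by a twisted $\omega'_{2,1}$ and propagating the twist through $\Omega'_{r+1}=\widetilde{\widetilde{\Sigma}^{m_r}\Omega'_r}$ until it emerges as $F(\gamma)$ (the paper's (4.10) and (4.11)), and (3)--(5) match the paper's use of the surjectivity hypothesis, diagram (3.1), and Theorem 4.1(5). The only part you leave as a plan rather than a verification is the induction showing the modified quasi-structures are again of the reduced form and that the accumulated $\tau$-conjugates collapse to $F(\gamma)$, which is precisely the bookkeeping the paper carries out and which you correctly identify as the crux.
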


\begin{prop}
Given maps $f_{n+1}:\Sigma^{m_{n+1}}X_{n+1}\to X_{n+2}$ and $f_0:\Sigma^{m_0}X_0\to X_1$, 
where $m_{n+1}, m_0$ are non negative integers, we have 
\begin{gather}
\begin{split}
&f_{n+1}\circ \Sigma^{m_{n+1}}\{\vec{\bm f}\}^{(\star)}_{\vec{\bm m}}\\
&\hspace{1cm}\subset\{f_{n+1}\circ \Sigma^{m_{n+1}}f_n,f_{n-1},\dots,f_1\}^{(\star)}_{(m_{n+1}+m_n,m_{n-1},\dots,m_1)}\\
&\hspace{2cm}\circ (1_{\Sigma^{m_{[n,1]}}X_1}\wedge\tau(\s^{n-2},\s^{m_{n+1}}))\\
&\hspace{1cm}=(-1)^{n\cdot m_{n+1}}\{f_{n+1}\circ \Sigma^{m_{n+1}}f_n, f_{n-1},\dots,f_1\}^{(\star)}_{
(m_{n+1}+m_n,m_{n-1},\dots,m_1)};
\end{split}
\\
\begin{split}
&\{f_{n+1}\circ \Sigma^{m_{n+1}}f_n,f_{n-1},\dots,f_1\}^{(\star)}_{(m_{n+1}+m_n,m_{n-1},\dots,m_1)}\\
&\hspace{1cm}\subset\{f_{n+1},f_n\circ \Sigma^{m_n}f_{n-1},f_{n-2},\dots,f_1\}^{(\star)}_{(m_{n+1},m_n+m_{n-1},m_{n-2},\dots,m_1)};
\end{split}
\\
\begin{split}
&\{f_n,\dots,f_2,f_1\circ \Sigma^{m_1}f_0\}^{(\star)}_{\vec{\bm m}}\\
&\hspace{1cm}\subset\{f_n,\dots,f_3,f_2\circ \Sigma^{m_2}f_1,f_0\}^{(\star)}_{(m_n,\dots,m_3,m_2+m_1,m_0)}\quad(\star=aq\ddot{s}_2, \ddot{s}_t);
\end{split}
\\
\{f_n,\dots,f_1\}^{(\star)}_{\vec{\bm m}}\circ \Sigma^{|\vec{\bm m}|+n-2}f_0
\subset\{f_n,\dots,f_2,f_1\circ \Sigma^{m_1}f_0\}^{(\star)}_{\vec{\bm m}}\quad(\star=aq\ddot{s}_2,\ddot{s}_t).
\end{gather}
\end{prop}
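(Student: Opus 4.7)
The plan is to prove each of the four inclusions by constructing a $\star$-presentation of the sequence on one side of the claimed containment from a given $\star$-presentation of the sequence on the other, and checking that the resulting bracket value is the asserted composite. The signs arise only from the standard relation that the twist $1_X\wedge\tau(\s^a,\s^b)$ acts on $[\Sigma^{a+b}X,Y]$ as multiplication by $(-1)^{ab}$.

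For (4.1), given a $\star$-presentation $\{\mathscr{S}_r,\overline{f_r},\Omega_r\text{ (or }\mathscr{A}_r)\mid 2\le r\le n\}$ of $\vec{\bm f}$ realising $\alpha=\overline{f_n}\circ\widetilde{\Sigma}^{m_n}g_{n,n-1}$, I keep $\mathscr{S}_r$ and its attached data for $r\le n-1$ unchanged and replace $\overline{f_n}$ by $f_{n+1}\circ\Sigma^{m_{n+1}}\overline{f_n}$ composed with the canonical twist $1_{C_{n,n-1}}\wedge\tau(\s^{m_{n+1}},\s^{m_n})$. Since each $\star$ constrains only the bottom data $\mathscr{S}_2,\Omega_2,\mathscr{A}_2$ and the inductive relation $\mathscr{S}_{r+1}=(\widetilde{\Sigma}^{m_r}\mathscr{S}_r)(\overline{f_r},\ldots)$ for $r\le n-1$ (both unaffected), the new data constitutes a $\star$-presentation of the shifted sequence. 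Its value equals $f_{n+1}\circ\Sigma^{m_{n+1}}\alpha$ post-composed with $(1_{\Sigma^{m_{[n,1]}}X_1}\wedge\tau(\s^{n-2},\s^{m_{n+1}}))$; the second equality in (4.1) is then the sign $(-1)^{(n-2)m_{n+1}}=(-1)^{n\cdot m_{n+1}}$.

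For (4.2), the towers $\mathscr{S}_r$ for $r\le n-2$ depend only on $f_1,\dots,f_{r-1}$, hence can be shared by both sequences. Starting from a $\star$-presentation of the left-hand side, at level $n-1$ I replace the top attaching map $f_{n-1}$ by $f_n\circ\Sigma^{m_n}f_{n-1}$ and take the extension to be $f_n\circ\Sigma^{m_n}\overline{f_{n-1}}$ (with the appropriate twist), and at level $n$ I take $\overline{f_{n+1}}$ to be induced by the old $\overline{f_n^{\text{LHS}}}$ through the natural identification of the newly formed quasi iterated mapping cone at the top. Again the reducedness conditions on $\mathscr{S}_2,\Omega_2,\mathscr{A}_2$ are untouched, so the presentation type $\star$ is preserved and the bracket values match.

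For (4.3) and (4.4), the modifications occur at the bottom of the tower. For (4.4), I rebuild $\mathscr{S}_2$ on the new attaching map $f_1\circ\Sigma^{m_1}f_0:\Sigma^{m_1+m_0}X_0\to X_2$; the canonical cofibre map $C'_{2,2}\to C_{2,2}$ induced by $\Sigma^{m_1}f_0:\Sigma^{m_1+m_0}X_0\to\Sigma^{m_1}X_1$ lets me transport each $\overline{f_r}$ to an extension of the new tower, and the output homotopy class is precisely $\alpha\circ\Sigma^{|\vec{\bm m}|+n-2}f_0$. For (4.3) I run the same construction in reverse at level $2$ and then build $\mathscr{S}'_3$ attached by $f_2\circ\Sigma^{m_2}f_1$, obtaining a $\star$-presentation of the right-hand sequence. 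The main obstacle, and the precise reason (4.3) and (4.4) are restricted to $\star=aq\ddot{s}_2,\ddot{s}_t$, is that this bottom surgery only produces a presentation of the required type when $\mathscr{S}_2,\mathscr{A}_2$, and $\Omega_2$ are fully reduced in the sense of Definition 3.1; for the other $\star$ one cannot guarantee that the new bottom mapping cone and its structure inherit the reducedness required. The most error-prone routine step throughout is the bookkeeping of the iterated twists $\tau(\s^a,\s^b)$ inserted by the operator $\widetilde{\Sigma}^{\ell}$ and by the identification $\psi^{\ell}_{j}$.
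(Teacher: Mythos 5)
Your proposal follows essentially the same route as the paper: for (4.1) and (4.2) the presentation is modified only at the top (replacing the final extension $\overline{f_n}$ by $f_{n+1}\circ\Sigma^{m_{n+1}}\overline{f_n}$, resp.\ replacing $\overline{f_{n-1}}$ by $f_n\circ\Sigma^{m_n}\overline{f_{n-1}}$ and rebuilding the top cone), while for (4.3) and (4.4) the bottom tower is rebuilt on the new attaching map and compared to the old one via the maps induced by $f_1$ resp.\ $\Sigma^{m_1}f_0$, with the sign in (4.1) coming from the twist $1\wedge\tau(\s^{n-2},\s^{m_{n+1}})$ exactly as you say. The only substantive omissions are routine: the paper's $\ddot{s}_t$ cases of (4.3)--(4.4) additionally invoke the $\Phi(J)$ comparison of mapping cones to transport the structures $\mathscr{A}_r$ up the tower, which your sketch glosses over but which does not change the strategy.
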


\begin{proof}[Proof of Theorem 4.1(1)] 
It suffices to show that $\{\vec{\bm f}\,\}^{(s_t)}_{\vec{\bm m}}\subset \{\vec{\bm f}\,\}^{(\ddot{s}_t)}_{\vec{\bm m}}$. 
Let $\alpha\in\{\vec{\bm f}\,\}^{(s_t)}_{\vec{\bm m}}$ and $\{\mathscr{S}_r,\overline{f_r},\mathscr{A}_r\,|\,2\le r\le n\}$ an $s_t$-presentation 
of $\vec{\bm f}$ with $\alpha=\overline{f_n}\circ  \Sigma ^{m_n}g_{n,n-1}\circ(1_{ \Sigma ^{m_{[n-1,1]}}X_1}\wedge\tau(\s^{m_n},\s^{n-2}))$, where 
\begin{align*}
&\mathscr{S}_r=( \Sigma ^{m_{r-1}}X_{r-1}, \Sigma  \Sigma ^{m_{[r-1,r-2]}}X_{r-2},\dots, \Sigma ^{r-2} \Sigma ^{m_{[r-1,1]}}X_1;\\
&\hspace{5cm}C_{r,1},\dots,C_{r,r}; g_{r,1},\dots,g_{r,r-1};j_{r,1},\dots,j_{r,r-1});\\
&C_{r,1}=X_r,\ g_{r,1}=f_{r-1},\ \mathscr{A}_r=\{a_{r,s}\,|\,1\le s<r\},\  \Omega(\mathscr{A}_r)=\{\omega_{r,s}\,|\,1\le s<r\};\\
&\text{if $3\le r\le n$, then $C_{r,2}=X_r\cup_{f_{r-1}}C \Sigma ^{m_{r-1}}X_{r-1},\ j_{r,1}=i_{f_{r-1}}$, and $a_{r,1}=1_{C_{r,2}}$}. 
\end{align*}
We will construct an $\ddot{s}_t$-presentation $\{\mathscr{S}_r',\overline{f_r}',\mathscr{A}_r'\,|\,2\le r\le n\}$ of 
$\vec{\bm f}$ such that $\overline{f_n}'\circ  \Sigma ^{m_n}g_{n,n-1}'\circ(1_{ \Sigma ^{m_{n-1}}\cdots \Sigma ^{m_1}X_1}\wedge\tau(\s^{m_n},\s^{n-2}))=\alpha$. 

First we set $\mathscr{S}_2'=( \Sigma ^{m_1}X_1;X_2,X_2\cup_{f_1}C \Sigma ^{m_1}X_1;f_1;i_{f_1})$, $a_{2,1}'=1_{C_{2,2}'}$,  
$e_{2}=a_{2,1}^{-1}:C_{2,2}'\to C_{2,2}$ (see \cite[Convention 5.2]{OO}), and $\overline{f_2}'=\overline{f_2}\circ  \Sigma ^{m_2}e_{2}$. 
Then we have 
$$C_{2,1}'=C_{2,1}=X_2,\quad     
e_{2}\circ j_{2,1}'=j_{2,1},\quad  \omega_{2,1}'=q_{f_1}'\simeq \omega_{2,1}\circ(e_{2}\cup C1_{C_{2,1}}). 
$$
Secondly we set $\mathscr{S}_3'=(\widetilde{ \Sigma }^{m_2}\mathscr{S}_2')(\overline{f_2}',\widetilde{ \Sigma }^{m_2}\mathscr{A}_2')$ and
$$
e_{3}=1_{X_3}\cup C \Sigma ^{m_2}e_{2}:C_{3,3}'=X_3\cup_{\overline{f_2}'}C \Sigma ^{m_2}(X_2\cup_{f_1}C \Sigma ^{m_1}X_1)\to C_{3,3}=X_3\cup_{\overline{f_2}}C \Sigma ^{m_2}C_{2,2}.
$$
Then $C_{3,s}'=C_{3,s}\,(s=1,2),\  j_{3,1}'=j_{3,1}=i_{f_2},\  j_{3,2}'=1_{X_3}\cup C \Sigma ^{m_2}j_{2,1}',\  e_{3}\circ j_{3,2}'=j_{3,2}$, 
and 
\begin{align*}
g_{3,2}'&=(\overline{f_2}'\cup C1_{ \Sigma ^{m_2}X_2})\circ (\widetilde{ \Sigma }^{m_2}\omega_{2,1}')^{-1}\\
&=(\overline{f_2}\cup C1_{\Sigma^{m_2}X_2})\circ(\Sigma^{m_2}e_2\cup C\Sigma^{m_2}1_{X_2})
\circ (\widetilde{\Sigma}^{m_2}\omega'_{2,1})^{-1}\\
&\simeq (\overline{f_2}\cup C1_{\Sigma^{m_2}X_2})\circ(\widetilde{\Sigma}^{m_2}\omega_{2,1})^{-1}
=g_{3,2}.
\end{align*} 
As remarked in \cite[Remark 5.5(1)]{OO}, we may suppose/take $g'_{3,2}=g_{3,2}$ 
by Lemma 4.3(2) of \cite{OO}. 
We set 
\begin{align*}
&a_{3,2}'= a_{3,2}\circ e_{3}:C_{3,3}'\to (X_3\cup_{f_2}C \Sigma ^{m_2}X_2)\cup_{g_{3,2}}C \Sigma  \Sigma ^{m_{[2,1]}} X_1,\\
&a_{3,1}'=1_{C_{3,2}'},\ \mathscr{A}_3'=\{a_{3,1}', a_{3,2}'\},\ \overline{f_3}'
=\begin{cases} \overline{f_3} & n=3\\ \overline{f_3}\circ \Sigma ^{m_3} e_{3} & n\ge 4\end{cases}.
\end{align*}
Then $\mathscr{A}_3'$ is a structure on $\mathscr{S}_3'$, $\overline{f_3}'^2=\overline{f_3}^2$, and 
\begin{align*} 
\omega_{3,2}'&=q_{g_{3,2}'}'\circ (a_{3,2}'\cup C1_{C_{3,2}'})
=q_{g_{3,2}}'\circ(a_{3,2}\circ e_3\cup C1_{C_{3,2}})\\
&=q_{g_{3,2}}'\circ (a_{3,2}\cup C1_{C_{3,2}})\circ(e_3\cup C1_{C_{3,2}})\\
&=\omega_{3,2}\circ (e_3\cup C1_{C_{3,2}}).
\end{align*}
When $n=3$, $\{\mathscr{S}_r',\overline{f_r}',\mathscr{A}_r'\,|\, r=2,3\}$ is an $\ddot{s}_t$-presentation of 
$(f_3,f_2,f_1)$ such that $\overline{f_3}'\circ \Sigma ^{m_3} g_{3,2}'\circ(1_{ \Sigma ^{m_2} \Sigma ^{m_1}X_1}\wedge\tau(\s^{m_3},\s^1))=\alpha$. 
When $n\ge 4$, by repeating the process above, we have 
an $\ddot{s}_t$-presentation 
$\{\mathscr{S}_r',\overline{f_r}',\mathscr{A}_r'\,|\,2\le r\le n\}$ of $\vec{\bm f}$, and $e_{r}:C_{r,r}'\simeq C_{r,r}$ for $3\le r\le n$ such that
$$
\left\{\begin{array}{@{\hspace{0.6mm}}l}
\mathscr{S}_r'=(\widetilde{ \Sigma }^{m_{r-1}}\mathscr{S}_{r-1}')(\overline{f_{r-1}}',\widetilde{ \Sigma }^{m_{r-1}}\mathscr{A}_{r-1}), 
\ e_r=1_{X_r}\cup C \Sigma ^{m_{r-1}}e_{r-1};\\
C_{r,s}'=C_{r,s}\ (1\le s\le r-1),\ C_{r,r}'=X_r\cup_{\overline{f_{r-1}}'}C \Sigma ^{m_{r-1}}C_{r-1,r-1}';\\
j_{r,s}'=j_{r,s},\ a_{r,s}'=a_{r,s},\ g_{r,s}'=g_{r,s}\ (1\le s\le r-2);\\
 j_{r,r-1}'=1_{X_r}\cup C \Sigma ^{m_{r-1}}j_{r-1,r-2}', \ e_{r}\circ j_{r,r-1}'=j_{r,r-1};\\ 
\overline{f_r}'
=\begin{cases} \overline{f_n} : \Sigma ^{m_n}C_{n,n-1}'= \Sigma ^{m_n}C_{n,n-1}\to X_{n+1}& r=n\\
\overline{f_r}\circ \Sigma ^{m_r} e_{r} :C_{r,r}'\to X_{r+1}& r<n\end{cases};\\
\omega_{r,r-1}'\simeq \omega_{r,r-1}\circ (e_{r}\cup C1_{C_{r,r-1}}), \ g_{r,r-1}'=g_{r,r-1};\\
a_{r,r-1}'= a_{r,r-1}\circ e_r:C_{r,r}'\to C_{r,r-1}'\cup_{g_{r,r-1}'}C \Sigma ^{r-2} \Sigma ^{m_{r-1}}\cdots \Sigma ^{m_1}X_1
\end{array}\right.
$$
and $\overline{f_n}'\circ  \Sigma ^{m_n}g_{n,n-1}'\simeq\overline{f_n}\circ \Sigma ^{m_n}g_{n,n-1}$. 
Hence $\alpha\in\{\vec{\bm f}\,\}^{(\ddot{s}_t)}_{\vec{\bm m}}$. 
This proves Theorem~4.1(1). 
\end{proof}

\begin{proof}[Proof of the first equality in Theorem 4.1(2)] 
The first equality is equivalent to the relation 
$\{\vec{\bm f}\,\}^{(aqs_2)}_{\vec{\bm m}}\subset\{\vec{\bm f}\,\}^{(aq\dot{s}_2)}_{\vec{\bm m}}$. 
Let $\alpha\in\{\vec{\bm f}\,\}^{(aqs_2)}_{\vec{\bm m}}$ and $\{\mathscr{S}_r,\overline{f_r},\Omega_r\,|\,2\le r\le n\}$ 
an $aqs_2$-presentation of $\vec{\bm f}$ with $\alpha=\overline{f_n}\circ \Sigma ^{m_n}g_{n,n-1}\circ(1_{ \Sigma ^{m_{n-1}}\cdots \Sigma ^{m_1}X_1}\wedge\tau(\s^{m_n},\s^{n-2}))$. 
It suffices to construct an $aq\dot{s}_2$-presentation $\{\mathscr{S}_r',\overline{f_r}',\Omega_r'\,|\,2\le r\le n\}$ 
with $\alpha=\overline{f_n}'\circ \Sigma ^{m_n} g_{n,n-1}' \circ(1_{ \Sigma ^{m_{n-1}}\cdots \Sigma ^{m_1}X_1}\wedge\tau(\s^{m_n},\s^{n-2}))$. 
Set $\mathscr{S}_2'=(\widetilde{ \Sigma }^{m_1}X_1;X_2,X_2\cup_{f_1}C \Sigma ^{m_1}X_1;f_1;i_{f_1})$. 
Since $\mathscr{S}_2$ is an iterated mapping cone, we can take $e_2:C_{2,2}'=X_2\cup_{f_1}C \Sigma ^{m_1}X_1\simeq C_{2,2}$ such that 
$e_2\circ j_{2,1}'=j_{2,1}$. 
Set $\overline{f_2}'=\overline{f_2}\circ \Sigma ^{m_2} e_2$, 
$\omega_{2,1}'=\omega_{2,1}\circ (e_2\cup C1_{X_2}):C_{2,2}'\cup_{j_{2,1}'}CC_{2,1}'\to  \Sigma  \Sigma ^{m_1} X_1$, and 
$\Omega_2'=\{\omega_{2,1}'\}$. 
Set $\mathscr{S}_3'=( \Sigma ^{m_2}\mathscr{S}_2')(\overline{f_2}',\widetilde{ \Sigma }^{m_2}\Omega_2')$, $\Omega_3'=\widetilde{\widetilde{ \Sigma }^{m_2}\Omega_2'}$, 
$e_3=1_{X_3}\cup C \Sigma ^{m_2}e_2:C_{3,3}'\to C_{3,3}$, 
and $\overline{f_3}'=\begin{cases} \overline{f_3}: \Sigma ^{m_3}C_{3,2}'= \Sigma ^{m_3}C_{3,2}\to X_4 & n=3\\ 
\overline{f_3}\circ  \Sigma ^{m_3}e_3: \Sigma ^{m_3}C_{3,3}'\to X_4 & n\ge 4\end{cases}$. 
Then 
\begin{align*}
&\omega_{2,1}'=\omega_{2,1}\circ(e_2\cup C1_{X_2}),\ \widetilde{ \Sigma }^{m_2}\omega_{2,1}'=\widetilde{ \Sigma }^{m_2}\omega_{2,1}\circ( \Sigma ^{m_2}e_2\cup C1_{ \Sigma ^{m_2}X_2}),\\
&\omega_{3,1}'=\omega_{3,1}=q'_{f_2},\\ 
&\omega_{3,2}'=\omega_{3,2}\circ(e_3\cup C1_{C_{3,2}}),\ \widetilde{ \Sigma }^{m_3}\omega'_{3,2}=\widetilde{ \Sigma }^{m_3}\omega_{3,2}\circ( \Sigma ^{m_3}e_3\cup C1_{ \Sigma ^{m_3}C_{3,2}}),
\\
& g_{3,2}'=(\overline{f_2}'\cup C1_{ \Sigma ^{m_2}X_2})\circ (\widetilde{ \Sigma }^{m_2}\omega_{2,1}')^{-1}\\
&\hspace{5mm}\simeq (\overline{f_2}\circ \Sigma ^{m_2}e_2\cup C1_{ \Sigma ^{m_2}X_2})\circ( \Sigma ^{m_2}e_2\cup C1_{ \Sigma ^{m_2}X_2})^{-1}\circ
(\widetilde{ \Sigma }^{m_2}\omega_{2,1})^{-1}\\
&\hspace{5mm}= g_{3,2}.
\end{align*} 
By continuing the construction inductively, we obtain an $aq\dot{s}_2$-presentation 
$\{\mathscr{S}_r',\overline{f_r}',\Omega_r'\,|\,2\le r\le n\}$ and 
$e_r: C_{r,r}'\simeq C_{r,r}$ such that 
\begin{gather*}
C_{r,s}'=C_{r,s}\ (1\le s<r<n), \\ 
\omega_{r,r-1}'=\omega_{r,r-1}\circ (e_r\cup 1_{C_{r,r-1}}):C_{r,r}'\cup CC_{r,r-1}'\to C_{r,r}\cup CC_{r,r-1}\ (r< n), \\
\overline{f_r}'=\begin{cases} \overline{f_n} :  \Sigma ^{m_n}C_{n,n-1}'= \Sigma ^{m_n}C_{n,n-1}\to X_{n+1} & r=n\\ 
\overline{f_r}\circ  \Sigma ^{m_r}e_r: \Sigma ^{m_r}C_{r,r}'\to X_{r+1} & r<n\end{cases},\\ 
g_{n,n-1}'=(\overline{f_{n-1}}'\cup C1_{ \Sigma ^{m_{n-1}}C_{n-1,n-2}})\circ(\widetilde{ \Sigma }^{m_{n-1}}\omega_{n-1,n-2}')^{-1}\simeq g_{n,n-1}.
\end{gather*} 
Hence $\overline{f_n}'\circ  \Sigma ^{m_n}g_{n,n-1}'\simeq \overline{f_n}\circ \Sigma ^{m_n} g_{n,n-1}$. 
This proves the first equality in Theorem 4.1(2). 
\end{proof}

\begin{proof}[Proof of the second equality in Theorem 4.1(2)] 
First we prove ``$\subset$''. 
Let $\alpha\in\{\vec{\bm f}\,\}^{(aq\dot{s}_2)}_{\vec{\bm m}}$ and 
$\{\mathscr{S}_r,\overline{f_r},\Omega_r\,|\,2\le r\le n\}$ an $aq\dot{s}_2$-presentation of $\vec{\bm f}$ 
with $\alpha=\overline{f_n}\circ  \Sigma ^{m_n}g_{n,n-1}\circ(1_{ \Sigma ^{m_{n-1}}\cdots \Sigma ^{m_1}X_1}\wedge\tau(\s^{m_n},\s^{n-2})$. 
Define inductively an $aq\ddot{s}_2$-presentation $\{\mathscr{S}_r',\overline{f_r}',\Omega_r'\,|\,2\le r\le n\}$ of $\vec{\bm f}$ as follows:
\begin{gather*}
\mathscr{S}_2'=\mathscr{S}_2, \ \omega_{2,1}'=q_{f_1}',\ \overline{f_2}'=\overline{f_2}, \ \Omega_2'=\{\omega_{2,1}'\}, \\
\mathscr{S}_{r+1}'=(\widetilde{\Sigma}^{m_r}\mathscr{S}_r')(\overline{f_r},\widetilde{\Sigma}^{m_r}\Omega_r'),\ \Omega_{r+1}'=\widetilde{\widetilde{\Sigma}^{m_r}\Omega_r'},\ \overline{f_{r+1}}'=\overline{f_{r+1}}\ (2\le r\le n-1).
\end{gather*}
Define 
\begin{align*}
\theta_1&=\omega'_{2,1}\circ\omega_{2,1}\in\mathscr{E}(\Sigma\Sigma^{m_1}X_1),\\
\theta_2&=(1_{\Sigma^{m_1}X_1}\wedge\tau(\s^1,\s^{m_2}))\circ \Sigma^{m_2}\theta_1\circ 
(1_{\Sigma^{m_1}X_1}\wedge\tau(\s^{m_2},\s^1))\in\mathscr{E}(\Sigma\Sigma^{m_{[2,1]}}X_1),\\
\theta_k&=(1_{\Sigma^{m_{[k-1,1]}}X_1}\wedge\tau(\s^1,\s^{m_k}))\circ \Sigma^{m_k}\theta_{k-1}\\
&\hspace{1cm}\circ (1_{\Sigma^{m_{[k-1,1]}}X_1}\wedge\tau(\s^{m_k},\s^1))\in\mathscr{E}(\Sigma\Sigma^{m_{[k,1]}}X_1)\ (2\le k\le n-1).
\end{align*}
By an induction on $k$, we have easily
$$
\theta_k=(1_{\Sigma^{m_1}X_1}\wedge\tau(\s^1,\s^{m_{[k,2]}}))\circ \Sigma^{m_{[k,2]}}\theta_1
\circ (1_{\Sigma^{m_1}X_1}\wedge\tau(\s^{m_{[k,2]}},\s^1))\ (2\le k<n).
$$

\begin{lemma}
\begin{enumerate}
\item $\Sigma^{r-2}\theta_{r-1}\circ\omega_{r,r-1}\simeq \omega'_{r,r-1}\ (2\le r\le n)$.
\item $g_{r,r-1}\simeq g'_{r,r-1}\circ \Sigma^{r-3}\theta_{r-1}\ (3\le r\le n)$.
\end{enumerate}
\end{lemma}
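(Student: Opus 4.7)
The plan is to prove part (1) by induction on $r$ and then to extract part (2) directly, using (1) together with the structural formula for $g_{r,r-1}$ implicit in the $aq$-construction. For the base case $r=2$ of (1), observe that both $\omega_{2,1}$ and $\omega'_{2,1}$ are homotopy equivalences with the same domain and codomain, so the formula $\theta_1=\omega'_{2,1}\circ\omega_{2,1}$ must be read as $\theta_1=\omega'_{2,1}\circ(\omega_{2,1})^{-1}$ for a fixed homotopy inverse. Then $\theta_1\circ\omega_{2,1}\simeq\omega'_{2,1}$ is immediate, and the base case holds because $\Sigma^{r-2}=\Sigma^0$ is the identity when $r=2$.

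For the inductive step, I exploit that the primed and unprimed presentations at level $r+1$ are built by identical recipes $(\widetilde{\Sigma}^{m_r}\mathscr{S}_r)(\overline{f_r},\widetilde{\Sigma}^{m_r}\Omega_r)$ and $\Omega_{r+1}=\widetilde{\widetilde{\Sigma^{m_r}\Omega_r}}$, differing only through the propagation of the initial discrepancy between $\omega_{2,1}$ and $\omega'_{2,1}=q'_{f_1}$. Since $C'_{r,s}=C_{r,s}$ and $\overline{f'_r}=\overline{f_r}$ hold throughout the construction, the factors $q$ and $\xi$ appearing in $\omega_{r+1,r}=\Sigma\widetilde{\Sigma}^{m_r}\omega_{r,r-1}\circ q\circ\xi$ agree with the primed analogues. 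Inserting the inductive hypothesis $\omega'_{r,r-1}\simeq\Sigma^{r-2}\theta_{r-1}\circ\omega_{r,r-1}$ and unfolding $\widetilde{\Sigma}^{m_r}$, I am reduced to rewriting $(1\wedge\tau(\s^{r-1},\s^{m_r}))\circ\Sigma^{m_r}\Sigma^{r-2}\theta_{r-1}$. By naturality of $\tau$ in its smash arguments,
\[
(1\wedge\tau(\s^{r-1},\s^{m_r}))\circ\Sigma^{m_r}\Sigma^{r-2}\theta_{r-1}\simeq\Sigma^{r-2}\theta_r\circ(1\wedge\tau(\s^{r-1},\s^{m_r})),
\]
the right-hand factor being precisely the $\Sigma^{r-2}$-suspension of the conjugation formula that defines $\theta_r$ from $\theta_{r-1}$. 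One further suspension yields $\omega'_{r+1,r}\simeq\Sigma^{r-1}\theta_r\circ\omega_{r+1,r}$, completing the induction.

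For part (2), the recursive $aq$-construction gives $g_{r,r-1}=(\overline{f_{r-1}}^{r-1}\cup C1_{\Sigma^{m_{r-1}}C_{r-1,r-2}})\circ(\widetilde{\Sigma}^{m_{r-1}}\omega_{r-1,r-2})^{-1}$, and the primed analogue has the same first factor because $\overline{f'_{r-1}}=\overline{f_{r-1}}$ and $C'_{r-1,r-2}=C_{r-1,r-2}$. Hence $g_{r,r-1}\simeq g'_{r,r-1}\circ\widetilde{\Sigma}^{m_{r-1}}\omega'_{r-1,r-2}\circ(\widetilde{\Sigma}^{m_{r-1}}\omega_{r-1,r-2})^{-1}$. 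Applying the index-shifted twist manipulation from the inductive step above yields $\widetilde{\Sigma}^{m_{r-1}}\omega'_{r-1,r-2}\simeq\Sigma^{r-3}\theta_{r-1}\circ\widetilde{\Sigma}^{m_{r-1}}\omega_{r-1,r-2}$, so the intermediate factors cancel and I obtain $g_{r,r-1}\simeq g'_{r,r-1}\circ\Sigma^{r-3}\theta_{r-1}$. The main obstacle is the careful bookkeeping of the twist maps $1\wedge\tau(\s^a,\s^b)$ and the verification that conjugating $\Sigma^{m_r}\Sigma^{r-2}\theta_{r-1}$ by the appropriate twist produces exactly $\Sigma^{r-2}\theta_r$; this is a coherence identity for the symmetric monoidal structure on smash products, routine in spirit but calling for attention to the order and position of the permuted $\s^{m_i}$ and $\s^j$ factors.
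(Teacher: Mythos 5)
Your proposal is correct and follows essentially the same route as the paper: the base case holds by reading $\theta_1=\omega'_{2,1}\circ\omega_{2,1}^{-1}$, the inductive step for (1) unfolds $\omega_{r+1,r}=\Sigma\widetilde{\Sigma}^{m_r}\omega_{r,r-1}\circ q\circ\xi$ and reduces to the twist identity $(1\wedge\tau(\s^{r-1},\s^{m_r}))\circ\Sigma^{m_r}\Sigma^{r-2}\theta_{r-1}=\Sigma^{r-2}\theta_r\circ(1\wedge\tau(\s^{r-1},\s^{m_r}))$, which the paper likewise isolates (as $\theta^*_r=\Sigma^{r-1}\theta_r$) and checks by an element chase. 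The only cosmetic difference is that you deduce (2) directly from (1) applied at level $r-1$ rather than running the paper's second induction, but the content — the same conjugation identity applied through $\widetilde{\Sigma}^{m_{r-1}}$ — is identical.
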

\begin{proof}
(1) The case $r=2$ is true by definition. 
Suppose the assertion for some $r$ with $2\le r<n$, that is, 
$\Sigma^{r-2}\theta_{r-1}\circ\omega_{r,r-1}\simeq \omega'_{r,r-1}$. 
Consider the case for $r+1$. 
Consider the following diagram. 
$$
\xymatrix{
C_{r+1,r+1}\cup CC_{r+1,r} \ar[d]_-{h}& \Sigma\Sigma^{m_r}\Sigma^{r-1}\Sigma^{m_{[r-1,1]}}X_1 \ar[r]^-\approx & \Sigma\Sigma^{r-1}\Sigma^{m_{[r,1]}}X_1\\
 \Sigma\Sigma^{m_r}(C_{r,r}\cup CC_{r,r-1})\ar[ur]^-{\Sigma\Sigma^{m_r}\omega'_{r,r-1}} \ar[r]_-{\Sigma\Sigma^{m_r}\omega_{r,r-1}} 
& \Sigma\Sigma^{m_r}\Sigma^{r-1}\Sigma^{m_{[r-1,1]}}X_1\ar[r]^-\approx \ar[u]_-{\Sigma\Sigma^{m_r}\Sigma^{r-2}\theta_{r-1}} & \Sigma\Sigma^{r-1}\Sigma^{m_{[r,1]}}X_1 \ar@{.>}[u]_-{\theta^*_r}
}
$$
where two $\approx$'s are $\Sigma\big(1_{\Sigma^{m_{[r-1,1]}}X_1}\wedge\tau(\s^{r-1},\s^{m_r})\big)$, $\theta^*_r$ is defined to make the square commutative, 
and $h$ is  
the composite of the the following three maps:
\begin{align*}
C_{r+1,r+1}\cup CC_{r+1,r}&=(X_{r+1}\cup_{\overline{f_r}} C\Sigma^{m_r}C_{r,r})\cup 
(X_{r+1}\cup_{\overline{f_r}^{r-1}}C\Sigma^{m_r}C_{r,r-1})\\
&\xrightarrow[\approx]{\xi}(X_{r+1}\cup CX_{r+1})\cup C(\Sigma^{m_r}C_{r,r}\cup C\Sigma^{m_r}C_{r,r-1})\\
&\xrightarrow[\simeq]{q}\Sigma(\Sigma^{m_r}C_{r,r}\cup C\Sigma^{m_r}C_{r,r-1})
\xrightarrow[\approx]{\Sigma\psi^{m_r}_{j_{r,r-1}}} \Sigma\Sigma^{m_r}(C_{r,r}\cup CC_{r,r-1}).
\end{align*}
Then $\omega'_{r+1,r}\simeq \theta^*_r\circ\omega_{r+1,r}$. 
It suffices for completing the induction to show $\theta^*_r=\Sigma^{r-1}\theta_r$. 
Let $u\in\s^1$ and $v\in\s^{r-2}$. Then $u\wedge v\in \s^{r-1}$. 
Let $x\in X_1$ and $s_i\in\s^{m_i}\ (1\le i\le r)$. 
Write $\theta_{r-1}(x\wedge s_1\wedge\cdots\wedge s_{r-1}\wedge u)=
x'\wedge s_1'\wedge\cdots\wedge s_{r-1}'\wedge u'$, where $x'\in X_1$ and $s_i'\in \s^{m_i}\ (1\le i\le r)$. 
Then 
$$
\theta^*_r(x\wedge s_1\wedge\cdots\wedge s_r\wedge u\wedge v)=
x'\wedge s_1'\wedge\cdots\wedge s_{r-1}'\wedge u'\wedge v=
\Sigma^{r-1}\theta_r(x\wedge s_1\wedge\cdots\wedge s_r\wedge u\wedge v).
$$
This completes the induction and proves (1). 

(2) By definition of $\theta_2$, we have $\theta_2\circ\widetilde{\Sigma}^{m_2}\omega_{3,2}\simeq\widetilde{\Sigma}^{m_2}\omega_{3,2}$ so that $\theta_2\circ g'_{3,2}\simeq g_{3,2}$ and hence (2) holds for $r=3$. 
Suppose the assertion holds for some $r$ with $3\le r<n$. 
Consider the case for $r+1$. 
Consider the following diagram. 
$$
\xymatrix{
& \Sigma^{m_r}\Sigma^{r-1}\Sigma^{m_{[r-1,1]}}X_1 \ar[r]^-\approx & \Sigma^{r-1}\Sigma^{m_{[r,1]}}X_1\\
\Sigma^{m_r}(C_{r,r}\cup CC_{r,r-1}) \ar[ur]^-{\Sigma^{m_r}\omega'_{r,r-1}} \ar[r]_-{\Sigma^{m_r}\omega_{r,r-1}} 
& \Sigma^{m_r}\Sigma^{r-1}\Sigma^{m_{[r-1,1]}}X_1 \ar[u]_-{\Sigma^{m_r}\Sigma^{r-2}\theta_{r-1}} 
\ar[r]^-\approx & \Sigma^{r-1}\Sigma^{m_{[r,1]}}X_1 \ar@{.>}[u]_-{\theta_r'}
}
$$
where two $\approx$'s are $1_{\Sigma^{m_{[r-1,1]}}X_1}\wedge\tau(\s^{r-1},\s^{m_r})$ 
and $\theta_r'$ is defined to make the square commutative. 
It suffices for the purpose to show $\theta_r'=\Sigma^{r-2}\theta_r$. 
This is proved by a similar method in (1). 
\end{proof}

The following proposition shows ``$\subset$''. 

\begin{prop}
Under the notations above we have
\begin{align*}
\overline{f_n}\circ\widetilde{\Sigma}^{m_n}g_{n,n-1}\simeq\overline{f_n}\circ\widetilde{\Sigma}^{m_n}g'_{n,n-1}\circ \Sigma^{n-3}\big(&(1_{\Sigma^{m_1}X_1}\wedge\tau(\s^1,\s^{m_{[n,2]}}))\\
&\circ \Sigma^{m_{[n,2]}}\theta_1\circ (1_{\Sigma^{m_1}X_1}\wedge\tau(\s^{m_{[n,2]}}))\big).
\end{align*}
\end{prop}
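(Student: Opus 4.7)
The plan is to reduce the proposition to Lemma 4.4(2) at $r=n$ combined with a single naturality identity for the coordinate swap $\tau$. Lemma 4.4(2) at $r=n$ gives $g_{n,n-1}\simeq g'_{n,n-1}\circ\Sigma^{n-3}\theta_{n-1}$. Applying $\Sigma^{m_n}$, precomposing with the twist $1_{\Sigma^{m_{[n-1,1]}}X_1}\wedge\tau(\s^{m_n},\s^{n-2})$, and postcomposing with $\overline{f_n}$, and using that $\widetilde{\Sigma}^{m_n}h=\Sigma^{m_n}h\circ\big(1\wedge\tau(\s^{m_n},\s^{n-2})\big)$ for any $h$ out of $\Sigma^{n-2}\Sigma^{m_{[n-1,1]}}X_1$, we obtain
\begin{align*}
\overline{f_n}\circ\widetilde{\Sigma}^{m_n}g_{n,n-1}&\simeq\overline{f_n}\circ\Sigma^{m_n}g'_{n,n-1}\circ\Sigma^{m_n}\Sigma^{n-3}\theta_{n-1}\\
&\hspace{3cm}\circ\big(1_{\Sigma^{m_{[n-1,1]}}X_1}\wedge\tau(\s^{m_n},\s^{n-2})\big).
\end{align*}
Write $\phi$ for the bracketed composite on the right-hand side of the proposition. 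The desired expression then unfolds as $\overline{f_n}\circ\widetilde{\Sigma}^{m_n}g'_{n,n-1}\circ\Sigma^{n-3}\phi=\overline{f_n}\circ\Sigma^{m_n}g'_{n,n-1}\circ\big(1\wedge\tau(\s^{m_n},\s^{n-2})\big)\circ\Sigma^{n-3}\phi$.

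Thus it suffices to prove the identity
$$\Sigma^{m_n}\Sigma^{n-3}\theta_{n-1}\circ\big(1\wedge\tau(\s^{m_n},\s^{n-2})\big)=\big(1\wedge\tau(\s^{m_n},\s^{n-2})\big)\circ\Sigma^{n-3}\phi.$$
For this I will use the explicit formula
$$\theta_{n-1}=(1_{\Sigma^{m_1}X_1}\wedge\tau(\s^1,\s^{m_{[n-1,2]}}))\circ\Sigma^{m_{[n-1,2]}}\theta_1\circ(1_{\Sigma^{m_1}X_1}\wedge\tau(\s^{m_{[n-1,2]}},\s^1))$$
established just before Lemma 4.4, together with the decomposition $\s^{m_{[n,2]}}=\s^{m_{[n-1,2]}}\wedge\s^{m_n}$. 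Under this decomposition $\phi$ has exactly the same three-factor shape as $\theta_{n-1}$ with $m_{[n-1,2]}$ replaced by $m_{[n,2]}$. A direct chase on a generic smash factor $x\wedge s_1\wedge\cdots\wedge s_n\wedge u\wedge v$ with $u\in\s^1$, $v\in\s^{n-3}$ shows that both sides produce $x'\wedge s_1'\wedge s_2\wedge\cdots\wedge s_{n-1}\wedge u'\wedge v\wedge s_n$, where $\theta_1(x\wedge s_1\wedge u)=x'\wedge s_1'\wedge u'$. Equivalently, one can split both composites into three strata matching the three factors of $\theta_{n-1}$ (respectively of $\phi$) and invoke naturality of $\tau$ stratum by stratum to commute the $\s^{m_n}$ coordinate past $\Sigma^{m_{[n-1,2]}}\theta_1$ and past the two boundary twists.

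The argument is essentially bookkeeping, and the only real obstacle is tracking where the $\s^{m_n}$ coordinate sits relative to the other suspension factors as it is moved past $\theta_1$ and the surrounding swaps. Once the explicit formula for $\theta_{n-1}$ is in place, no geometric input beyond Lemma 4.4 and the naturality of $\tau$ is needed.
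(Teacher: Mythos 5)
Your proposal is correct and follows essentially the same route as the paper: invoke Lemma 4.4(2) at $r=n$, unwind $\widetilde{\Sigma}^{m_n}$ into $\Sigma^{m_n}(-)\circ(1\wedge\tau(\s^{m_n},\s^{n-2}))$, and reduce to the conjugation identity relating $\Sigma^{m_n}\Sigma^{n-3}\theta_{n-1}$ to $\Sigma^{n-3}$ of the bracketed composite, which the paper dispatches with ``by a precise checking'' and you verify by the explicit formula for $\theta_{n-1}$ in terms of $\theta_1$ together with a coordinate chase. Your chase lands on the same element $x'\wedge s_1'\wedge s_2\wedge\cdots\wedge s_{n-1}\wedge u'\wedge v\wedge s_n$ on both sides, so the argument is complete.
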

\begin{proof}
We have
\begin{align*}
&\overline{f_n}\circ\widetilde{\Sigma}^{m_n}g_{n,n-1}=\overline{f_n}\circ \Sigma^{m_n}g_{n,n-1}
\circ (1_{\Sigma^{m_{[n-1,1]}}X_1}\wedge\tau(\s^{m_n},\s^{n-2}))\\
&\simeq \overline{f_n}\circ \Sigma^{m_n}(g'_{n,n-1}\circ \Sigma^{n-3}\theta_{n-1})\circ
 (1_{\Sigma^{m_{[n-1,1]}}X_1}\wedge\tau(\s^{m_n},\s^{n-2})) \ (\text{by Lemma 4.4(2)})\\
&=\overline{f_n}\circ \Sigma^{m_n}g'_{n,n-1}\circ 
(1_{\Sigma^{m_{[n-1,1]}}X_1}\wedge\tau(\s^{m_n},\s^{n-2}))\\
&\hspace{1cm}\circ (1_{\Sigma^{m_{[n-1,1]}}X_1}\wedge\tau(\s^{n-2},\s^{m_n})
\circ \Sigma^{m_n}\Sigma^{n-3}\theta_{n-1}\circ (1_{\Sigma^{m_{[n-1,1]}}X_1}\wedge\tau(\s^{m_n},\s^{n-2})).
\end{align*}
By a precise checking, we can show 
\begin{align*}
& (1_{\Sigma^{m_{[n-1,1]}}X_1}\wedge\tau(\s^{n-2},\s^{m_n})
\circ \Sigma^{m_n}\Sigma^{n-3}\theta_{n-1}\circ (1_{\Sigma^{m_{[n-1,1]}}X_1}\wedge\tau(\s^{m_n},\s^{n-2}))\\
&= \Sigma^{n-3}\Big((1_{\Sigma^{m_1}X_1}\wedge\tau(\s^1,\s^{m_{[n,2]}}))\circ \Sigma^{m_{[n,2]}}\theta_1\circ (1_{\Sigma^{m_1}X_1}\wedge\tau(\s^{m_{[n,2]}},\s^1))\Big)
\end{align*}
so that the assertion follows.
\end{proof}

Secondly we prove ``$\supset$''. 
Let $\beta'\in\{\vec{\bm f}\}^{(aq\ddot{s}_2)}$, 
$\varepsilon\in\mathscr{E}(\Sigma\Sigma^{m_1}X_1)$, and 
$\{\mathscr{S}'_r,\overline{f_r}',\Omega'_r\,|\,2\le r\le n\}$ an 
$aq\ddot{s}_2$-presentation of $\vec{\bm f}$ representing $\beta'$. 
Define 
\begin{gather*}
\mathscr{S}_2=\mathscr{S}_2',\ \omega_{2,1}=\varepsilon\circ\omega'_{2,1},\ \Omega_2=\{\omega_{2,1}\},\ \overline{f_2}=\overline{f_2}',\\
\mathscr{S}_{r+1}=(\widetilde{\Sigma}^{m_r}\mathscr{S}_r)(\overline{f_r},\widetilde{\Sigma}^{m_r}\Omega_r), \Omega_{r+1}=\widetilde{\widetilde{\Sigma}^{m_r}\Omega_r},\ \overline{f_{r+1}}=\overline{f_{r+1}}'\ (2\le r<n).
\end{gather*}
Then $\{\mathscr{S}_r,\overline{f_r},\Omega_r\,|\,2\le r\le n\}$ is an 
$aq\dot{s}_2$-presentation of $\vec{\bm f}$. 
Set $\beta=\overline{f_n}\circ\widetilde{\Sigma}^{m_n}g_{n,n-1}$ and 
$\theta_1=\varepsilon^{-1}$. 
Tracing the proof above of ``$\subset$'' we can show
$$
\beta=\beta'\circ \Sigma^{n-3}\big((1_{\Sigma^{m_1}X_1}\wedge\tau(\s^1,\s^{m_{[n,2]}}))
\circ \Sigma^{m_{[n,2]}}\theta_1\circ (1_{\Sigma^{m_1}X_1}\wedge\tau(\s^{m_{[n,2]}}))\big)
$$
so that ``$\supset$'' holds. 
This completes the proof of the second equality in Theorem 4.1(2).
\end{proof}

\begin{proof}[Proof of the first containment in Theorem 4.1(2)] 
Let $\theta\in\mathscr{E}(\Sigma\Sigma^{m_1}X_1)$. 
Then without much difficulties we can show
\begin{align*}
&\Sigma^{n-3}\big((1_{\Sigma^{m_1}X_1}\wedge\tau(\s^1,\s^{m_{[n,2]}}))\circ \Sigma^{m_{[n,2]}}\theta\circ (1_{\Sigma^{m_1}X_1}\wedge\tau(\s^{m_{[n,2]}},\s^1))\big)\\
&=(1_{\Sigma^{m_{[n-2,1]}}X_1}\wedge\tau(\s^{n-2},\s^{m_{[n,n-1]}}))\circ \Sigma^{m_{[n,n-1]}}\Sigma^{n-3}\big((1_{\Sigma^{m_1}X_1}\wedge\tau(\s^1,\s^{m_{[n-2,2]}}))\\
&\hspace{1cm}\circ \Sigma^{m_{[n-2,2]}}\theta\circ (1_{\Sigma^{m_1}X_1}\wedge\tau(\s^{m_{[n-2,2]}},\s^1))\big)
\circ (1_{\Sigma^{m_{[n-2,1]}}X_1}\wedge(\s^{m_{[n,n-1]}},\s^{n-2})).
\end{align*}
Since 
\begin{align*}
&\Sigma^{n-3}\big((1_{\Sigma^{m_1}X_1}\wedge\tau(\s^1,\s^{m_{[n-2,2]}}))
\circ \Sigma^{m_{[n-2,2]}}\theta\circ (1_{\Sigma^{m_1}X_1}\wedge\tau(\s^{m_{[n-2,2]}},\s^1))\big)\\
&\hspace{3cm}\in \mathscr{E}(\Sigma^{n-2}\Sigma^{m_{[n-2,1]}}X_1),
\end{align*}
we have the assertion.
\end{proof}

\begin{proof}[Proof of the third, fourth and last equalities in Theorem 4.1(2)]
We prove 
\begin{gather}
\begin{split}
\{\vec{\bm f}\,\}^{(qs_2)}_{\vec{\bm m}}
&\subset
\{\vec{\bm f}\,\}^{(aq\ddot{s}_2)}_{\vec{\bm m}}\circ(1_{ \Sigma ^{m_{[n-2,1]}}X_1}\wedge\tau(\s^{n-2},\s^{m_{[n,n-1]}}))\\
&\hspace{1cm}\circ \Sigma^{m_{[n,n-1]}}\mathscr{E}(\Sigma^{n-2}\Sigma^{m_{[n-2,1]}}X_1)\circ(1_{\Sigma^{m_{[n-2,1]}}X_1}\wedge\tau(\s^{m_{[n,n-1]}},\s^{n-2})),
\end{split}\\
\begin{split}
\{\vec{\bm f}\,\}^{(aq\ddot{s}_2)}_{\vec{\bm m}}&\circ(1_{ \Sigma ^{m_{[n-2,1]}}X_1}\wedge\tau(\s^{n-2},\s^{m_{[n,n-1]}}))\\
&\hspace{1cm}\circ \Sigma^{m_{[n,n-1]}}\mathscr{E}(\Sigma^{n-2}\Sigma^{m_{[n-2,1]}}X_1)\circ(1_{\Sigma^{m_{[n-2,1]}}X_1}\wedge\tau(\s^{m_{[n,n-1]}},\s^{n-2}))\\
&\subset\begin{cases}\{\vec{\bm f}\}^{(q\dot{s}_2)}_{\vec{\bm m}} & n=3\\ \{\vec{\bm f}\}^{(q\ddot{s}_2)}_{\vec{\bm m}} & n\ge 4\end{cases}.
\end{split}
\end{gather}
If these are proved, then 
\begin{align*}
\{\vec{\bm f}\,\}^{(qs_2)}_{\vec{\bm m}}
&\subset
\{\vec{\bm f}\,\}^{(aq\ddot{s}_2)}_{\vec{\bm m}}\circ(1_{ \Sigma ^{m_{[n-2,1]}}X_1}\wedge\tau(\s^{n-2},\s^{m_{[n,n-1]}}))\\
&\hspace{1cm}\circ \Sigma^{m_{[n,n-1]}}\mathscr{E}(\Sigma^{n-2}\Sigma^{m_{[n-2,1]}}X_1)\circ(1_{\Sigma^{m_{[n-2,1]}}X_1}\wedge\tau(\s^{m_{[n,n-1]}},\s^{n-2}))\\
&\subset\begin{cases}\{\vec{\bm f}\}^{(q\dot{s}_2)}_{\vec{\bm m}}\subset\{\vec{\bm f}\}^{(qs_2)}_{\vec{\bm m}} & n=3\\ \{\vec{\bm f}\}^{(q\ddot{s}_2)}_{\vec{\bm m}}\subset\{\vec{\bm f}\}^{(q\dot{s}_2)}_{\vec{\bm m}}\subset \{\vec{\bm f}\}^{(qs_2)}_{\vec{\bm m}} & n\ge 4\end{cases}.
\end{align*}
so that the third and fourth equalities in (2) and the last assertion of (2) 
follow. 

To prove (4.5), let $\alpha\in\{\vec{\bm f}\,\}^{(qs_2)}_{\vec{\bm m}}$ and $\{\mathscr{S}_r,\overline{f_r},\Omega_r\,|\,2\le r\le n\}$ a 
$qs_2$-presentation of $\vec{\bm f}$ with $\alpha=\overline{f_n}\circ  \Sigma ^{m_n}g_{n,n-1}\circ(1_{ \Sigma ^{m_{n-1}}\dots \Sigma ^{m_1}X_1}\wedge\tau(\s^{m_n},\s^{n-2}))$. 
Set 
$$
\mathscr{S}_2'=( \Sigma ^{m_1}X_1;X_2,X_2\cup_{f_1}C \Sigma ^{m_1}X_1;f_1;i_{f_1}),\quad j_{2,1}'=i_{f_1},\quad \omega_{2,1}'=q_{f_1}',\quad \Omega_2'=\{\omega_{2,1}'\}.
$$ 
Since $j_{2,1}$ is a homotopy cofibre of $f_1$ by the hypothesis, there exists a homotopy equivalence $e_2:C_{2,2}'=X_2\cup_{f_1}C \Sigma ^{m_1}X_1\to C_{2,2}$ such that $e_2\circ j_{2,1}'=j_{2,1}$. 
Set $\overline{f_2}'=\overline{f_2}\circ  \Sigma ^{m_2}e_2$. 
Then $\overline{f_2}'$ is an extension of $f_2$ to $C_{2,2}'$. 
Set 
\begin{gather*}
\mathscr{S}_3'=(\widetilde{ \Sigma }^{m_2}\mathscr{S}_2')(\overline{f_2}',\widetilde{ \Sigma }^{m_2}\Omega_2'),\ \Omega_3'=\widetilde{\widetilde{ \Sigma }^{m_2}\Omega_2'},\ 
e_3=1_{X_3}\cup C \Sigma ^{m_2}e_2:C_{3,3}'\to C_{3,3},\\
\overline{f_3}'=\begin{cases} \overline{f_3}: \Sigma ^{m_3}C_{3,2}'= \Sigma ^{m_3}C_{3,2}\to X_4 & n=3\\ \overline{f_3}\circ  \Sigma ^{m_3}e_3: \Sigma ^{m_3}C_{3,3}'\to X_4 & n\ge 4\end{cases}.
\end{gather*}
Proceeding with the construction, we have an $aq\ddot{s}_2$-presentation $\{\mathscr{S}_r',\overline{f_r}',\Omega_r'\,|\,2\le r\le n\}$ 
of $\vec{\bm f}$ and homotopy equivalences $e_r:C'_{r,r}\to C_{r,r}$ such that 
\begin{gather*}
C_{r,s}'=C_{r,s}\ (1\le s\le r-1),\ j_{r,s}'=j_{r,s}\ (1\le s\le r-2),\\
e_r=1_{X_r}\cup C \Sigma ^{m_{r-1}}e_{r-1}:C_{r,r}'\to C_{r,r}\ (3\le r\le n),\\
\overline{f_r}'=\begin{cases} \overline{f_r}\circ  \Sigma ^{m_r}e_r :  \Sigma ^{m_r}C_{r,r}'\to X_{r+1} & r<n\\ \overline{f_n} :  \Sigma ^{m_n}C_{n,n-1}'= \Sigma ^{m_n}C_{n,n-1}\to X_{n+1} & r=n\end{cases}.
\end{gather*} 
Set 
\begin{align*}
\theta_{n-1}&=\omega_{n-1,n-2}\circ(e_{n-1}\cup C1_{C_{n-1,n-2}})\circ{\omega_{n-1,n-2}'}^{-1},\\
\theta_{n-1}'&=(1_{\Sigma^{m_{[n-2,1]}}X_1}\wedge\tau(\s^{n-2},\s^{m_{n-1}})\circ \Sigma^{m_{n-1}}\theta_{n-1}\circ(1_{\Sigma^{m_{[n-2,1]}}X_1}\wedge\tau(\s^{m_{n-1}},\s^{n-2})).
\end{align*}
Then 
$$
\widetilde{\Sigma}^{m_{n-1}}\omega_{n-1,n-2}\circ(\Sigma^{m_{n-1}}e_{n-1}\cup C1_{\Sigma^{m_{n-1}}C_{n-1,n-2}})\simeq\theta'_{n-1}\circ\widetilde{\Sigma}^{m_{n-1}}\omega'_{n-1,n-2}
$$ and so 
\begin{align*}
g_{n,n-1}\circ\theta'_{n-1}&=(\overline{f_{n-1}}\cup C1_{\Sigma^{m_{n-1}}C_{n-1,n-2}})\circ(\widetilde{\Sigma}^{m_{n-1}}\omega_{n-1,n-2})^{-1}\circ\theta'_{n-1}\\
&\simeq(\overline{f'_{n-1}}\cup C1_{\Sigma^{m_{n-1}}C_{n-1,n-2}})\circ(\widetilde{\Sigma}^{m_{n-1}}
\omega'_{n-1,n-2})^{-1}=g'_{n,n-1}.
\end{align*}
Hence we have 
\begin{align*}
&\widetilde{\Sigma}^{m_n}g_{n,n-1}=\Sigma^{m_n}g_{n,n-1}\circ(1_{\Sigma^{m_{[n-1,1]}}X_1}\wedge\tau(\s^{m_n},\s^{n-2}))\\
&\simeq \Sigma^{m_n}(g'_{n,n-1}\circ{\theta'_{n-1}}^{-1})\circ(1_{\Sigma^{m_{[n-1,1]}}X_1}\wedge\tau(\s^{m_n},\s^{n-2}))\\
&\simeq \Sigma^{m_n}g'_{n,n-1}\circ (1_{\Sigma^{m_{[n-1,1]}}X_1}\wedge\tau(\s^{m_n},\s^{n-2}))\circ (1_{\Sigma^{m_{[n-1,1]}}X_1}\wedge\tau(\s^{n-2},\s^{m_n}))\\
&\hspace{2cm}\circ \Sigma^{m_n}{\theta'_{n-1}}^{-1}\circ (1_{\Sigma^{m_{[n-1,1]}}X_1}\wedge\tau(\s^{m_n},\s^{n-2}))\\
&\simeq \widetilde{\Sigma}^{m_n}g'_{n,n-1}\circ (1_{\Sigma^{m_{[n-1,1]}}X_1}\wedge\tau(\s^{n-2},\s^{m_n}))\circ \Sigma^{m_n}{\theta'_{n-1}}^{-1}\circ (1_{\Sigma^{m_{[n-1,1]}}X_1}\wedge\tau(\s^{m_n},\s^{n-2}))\\
&\simeq \widetilde{\Sigma}^{m_n}g'_{n,n-1}\circ (1_{\Sigma^{m_{[n-1,1]}}X_1}\wedge\tau(\s^{n-2},\s^{m_n}))\\
&\hspace{5mm}\circ \Sigma^{m_n}\big((1_{\Sigma^{m_{[n-2,1]}}X_1}\wedge\tau(\s^{n-2},\s^{m_{n-1}}))\circ \Sigma^{m_{n-1}}\theta_{n-1}^{-1}\circ (1_{\Sigma^{m_{[n-2,1]}}X_1}\wedge\tau(\s^{m_{n-1}},\s^{n-2}))\big)\\
&\hspace{5mm}\circ(1_{\Sigma^{m_{[n-1,1]}}X_1}\wedge\tau(\s^{m_n},\s^{n-2}))\\
&\simeq \widetilde{\Sigma}^{m_n}g'_{n,n-1}
\circ(1_{\Sigma^{m_{[n-2,1]}}X_1}\wedge\tau(\s^{n-2},\s^{m_{[n,n-1]}}))
\circ \Sigma^{m_{[n,n-1]}}\theta_{n-1}^{-1}\\
&\hspace{2cm}
\circ(1_{\Sigma^{m_{[n-2,1]}}X_1}\wedge\tau(\s^{m_{[n,n-1]}},\s^{n-2}))
\end{align*}
Hence $\alpha=\overline{f_n}\circ\widetilde{\Sigma}^{m_n}g_{n,n-1}$ is in 
the right hand term of (4.5). 
This proves (4.5). 

To prove (4.6), let $\alpha\in\{\vec{\bm f}\,\}^{(aq\ddot{s}_2)}_{\vec{\bm m}}$, $\gamma\in\mathscr{E}( \Sigma ^{n-2} \Sigma ^{m_{[n-2,1]}}X_1)$, and 
$\{\mathscr{S}_r,\overline{f_r},\Omega_r\,|\,2\le r\le n\}$ an $aq\ddot{s}_2$-presentation of $\vec{\bm f}$ such that $\alpha=\overline{f_n}\circ \widetilde{ \Sigma }^{m_n}g_{n,n-1} 
=\overline{f_n}\circ \Sigma^{m_n}g_{n,n-1}\circ(1_{ \Sigma ^{m_{n-1}}\cdots \Sigma ^{m_1}X_1}\wedge\tau(\s^{m_n},\s^{n-2}))$. 
Set $\Omega_{n-1}'=\{\omega_{n-1,s}\,|\,1\le s\le n-3\}\cup\{\gamma\circ\omega_{n-1,n-2}\}$ which is a quasi-structure on $\mathscr{S}_{n-1}$. 
Set $\mathscr{S}_n'=(\widetilde{\Sigma}^{m_{n-1}}\mathscr{S}_{n-1})(\overline{f_{n-1}},\widetilde{\Sigma}^{m_{n-1}}\Omega_{n-1}')$ and $\Omega_n'=\widetilde{\widetilde{\Sigma}^{m_{n-1}}\Omega'_{n-1}}$. 
Let $\{\mathscr{S}'_r,\overline{f_r}',\Omega'_r\,|\,2\le r\le n\}$ be obtained from 
$\{\mathscr{S}_r,\overline{f_r},\Omega_r\,|\,2\le r\le n\}$ by replacing $\Omega_{n-1},\mathscr{S}_n,\Omega_n$ with $\Omega_{n-1}', \mathscr{S}_n', 
\Omega_n'$, respectively. 
Then the new collection is a $q\dot{s}_2$-presentation if $n=3$ and an $q\ddot{s}_2$-presentation if $n\ge 4$ of $\vec{\bm f}$. 
Set 
\begin{align*}
\theta &=(1_{\Sigma^{m_{[n-2,1]}}X_1}\wedge\tau(\s^{n-2},\s^{m_{n-1}}))\circ \Sigma^{m_{n-1}}\gamma\circ (1_{\Sigma^{m_{[n-2,1]}}X_1}\wedge\tau(\s^{m_{n-1}},\s^{n-2})),\\
\theta' &=(1_{\Sigma^{m_{[n-1,1]}}X_1}\wedge\tau(\s^{n-2},\s^{m_n}))\circ \Sigma^{m_n}\theta\circ (1_{\Sigma^{m_{[n-1,1]}}X_1}\wedge\tau(\s^{m_n},\s^{n-2})).
\end{align*}
Then 
$\overline{f_n}\circ \widetilde{\Sigma}^{m_n}g_{n,n-1}\circ\theta'^{-1}
\simeq \overline{f_n}\circ \widetilde{\Sigma}^{m_n}g_{n,n-1}'$ which represents 
an element of $\begin{cases} \{\vec{\bm f}\}^{(q\dot{s}_2)}_{\vec{\bm m}} & n=3\\
\{\vec{\bm f}\}^{(q\ddot{s}_2)}_{\vec{\bm m}} & n\ge 4\end{cases}
$. 
We have 
$$
\theta'^{-1}\simeq 
(1_{\Sigma^{m_{[n-2,1]}}X_1}\wedge\tau(\s^{n-2},\s^{m_{[n,n-1]}}))\circ \Sigma^{m_{[n,n-1]}}\gamma^{-1}\circ (1_{\Sigma^{m_{[n-2,1]}}X_1}\wedge\tau(\s^{m_{[n,n-1]}},\s^{n-2})).
$$
Hence we have (4.6). 

When $n=3$, $\{\vec{\bm f}\}^{(aq\ddot{s}_2)}_{\vec{\bm m}}=
\{\vec{\bm f}\}^{(q\ddot{s}_2)}_{\vec{\bm m}}$ by definitions. 
\end{proof}

\begin{proof}[Proof of Theorem 4.1(3)] 
It suffices to prove
\begin{align*}
\{\vec{\bm f}\,\}^{(q)}_{\vec{\bm m}}&=\{\vec{\bm f}\,\}^{(aq)}_{\vec{\bm m}}\circ(1_{ \Sigma ^{m_{[n-1,1]}}X_1}\wedge\tau(\s^{n-2},\s^{m_n}))\\
&\hspace{0.5cm}
\circ \Sigma ^{m_n}\Big((1_{ \Sigma ^{m_{[n-2,1]}}X_1}\wedge\tau(\s^{n-2},\s^{m_{n-1}}))
\circ \Sigma ^{m_{n-1}}\mathscr{E}( \Sigma ^{n-2} \Sigma ^{m_{[n-2,1]}}X_1)\\
&\hspace{0.5cm}
\circ(1_{ \Sigma ^{m_{[n-2,1]}}X_1}\wedge\tau(\s^{m_{n-1}},\s^{n-2}))\Big)
\circ(1_{ \Sigma ^{m_{[n-1,1]}}X_1}\wedge\tau(\s^{m_n},\s^{n-2}))\\
&=\{\vec{\bm f}\,\}^{(aq)}_{\vec{\bm m}}\circ(1_{ \Sigma ^{m_{[n-2,1]}}X_1}\wedge\tau(\s^{n-2},\s^{m_{[n,n-1]}}))\\
&\hspace{5mm} \circ \Sigma ^{m_{[n,n-1]}}\mathscr{E}(\Sigma^{n-2}\Sigma^{m_{[n-2,1]}}X_1)
\circ(1_{ \Sigma ^{m_{[n-2,1]}}X_1}\wedge\tau(\s^{m_{[n,n-1]}},\s^{n-2})).
\end{align*}
The second equality is easily proved. 
We prove the first equality. 
First we prove ``$\,\subset\,$''. 
Let $\alpha\in \{\vec{\bm f}\,\}^{(q)}_{\vec{\bm m}}$ and $\{\mathscr{S}_r,\overline{f_r},\Omega_r\,|\,2\le r\le n\}$ 
a $q$-presentation of $\vec{\bm f}$ with $\alpha=\overline{f_n}\circ  \Sigma ^{m_n}g_{n,n-1}\circ(1_{ \Sigma ^{m_{[n-1,1]}}X_1}\wedge\tau(\s^{m_n},\s^{n-2}))$. 
We define inductively
\begin{gather*}
\mathscr{S}_2'=\mathscr{S}_2,\, \Omega_2'=\Omega_2;\ \mathscr{S}_3'
=(\widetilde{ \Sigma }^{m_2}\mathscr{S}_2')(\overline{f_2},\widetilde{ \Sigma }^{m_2}\Omega_2'),\, \Omega_3'=\widetilde{\widetilde{ \Sigma }^{m_2}\Omega_2'};\\ 
\mathscr{S}_4'=(\widetilde{ \Sigma }^{m_3}\mathscr{S}_3')(\overline{f_3},\widetilde{ \Sigma }^{m_3}\Omega_3'),\,\Omega_4'=\widetilde{\widetilde{ \Sigma }^{m_3}\Omega_3'};\ \dots .
\end{gather*}
By Remark 5.5(3) of \cite{OO}, this definition is possible and $\mathscr{S}_r', \mathscr{S}_r$ have the same edge. 
Then $\{\mathscr{S}_r',\overline{f_r},\Omega_r'\,|\,2\le r\le n\}$ is an $aq$-presentation of $\vec{\bm f}$. 
Set 
$$
\gamma=\omega_{n-1,n-2}\circ\omega_{n-1,n-2}'^{-1}\in\mathscr{E}( \Sigma ^{n-2} \Sigma ^{m_{[n-2,1]}}X_1).
$$
Then
\begin{align*}
\widetilde{ \Sigma }^{m_{n-1}}\omega_{n-1,n-2}&\simeq (1_{ \Sigma ^{m_{[n-2,1]}}X_1}\wedge\tau(\s^{n-2},\s^{m_{n-1}})\\
&\hspace{5mm}
\circ \Sigma ^{m_{n-1}}\gamma\circ(1_{ \Sigma ^{m_{[n-2,1]}}X_1}\wedge\tau(\s^{m_{n-1}},\s^{n-2}))\circ\widetilde{ \Sigma }^{m_{n-1}}\omega_{n-1,n-2}'
\end{align*} 
and
\begin{align*}
\alpha&= \overline{f_n}\circ  \Sigma ^{m_n}g_{n,n-1}\circ(1_{ \Sigma ^{m_{[n-1,1]}}X_1}\wedge\tau(\s^{m_n},\s^{n-2}))\\
&=\overline{f_n}\circ \Sigma ^{m_n}\Big((\overline{f_{n-1}}\cup C1_{ \Sigma ^{m_{n-1}}C_{n-1,n-2}})\circ(\widetilde{ \Sigma }^{m_{n-1}}\omega_{n-1,n-2})^{-1}\Big)\\
&\hspace{1cm}\circ(1_{ \Sigma ^{m_{[n-1,1]}}X_1}\wedge\tau(\s^{m_n},\s^{n-2}))\\
&=\overline{f_n}\circ \Sigma ^{m_n}\Big((\overline{f_{n-1}}\cup C1_{ \Sigma ^{m_{n-1}}C_{n-1,n-2}})
\circ(\widetilde{ \Sigma }^{m_{n-1}}\omega_{n-1,n-2}')^{-1}\\
&\hspace{1cm}\circ(1_{ \Sigma ^{m_{[n-2,1]}}X_1}\wedge\tau(\s^{n-2},\s^{m_{n-1}})\circ( \Sigma ^{m_{n-1}}\gamma)^{-1}\\
&\hspace{1cm}\circ 
(1_{ \Sigma ^{m_{[n-1,1]}}X_1}\wedge\tau(\s^{m_n},\s^{n-2}))\Big)
\circ(1_{ \Sigma ^{m_{[n-1,1]}}X_1}\wedge\tau(\s^{m_n},\s^{n-2}))\\
&=\overline{f_n}\circ \Sigma ^{m_n}\Big((\overline{f_{n-1}}\cup C1_{ \Sigma ^{m_{n-1}}C_{n-1,n-2}})
\circ(\widetilde{ \Sigma }^{m_{n-1}}\omega_{n-1,n-2}')^{-1}\Big)\\
&\hspace{1cm}\circ(1_{ \Sigma ^{m_{[n-1,1]}}X_1}\wedge\tau(\s^{m_n},\s^{n-2}))
\circ(1_{ \Sigma ^{m_{[n-1,1]}}X_1}\wedge\tau(\s^{n-2},\s^{m_n}))\\
&\hspace{1cm}\circ \Sigma ^{m_n}\Big((1_{ \Sigma ^{m_{[n-2,1]}}X_1}\wedge\tau(\s^{n-2},\s^{m_{n-1}})\circ( \Sigma ^{m_{n-1}}\gamma)^{-1}\\
&\hspace{1cm}\circ 
(1_{ \Sigma ^{m_{[n-1,1]}}X_1}\wedge\tau(\s^{m_n},\s^{n-2}))\Big)
\circ(1_{ \Sigma ^{m_{[n-1,1]}}X_1}\wedge\tau(\s^{m_n},\s^{n-2}))\\
&\in\{\vec{\bm f}\,\}^{(aq)}_{\vec{\bm m}}\circ(1_{ \Sigma ^{m_{[n-1,1]}}X_1}\wedge\tau(\s^{n-2},\s^{m_n}))\\
&\hspace{1cm}\circ \Sigma ^{m_n}\Big((1_{ \Sigma ^{m_{[n-2,1]}}X_1}\wedge\tau(\s^{n-2},\s^{m_{n-1}})\circ( \Sigma ^{m_{n-1}}\gamma)^{-1}\\
&\hspace{1cm}\circ 
(1_{ \Sigma ^{m_{[n-1,1]}}X_1}\wedge\tau(\s^{m_n},\s^{n-2}))\Big)
\circ(1_{ \Sigma ^{m_{[n-1,1]}}X_1}\wedge\tau(\s^{m_n},\s^{n-2}))\\
&\subset\{\vec{\bm f}\,\}^{(aq)}_{\vec{\bm m}}\circ 
(1_{ \Sigma ^{m_{[n-1,1]}}X_1}\wedge\tau(\s^{n-2},\s^{m_n}))\\
&\hspace{1cm}\circ \Sigma ^{m_n}\Big((1_{ \Sigma ^{m_{[n-2,1]}}X_1}\wedge\tau(\s^{n-2},\s^{m_{n-1}})\circ \Sigma ^{m_{n-1}}\mathscr{E}( \Sigma ^{n-2} \Sigma ^{m_{[n-2,1]}}X_1)\\
&\hspace{1cm}\circ 
(1_{ \Sigma ^{m_{[n-1,1]}}X_1}\wedge\tau(\s^{m_n},\s^{n-2}))\Big)
\circ(1_{ \Sigma ^{m_{[n-1,1]}}X_1}\wedge\tau(\s^{m_n},\s^{n-2}))
\end{align*}
Hence ``$\subset$'' is obtained. 

The converse containment ``$\supset$'' is obtained as follows. 
Let $\alpha\in\{\vec{\bm f}\}^{(aq)}_{\vec{\bm m}}$ which is represented by an $aq$-presentation $\{\mathscr{S}_r,\overline{f_r},\Omega_t\,|\,2\le r\le n\}$. 
Let $\gamma\in\mathscr{E}(\Sigma^{n-2}\Sigma^{m_{[n-2,1]}}X_1)$. 
Set $\Omega_{n-1}'$ be obtained from $\Omega_{n-1}$ by replacing $\omega_{n-1,n-2}$ with $\gamma^{-1}\circ\omega_{n-1,n-2}$. 
Set $\mathscr{S}_n'=(\widetilde{\Sigma}^{m_{n-1}}\mathscr{S}_{n-1})(\overline{f_{n-1}},\widetilde{\Sigma}^{m_{n-1}}\Omega_{n-1}')$ and $\Omega_n'=\widetilde{\widetilde{\Sigma}^{m_{n-1}}\Omega_{n-1}'}$. 
Let $\{\mathscr{S}_r',\overline{f_r}',\Omega_r'\,|\,2\le r\le n\}$ be obtained from 
$\{\mathscr{S}_r,\overline{f_r},\Omega_t\,|\,2\le r\le n\}$ by replacing $\Omega_{n-1}, 
\mathscr{S}_n$ with $\Omega_{n-1}',\mathscr{S}_n'$, respectively. 
The new collection is a $q$-presentation of $\vec{\bm f}$ such that 
\begin{align*}
g'_{n,n-1}\simeq g_{n,n-1}&\circ(1_{\Sigma^{m_{[n-2,1]}}X_1}\wedge\tau(\s^{n-2},\s^{m_{n-1}}))\\
&\circ \Sigma^{m_{n-1}}\gamma\circ(1_{\Sigma^{m_{[n-2,1]}}X_1}\wedge\tau(\s^{m_{n-1}},\s^{n-2})).
\end{align*}
Then
\begin{align*}
\{\vec{\bm f}\}^{(q)}_{\vec{\bm m}}\ni &\overline{f_n}\circ \Sigma^{m_n}g_{n,n-1}'\circ(1_{\Sigma^{m_{[n-1,1]}}X_1}\wedge\tau(\s^{m_n},\s^{n-2}))\\
&\quad \simeq \overline{f_n}\circ \Sigma^{m_n}g_{n,n-1}\circ(1_{\Sigma^{m_{[n-1,1]}}X_1}\wedge\tau(\s^{m_n},\s^{n-2}))\\
&\qquad \circ(1_{\Sigma^{m_{[n-1,1]}}X_1}\wedge\tau(\s^{n-2},\s^{m_n}))\circ \Sigma^{m_n}
\Big[(1_{\Sigma^{m_{[n-2,1]}}X_1}\wedge\tau(\s^{n-2},\s^{m_{n-1}}))\\
&\qquad \circ \Sigma^{m_{n-1}}\gamma\circ(1_{\Sigma^{m_{[n-2,1]}}X_1}\wedge\tau(\s^{m_{n-1}},\s^{n-2}))
\Big]\circ(1_{\Sigma^{m_{[n-1,1]}}X_1}\wedge\tau(\s^{m_n},\s^{n-2})).
\end{align*}
This proves ``$\supset$'' and ends the proof of Theorem 4.1(3). 
\end{proof}

\begin{proof}[Proof of Theorem 4.1(4)] 
We define a subgroup $\Gamma$ of $\mathscr{E}(\Sigma^{|\vec{\bm m}|+n-2}X_1)$ by  
\begin{align*}
\Gamma=\big(1_{\Sigma^{m_{[n-1,1]}}X_1}\wedge\tau(\s^{n-2},\s^{m_n})\big)
&\circ \Sigma^{m_n}\mathscr{E}(\Sigma^{n-2}\Sigma^{m_{[n-1,1]}}X_1)\\
&\circ \big(1_{\Sigma^{m_{[n-1,1]}}X_1}\wedge\tau(\s^{m_n},\s^{n-2})\big).
\end{align*}
We prove
\begin{gather}
\{\vec{\bm f}\,\}^{(\ddot{s}_t)}_{\vec{\bm m}}\subset \{\vec{\bm f}\,\}^{(aq\ddot{s}_2)}_{\vec{\bm m}}\circ\Gamma,\\ 
\{\vec{\bm f}\,\}^{(\ddot{s}_t)}_{\vec{\bm m}}\circ\Gamma
\supset \{\vec{\bm f}\,\}^{(aq\ddot{s}_2)}_{\vec{\bm m}}. 
\end{gather}
If these are done, then, by applying $\Gamma$ to them from the right, 
we have the equality.  
To prove (4.7), let 
$\alpha\in\{\vec{\bm f}\,\}^{(\ddot{s}_t)}_{\vec{\bm m}}$ and $\{\mathscr{S}_r,\overline{f_r},\mathscr{A}_r\,|\,2\le r\le n\}$ 
an $\ddot{s}_t$-presentation of $\vec{\bm f}$ with $\alpha=\overline{f_n}\circ \widetilde{ \Sigma }^{m_n}g_{n,n-1}
=\overline{f_n}\circ  \Sigma ^{m_n}g_{n,n-1}\circ(1_{\Sigma^{m_{[n-1,1]}}X_1}\wedge\tau(\s^{m_n},\s^{n-2})))$. 
We define inductively
\begin{gather*}
\mathscr{S}_2'=\mathscr{S}_2, \Omega_2'=\Omega(\mathscr{A}_2);\, 
\mathscr{S}_3'=(\widetilde{ \Sigma }^{m_2}\mathscr{S}_2')(\overline{f_2},\widetilde{ \Sigma }^{m_2}\Omega_2'), 
\Omega_3'=\widetilde{\widetilde{ \Sigma }^{m_2}\Omega'_2};\\ 
\mathscr{S}_4'=(\widetilde{ \Sigma }^{m_3}\mathscr{S}_3')(\overline{f_3},\widetilde{ \Sigma }^{m_3}\Omega_3'), \Omega_4'=\widetilde{\widetilde{ \Sigma }^{m_3}\Omega'_3};\ \dots .
\end{gather*}
By Remark 5.5(3) of \cite{OO}, this definition is possible, and $\mathscr{S}_r', \mathscr{S}_r$ have the same edge. 
Then $\{\mathscr{S}_r',\overline{f_r},\Omega_r'\,|\,2\le r\le n\}$ is an $aq\ddot{s}_2$-presentation  of 
$\vec{\bm f}$ and 
\begin{align*}
\{\vec{\bm f}\,\}^{(aq\ddot{s}_2)}_{\vec{\bm m}}\ni& \overline{f_n}\circ \widetilde{ \Sigma }^{m_n}g_{n,n-1}'
=\overline{f_n}\circ \Sigma^{m_n}g'_{n,n-1}\circ(1_{\Sigma^{m_{[n-1,1]}}X_1}\wedge\tau(\s^{m_n},\s^{n-2}))\\
&=\overline{f_n}\circ \Sigma ^{m_n}\big((\overline{f_{n-1}}\cup C1_{ \Sigma ^{m_{n-1}}C_{n-1,n-2}})\circ(\widetilde{ \Sigma }^{m_{n-1}}\omega_{n-1,n-2}')^{-1}\big)\\
&\hspace{1cm}\circ(1_{\Sigma^{m_{[n-1,1]}}X_1}\wedge\tau(\s^{m_n},\s^{n-2}))\\
&=\overline{f_n}\circ \Sigma ^{m_n}\big((\overline{f_{n-1}}\cup C1_{ \Sigma ^{m_{n-1}}C_{n-1,n-2}})\circ(\widetilde{ \Sigma }^{m_{n-1}}\omega_{n-1,n-2})^{-1}\big)\\
&\hspace{1cm}\circ \Sigma ^{m_n}\big((\widetilde{ \Sigma }^{m_{n-1}}\omega_{n-1,n-2})
\circ(\widetilde{ \Sigma }^{m_{n-1}}\omega_{n-1,n-2}')^{-1}\big)\\
&\hspace{1cm}\circ(1_{ \Sigma ^{m_{n-1}}\cdots \Sigma ^{m_1}X_1}\wedge\tau(\s^{m_n},\s^{n-2}))\\
&=\overline{f_n}\circ \Sigma ^{m_n}\big((\overline{f_{n-1}}\cup C1_{ \Sigma ^{m_{n-1}}C_{n-1,n-2}})\circ(\widetilde{ \Sigma }^{m_{n-1}}\omega_{n-1,n-2})^{-1}\big)\\
&\hspace{1cm}\circ(1_{ \Sigma ^{m_{[n-1,1]}}X_1}\wedge\tau(\s^{m_n},\s^{n-2}))\\
&\hspace{1cm}\circ(1_{ \Sigma ^{m_{[n-1,1]}}X_1}\wedge\tau(\s^{n-2},\s^{m_n}))\\
&\hspace{1cm}\circ \Sigma ^{m_n}\big((\widetilde{ \Sigma }^{m_{n-1}}\omega_{n-1,n-2})
\circ(\widetilde{ \Sigma }^{m_{n-1}}\omega_{n-1,n-2}')^{-1}\big)\\
&\hspace{1cm}\circ(1_{ \Sigma ^{m_{[n-1,1]}}X_1}\wedge\tau(\s^{m_n},\s^{n-2}))\\
&=\alpha\circ\varepsilon_0,
\end{align*}
where 
\begin{align*}
\varepsilon_0&=(1_{ \Sigma ^{m_{[n-1,1]}}X_1}\wedge\tau(\s^{n-2},\s^{m_n}))\\
&\hspace{1cm}\circ \Sigma ^{m_n}\big((\widetilde{ \Sigma }^{m_{n-1}}\omega_{n-1,n-2})
\circ(\widetilde{ \Sigma }^{m_{n-1}}\omega_{n-1,n-2}')^{-1}\big)\\
&\hspace{1cm}\circ(1_{ \Sigma ^{m_{[n-1,1]}}X_1}\wedge\tau(\s^{m_n},\s^{n-2}))\\
&\in\Gamma.
\end{align*} 
Hence 
$$
\alpha\in\{\vec{\bm f}\,\}^{(aq\ddot{s}_2)}_{\vec{\bm m}}\circ\varepsilon^{-1}_0\subset
\{\vec{\bm f}\,\}^{(aq\ddot{s}_2)}_{\vec{\bm m}}
\circ\Gamma. 
$$
This proves (4.7). 

To prove (4.8), let $\alpha\in\{\vec{\bm f}\,\}^{(aq\ddot{s}_2)}_{\vec{\bm m}}$ and $\{\mathscr{S}_r,\overline{f_r},\Omega_r\,|\,2\le r\le n\}$ 
an $aq\ddot{s}_2$-presentation of $\vec{\bm f}$ with $\alpha=\overline{f_n}\circ \Sigma ^{m_n} g_{n,n-1}\circ(1_{ \Sigma ^{m_{[n-1,1]}}X_1}\wedge\tau(\s^{m_n},\s^{n_2}))$. 
We define inductively
\begin{align*}
&\mathscr{S}_2'=\mathscr{S}_2,\quad \mathscr{A}_2'=\{1_{X_2\cup_{f_1}C\Sigma^{m_1}X_1}\},\\ 
&\mathscr{S}_3'=(\widetilde{ \Sigma }^{m_2}\mathscr{S}_2')(\overline{f_2}, \widetilde{ \Sigma }^{m_2}\mathscr{A}_2'),\quad \mathscr{A}_3'\text{ is a reduced structure on $\mathscr{S}_3'$},\\
&\mathscr{S}_4'=(\widetilde{ \Sigma }^{m_3}\mathscr{S}_3')(\overline{f_3}, \widetilde{ \Sigma }^{m_3}\mathscr{A}_3'),\quad \mathscr{A}_4'\text{ is a reduced structure on $\mathscr{S}_4'$},\\
& \dots .
\end{align*}
By Remark 5.5(3) of \cite{OO}, this definition is possible, and $\mathscr{S}_r', \mathscr{S}_r$ have the same edge. 
Then $\{\mathscr{S}_r',\overline{f_r},\mathscr{A}_r'\,|\,2\le r\le n\}$ is an 
$\ddot{s}_t$-presentation of $\vec{\bm f}$ and 
\begin{align*}
\{\vec{\bm f}\,\}^{(\ddot{s}_t)}_{\vec{\bm m}}\ni& \overline{f_n}\circ \Sigma ^{m_n}g_{n,n-1}'\circ(1_{ \Sigma ^{m_{[n-1,1]}}X_1}\wedge\tau(\s^{m_n},\s^{n-2}))\\
&=\overline{f_n}\circ \Sigma ^{m_n}\big( (\overline{f_{n-1}}\cup C1_{ \Sigma ^{m_{n-1}}C_{n-1,n-2}})\circ(\widetilde{ \Sigma }^{m_{n-1}}\omega_{n-1,n-2}')^{-1}\big)\\
&\hspace{1cm}\circ(1_{ \Sigma ^{m_{[n-1,1]}}X_1}\wedge\tau(\s^{m_n},\s^{n-2}))\\
&=\overline{f_n}\circ \Sigma ^{m_n}\big((\overline{f_{n-1}}\cup C1_{ \Sigma ^{m_{n-1}}C_{n-1,n-2}})\circ(\widetilde{ \Sigma }^{m_{n-1}}\omega_{n-1,n-2})^{-1}\big)\\
&\hspace{1cm}\circ(1_{ \Sigma ^{m_{[n-1,1]}}X_1}\wedge\tau(\s^{m_n},\s^{n-2}))\\
&\hspace{1cm}\circ(1_{ \Sigma ^{m_{[n-1,1]}}X_1}\wedge\tau(\s^{n-2},\s^{m_n}))\\
&\hspace{1cm}\circ \Sigma ^{m_n}\big((\widetilde{ \Sigma }^{m_{n-1}}\omega_{n-1,n-2})
\circ(\widetilde{ \Sigma }^{m_{n-1}}\omega_{n-1,n-2}')^{-1}\big)\\
&\hspace{1cm}\circ(1_{ \Sigma ^{m_{[n-1,1]}}X_1}\wedge\tau(\s^{m_n},\s^{n-2}))\\
&=\alpha\circ\varepsilon_0
\end{align*}
where 
$\Omega(\mathscr{A}_{n-1}')=\{\omega_{n-1,s}'\,|\,1\le s<n-1\}$ and 
\begin{align*}
\varepsilon_0&=(1_{ \Sigma ^{m_{[n-1,1]}}X_1}\wedge\tau(\s^{n-2},\s^{m_n}))\\
&\hspace{1cm}\circ \Sigma ^{m_n}\big((\widetilde{ \Sigma }^{m_{n-1}}\omega_{n-1,n-2})
\circ(\widetilde{ \Sigma }^{m_{n-1}}\omega_{n-1,n-2}')^{-1}\big)\\
&\hspace{1cm}\circ(1_{ \Sigma ^{m_{[n-1,1]}}X_1}\wedge\tau(\s^{m_n},\s^{n-2}))\\
&\in\Gamma.
\end{align*}
Hence $\alpha\in\{\vec{\bm f}\,\}^{(\ddot{s}_t)}_{\vec{\bm m}}\circ\varepsilon_0^{-1}\subset \{\vec{\bm f}\,\}^{(\ddot{s}_t)}_{\vec{\bm m}}\circ\Gamma$. 
This proves (4.8) and completes the proof of Theorem~4.1(4).
\end{proof}

\begin{proof}[Proof of Theorem 4.1(5)]
Let $\alpha\in\{\vec{\bm f}\}^{(q)}_{\vec{\bm m}}$ and $\{\mathscr{S}_r,\overline{f_r},\Omega_r\,|\,2\le r\le n\}$ a 
$q$-presentation of $\vec{\bm f}$ with $\alpha=\overline{f_n}\circ\widetilde{\Sigma} ^{m_n} g_{n,n-1}$. 
We will define an $aq\ddot{s}_2$-presentation $\{\mathscr{S}_r',\overline{f_r}',\Omega_r'\,|\,2\le r\le n\}$ of $\vec{\bm f}$ such that 
$\overline{f_n}'\circ\widetilde{\Sigma}^{m_n}g_{n,n-1}'=\alpha\circ \theta$ for some self map $\theta$ 
of $\Sigma^{|\vec{\bm m}|+n-2}X_1$ 
(notice that $\theta$ is not necessarily a homotopy equivalence). 
Now set
$$
\mathscr{S}_2'=( \Sigma ^{m_1}X_1;X_2,X_2\cup_{f_1}C \Sigma ^{m_1}X_1;f_1;i_{f_1}),\quad \Omega_2'=\{q_{f_1}'\}.
$$
Since $\mathscr{S}_2$ is a quasi iterated mapping cone and $j_{2,1}'=i_{f_1}$ is a cofibration, there exists a map (not necessarily a homotopy equivalence) $e_2:C_{2,2}'\to C_{2,2}$ 
such that $e_2\circ j_{2,1}'=j_{2,1}$. 
Set $\overline{f_2}'=\overline{f_2}\circ  \Sigma ^{m_2}e_2$. 
Then $\overline{f_2}'\circ \Sigma ^{m_2} j_{2,1}'=f_2$ and so $\overline{f_2}'$ is an 
extension of $f_2$ to $ \Sigma ^{m_2}C_{2,2}'$. 
Set 
\begin{align*}
&\mathscr{S}_3'=(\widetilde{ \Sigma }^{m_2}\mathscr{S}_2')(\overline{f_2}',\widetilde{ \Sigma }^{m_2}\Omega_2'),\ \Omega_3'=\widetilde{\widetilde{ \Sigma }^{m_2}\Omega_2'},\ 
 e_3=1_{X_3}\cup C \Sigma ^{m_2}e_2:C_{3,3}'\to C_{3,3},\\ 
&\hspace{2cm}\overline{f_3}'=\begin{cases} \overline{f_3}: \Sigma ^{m_3}C_{3,2}'= \Sigma ^{m_3}C_{3,2}\to X_4 & n=3\\ \overline{f_3}\circ  \Sigma ^{m_3}e_3: \Sigma ^{m_3}C_{3,3}'\to X_4 & n\ge 4\end{cases}.
\end{align*}
Proceeding with the construction, we have an $aq\ddot{s}_2$-presentation $\{\mathscr{S}_r',\overline{f_r}',\Omega_r'\,|\,2\le r\le n\}$ 
of $\vec{\bm f}$ and maps $e_r:C_{r,r}'\to C_{r,r}\ (2\le r\le n)$ such that  
\begin{gather*}
C_{r,s}'=C_{r,s}\ (1\le s\le r-1),\ j_{r,s}'=j_{r,s}\ (1\le s\le r-2),\\
 e_r\circ j_{r,r-1}'=j_{r,r-1}\ (2\le r\le n),\\ 
\overline{f_r}'=\begin{cases} \overline{f_n} :  \Sigma ^{m_n}C_{n,n-1}'= \Sigma ^{m_n}C_{n,n-1}\to X_{n+1} & r=n\\ \overline{f_r}\circ \Sigma ^{m_r} e_r :  \Sigma ^{m_r}C_{r,r}'\to X_{r+1} & r<n\end{cases}.
\end{gather*}
Let $\theta$ be a self map of $\Sigma^{|\vec{\bm m}|+n-2}X_1$ defined by 
\begin{align*}
\theta&=(1_{ \Sigma ^{m_{[n-1,1]}}X_1}\wedge\tau(\s^{n-2},\s^{m_n}))\\
&\hspace{5mm}\circ \Sigma ^{m_n}\Big((\widetilde{ \Sigma }^{m_{n-1}}\omega_{n-1,n-2})\circ
( \Sigma ^{m_{n-1}}e_{n-1}\cup C1_{ \Sigma ^{m_{n-1}}C_{n-1,n-2}})
\circ(\widetilde{ \Sigma }^{m_{n-1}}\omega_{n-1,n-2}')^{-1}
\Big)\\
&\hspace{5mm}\circ(1_{ \Sigma ^{m_{[n-1,1]}}X_1}\wedge\tau(\s^{m_n},\s^{n-2})).
\end{align*} 
Then $\theta\in\Gamma'$ 
and 
\begin{align*}
\{\vec{\bm f}\,\}^{(aq\ddot{s}_2)}_{\vec{\bm m}}&\ni\overline{f_n}'\circ\widetilde{\Sigma} ^{m_n} g_{n,n-1}'\\
&=\overline{f_n}'\circ \Sigma ^{m_n}\big((\overline{f_{n-1}}'\cup C1_{ \Sigma ^{m_{n-1}}C_{n-1,n-2}})\circ(\widetilde{ \Sigma }^{m_{n-1}}\omega_{n-1,n-2}')^{-1}\big)\\
&\hspace{1cm}\circ(1_{ \Sigma ^{m_{[n-1,1]}}X_1}\wedge\tau(\s^{m_n},\s^{n-2}))\\
&=\overline{f_n}\circ \Sigma ^{m_n}\big((\overline{f_{n-1}}\circ \Sigma ^{m_{n-1}}e_{n-1}\cup C1_{ \Sigma ^{m_{n-1}}C_{n-1,n-2}})\\
&\hspace{1cm}\circ(\widetilde{ \Sigma }^{m_{n-1}}\omega_{n-1,n-2}')^{-1}\big)
\circ(1_{ \Sigma ^{m_{[n-1,1]}}X_1}\wedge\tau(\s^{m_n},\s^{n-2}))\\
&=\overline{f_n}\circ \Sigma ^{m_n}\big((\overline{f_{n-1}}\cup C1_{ \Sigma _{m_{n-1}}C_{n-1,n-2}})\circ(\Sigma^{m_{n-1}}e_{n-1}\cup C_{1_{\Sigma^{m_{n-1}}C_{n-1,n-2}}})\\
&\hspace{1cm}\circ(\widetilde{\Sigma}^{m_{n-1}}\omega'_{n-1,n-2})^{-1}\big)
\circ(1_{\Sigma^{m_{[n-1,1]}}X_1}\wedge\tau(\s^{m_n},\s^{n-2}))\\
&=\overline{f_n}\circ \Sigma ^{m_n}\Big((\overline{f_{n-1}}\cup C1_{ \Sigma _{m_{n-1}}C_{n-1,n-2}})
\circ(\widetilde{\Sigma}^{m_{n-1}}\omega_{n-1,n-2})^{-1}\\
&\hspace{1cm}\circ(\widetilde{\Sigma}^{m_{n-1}}\omega_{n-1,n-2})
\circ(\Sigma^{m_{n-1}}e_{n-1}\cup C_{1_{\Sigma^{m_{n-1}}C_{n-1,n-2}}})\\
&\hspace{1cm}\circ(\widetilde{\Sigma}^{m_{n-1}}\omega'_{n-1,n-2})^{-1}\Big)
\circ(1_{\Sigma^{m_{[n-1,1]}}X_1}\wedge\tau(\s^{m_n},\s^{n-2}))\\
&=\overline{f_n}\circ \Sigma ^{m_n}g_{n,n-1}\\
&\hspace{1cm}\circ \Sigma^{m_n}\Big((\widetilde{\Sigma}^{m_{n-1}}\omega_{n-1,n-2})
\circ(\Sigma^{m_{n-1}}e_{n-1}\cup C_{1_{\Sigma^{m_{n-1}}C_{n-1,n-2}}})\\
&\hspace{1cm}\circ(\widetilde{\Sigma}^{m_{n-1}}\omega'_{n-1,n-2})^{-1}\Big)
\circ (1_{\Sigma^{m_{[n-1,1]}}X_1}\wedge\tau(\s^{m_n},\s^{n-2}))\\
&=\overline{f_n}\circ \Sigma ^{m_n}g_{n,n-1}\circ(1_{\Sigma^{m_{[n-1,1]}}X_1}\wedge\tau(\s^{m_n},\s^{n-2}))\circ(1_{\Sigma^{m_{[n-1,1]}}X_1}\wedge\tau(\s^{n-2},\s^{m_n}))\\
&\hspace{1cm}\circ \Sigma^{m_n}\Big((\widetilde{\Sigma}^{m_{n-1}}\omega_{n-1,n-2})
\circ(\Sigma^{m_{n-1}}e_{n-1}\cup C_{1_{\Sigma^{m_{n-1}}C_{n-1,n-2}}})\\
&\hspace{1cm}\circ(\widetilde{\Sigma}^{m_{n-1}}\omega'_{n-1,n-2})^{-1}\Big)
\circ (1_{\Sigma^{m_{[n-1,1]}}X_1}\wedge\tau(\s^{m_n},\s^{n-2}))\\
&=\alpha\circ \theta.
\end{align*}
Since $\{\vec{\bm f}\,\}^{(aq\ddot{s}_2)}_{\vec{\bm m}}\subset \{\vec{\bm f}\,\}^{(aq\ddot{s}_2)}_{\vec{\bm m}}\circ\Gamma =
\{\vec{\bm f}\,\}^{(\ddot{s}_t)}_{\vec{\bm m}}\circ\Gamma $ by (4), we have $\alpha\circ\theta=\beta\circ\gamma$ for some $\beta\in\{\vec{\bm f}\,\}^{(\ddot{s}_t)}_{\vec{\bm m}}$ and $\gamma\in\Gamma $. 
Set $\theta'=\theta\circ\gamma^{-1}$. 
Then $\theta'\in \Gamma'$ and $\alpha\circ\theta'=\beta\in\{\vec{\bm f}\,\}^{(\ddot{s}_t)}_{\vec{\bm m}}$. 

Finally $\Gamma'$ is a subgroup of 
$[\Sigma^{|\vec{\bm m}|+n-2}X_1,\Sigma^{|\vec{\bm m}|+n-2}X_1]$, 
since comultiplication on $\s^i$ for $i\ge 2$ is unique up to homotopy and 
the following three functions are homomorphisms: 
\begin{gather*}
\Sigma^\ell : [\Sigma^kX,\Sigma^kX]\to[\Sigma^\ell \Sigma^kX,\Sigma^\ell \Sigma^kX],\\
(1_X\wedge\tau(\s^\ell,\s^k))^*:[\Sigma^\ell \Sigma^kX,\Sigma^\ell \Sigma^kX]\to [\Sigma^k \Sigma^\ell X, \Sigma^\ell \Sigma^kX],\\
(1_X\wedge\tau(\s^k,\s^\ell))_*:[\Sigma^k \Sigma^\ell X,\Sigma^\ell \Sigma^k X]\to [\Sigma^k \Sigma^\ell X,\Sigma^k \Sigma^\ell X],
\end{gather*}
where $X=\Sigma^{m_{[n-1,1]}}X_1,\ k=n-2,\ \ell=m_n$. 
This ends the proof of Theorem~4.1(5).
\end{proof}

\begin{proof}[Proof of Corollary 4.2(1)]
This follows Theorem 4.1(2). 
\end{proof}

\begin{proof}[Proof of Corollary 4.2(2)]
Let $\alpha\in\{\vec{\bm f}\}^{(aq)}_{\vec{\bm m}}$ and $\{\mathscr{S}_r,\overline{f_r},\Omega_r\,|\,2\le r\le n\}$ an $aq$-presentation of $\vec{\bm f}$ 
with $\alpha=\overline{f_n}\circ\widetilde{\Sigma}^{m_n}g_{n,n-1}$. 
Let $\theta\in\mathscr{E}(\Sigma\Sigma^{m_1}X_1)$. 
We set 
\begin{align*}
F(\theta)&=(1_{\Sigma^{m_1}X_1}\wedge\tau(\s^{n-2},\s^{m_{[n,2]}}))\circ \Sigma^{m_{[n,2]}}\Sigma^{n-3}\theta\circ(1_{\Sigma^{m_1}X_1}\wedge\tau(\s^{m_{[n,2]}},\s^{n-2})),\\
\theta^*_r&=(1_{\Sigma^{m_1}X_1}\wedge\tau(\s^{r-1},\s^{m_{[r-1,2]}})\circ \Sigma^{m_{[r-1,2]}}\Sigma^{r-2}\theta\\
&\hspace{3cm}\circ(1_{\Sigma^{m_1}X_1}\wedge\tau(\s^{m_{[r-1,2]}},\s^{r-1})\quad (3\le r\le n),\\
\omega'_{2,1}&=\theta\circ\omega_{2,1},\quad
\omega'_{r,s}=\begin{cases}\omega_{r,s}&1\le s\le r-2\\ \theta^*_r\circ\omega_{r,s}& s=r-1\end{cases}\ (3\le r\le n).
\end{align*}
Then $\theta^*_r\in\mathscr{E}(\Sigma^{r-1}\Sigma^{m_{[r-1,1]}}X_1)$ for $3\le r\le n$, and $\Omega_r'=\{\omega'_{r,s}\,|\,1\le s<r\}$ is a quasi-structure on $\mathscr{S}_r$. 
Using the identity $S^1\wedge \s^{r-2}=\s^{r-1}$, we have easily 
\begin{equation}
\begin{split}
&\Sigma(1_{\Sigma^{m_{[r-1,1]}}X_1}\wedge\tau(\s^{r-1},\s^{m_r}))\circ \Sigma\Sigma^{m_r}\theta^*_r\\
&=
\theta^*_{r+1}\circ \Sigma(1_{\Sigma^{m_{[r-1,1]}}X_1}\wedge\tau(\s^{m_r},\s^{r-1}))\quad 
(3\le r<n).
\end{split}
\end{equation}
We define
\begin{align*}
\mathscr{S}'_2&=\mathscr{S}_2,\quad \Omega'_2=\{\omega'_{2,1}\},\quad \overline{f_2}'=\overline{f_2},\\
\mathscr{S}'_{r+1}&=(\widetilde{\Sigma}^{m_r}\mathscr{S}_r')(\overline{f_r}',\widetilde{\Sigma}^{m_r}\Omega'_r),\quad \overline{f_{r+1}}'=\overline{f_{r+1}}\quad (2\le r<n).
\end{align*}
By \cite[Remark 5.5]{OO}, the definition is possible, and 
$\mathscr{S}_r$ and $\mathscr{S}'_r$ have the same edge. 
We have 
\begin{equation}
\Omega'_{r+1}=\widetilde{\widetilde{\Sigma}^{m_r}\Omega'_r}\ (2\le r<n).
\end{equation}
The assertion (4.10) is proved as follows. 
Since $\Omega_{r+1}=\widetilde{\widetilde{\Sigma}^{m_r}\Omega_r}$, we have 
$$
\omega_{r+1,s}=\Sigma(1_{\Sigma^{m_{[r-1,r-s+1]}}X_{r-s+1}}\wedge\tau(\s^{s-1},\s^{m_r}))
\circ \Sigma\Sigma^{m_r}\omega_{r,s-1}\circ \Sigma\psi^{m_r}_{j_{r,s-1}}\circ q\circ \xi. 
$$ 
We write $\widetilde{\widetilde{\Sigma}^{m_r}\Omega'_r}=\{\omega^*_{r+1,s}\,|\,1\le s\le r\}$. 
It suffices to prove $\omega^*_{r+1,s}=\omega'_{r+1,s}$. 
This shall be done by an induction on $r$. 
We have $\omega^*_{r+1,1}=q'_{f_r}=\omega_{r+1,1}'$ by definitions. 
Suppose $2\le s\le r$. 
We have 
\begin{align*}
\omega_{3,2}^*&=\Sigma(1_{\Sigma^{m_1}X_1}\wedge\tau(\s^1,\s^{m_2}))\circ \Sigma\Sigma^{m_2}\theta\circ \Sigma\Sigma^{m_2}\omega_{2,1}\circ \Sigma\psi^{m_2}_{j_{2,1}}\circ q\circ \xi,\\
\omega_{3,2}'&=(1_{\Sigma^{m_1}X_1}\wedge\tau(\s^2,\s^{m_2}))\circ \Sigma^{m_2}\Sigma\theta\circ 
(1_{\Sigma^{m_1}X_1}\wedge\tau(\s^{m_2},\s^2))\circ \omega_{3,2}\\
&=(1_{\Sigma^{m_1}X_1}\wedge\tau(\s^2,\s^{m_2}))\circ \Sigma^{m_2}E\theta\circ 
(1_{\Sigma^{m_1}X_1}\wedge\tau(\s^{m_2},\s^2))\\
&\hspace{2cm}\circ \Sigma(1_{\Sigma^{m_1}X_1}\wedge\tau(\s^1,\s^{m_2}))\circ \Sigma\Sigma^{m_2}\omega_{2,1}\circ \Sigma\psi^{m_2}\circ q\circ\xi.
\end{align*}
As is easily seen, we have 
\begin{align*}
&\Sigma(1_{\Sigma^{m_1}X_1}\wedge\tau(\s^1,\s^{m_2}))\circ \Sigma\Sigma^{m_2}\theta\\
&=
(1_{\Sigma^{m_1}X_1}\wedge\tau(\s^2,\s^{m_2}))\circ \Sigma^{m_2}\Sigma\theta
\circ 
(1_{\Sigma^{m_1}X_1}\wedge\tau(\s^{m_2},\s^2))
\circ \Sigma(1_{\Sigma^{m_1}X_1}\wedge\tau(\s^1,\s^{m_2})).
\end{align*}
Hence $\omega_{3,2}^*=\omega'_{3,2}$. 
Suppose $\omega^*_{r+1,s}=\omega'_{r+1,s}\ (2\le s\le r)$. 
We are going to show $\omega^*_{r+2,s}=\omega'_{r+2,s}\ (2\le s\le r+1)$. 
We have $\omega^*_{r+2,s}=\omega_{r+2,s}=\omega'_{r+2,s}$ for $s\le r$, and 
\begin{align*}
&\omega'_{r+2,r+1}=\theta^*_{r+2}\circ\omega_{r+2,r+1}\\
&=\theta^*_{r+2}\circ \Sigma(1_{\Sigma^{m_{[r,1]}}X_1}\wedge\tau(\s^r,\s^{m_{r+1}}))
\circ \Sigma\Sigma^{m_{r+1}}\omega_{r+1,r}\circ \Sigma\psi^{m_{r+1}}_{j_{r+1,r}}\circ q\circ \xi\\
&=\Sigma(1_{\Sigma^{m_{[r,1]}}X_1}\wedge\tau(\s^r,\s^{m_{r+1}}))\circ \Sigma\Sigma^{m_{r+1}}\theta^*_{r+1}\circ \Sigma\Sigma^{m_{r+1}}\omega'_{r+1,r}\circ \Sigma\psi^{m_{r+1}}\circ q\circ\xi\ 
(\text{by (4.9)})\\
&=\Sigma(1_{\Sigma^{m_{[r,1]}}X_1}\wedge\tau(\s^r,\s^{m_{r+1}}))\circ \Sigma\Sigma^{m_{r+1}}\omega'_{r+1,r}\circ \Sigma\psi^{m_{r+1}}\circ q\circ \xi\\
&=\omega^*_{r+2,r+1}.
\end{align*} 
This completes the induction and ends the proof of (4.10).

By (4.10), $\{\mathscr{S}'_r,\overline{f_r}',\Omega'_r\,|\,2\le r\le n\}$ 
is an $aq$-presentation of $\vec{\bm f}$. 
Set 
$$
\theta'_{n-1}=(1_{\Sigma^{m_{[n-2,1]}}X_1}\wedge\tau(\s^{n-2},\s^{m_{n-1}}))\circ 
\Sigma^{m_{n-1}}\theta^*_{n-1}\circ (1_{\Sigma^{m_{[n-2,1]}}X_1}\wedge\tau(\s^{m_{n-1}},\s^{n-2})).
$$
Then
$\theta'_{n-1}\circ(1_{\Sigma^{m_{[n-2,1]}}X_1}\wedge\tau(\s^{n-2},\s^{m_{n-1}})) =(1_{\Sigma^{m_{[n-2,1]}}X_1}\wedge\tau(\s^{n-2},\s^{m_{n-1}}))\circ \Sigma^{m_{n-1}}\theta^*_{n-1}$ 
and, by definitions, 
$g'_{n,n-1}\circ\theta'_{n-1}\simeq g_{n,n-1}$. 
We have 
\begin{align*}
&\overline{f_n}\circ\widetilde{\Sigma}^{m_n}g_{n,n-1}=\overline{f_n}\circ \Sigma^{m_n}g_{n,n-1}
\circ (1_{\Sigma^{m_{[n-1,1]}}X_1}\wedge\tau(\s^{m_n},\s^{n-2}))\\
&\simeq \overline{f_n}\circ \Sigma^{m_n}(g'_{n,n-1}\circ\theta'_{n-1})
\circ (1_{\Sigma^{m_{[n-1,1]}}X_1}\wedge\tau(\s^{m_n},\s^{n-2}))\\
&=\overline{f_n}\circ \Sigma^{m_n}g'_{n,n-1}\circ(1_{\Sigma^{m_{[n-1,1]}}X_1}\wedge\tau(\s^{m_n},\s^{n-2}))\\
&\hspace{1cm}\circ(1_{\Sigma^{m_{[n-1,1]}}X_1}\wedge\tau(\s^{n-2},\s^{m_n}))
\circ \Sigma^{m_n}\theta'_{n-1}\circ (1_{\Sigma^{m_{[n-1,1]}}X_1}\wedge\tau(\s^{m_n},\s^{n-2}))\\
&=\overline{f_n}\circ \widetilde{\Sigma}^{m_n}g'_{n,n-1}\circ F^*(\theta),
\end{align*}
where $F^*(\theta)=(1_{\Sigma^{m_{[n-1,1]}}X_1}\wedge\tau(\s^{n-2},\s^{m_n}))
\circ \Sigma^{m_n}\theta'_{n-1}(1_{\Sigma^{m_{[n-1,1]}}X_1}\wedge\tau(\s^{m_n},\s^{n-2}))$. 
We will prove
\begin{equation}
F^*(\theta)=F(\theta),\  F(\theta^{-1})=F(\theta)^{-1}.
\end{equation}
If this is done, then $\overline{f_n}\circ\widetilde{\Sigma}^{m_n}g_{n,n-1}\simeq\overline{f_n}\circ\widetilde{\Sigma}^{m_n}g'_{n,n-1}\circ F(\theta)$ and hence $\{\vec{\bm f}\,\}^{(aq)}_{\vec{\bm m}}\subset\{\vec{\bm f}\,\}^{(aq)}_{\vec{\bm m}}\circ F(\theta)$. 
By replacing $\theta$ with $\theta^{-1}$, we have $\{\vec{\bm f}\,\}^{(aq)}_{\vec{\bm m}}\subset\{\vec{\bm f}\,\}^{(aq)}_{\vec{\bm m}}\circ F(\theta^{-1})=
\{\vec{\bm f}\,\}^{(aq)}_{\vec{\bm m}}\circ F(\theta)^{-1}$ so that 
$\{\vec{\bm f}\,\}^{(aq)}_{\vec{\bm m}}\circ F(\theta)\subset\{\vec{\bm f}\,\}^{(aq)}_{\vec{\bm m}}$. 
Hence $\{\vec{\bm f}\,\}^{(aq)}_{\vec{\bm m}}=\{\vec{\bm f}\,\}^{(aq)}_{\vec{\bm m}}\circ F(\theta)$ and we obtain Corollary~4.2(2). 

Now we prove (4.11) as follows. 
We have
\begin{align*}
&F^*(\theta)=(1_{\Sigma^{m_{[n-1,1]}}X_1}\wedge\tau(\s^{n-2},\s^{m_n}))\circ \Sigma^{m_n}\big((1_{\Sigma^{m_{[n-2,1]}}X_1}\wedge\tau(\s^{n-2},\s^{m_{n-1}}))\\
&\hspace{1cm}\circ \Sigma^{m_{n-1}}\theta^*_{n-1}\circ(1_{\Sigma^{m_{[n-2,1]}}X_1}\wedge\tau(\s^{m_{n-1}},\s^{n-2}))\big)
\circ(1_{\Sigma^{m_{[n-1,1]}}X_1}\wedge\tau(\s^{m_n},\s^{n-2}))\\
&=(1_{\Sigma^{m_{[n-1,1]}}X_1}\wedge\tau(\s^{n-2},\s^{m_n}))\circ \Sigma^{m_n}
\Big((1_{\Sigma^{m_{[n-2,1]}}X_1}\wedge\tau(\s^{n-2},\s^{m_{n-1}}))\\
&\hspace{1cm}\circ \Sigma^{m_{n-1}}
\big((1_{\Sigma^{m_1}X_1}\wedge\tau(\s^{n-2},\s^{m_{[n-2,2]}}))\circ \Sigma^{m_{[n-2,2]}}\Sigma^{n-3}\theta\\
&\hspace{1cm}\circ(1_{\Sigma^{m_1}X_1}\wedge\tau(\s^{m_{[n-2,2]}},\s^{n-2}))
\big)
\circ(1_{\Sigma^{m_{[n-2,1]}}X_1}\wedge\tau(\s^{m_{n-1}},\s^{n-2}))
\Big) \\
&\hspace{1cm}\circ (1_{\Sigma^{m_{[n-1,1]}}X_1}\wedge\tau(\s^{m_n},\s^{n-2}))\\
&=(1_{\Sigma^{m_1}X_1}\wedge\tau(\s^{n-2},\s^{m_{[n,2]}}))\circ \Sigma^{m_{[n,2]}}\Sigma^{n-3}\theta\circ(1_{\Sigma^{m_1}X_1}\wedge\tau(\s^{m_{[n,2]}},\s^{n-2}))\\
&=F(\theta).
\end{align*}
By definitions, $F(\theta^{-1})=F(\theta)^{-1}$. 
This completes the proof of (4.11) and Corollary 4.2(2).
\end{proof}

\begin{proof}[Proof of Corollary 4.2(3)] 
Suppose that $\Sigma^{m_{[n,2]}}\Sigma^{n-3}:\mathscr{E}(\Sigma\Sigma^{m_1}X_1)\to\mathscr{E}(\Sigma^{|\vec{\bm m}|+n-2}X_1)$ is surjective. 
Then the argument in the proof of Corollary 4.2(2) implies that every element of 
$\mathscr{E}(\Sigma^{|\vec{\bm m}|+n-2}X_1)$ has a form of $F(\theta)$ so that 
$\{\vec{\bm f}\}^{(aq)}_{\vec{\bm m}}=\{\vec{\bm f}\}^{(aq)}_{\vec{\bm m}}\circ\mathscr{E}(\Sigma^{|\vec{\bm m}|+n-2}X_1)$. 
As is easily seen, $\Sigma^{m_{[n,2]}}:\mathscr{E}(\Sigma^{n-3}\Sigma\Sigma^{m_1}X_1)\to
\mathscr{E}(\Sigma^{m_{[n,2]}}\Sigma^{n-3}\Sigma\Sigma^{m_1}X_1))$ 
is equal to 
$\big(1_{\Sigma^{m_1}X_1}\wedge\tau(\s^{m_{[n,2]}},\s^1\wedge\s^{n-3})\big)_\#
\circ \Sigma^{m_{[n,n-1]}}\circ \big(1_{\Sigma^{m_1}X_1}\wedge\tau(\s^1\wedge\s^{n-3})\big)_\#\circ \Sigma^{m_{[n-2,2]}}$. 
Since two functions of the form $(\ )_\#$ are isomorphisms and 
$\Sigma^{m_{[n,2]}}$ is surjective by the assumption, 
$\Sigma^{m_{[n,n-1]}}:\mathscr{E}(\Sigma^{n-3}\Sigma\Sigma^{m_{[n-2,1]}}X_1)\to \mathscr{E}(\Sigma^{m_{[n,n-1]}}\Sigma^{n-3}\Sigma\Sigma^{m_{[n-2,1]}}X_1)$ is surjective so that 
$$
\{\vec{\bm f}\}^{(q)}_{\vec{\bm m}}=\{\vec{\bm f}\}^{(aq)}_{\vec{\bm m}}\circ\mathscr{E}(\Sigma^{|\vec{\bm m}|+n-2}X_1)
$$
by Theorem 4.1(3). 
\end{proof}

\begin{proof}[Proof of Corollary 4.2(4),(5)] 
If $\{\vec{\bm f}\,\}^{(\star)}_{\vec{\bm m}}$ is not empty for some $\star$, 
then $\{\vec{\bm f}\,\}^{(q)}_{\vec{\bm m}}$ is not empty by (3.1) so that $\{\vec{\bm f}\,\}^{(aq\ddot{s}_2)}_{\vec{\bm m}}$ and 
$\{\vec{\bm f}\,\}^{(\ddot{s}_t)}_{\vec{\bm m}}$ are not empty by Theorem 4.1(5), and so $\{\vec{\bm f}\,\}^{(\star)}_{\vec{\bm m}}$ is 
not empty for every $\star$ by (3.1) and Theorem 4.1. 
This proves (4). 

If $\{\vec{\bm f}\,\}^{(\star)}_{\vec{\bm m}}$ contains $0$ for some $\star$, then $\{\vec{\bm f}\,\}^{(q)}_{\vec{\bm m}}$ contains $0$ by (3.1), 
and so $\{\vec{\bm f}\,\}^{(\star)}_{\vec{\bm m}}$ contains $0$ for every $\star$ by Theorem 4.1(5)  and (3.1). This proves (5). 
\end{proof}

\begin{proof}[Proof of Proposition 4.3(4.1)]
Let $\alpha=\overline{f_n}\circ \Sigma^{m_n}g_{n,n-1}\circ (1_{\Sigma^{m_{[n-1,1]}}X_1}\wedge\tau(\s^{m_n},\s^{n-2}))\in\{\vec{\bm f}\}^{(\star)}_{\vec{\bm m}}$. 
Then \
\begin{align*}
&f_{n+1}\circ \Sigma^{m_{n+1}}\alpha=f_{n+1}\circ \Sigma^{m_{n+1}}\overline{f_n}\circ \Sigma^{m_{n+1}}\Sigma^{m_n}g_{n,n-1}\circ \Sigma^{m_{n+1}}(1_{\Sigma^{m_{[n-1,1]}}X_1}\wedge\tau(\s^{m_n},\s^{n-2}))\\
&=(f_{n+1}\circ \Sigma^{m_{n+1}}\overline{f_n})\circ \Sigma^{m_{n+1}+m_n}g_{n,n-1}\\
&\hspace{1cm}\circ (1_{\Sigma^{m_{[n-1,1]}}X_1}\wedge\tau(\s^{m_{[n+1,n]}},\s^{n-2}))\circ (1_{m_{[n,1]}X_1}\wedge\tau(\s^{n-2},\s^{m_{n+1}}))\\
&\in\{f_{n+1}\circ \Sigma^{m_{n+1}}f_n,f_{n-1},\dots,f_1\}^{(\star)}_{(m_{n+1}+m_n,m_{n-1},\dots,m_1)}\circ (1_{\Sigma^{m_{[n,1]}}X_1}\wedge(\s^{n-2},\s^{m_{n+1}})).
\end{align*}
\end{proof}

\begin{proof}[Proof of Proposition 4.3(4.2)]
We set
\begin{gather*}
(X_{n+1}',X_n',\dots,X_1')=(X_{n+2},X_n,\dots,X_1),\ \vec{\bm m'}=(m_{n+1}+m_n,m_{n-1},\dots,m_1),\\ 
\vec{\bm f'}=(f_{n+1}\circ \Sigma^{m_{n+1}}f_n,f_{n-1},\dots,f_1),\\
(X_{n+1}^*,X_n^*,\dots,X_1^*)=(X_{n+2},X_{n+1},X_{n-1},\dots,X_1),\ \vec{\bm m^*}=(m_{n+1},m_n+m_{n-1},m_{n-2},\dots,m_1),\\ 
\vec{\bm f^*}=(f_{n+1},f_n\circ \Sigma^{m_n}f_{n-1},f_{n-2},\dots,f_1).
\end{gather*}
We should prove $\{\vec{\bm f'}\}^{(\star)}_{\vec{\bm m'}}\subset\{\vec{\bm f^*}\}^{(\star)}_{\vec{\bm m^*}}$. 
By Theorem 4.1(2),(3), it suffices to prove the assertion for $\star=aq\ddot{s}_2,\ddot{s}_t,q_2$. 

We prove the assertion only for $\star=aq\ddot{s}_2$, because the cases $\star=\ddot{s}_t,q_2$ can be treated similarly. 

Let $\alpha\in\{\vec{\bm f'}\}^{(\star)}_{\vec{\bm m'}}$ and $\{\mathscr{S}_r',\overline{f'_r},\Omega_r'\,|\,2\le r\le n\}$ an $aq\ddot{s}_2$-presentation of $\vec{\bm f'}$ such that $\alpha=\overline{f_n'}\circ \widetilde{\Sigma}^{m_n'}g'_{n,n-1}$. 
We define
\begin{gather*}
\mathscr{S}_r^*=\mathscr{S}_r',\ \Omega^*_r=\Omega'_r\ (2\le r\le n-1),\\
\overline{f^*_r}=\overline{f'_r}:\Sigma^{m_r'}C'_{r,r-1}\to X_{r+1}'=X_{r+1}=X^*_{r+1}\  (2\le r\le n-2),\\
\overline{f^*_{n-1}}=f_n\circ \Sigma^{m_n}\overline{f'_{n-1}}:\Sigma^{m^*_{n-1}}C^*_{n-1,n-1}\to X_n^*=X_{n+1},\\
\mathscr{S}^*_n=(\widetilde{\Sigma}^{m^*_{n-1}}\mathscr{S}^*_{n-1})(\overline{f^*_{n-1}},\widetilde{\Sigma}^{m^*_{n-1}}\Omega^*_{n-1}),\ \Omega^*_n=\widetilde{\widetilde{\Sigma}^{m^*_{n-1}}\Omega^*_{n-1}}.
\end{gather*}
Then
\begin{gather*}
C^*_{n,s}=X^*_n\cup_{\overline{f^*_{n-1}}^{s-1}}C\Sigma^{m^*_{s-1}}C^*_{n-1,s-1}=X_{n+1}\cup _{f_n\circ \Sigma^{m_n}\overline{f'_{n-1}}^{s-1}}C\Sigma^{m_{[n,n-1]}}C'_{n-1,s-1},\\
C^*_{n,n-1}=X_{n+1}\cup_{f_n\circ \Sigma^{m_n}\overline{f'_{n-1}}^{n-2}}C\Sigma^{m_{[n,n-1]}}C'_{n-1,n-2}.
\end{gather*}
Define $\overline{f^*_n}$ by the composite of the following maps
\begin{align*}
&\Sigma^{m^*_n}C^*_{n,n-1}=\Sigma^{m_{n+1}}(X_{n+1}\cup_{f_n\circ \Sigma^{m_n}\overline{f'_{n-1}}^{n-2}}
C\Sigma^{m_{[n,n-1]}}C'_{n-1,n-2})\\
&\xrightarrow{(\psi^{m_{n+1}}_{f_n\circ \Sigma^{m_n}\overline{f'_{n-1}}^{n-2}})^{-1}}
\Sigma^{m_{n+1}}X_{n+1}\cup_{\Sigma^{m_{n+1}}(f_n\circ \Sigma^{m_n}\overline{f'_{n-1}}^{n-2})}C\Sigma^{m_{[n+1,n-1]}}C'_{n-1,n-2}\\
&=\Sigma^{m_{n+1}}X_{n+1}\cup_{\Sigma^{m_{n+1}}f_n\circ \Sigma^{m'_n}\overline{f'_{n-1}}^{n-2}}C\Sigma^{m'_{[n,n-1]}}C'_{n-1,n-2}\\
&\xrightarrow{f_{n+1}\cup \overline{f'_n}\circ \psi^{m'_n}_{\overline{f'_{n-1}}^{n-2}}} 
X_{n+2}=X^*_{n+1}.
\end{align*}
This is a well-defined map. 
Let  $\{\mathscr{S}_r^*,\overline{f^*_r},\Omega_r^*\,|\,2\le r\le n\}$ be the collection 
obtained from $\{\mathscr{S}_r',\overline{f_r'},\Omega_r'\,|\,2\le r\le n\}$ by 
replacing $\overline{f'_{n-1}}, \mathscr{S}_n', \overline{f_n'}, \Omega_n'$ with 
$\overline{f^*_{n-1}}, \mathscr{S}^*_n, \overline{f^*_n}, \Omega_n^*$, respectively. 
Then this is an $aq\ddot{s}_2$-presentation of $\vec{\bm f^*}$ and 
$\overline{f^*_n}\circ \widetilde{\Sigma}^{m^*_n}g^*_{n,n-1}\simeq \overline{f'_n}\circ \widetilde{\Sigma}^{m'_n}g'_{n,n-1}$. 
This proves the assertion for $\star=aq\ddot{s}_2$. 
\end{proof}

For the proofs of Proposition 4.3 (4.3), (4.4), we introduce the following notations: 
\begin{gather*}
(X_{n+1}',\dots,X_1')=(X_{n+1},\dots,X_2,\Sigma^{m_0}X_0),\ \vec{\bm m'}=\vec{\bm m},\\
(f_n',\dots,f_1')=(f_n,\dots,f_2, f_1\circ \Sigma^{m_1}f_0),\\
(X_{n+1}^*,\dots,X^*_1)=(X_{n+1},\dots,X_3,X_1,X_0),\ \vec{\bm m^*}=(m_n,\dots,m_3, m_2+m_1, m_0),\\
(f_n^*,\dots,f_1^*)=(f_n,\dots,f_3, f_2\circ \Sigma^{m_2}f_1, f_0).
\end{gather*}

\begin{proof}[Proof of Proposition 4.3(4.3)]
%
First consider the case $\star=aq\ddot{s}_2$. 
We should prove $\{\vec{\bm f'}\}^{(aq\ddot{s}_2)}_{\vec{\bm m'}}\subset\{\vec{\bm f^*}\}^{(aq\ddot{s}_2)}_{\vec{\bm m^*}}$. 
Let $\alpha\in\{\vec{\bm f'}\}^{(aq\ddot{s}_2)}_{\vec{\bm m'}}$ and 
$\{\mathscr{S}_r',\overline{f'_r},\Omega_r'\,|\,2\le r\le n\}$ 
an $aq\ddot{s}_2$-presentation of $\vec{\bm f'}$ such that 
$\alpha=\overline{f'_n}\circ \widetilde{\Sigma}^{m_n'}g'_{n,n-1}$. 
Set 
\begin{align*}
&\mathscr{S}_2^*=(\Sigma^{m^*_1}X^*_1;X^*_2,X^*_2\cup_{f^*_1}C\Sigma^{m^*_1}X^*_1;f^*_1;i_{f^*_1})=(\Sigma^{m_0}X_0;X_1,X_1\cup_{f_0}C\Sigma^{m_0}X_0;f_0;i_{f_0}),\\
&\Omega^*_2=\{q'_{f^*_1}\},\ e_{2,1}=f_1:\Sigma^{m_1}C^*_{2,1}=\Sigma^{m_1}X_1\to X_2=C'_{2,1},\\
&e_{2,2}=(f_1\cup C1_{\Sigma^{m_{[1,0]}}X_0})\circ(\psi^{m_1}_{f_0})^{-1}\\
&\hspace{1cm}:\Sigma^{m_1}C^*_{2,2}=
\Sigma^{m_1}(X_1\cup_{f_0}C\Sigma^{m_0}X_0)\cong \Sigma^{m_1}X_1\cup_{\Sigma^{m_1}f_0}C\Sigma^{m_{[1,0]}}X_0\\
&\hspace{2cm}\longrightarrow X_2\cup_{f_1\circ \Sigma^{m_1}f_0}C\Sigma^{m_{[1,0]}}X_0=C'_{2,2},\\
&\overline{f^*_2}=\overline{f'_2}\circ \Sigma^{m_2}e_{2,2}:\Sigma^{m^*_2}C^*_{2,2}=\Sigma^{m_2}\Sigma^{m_1}C^*_{2,2}\to \Sigma^{m_2}C'_{2,2}\to X_3'=X_3=X^*_3.
\end{align*} 
Then $\Sigma^{m_2}e_{2,2}\circ \Sigma^{m^*_2}j^*_{2,1}=\Sigma^{m_2}j'_{2,1}\circ \Sigma^{m_2}e_{2,1}$ and 
$\overline{f^*_2}\circ \Sigma^{m^*_2}j^*_{2,1}=f^*_2$. 
We set 
\begin{align*}
&\mathscr{S}^*_3=(\widetilde{\Sigma}^{m^*_2}\mathscr{S}^*_2)(\overline{f^*_2},\widetilde{\Sigma}^{m^*_2}\Omega^*_2),\ \Omega^*_3=\widetilde{\widetilde{\Sigma}^{m^*_2}\Omega^*_2},\\
&e_{3,1}=1_{X_3}:C^*_{3,1}=X_3\to C'_{3,1}=X_3,\\
&e_{3,2}=1_{X_3}\cup C\Sigma^{m_2}e_{2,1}:C^*_{3,2}=X_3\cup_{f_2\circ \Sigma^{m_2}f_1}C\Sigma^{m_{[2,1]}}X_1\to X_3\cup_{f_2}C\Sigma^{m_2}X_2=C'_{3,2},\\
&e_{3,3}=1_{X_3}\cup C\Sigma^{m_2}e_{2,2}:C^*_{3,,3}=X_3\cup_{\overline{f'_2}\circ \Sigma^{m_2}e_{2,2}}C\Sigma^{m_{[2,1]}}C^*_{2,2}\to X_3\cup_{\overline{f'_2}}C\Sigma^{m_2}C'_{2,2}=C'_{3,3}.
\end{align*}
Then $e_{3,s+1}\circ j^*_{3,s}=j'_{3,s}\circ e_{3,s}\ (s=1,2)$ and 
$\widetilde{\Sigma}^{m^*_2}\omega^*_{2,1}=\widetilde{\Sigma}^{m_2}\omega'_{2,1}\circ(\Sigma^{m_2}e_{2,2}\cup C\Sigma^{m_2}e_{2,1})$. 
Hence $e_{3,2}\circ g^*_{3,2}\simeq g'_{3,2}$. 
We define 
$$
\overline{f^*_3}=\begin{cases} \overline{f'_3}\circ \Sigma^{m_3}e_{3,2}:\Sigma^{m^*_3}C^*_{3,2}\to X^*_4=X_4 & n=3\\
\overline{f'_3}\circ \Sigma^{m_3}e_{3,3}:\Sigma^{m^*_3}C^*_{3,3}\to X_4^*=X_4 & n\ge 4\end{cases}.
$$
Then, when $n=3$, $\{\mathscr{S}^*_r,\overline{f^*_r},\Omega^*_r\,|\,r=2,3\}$ 
is an $aq\ddot{s}_2$-presentation of $\vec{\bm f^*}$ and $\overline{f^*_3}\circ \widetilde{\Sigma}^{m^*_3}g^*_{3,2}\simeq \overline{f'_3}\circ\widetilde{\Sigma}^{m'_3}g'_{3,2}$ and 
so $\alpha\in\{\vec{\bm f^*}\}^{(aq\ddot{s}_2)}_{\vec{\bm m^*}}$ as desired. 
Suppose $n\ge 4$.  
We set
$$
\mathscr{S}^*_4=(\widetilde{\Sigma}^{m_3}\mathscr{S}^*_3)(\overline{f^*_3},\widetilde{\Sigma}^{m_3}\Omega^*_3),\ \Omega^*_4=\widetilde{\widetilde{\Sigma}^{m_3}\Omega^*_3}.
$$
Then $C^*_{4,s}=C'_{4,s}$ for $s=1,2$. 
We set
\begin{align*}
&e_{4,1}=1_{X_4}:C^*_{4,1}=X_4\to C'_{4,1}=X_4,\\
&e_{4,2}=1_{X_4}\cup C\Sigma^{m_3}e_{3,1}=1_{C^*_{4,2}}:C^*_{4,2}\to C'_{4,2},\\
&e_{4,3}=1_{X_4}\cup C\Sigma^{m_3}e_{3,2}:C^*_{4,3}=X_4\cup_{\overline{f^*_3}^2}C\Sigma^{m_3}C^*_{3,2}\to X_4\cup_{\overline{f'_3}^2}C\Sigma^{m_3}C'_{3,2}=C'_{4,3},\\
&e_{4,4}=1_{X_4}\cup C\Sigma^{m_3}e_{3,3}:C^*_{4,4}=X_4\cup_{\overline{f^*_3}}C\Sigma^{m_3}C^*_{3,3}\to X_4\cup_{\overline{f'_3}}C\Sigma^{m_3}C'_{3,3}=C'_{4,4}.
\end{align*}
Then $e_{4,s+1}\circ j^*_{4,s}=j'_{4,s}\circ e_{4,s}\ (s=1,2,3)$ and 
$\widetilde{\Sigma}^{m^*_3}\omega^*_{3,2}=\widetilde{\Sigma}^{m_3'}\omega'_{3,2}\circ (\Sigma^{m^*_3}e_{3,3}\cup C\Sigma^{m^*_3}e_{3,2})$ so that 
$e_{4,3}\circ g^*_{4,3}\simeq g'_{4,3}$. 
We define
$$
\overline{f^*_4}=\begin{cases} \overline{f'_4}\circ \Sigma^{m_4}e_{4,3}:\Sigma^{m^*_4}C^*_{4,3}\to \Sigma^{m'_4}C'_{4,3}\to X_5'=X_5^*=X_5 & n=4\\
\overline{f'_4}\circ \Sigma^{m_4}e_{4,4}:\Sigma^{m^*_4}C^*_{4,4}\to \Sigma^{m'_4}C'_{4,4}\to X_5'=X_5^*=X_5 & n\ge 5\end{cases}.
$$
When $n=4$, $\{\mathscr{S}^*_r,\overline{f^*_r},\Omega_r^*\,|\,2\le r\le 4\}$ 
is an $aq\ddot{s}_2$-presentation of $\vec{\bm f^*}$ and 
\begin{align*}
&\overline{f^*_4}\circ\widetilde{\Sigma}^{m^*_4}g^*_{4,3}=\overline{f'_4}\circ \Sigma^{m_4}e_{4,3}\circ \Sigma^{m_4}g^*_{4,3}\circ(1_{\Sigma^{m^*_{[3,1]}}X^*_1}\wedge\tau(\s^{m^*_4},\s^2))\\
&\simeq\overline{f'_4}\circ \Sigma^{m_4}g'_{4,3}\circ (1_{\Sigma^{m_{[3,0]}}X_0}\wedge\tau(\s^{m_4},\s^2))\\
&=\overline{f'_4}\circ \Sigma^{m_4}g'_{4,3}\circ(1_{\Sigma^{m'_{[3,1]}}X'_1}\wedge\tau(\s^{m_4},\s^2))=\overline{f'_4}\circ\widetilde{\Sigma}^{m'_4}g'_{4,3}
\end{align*}
so that $\alpha\in\{\vec{\bm f^*}\}^{(aq\ddot{s}_2)}_{\vec{\bm m^*}}$ as desired. 
By repeating the process, we have an $aq\ddot{s}_2$-presentation 
$\{\mathscr{S}^*_r,\overline{f^*_r},\Omega_r^*\,|\,2\le r\le n\}$ of 
$\vec{\bm f^*}$ such that $\overline{f^*_n}\circ\widetilde{\Sigma}^{m^*_n}g^*_{n,n-1}$ represents $\alpha$. 
This proves the assertion for $\star=aq\ddot{s}_2$.

Secondly we consider the case $\star=\ddot{s}_t$. 
We should prove $\{\vec{\bm f'}\}^{(\ddot{s}_t)}_{\vec{\bm m'}}\subset\{\vec{\bm f^*}\}^{(\ddot{s}_t)}_{\vec{\bm m^*}}$. 
Let $\alpha\in\{\vec{\bm f'}\}^{(\ddot{s}_t)}_{\vec{\bm m'}}$ and 
$\{\mathscr{S}_r',\overline{f_r'},\mathscr{A}_r'\,|\,2\le r\le n\}$ an 
$\ddot{s}_t$-presentation of $\vec{\bm f'}$ with 
$\alpha=\overline{f'_n}\circ \widetilde{\Sigma}^{m_n}g'_{n,n-1}$. 
Set 
\begin{gather*}
\mathscr{S}_2^*=(\Sigma^{m^*_1}X^*_1;X^*_2,X^*_2\cup_{f^*_1}C\Sigma^{m^*_1}X^*_1;f^*_1;i_{f^*_1})=(\Sigma^{m_0}X_0;X_1, X_1\cup_{f_0}C\Sigma^{m_0}X_0;f_0;i_{f_0}),\\
e_{2,2}=(f_1\cup C1_{\Sigma^{m_{[1,0]}}X_0})\circ (\psi^{m_1}_{f_0})^{-1}:\Sigma^{m_1}C^*_{2,2}\to C'_{2,2},\\
\overline{f^*_2}=\overline{f_2'}\circ \Sigma^{m_2}e_{2,2}:\Sigma^{m^*_2}C^*_{2,2}\to X_3'=X_3^*=X_3,\quad 
\mathscr{A}^*_2=\{1_{C^*_{2,2}}\},\\
\mathscr{S}_3^*=(\widetilde{\Sigma}^{m_2^*}\mathscr{S}^*_2)(\overline{f^*_2},\widetilde{\Sigma}^{m_2^*}\mathscr{A}^*_2),\quad 
e_{3,1}=1_{X_3}:C^*_{3,1}=X_3^*\to X_3'=C'_{3,1},\\
 e_{3,2}=1_{X_3}\cup C\Sigma^{m_2}f_1:C^*_{3,2}=X_3\cup_{f_2\circ \Sigma^{m_2}f_1}C\Sigma^{m_{[2,1]}}X_1\to C'_{3,2}=X_3\cup_{f_2}C\Sigma^{m_2}X_2.
\end{gather*}
By definitions the following diagram is commutative.
$$
\xymatrix{
C^*_{3,2} \ar[d]^-{e_{3,2}} & \Sigma^{m_{[2,1]}}C^*_{2,2}\cup C\Sigma^{m_{[2,1]}}X_1 \ar[d]^-{\Sigma^{m_2}e_{2,2}\cup C\Sigma^{m_2}f_1} \ar[l]_-{\overline{f^*_2}\cup C1} \ar[r]^-{\widetilde{\Sigma}^{m^*_2}\omega^*_{2,1}} & \Sigma\Sigma^{m_{[2,0]}}X_0 \ar@{=}[d]\\
C'_{3,2} & \Sigma^{m_2}C'_{2,2}\cup C\Sigma^{m_2}X_2 \ar[l]_-{\overline{f'_2}\cup C1} \ar[r]^-{\widetilde{\Sigma}^{m_2}\omega'_{2,1}} & \Sigma\Sigma^{m_{[2,0]}}X_0
}
$$
Hence $e_{3,2}\circ g^*_{3,2}\simeq g'_{3,2}$. 
Let $\mathscr{A}_3^*$ be an arbitrary reduced structure on $\mathscr{S}^*_3$. 

Let $n=3$. 
We set $\overline{f^*_3}=\overline{f'_3}\circ \Sigma^{m_3}e_{3,2}:\Sigma^{m_3}C^*_{3,2}\to X_4$. 
Then we can easily see that $\{\mathscr{S}^*_r,\overline{f^*_r},\mathscr{A}^*_r\,|\,r=2,3\}$ 
is an $\ddot{s}_t$-presentation of $\vec{\bm f^*}$ and 
$\overline{f^*_3}\circ\widetilde{\Sigma}^{m_3^*}g^*_{3,2}\simeq \overline{f'_3}\circ \widetilde{\Sigma}^{m_3}g'_{3,2}$. 
This proves the assertion for $n=3$. 

In the rest of the proof, we suppose $n\ge 4$. 
Take $J^3:e_{3,2}\circ g^*_{3,2}\simeq g'_{3,2}$ and set 
$\Phi(J^3)=\Phi(g^*_{3,2},g'_{3,2},1_{},e_{3,2};J^3):C_{g^*_{3,2}}\to C_{g'_{3,2}}$. 
Then we have the following homotopy commutative diagram by \cite[Proposition 3.3]{OO}. 
$$
\xymatrix{
&& C^*_{3,3} \ar[d]^-{a^*_{3,2}} \ar[r] & C^*_{3,3}\cup CC^*_{3,2} \ar[d]_-{a^*_{3,2}\cup C1} \ar[dr]^-{\omega^*_{3,2}} & \\
\Sigma\Sigma^{m_{[2,0]}}X_0 \ar@{=}[d] \ar[r]^-{g^*_{3,2}} & C^*_{3,2} \ar[d]_-{e_{3,2}} \ar[r] \ar[ur]^-{j^*_{3,2}} & C_{g^*_{3,2}} \ar[d]_-{\Phi(J^3)} \ar[r] & C_{g^*_{3,2}}\cup CC^*_{3,2} \ar[d]^-{\Phi(J^3)\cup Ce_{3,2}} \ar[r] & \Sigma^2 \Sigma^{m_{[2,0]}}X_0 \ar@{=}[d] \\
\Sigma\Sigma^{m_{[2,0]}}X_0 \ar[r]^-{g'_{3,2}} & C'_{3,2} \ar[r] \ar[dr]_-{j'_{3,2}} & C_{g'_{3,2}} \ar[d]^-{{a'_{3,2}}^{-1}} \ar[r] & C_{g'_{3,2}}\cup CC'_{3,2} \ar[d]_-{{a'_{3,2}}^{-1}\cup C1} \ar[r] & \Sigma^2 \Sigma^{m_{[2,0]}}X_0\\
&& C'_{3,3} \ar[r] & C'_{3,3}\cup CC'_{3,2} \ar[ur]_-{\omega'_{3,2}} &
}
$$
Set $e_{3,3}={a'_{3,2}}^{-1}\circ\Phi(J^3)\circ a^*_{3,2}:C^*_{3,3}\to C'_{3,3}$, 
$\overline{f^*_3}=\overline{f'_3}\circ \Sigma^{m_3}e_{3,3}:\Sigma^{m_3}C^*_{3,3}\to X_4$, 
$\mathscr{S}^*_4=(\widetilde{\Sigma}^{m_3}\mathscr{S}^*_3)(\overline{f^*_3},\widetilde{\Sigma}^{m_3}\mathscr{A}^*_3)$, and $e_{4,3}=1_{X_4}\cup C\Sigma^{m_3}e_{3,2}:C^*_{4,3}\to C'_{4,3}$. 
Then the following diagram is homotopy commutative. 
$$
\xymatrix{
C^*_{4,3} \ar[d]^-{e_{4,3}} & \Sigma^{m_3}C^*_{3,3}\cup C\Sigma^{m_3}C^*_{3,2} \ar[d]^-{\Sigma^{m_3}e_{3,3}\cup C\Sigma^{m_3}e_{3,2}} \ar[l]_-{\overline{f^*_3}\cup C1} \ar[r]^-{\widetilde{\Sigma}^{m^*_3}\omega^*_{3,2}} & \Sigma^2\Sigma^{m_{[3,0]}}X_0 \ar@{=}[d]\\
C'_{4,3} & \Sigma^{m_3}C'_{3,3}\cup C\Sigma^{m_3}C'_{3,2} \ar[l]_-{\overline{f'_3}\cup C1} \ar[r]^-{\widetilde{\Sigma}^{m_3}\omega'_{3,2}} & \Sigma^2\Sigma^{m_{[3,0]}}X_0
}
$$
Hence $e_{4,3}\circ g^*_{4,3}\simeq g'_{4,3}$. 
Let $\mathscr{A}_4^*$ be an arbitrary reduced structure on $\mathscr{S}^*_4$. 

Let $n=4$. 
Set $\overline{f^*_4}=\overline{f'_4}\circ \Sigma^{m_4}e_{4,3}:\Sigma^{m_4}C^*_{4,3}\to X_5$. 
Then $\{\mathscr{S}^*_r,\overline{f^*_r},\mathscr{A}^*_r\,|\,r=2,3,4\}$ is an $\ddot{s}_t$-presentation of $\vec{\bm f^*}$ and
$\overline{f^*_4}\circ \widetilde{\Sigma}^{m_4}g^*_{4,3}\simeq \overline{f'_4}\circ\widetilde{\Sigma}^{m_4}g'_{4,3}$. 
This proves the assertion for $n=4$. 

Proceeding with the process above, we have an $\ddot{s}_t$-presentation of $\vec{\bm f^*}$ with $\overline{f^*_n}\circ\widetilde{\Sigma}^{m_n}g^*_{n,n-1}\simeq \overline{f'_n}\circ\widetilde{\Sigma}^{m_n}g'_{n,n-1}$ so that the proof of the assertion for $\star=\ddot{s}_t$ is obtained. 
\end{proof}

\begin{proof}[Proof of Proposition 4.3(4.4)]
We should prove $\{\vec{\bm f}\}^{(\star)}_{\vec{\bm m}}\circ \Sigma^{|\vec{\bm m}|+n-2}f_0\subset\{\vec{\bm f'}\}^{(\star)}_{\vec{\bm m}}$ for $\star=aq\ddot{s}_2,\ddot{s}_t$. 

First we prove the assertion for $\star=aq\ddot{s}_2$. 
Let $\alpha\in\{\vec{\bm f}\}^{(aq\ddot{s}_2)}_{\vec{\bm m}}$ and 
$\{\mathscr{S}_r, \overline{f_r}, \Omega_r\,|\,2\le r\le n\}$ an 
$aq\ddot{s}_2$-presentation of $\vec{\bm f}$ such that 
$\alpha=\overline{f_n}\circ \widetilde{\Sigma}^ng_{n,n-1}$. 
We set
\begin{align*}
&\mathscr{S}'_2=(\Sigma^{m'_1}X'_1;X_2',X'_2\cup_{f'_1}C\Sigma^{m'_1}X'_1;f'_1;i_{f'_1})\\
&\hspace{0.4cm}=(\Sigma^{m_{[1,0]}}X_0;X_2,X_2\cup_{f_1\circ \Sigma^{m_1}f_0}C\Sigma^{m_{[1,0]}}X_0;f_1\circ \Sigma^{m_1}f_0;i_{f_1\circ \Sigma^{m_1}f_0}),\\
&\Omega'_2=\{q'_{f_1\circ \Sigma^{m_1}f_0}\},\\
&e_{2,1}=1_{X_2}:C'_{2,1}\to C_{2,1},\\
&e_{2.2}=1_{X_2}\cup C\Sigma^{m_1}f_0:C'_{2,2}=X_2\cup_{f_1\circ \Sigma^{m_1}f_0}C\Sigma^{m_1}\Sigma^{m_0}X_0\to X_2\cup_{f_1}C\Sigma^{m_1}X_1=C_{2,2},\\
&\overline{f'_2}=\overline{f_2}\circ \Sigma^{m_2}e_{2,2}:\Sigma^{m'_2}C'_{2,2}\to \Sigma^{m_2}C_{2,2}\to X_3=X'_3.
\end{align*}
Then $\Sigma\Sigma^{m_{[2,1]}}f_0\circ\widetilde{\Sigma}^{m'_2}\omega'_{2,1}=\widetilde{\Sigma}^{m_2}\omega_{2,1}\circ(\Sigma^{m_2}e_{2,2}\cup C \Sigma^{m_2}e_{2,1})$ so that 
$g'_{3,2}\simeq  g_{3,2}\circ \Sigma\Sigma^{m_{[2,1]}}f_0$. 
We set
\begin{align*}
&\mathscr{S}_3'=(\widetilde{\Sigma}^{m'_2}\mathscr{S}'_2)(\overline{f'_2},\widetilde{\Sigma}^{m'_2}\Omega'_2),\ \Omega'_3=\widetilde{\widetilde{\Sigma}^{m'_2}\Omega'_2},\\
&e_{3,1}=1_{X_3}:C'_{3,1}\to C_{3,1},\ e_{3,2}=1_{C_{3,2}}:C'_{3,2}\to C_{3,2},\\
&e_{3,3}=1_{X_3}\cup C\Sigma^{m_2}e_{2,2}:C'_{3,3}=X_3\cup_{\overline{f'_2}}C\Sigma^{m_2}C'_{2,2}
\to X_3\cup_{\overline{f_2}}C\Sigma^{m_2}C_{2,,2}=C_{3,3},\\
&\overline{f'_3}=\begin{cases} \overline{f_3}\circ \Sigma^{m_3}e_{3,2}=\overline{f_3}:\Sigma^{m_3}C'_{3,2}=\Sigma^{m_3}C_{3,2}\to X_4=X'_4 & n=3\\
\overline{f_3}\circ \Sigma^{m_3}e_{3,3}:\Sigma^{m_3}C'_{3,3}\to \Sigma^{m_3}C_{3,3}\to X_4=X'_4 & n\ge 4\end{cases}.
\end{align*}
When $n=3$, $\{\mathscr{S}'_r,\overline{f'_r},\Omega'_r\,|\,r=2,3\}$ is an $aq\ddot{s}_2$-presentation of $\vec{\bm f'}$ such that 
\begin{align*}
&\overline{f'_3}\circ\widetilde{\Sigma}^{m'_3}g'_{3,2}=\overline{f_3}\circ \Sigma^{m_3}e_{3,2}\circ \Sigma^{m_3}g'_{3,2}\circ(1_{\Sigma^{m'_{[2,1]}}X'_1}\wedge\tau(\s^{m_3},\s^1))\\
&\simeq\overline{f_3}\circ \Sigma^{m_3}g_{3,2}\circ \Sigma^{m_3}\Sigma\Sigma^{m_{[2,1]}}f_0\circ 
(1_{\Sigma^{m_{[2,0]}}X_0}\wedge\tau(\s^{m_3},\s^1))\\
&=\overline{f_3}\circ \Sigma^{m_3}g_{3,2}\circ(1_{\Sigma^{m_{[2,1]}}X_1}\wedge\tau(\s^{m_3},\s^1))\circ \Sigma\Sigma^{m_{[3,1]}}f_0\\
&=\overline{f_3}\circ\widetilde{\Sigma}^{m_3}g_{3,2}\circ \Sigma\Sigma^{m_{[3,1]}}f_0
\end{align*}
so that $\alpha\circ \Sigma^{|\vec{\bm m}|+1}f_0\in\{\vec{\bm f'}\}^{(aq\ddot{s}_2)}_{\vec{\bm m'}}$. 
This proves the assertion for $n=3$. 
When $n\ge 4$, we set $\mathscr{S}_4'=(\widetilde{\Sigma}^{m'_3}\mathscr{S}'_3)(\overline{f'_3},\widetilde{\Sigma}^{m'_3}\Omega'_3)$ and $\Omega'_4=\widetilde{\widetilde{\Sigma}^{m'_3}\Omega'_3}$. 
By repeating the process, we have an $aq\ddot{s}_2$-presentation 
$\{\mathscr{S}'_r,\overline{f'_r},\Omega'_r\,|\,2\le r\le n\}$ of $\vec{\bm f'}$ 
which represents $\alpha\circ \Sigma^{|\vec{\bm m}|+n-2}f_0$. 
This proves the assertion for $\star=aq\ddot{s}_2$.

Secondly we consider the case $\star=\ddot{s}_t$. 
Let $\alpha\in\{\vec{\bm f}\}^{(\ddot{s}_t)}_{\vec{\bm m}}$ and 
$\{\mathscr{S}_r, \overline{f_r}, \mathscr{A}_r\,|\,2\le r\le n\}$ an 
$\ddot{s}_t$-presentation of $\vec{\bm f}$ such that 
$\alpha=\overline{f_n}\circ \widetilde{\Sigma}^ng_{n,n-1}$. 
We set
\begin{align*}
&\mathscr{S}'_2=(\Sigma^{m'_1}X'_1;X_2',X'_2\cup_{f'_1}C\Sigma^{m'_1}X'_1;f'_1;i_{f'_1})\\
&\hspace{0.4cm}=(\Sigma^{m_{[1,0]}}X_0;X_2,X_2\cup_{f_1\circ \Sigma^{m_1}f_0}C\Sigma^{m_{[1,0]}}X_0;f_1\circ \Sigma^{m_1}f_0;i_{f_1\circ \Sigma^{m_1}f_0}),\\
&\mathscr{A}'_2=\{1_{C'_{2,2}}\},\\
&e_{2,1}=1_{X_2}:C'_{2,1}\to C_{2,1},\\
&e_{2.2}=1_{X_2}\cup C\Sigma^{m_1}f_0:C'_{2,2}=X_2\cup_{f_1\circ \Sigma^{m_1}f_0}C\Sigma^{m_1}\Sigma^{m_0}X_0\to X_2\cup_{f_1}C\Sigma^{m_1}X_1=C_{2,2},\\
&\overline{f'_2}=\overline{f_2}\circ \Sigma^{m_2}e_{2,2}:\Sigma^{m'_2}C'_{2,2}\to \Sigma^{m_2}C_{2,2}\to X_3=X'_3,\\
&\mathscr{S}_3'=(\widetilde{\Sigma}^{m'_2}\mathscr{S}'_2)(\overline{f'_2},\widetilde{\Sigma}^{m'_2}\mathscr{A}'_2),\ \mathscr{A}'_3\text{ is an arbitrary reduced structure on $\mathscr{S}_3'$}.
\end{align*}
Then $C'_{3,i}=C_{3,i}$ for $i=1,2$, and 
$\Sigma\Sigma^{m_{[1,0]}}f_0\circ\omega'_{2,1}=\omega_{2,1}\circ (e_{2,2}\cup Ce_{2,1})$ 
so that $g'_{3,2}\simeq g_{3,2}\circ \Sigma\Sigma^{m_{[2,1]}}f_0$. 
When $n=3$, we set $\overline{f'_3}=\overline{f_3}:\Sigma^{m_3}C'_{3,2}=\Sigma^{m_3}C_{3,2}\to X_4$ so that $\{\mathscr{S}_r',\overline{f_r'},\mathscr{A}_r'\,|\,r=2,3\}$ is an $\ddot{s}_t$-presentation of $\vec{\bm f'}$ with $\overline{f_3'}\circ\widetilde{\Sigma}^{m_3'}g'_{3,2}=\alpha\circ \Sigma\Sigma^{m_{[3,1]}}f_0$. 
Indeed, 
\begin{align*}
&\overline{f'_3}\circ\widetilde{\Sigma}^{m'_3}g'_{3,2}=\overline{f_3}\circ \Sigma^{m_3}g'_{3,2}\circ(1_{\Sigma^{m'_{[2,1]}}X'_1}\wedge\tau(\s^{m_3},\s^1))\\
&\simeq\overline{f_3}\circ \Sigma^{m_3}g_{3,2}\circ \Sigma^{m_3}\Sigma\Sigma^{m_{[2,1]}}f_0\circ 
(1_{\Sigma^{m_{[2,0]}}X_0}\wedge\tau(\s^{m_3},\s^1))\\
&=\overline{f_3}\circ \Sigma^{m_3}g_{3,2}\circ(1_{\Sigma^{m_{[2,1]}}X_1}\wedge\tau(\s^{m_3},\s^1))\circ \Sigma\Sigma^{m_{[3,1]}}f_0\\
&=\overline{f_3}\circ\widetilde{\Sigma}^{m_3}g_{3,2}\circ \Sigma\Sigma^{m_{[3,1]}}f_0.
\end{align*}
This proves the assertion for $n=3$. 

In the rest of the proof, suppose $n\ge 4$. 
Take $J^3:g'_{3,2}\simeq g_{3,2}\circ \Sigma\Sigma^{m_{[2,1]}}f_0$ arbitrarily and set 
$\Phi(J^3)=\Phi(g'_{3,2},g_{3,2},\Sigma\Sigma^{m_{[2,1]}}f_0,1_{C_{3,2}};J^3):C_{g'_{3,2}}\to C_{g_{3,2}}$. 
We have the following homotopy commutative diagram. 
$$
\xymatrix{
&&C'_{3,3} \ar[d]^-{a'_{3,2}} \ar[r] &C'_{3,3}\cup CC'_{3,2}\ar[d]_-{a'_{3,2}\cup C1} \ar[dr]^-{\omega'_{3,2}}&\\
\Sigma\Sigma^{m_{[2,0]}}X_0 \ar[d]_-{\Sigma\Sigma^{m_{[2,1]}}f_0} \ar[r]^-{g'_{3,2}}&C'_{3,2} \ar@{=}[d] \ar[ur] \ar[r] &C_{g'_{3,2}}\ar[d]^-{\Phi(J^3)} \ar[r] & C_{g'_{3,2}}\cup CC'_{3,2} \ar[d]^-{\Phi(J^3)\cup C1} \ar[r] & \Sigma^2\Sigma^{m_{[2,0]}}X_0 \ar[d]^-{\Sigma^2\Sigma^{m_{[2,1]}}f_0} \\
\Sigma\Sigma^{m_{[2,1]}}X_1 \ar[r]^-{g_{3,2}} & C_{3,2} \ar[dr] \ar[r] & C_{g_{3,2}} \ar[d]^-{a_{3,2}^{-1}} \ar[r] & C_{g_{3,2}}\cup CC_{3,2} \ar[d]_-{a_{3,2}^{-1}\cup C1} \ar[r] & \Sigma^2 \Sigma^{m_{[2,1]}}X_1\\
&& C_{3,3} \ar[r] & C_{3,3}\cup CC_{3,2} \ar[ur]_-{\omega_{3,2}} &
}
$$
Set $e_{3,i}=1_{C_{3,i}}:C'_{3,i}\to C_{3,i}$ for $i=1,2$, and $e_{3,3}=a_{3,2}^{-1}\circ \Phi(J^3)\circ a'_{3,2}:C'_{3,3}\to C_{3,3}$. 
Also set $\overline{f'_3}=\overline{f_3}\circ \Sigma^{m_3}e_{3,3}:\Sigma^{m_3}C'_{3,3}\to X_4=X_4'$ and $\mathscr{S}_4'=(\widetilde{\Sigma}^{m_3}\mathscr{S}'_3)(\overline{f_3'},\widetilde{\Sigma}^{m_3}\mathscr{A}_3')$. 
Then $C'_{4,i}=C_{4,i}$ for $1\le i\le 3$. 
Let $\mathscr{A}_4'$ be an arbitrary regular structure on $\mathscr{S}'_4$. 
Then $C_{4,i}'=C_{4,i}$ for $i=1,2,3$. 
We have $\Sigma^2\Sigma^{m_{[2,1]}}f_0\circ\omega'_{3,2}=\omega_{3,2}\circ (e_{3,3}\cup Ce_{3,2})$ so that $g'_{4,3}\simeq g_{4,3}\circ \Sigma^2\Sigma^{m_{[3,1]}}f_0$. 
When $n=4$, set $\overline{f_4'}=\overline{f_4}:\Sigma^{m_4}C'_{4,3}\to X_5$ so that 
$\{\mathscr{S}'_r,\overline{f_r'},\mathscr{A}'_r\,|\,r=2,3,4\}$ is an $\ddot{s}_t$-presentation of $\vec{\bm f'}$ which represents $\alpha\circ \Sigma^{|\vec{\bm m}|+2}f_0$. 

When $n\ge 5$, take $J^4:g'_{4,3}\simeq g_{4,3}\circ \Sigma^2\Sigma^{m_{[3,1]}}f_0$, set 
$\Phi(J^4)=\Phi(g'_{4,3},g_{4,3},\Sigma^2\Sigma^{m_{[3,1]}}f_0,1_{C_{4,3}};J^4):C_{g'_{4,3}}\to C_{g_{4,3}}$, and consider the following homotopy commutative diagram. 
$$
\xymatrix{
&&C'_{4,4} \ar[d]^-{a'_{4,3}} \ar[r] &C'_{4,4}\cup CC'_{4,3}\ar[d]_-{a'_{4,3}\cup C1} \ar[dr]^-{\omega'_{4,3}}&\\
\Sigma^2\Sigma^{m_{[3,0]}}X_0 \ar[d]_-{\Sigma^2\Sigma^{m_{[3,1]}}f_0} \ar[r]^-{g'_{4,3}}&C'_{4,3} \ar@{=}[d] \ar[ur] \ar[r] &C_{g'_{4,3}}\ar[d]^-{\Phi(J^4)} \ar[r] & C_{g'_{4,3}}\cup CC'_{4,3} \ar[d]^-{\Phi(J^4)\cup C1} \ar[r] & \Sigma^3\Sigma^{m_{[3,0]}}X_0 \ar[d]^-{\Sigma^3\Sigma^{m_{[3,1]}}f_0} \\
\Sigma^2\Sigma^{m_{[3,1]}}X_1 \ar[r]^-{g_{4,3}} & C_{4,3} \ar[dr] \ar[r] & C_{g_{4,3}} \ar[d]^-{a_{4,3}^{-1}} \ar[r] & C_{g_{4,3}}\cup CC_{4,3} \ar[d]_-{a_{4,3}^{-1}\cup C1} \ar[r] & \Sigma^3 \Sigma^{m_{[3,1]}}X_1\\
&& C_{4,4} \ar[r] & C_{4,4}\cup CC_{4,3} \ar[ur]_-{\omega_{4,3}} &
}
$$ 
Set $e_{4,i}=1_{C_{4,i}}:C'_{4,i}\to C_{4,i}$ for $1\le i\le 3$ and $e_{4,4}=a_{4,3}^{-1}\circ\Phi(J^4)\circ a'_{4,3}:C'_{4,4}\to C_{4,4}$.

Suppose $n=5$.  
We set $\overline{f_4'} =\overline{f_4}\circ \Sigma^{m_4}e_{4,4}:\Sigma^{m_4}C_{4,4}'\to X_5=X_5'$, $\mathscr{S}_5'=(\widetilde{\Sigma}^{m_4}\mathscr{S}_4')(\overline{f_4'},\widetilde{\Sigma}^{m_4}\mathscr{A}_4')$, and let $\mathscr{A}_5'$ be an arbitrary regular structure on $\mathscr{S}_5'$. 
Then $C'_{5,i}=C_{5,i}$ for $1\le i\le 4$ and $g'_{5,4}\simeq g_{5,4}\circ \Sigma^3\Sigma^{m_{[4,1]}}f_0$ so that $\{\mathscr{S}_r',\overline{f_r'},\mathscr{A}_r'\,|\,2\le r\le 5\}$ is 
an $\ddot{s}_t$-presentation of $\vec{f'}$ representing $\alpha\circ \Sigma^{|\vec{\bm m}|+3}f_0$. 
 
By repeating the process, we have an $\ddot{s}_t$-presentation 
$\{\mathscr{S}'_r,\overline{f'_r},\mathscr{A}'_r\,|\,2\le r\le n\}$ of $\vec{\bm f'}$ 
which represents $\alpha\circ \Sigma^{|\vec{\bm m}|+n-2}f_0$. 
This proves the assertion for $\star=\ddot{s}_t$.
\end{proof}

The two proofs above of (4.3) and (4.4) contain executing the notice 
in \cite[Remark~6.2.4]{OO}. 

\section{Suspension}
Given a sequence of non-negative integers $\vec{\bm m}=(m_n,\dots,m_1)$, maps $f_i:\Sigma^{m_i}X_i\to X_{i+1}\ (1\le i\le n)$, and an integer $\ell\ge 0$, 
we set
\begin{align*}
\widetilde{\Sigma}^\ell f_i&=\Sigma^\ell f_i\circ(1_{X_i}\wedge\tau(\s^\ell,\s^{m_i})):\Sigma^{m_i}\Sigma^\ell X_i\to \Sigma^\ell X_{i+1},\\
\widetilde{\Sigma}^\ell\vec{\bm f}&=(\widetilde{\Sigma}^\ell f_n,\dots,\widetilde{\Sigma}^\ell f_1).
\end{align*}

\begin{thm}
Under the situation above, we have 
\begin{equation}
\Sigma^\ell \{\vec{\bm f}\,\}^{(\star)}_{\vec{\bm m}}\subset
\{\widetilde{\Sigma}^\ell\vec{\bm f}\,\}^{(\star)}_{\vec{\bm m}}\circ(1_{X_1}\wedge\tau(\s^{|\vec{\bm m}|+n-2},\s^\ell))
=(-1)^{(|\vec{\bm m}|+n)\ell}\{\widetilde{\Sigma}^\ell\vec{\bm f}\,\}^{(\star)}_{\vec{\bm m}}.
\end{equation}
\end{thm}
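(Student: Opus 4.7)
The plan is to transport a $\star$-presentation of $\vec{\bm f}$ through the $\ell$-fold suspension functor $\widetilde{\Sigma}^\ell$ introduced in Section~3, using Proposition~3.1 as the central input. Given $\alpha\in\{\vec{\bm f}\,\}^{(\star)}_{\vec{\bm m}}$ with representing collection $\{\mathscr{S}_r,\overline{f_r},\Omega_r\text{ (or }\mathscr{A}_r)\mid 2\le r\le n\}$ so that $\alpha=\overline{f_n}\circ\widetilde{\Sigma}^{m_n}g_{n,n-1}$, I will form $\{\widetilde{\Sigma}^\ell\mathscr{S}_r,\widetilde{\Sigma}^\ell\overline{f_r},\widetilde{\Sigma}^\ell\Omega_r\text{ (or }\widetilde{\Sigma}^\ell\mathscr{A}_r)\mid 2\le r\le n\}$. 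By Proposition~3.1, each $\widetilde{\Sigma}^\ell\mathscr{S}_r$ is again a (reduced/quasi) iterated mapping cone of depth $r-1$ with the claimed (quasi-)structure. The main step is verifying the iterative compatibility
\[
\widetilde{\Sigma}^\ell\bigl((\widetilde{\Sigma}^{m_r}\mathscr{S}_r)(\overline{f_r},\widetilde{\Sigma}^{m_r}\Omega_r)\bigr)
=(\widetilde{\Sigma}^{m_r}\widetilde{\Sigma}^\ell\mathscr{S}_r)(\widetilde{\Sigma}^\ell\overline{f_r},\widetilde{\Sigma}^{m_r}\widetilde{\Sigma}^\ell\Omega_r),
\]
which reduces to the naturality of $\psi^\ell_{(-)}$, $\xi$, $q$, and the twist $\tau$ with respect to smashing in an extra $\s^\ell$-factor; the same check works for structures $\mathscr{A}_r$ in place of $\Omega_r$. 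This shows the suspended collection is a $\star$-presentation of $\widetilde{\Sigma}^\ell\vec{\bm f}$ for every one of the twelve symbols $\star$.

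Next I will identify the resulting bracket element. Tracing through the definitions, the terminal map of the suspended presentation is $\widetilde{\Sigma}^\ell\overline{f_n}\circ\widetilde{\Sigma}^{m_n}\widetilde{\Sigma}^\ell g_{n,n-1}$, from $\Sigma^{n-2}\Sigma^{|\vec{\bm m}|}\Sigma^\ell X_1$ to $\Sigma^\ell X_{n+1}$. Using the definition of $\widetilde{\Sigma}^\ell$ on maps and a repeated application of the identity $(1_X\wedge\tau)\circ\Sigma^k(1_X\wedge\tau)=1_X\wedge\tau$ (where the twists concern adjacent sphere factors), this composite is exactly $\Sigma^\ell\alpha\circ(1_{X_1}\wedge\tau(\s^{|\vec{\bm m}|+n-2},\s^\ell))$. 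This proves the inclusion
\[
\Sigma^\ell\{\vec{\bm f}\,\}^{(\star)}_{\vec{\bm m}}\subset\{\widetilde{\Sigma}^\ell\vec{\bm f}\,\}^{(\star)}_{\vec{\bm m}}\circ(1_{X_1}\wedge\tau(\s^{|\vec{\bm m}|+n-2},\s^\ell)).
\]

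For the final equality with sign, I will use that the target $\Sigma^{|\vec{\bm m}|+n-2}\Sigma^\ell X_1$ is a multi-suspension, so the twist self-map $1_{X_1}\wedge\tau(\s^{|\vec{\bm m}|+n-2},\s^\ell)$ is homotopic to $(-1)^{(|\vec{\bm m}|+n-2)\ell}$ times the identity, and $(-1)^{(|\vec{\bm m}|+n-2)\ell}=(-1)^{(|\vec{\bm m}|+n)\ell}$. Combined with $-\{\widetilde{\Sigma}^\ell\vec{\bm f}\,\}^{(\star)}_{\vec{\bm m}}=\{\widetilde{\Sigma}^\ell\vec{\bm f}\,\}^{(\star)}_{\vec{\bm m}}$, this gives the stated equality. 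The minus-sign invariance is Corollary~4.2(1),(2) for the symbols to which those statements apply, and for the remaining $\star\in\{q\ddot{s}_2,aq\ddot{s}_2,s_t,\dot{s}_t,\ddot{s}_t,q_2\}$ it is deduced from Theorem~4.1(4),(5) together with the same sign invariance of $\{\vec{\bm f}\,\}^{(aq\ddot{s}_2)}_{\vec{\bm m}}\circ\Gamma$, since $-1\in\Gamma$ up to homotopy.

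The principal obstacle is the bookkeeping in the first paragraph: checking that the constructions $(\mathscr{S})(\overline{f},\Omega)$, $\widetilde{\Omega}$, and $\widetilde{\mathscr{A}}$ are all natural with respect to $\widetilde{\Sigma}^\ell$ requires carefully unwinding the homeomorphisms $\psi^\ell$, $\xi$, and the twist-permutation to ensure the classification of the presentation (as one of the twelve flavors $q,aq,qs_2,\dots$) is preserved; once this is verified uniformly, the identification of the terminal map and the sign calculation are essentially formal.
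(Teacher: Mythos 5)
Your overall strategy is the same as the paper's: transport a $\star$-presentation through the $\ell$-fold suspension, identify the terminal composite with $\Sigma^\ell\alpha$ precomposed with the twist, and then read off the sign. Two points need repair, though. First, the displayed ``iterative compatibility'' cannot hold as an equality of presentations: $\Sigma^\ell(Y\cup_f CX)$ and $\Sigma^\ell Y\cup_{\Sigma^\ell f}C\Sigma^\ell X$ are related only by the canonical homeomorphism $\psi^\ell_f$, the edges $\Sigma^{s-1}\Sigma^\ell\Sigma^{m_{[r-1,r-s]}}X_{r-s}$ of $\widetilde{\Sigma}^\ell\mathscr{S}_r$ differ from the required edges $\Sigma^{s-1}\Sigma^{m_{[r-1,r-s]}}\Sigma^\ell X_{r-s}$ by a permutation of sphere factors, and reducedness of $\mathscr{S}_2$ or of $\Omega_2$ is likewise not preserved on the nose. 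This is why the paper does not suspend the given presentation but instead builds a new one $\{\mathscr{S}^*_r,\overline{\widetilde{\Sigma}^\ell f_r},\Omega^*_r\}$ by iterating the mapping-cone construction on the suspended data, together with homeomorphisms $e_{r,s}:C^*_{r,s}\approx\Sigma^\ell C_{r,s}$; the substantive work is then the two compatibility relations (5.3)--(5.4) comparing $\omega^*_{r,s}$ with $\Sigma^\ell\omega_{r,s}$ and $g^*_{r,s}$ with $\Sigma^\ell g_{r,s}$. You do flag this bookkeeping as the principal obstacle, but the statement you propose to verify should be an identification realized by such homeomorphisms, not an equality of presentations.

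Second, your derivation of the final equality invokes $-\{\widetilde{\Sigma}^\ell\vec{\bm f}\,\}^{(\star)}_{\vec{\bm m}}=\{\widetilde{\Sigma}^\ell\vec{\bm f}\,\}^{(\star)}_{\vec{\bm m}}$ for all twelve $\star$. The paper proves this only for $\star\in\{q,qs_2,q\dot{s}_2,aqs_2,aq\dot{s}_2,aq\}$; for $aq\ddot{s}_2$, $\ddot{s}_t$, etc.\ only the weaker statements after composing with $\Gamma$ or $\mathscr{E}(\Sigma^{|\vec{\bm m}|+n-2}X_1)$ are available, and $-(S\circ\Gamma)=S\circ\Gamma$ does not yield $-S=S$. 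Fortunately the claim is not needed: once the twist $1_{X_1}\wedge\tau(\s^{|\vec{\bm m}|+n-2},\s^\ell)$ is identified, under the standard identifications, with $(-1)^{(|\vec{\bm m}|+n)\ell}$ times the identity of a (multiple) suspension, precomposition multiplies every element of the bracket by that sign, which is exactly the asserted equality. Drop the appeal to sign-invariance and the final step closes on its own.
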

\begin{proof}
By Theorem 4.1, it suffices to prove (5.1) for $\star= aq\ddot{s}_2, aq, s_t, q_2$. 
We will prove the assertion for only $\star=aq\ddot{s}_2$, because other three 
cases can be proved similarly or more easily.  
Take $\alpha\in\{\vec{\bm f}\,\}^{(aq\ddot{s}_2)}_{\vec{\bm m}}$ and let 
$\{\mathscr{S}_r,\overline{f_r},\Omega_r\,|\,2\le r\le n\}$ be an $aq\ddot{s}_2$-presentation of $\vec{\bm f}$ with $\alpha=\overline{f_n}\circ \widetilde{\Sigma}^{m_n}g_{n,n-1}=\overline{f_n}\circ \Sigma^{m_n}g_{n,n-1}\circ (1_{\Sigma^{m_{[n-1,1]}}}\wedge\tau(\s^{m_n},\s^{n-2}))$. 
Set $X_i^*=\Sigma^\ell X_i$. 
Then $\widetilde{\Sigma}^\ell f_i:\Sigma^\ell X_i^*\to X_{i+1}^*$. 
We construct an $aq\ddot{s}_2$-presentation $\{\mathscr{S}^*_r,\overline{\widetilde{\Sigma}^\ell f_r},\Omega_r^*\,|\,2\le r\le n\}$ of $\widetilde{\Sigma}^\ell\vec{\bm f}$ 
such that 
\begin{equation}
\Sigma^\ell(\overline{f_n}\circ\widetilde{\Sigma}^{m_n}g_{n,n-1})\simeq 
\overline{\widetilde{\Sigma}^\ell f_n}\circ\widetilde{\Sigma}^{m_n}g^*_{n,n-1}\circ(1_{X_1}\wedge\tau(\s^{m_{[n,1]}}\wedge\s^{n-2},\s^\ell))
\end{equation}
which implies (5.1). 
We construct  $\{\mathscr{S}^*_r,\overline{\widetilde{\Sigma}^\ell f_r},\Omega_r^*\,|\,2\le r\le n\}$ 
and  homeomorphisms $e_{r,s}:C^*_{r,s}\to \Sigma^\ell C_{r,s}\ (1\le s\le r\le n,\, 2\le r)$ as follows. 
Set 
\begin{gather*}
\mathscr{S}^*_2=(\Sigma^{m_1}X_1^*;X_2^*, X_2^*\cup_{\widetilde{\Sigma}^\ell f_1}C\Sigma^{m_1}X_1^*;\widetilde{\Sigma}^\ell f_1;i_{\widetilde{\Sigma}^\ell f_1}),\\
\omega^*_{2,1}=q'_{\widetilde{\Sigma}^\ell f_1}:C^*_{2,2}\cup CC^*_{2,1}\to \Sigma\Sigma^{m_1}X^*_1,\quad e_{2,1}=1_{\Sigma^\ell X_2}:C_{2,1}^*=\Sigma^\ell C_{2,1},\\
e_{2,2}=\psi^\ell_{f_1}\circ(1_{X^*_2}\cup C(1_{X_1}\wedge\tau(\s^\ell,\s^{m_1})))
:C_{2,2}^*\approx \Sigma^\ell C_{2,2},\\
\overline{\widetilde{\Sigma}^\ell f_2}=\Sigma^\ell \overline{f_2}\circ (1_{C_{2,2}}\wedge\tau(\s^\ell,\s^{m_2}))
\circ \Sigma^{m_2}e_{2,2} : \Sigma^{m_2}C^*_{2,2}\to X^*_3,\\
\mathscr{S}^*_3=(\widetilde{\Sigma}^{m_2}\mathscr{S}^*_2)(\overline{\widetilde{\Sigma}^\ell f_2},\widetilde{\Sigma}^{m_2}\Omega^*_2),\ \Omega_3^*=\widetilde{\widetilde{\Sigma}^{m_2}\Omega^*_2},\ 
e_{3,1}=1_{X^*_3}:C^*_{3,1}\to \Sigma^\ell X_3,\\
e_{3,2}=\psi^\ell_{f_2}\circ (1_{X^*_3}\cup C(1_{X_2}\wedge\tau(\s^\ell,\s^{m_2}))\circ(1_{X^*_3}\cup C\Sigma^{m_2}e_{2,1}) : C^*_{3,2}\approx \Sigma^\ell C_{3,2},\\
e_{3,3}=\psi^\ell_{\overline{f_2}}\circ (1_{X^*_3}\cup C(1_{C_{2,2}}\wedge\tau(\s^\ell,\s^{m_2}))\circ(1_{X^*_3}\cup C\Sigma^{m_2}e_{2,2}) : C^*_{3,3}\approx \Sigma^\ell C_{3,3},\\
\overline{\widetilde{\Sigma}^\ell f_3}=\begin{cases} \Sigma^\ell\overline{f_3}\circ(1_{C_{3,2}}\wedge\tau(\s^\ell,\s^{m_3}))\circ \Sigma^{m_3}e_{3,2}  : \Sigma^{m_3} C^*_{3,2}\to X^*_4 & 3=n\\
\Sigma^\ell\overline{f_3}\circ(1_{C_{3,3}}\wedge\tau(\s^\ell,\s^{m_3}))\circ \Sigma^{m_3}e_{3,3}  : \Sigma^{m_3} C^*_{3,3}\to X^*_4 & 3<n\end{cases}.
\end{gather*}
By repeating the process, we have an $aq\ddot{s}_2$-presentation 
$\{\mathscr{S}^*_r,\overline{\widetilde{\Sigma}^\ell f_r},\Omega^*_r\,|\,2\le r\le n\}$ of $\widetilde{\Sigma}^\ell\vec{\bm f}$ 
and homeomorphisms $e_{r,s}:C^*_{r,s}\to \Sigma^\ell C_{r,s}$ such that 
\begin{gather*}
e_{r,1}=1_{\Sigma^\ell X_r},\ \Sigma^\ell j_{r,s}\circ e_{r,s}=e_{r,s+1}\circ j^*_{r,s},\\
\overline{\widetilde{\Sigma}^\ell f_r}=\begin{cases} \Sigma^\ell\overline{f_n}\circ(1_{C_{n,n-1}}\wedge\tau(\s^\ell,\s^{m_n}))\circ \Sigma^{m_n}e_{n,n-1}:\Sigma^{m_n}C^*_{n,n-1}\to X^*_{n+1} & r=n\\
\Sigma^\ell\overline{f_r}\circ(1_{C_{r,r}}\wedge\tau(\s^\ell,\s^{m_r}))\circ \Sigma^{m_r}e_{r,r}:\Sigma^{m_r}C^*_{r,r}\to X^*_{r+1} & r<n\end{cases}.
\end{gather*}
We will prove 
\begin{gather}
(1_{X_{r-s}}\wedge\tau(\s^\ell,\s^{m_{[r-1,r-s]}}\wedge\s^s))\circ \omega^*_{r,s}
=\Sigma^\ell\omega_{r,s}\circ\psi^\ell_{j_{r,s}}\circ(e_{r,s+1}\cup Ce_{r,s}),\\
e_{r,s}\circ g^*_{r,s}\simeq \Sigma^\ell g_{r,s}\circ(1_{X_{r-s}}\wedge\tau(\s^\ell,\s^{m_{[r-1,r-s]}}\wedge\s^{s-1})).
\end{gather}
If (5.4) holds, then (5.2) and hence (5.1) are obtained as follows. 
\begin{align*}
&\overline{\widetilde{\Sigma}^\ell f_n}\circ\widetilde{\Sigma}^{m_n}g^*_{n,n-1}\circ(1_{X_1}\wedge\tau(\s^{m_{[n,1]}}\wedge\s^{n-2},\s^\ell))\\
&\simeq \Sigma^\ell\overline{f_n}\circ(1_{C_{n,n-1}}\wedge\tau(\s^\ell,\s^{m_n}))\circ \Sigma^{m_n}\big(\Sigma^\ell g_{n,n-1}\circ(1_{X_1}\wedge\tau(\s^\ell,\s^{m_{[n-1,1]}}\wedge\s^{n-2}))\big)\\
&\qquad \circ (1_{\Sigma^{m_{[n-1,1]}}\Sigma^\ell X_1}\wedge\tau(\s^{m_n},\s^{n-2}))
\circ (1_{X_1}\wedge\tau(\s^{m_{[n,1]}}\wedge\s^{n-2},\s^\ell))\\
&=\Sigma^\ell\big(\overline{f_n}\circ \Sigma^{m_n}g_{n,n-1}\circ(1_{\Sigma^{m_{[n-1,1]}}X_1}\wedge\tau(\s^{m_n},\s^{n-2})\big)
=\Sigma^\ell(\overline{f_n}\circ\widetilde{\Sigma}^{m_n}g_{n,n-1}).
\end{align*}

In the rest of the proof, we prove (5.3) and (5.4). 
First we show that if (5.3) holds then (5.4) holds. 
Since 
\begin{align*}
e_{r,s}\circ(\overline{\widetilde{\Sigma}^\ell f_{r-1}}^s\cup C1_{\Sigma^{m_r}C^*_{r-1,s-1}})
&=\Sigma^\ell(\overline{f_{r-1}}^s\cup C1_{\Sigma^{m_{r-1}}C_{r-1,s-1}})\circ 
\psi^\ell_{\Sigma^{m_{r-1}}j_{r-1,s-1}}\\
&\quad\circ\big((1_{C_{r-1,s}}\wedge\tau(\s^\ell,\s^{m_{r-1}}))\cup C(1_{C_{r-1,s-1}}\wedge\tau(\s^\ell,\s^{m_{r-1}}))\big)\\
&\quad\circ (\Sigma^{m_{r-1}}e_{r-1,s}\cup C\Sigma^{m_{r-1}}e_{r-1,s-1}),
\end{align*}
we have 
$e_{r,s}\circ g^*_{r,s}\simeq \Sigma^\ell g_{r,s}\circ(1_{X_{r-s}}\wedge\tau(\s^\ell,\s^{m_{[r-1,r-s]}}\wedge\s^{s-1}))$ as desired. 

Secondly we prove (5.3) for $r=2$. 
Let $x_1\in X_1,\ u\in \s^\ell,\ s_1\in \s^{m_1},\ t\in I$. 
The map $\omega^*_{2,1}:C^*_{2,2}\cup C^*_{2,1}
\to \Sigma\Sigma^{m_1}\Sigma^\ell X_1$ maps $x_1\wedge u\wedge s_1\wedge\overline{ t}$ 
to $x_1\wedge u\wedge s_1\wedge\overline{t}$, and the map 
$\Sigma^\ell\omega_{2,1}\circ\psi^\ell_{j_{2,1}}:
\Sigma^\ell C_{2,2}\cup C\Sigma^\ell C_{2,1}\to \Sigma^\ell \Sigma \Sigma^{m_1}X_1$ maps 
$x_1\wedge s_1\wedge \overline{t}\wedge u$ to 
$x_1\wedge s_1\wedge\overline{t}\wedge u$. 
Hence 
$
(1_{X_1}\wedge\tau(\s^\ell,\s^{m_{1}}\wedge \s^1))\circ\omega^*_{2,1}
=\Sigma^\ell\omega_{2,1}\circ\psi^\ell_{j_{2,1}} \circ (e_{2,2}\cup Ce_{2,1}).
$ 
This proves (5.3) for $r=2$. 

By the induction, we can prove (5.3) for $2\le r\le n$. 
We omit the details. 
This completes the proof of Theorem 5.1 for $\star=aq\ddot{s}_2$. 
\end{proof}

\section{Homotopy invariance}

We will prove the following theorem which is (1.11) and allows us to use the notation 
$\{\vec{\bm \alpha}\,\}^{(\star)}_{\vec{\bm m}}$ instead of $\{\vec{\bm f}\,\}^{(\star)}_{\vec{\bm m}}$. 

\begin{thm}
If $\vec{\bm f},\vec{\bm f}'\in\mathrm{Rep}(\vec{\bm \alpha})$, then 
$\{\vec{\bm f}\,\}^{(\star)}_{\vec{\bm m}}=\{\vec{\bm f}'\,\}^{(\star)}_{\vec{\bm m}}$.
\end{thm}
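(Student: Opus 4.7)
The plan is to establish the inclusion $\{\vec{\bm f}\,\}^{(\star)}_{\vec{\bm m}}\subset\{\vec{\bm f}'\,\}^{(\star)}_{\vec{\bm m}}$; the opposite inclusion then follows by exchanging the roles of $\vec{\bm f}$ and $\vec{\bm f}'$. The strategy is a uniform inductive construction applicable to every one of the twelve types $\star$: given a $\star$-presentation of $\vec{\bm f}$, I would build a corresponding $\star$-presentation of $\vec{\bm f}'$ whose defining composite is homotopic to the original. Before doing this, I would note that Theorem 4.1 and Corollary 4.2 express most of the brackets as post-compositions of one of the three essential ones ($aq\ddot{s}_2$, $\ddot{s}_t$, $aq$) with subsets of $[\Sigma^{|\vec{\bm m}|+n-2}X_1,\Sigma^{|\vec{\bm m}|+n-2}X_1]$ that depend only on the spaces $X_i$ and the integers $m_i$; this already reduces all cases except $q_2$ to those three.

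Fix homotopies $H_i:f_i\simeq f_i'$ for $1\le i\le n$, and let $\{\mathscr{S}_r,\overline{f_r},\Omega_r\ (\text{or }\mathscr{A}_r)\,|\,2\le r\le n\}$ be a $\star$-presentation of $\vec{\bm f}$ realizing a given $\alpha\in\{\vec{\bm f}\,\}^{(\star)}_{\vec{\bm m}}$. I would construct, by induction on $r$, a $\star$-presentation $\{\mathscr{S}_r',\overline{f_r}',\Omega_r'\ (\text{or }\mathscr{A}_r')\,|\,2\le r\le n\}$ of $\vec{\bm f}'$ together with homotopy equivalences $e_{r,s}:C_{r,s}'\to C_{r,s}$ compatible with the cofibrations (i.e.\ $e_{r,s+1}\circ j_{r,s}'=j_{r,s}\circ e_{r,s}$) and satisfying $\overline{f_r}\circ\Sigma^{m_r}e_{r,r}\simeq \overline{f_r}'$ via a homotopy whose restriction to $\Sigma^{m_r}X_r$ is $\Sigma^{m_r}H_r$. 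For $r=2$, the quasi iterated mapping cone $\mathscr{S}_2'$ is obtained from $\mathscr{S}_2$ by substituting $f_1'$ for $f_1$; since $f_1\simeq f_1'$, the standard comparison of mapping cones of homotopic maps furnishes $e_{2,2}$ (with $e_{2,1}=1_{X_2}$), and the homotopy extension property for $\Sigma^{m_2}j_{2,1}'$ supplies an extension $\overline{f_2}'$ of $f_2'$ homotopic to $\overline{f_2}\circ \Sigma^{m_2}e_{2,2}$. The inductive step uses the same two ingredients together with the recursive definition of $\mathscr{S}_{r+1}$, taking $e_{r+1,s+1}=1_{X_{r+1}}\cup C\Sigma^{m_r}e_{r,s}$ and pulling back the extensions through the $e_{r,r}$'s. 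Chasing these equivalences through the definitions of $g_{n,n-1}$ and $g_{n,n-1}'$ then yields $\overline{f_n}'\circ\widetilde{\Sigma}^{m_n}g_{n,n-1}'\simeq\overline{f_n}\circ\widetilde{\Sigma}^{m_n}g_{n,n-1}=\alpha$, giving the desired containment.

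The main obstacle will be preserving the refinement axioms of each type of presentation---reducedness of $\Omega_2$ or $\mathscr{A}_2$, the recursive formulas $\mathscr{S}_{r+1}=(\widetilde{\Sigma}^{m_r}\mathscr{S}_r)(\overline{f_r},\widetilde{\Sigma}^{m_r}\Omega_r)$ with $\Omega_{r+1}=\widetilde{\widetilde{\Sigma}^{m_r}\Omega_r}$ for the $aq$-family, the requirement that $\mathscr{A}_r$ be reduced for $r\ge 3$ in the $s_t$-family, and the hybrid axioms defining the $q_2$-presentation---while simultaneously transferring the data through the $e_{r,s}$'s. A secondary difficulty is that the homotopy extension property delivers only the existence of $\overline{f_r}'$ and not a canonical choice, so one must verify independence of the homotopy class of the final composite from these choices; this should follow from the functoriality of the recursive construction in the homotopy class of each extension restricted to the relevant cofibration, cf.\ \cite[Remark 5.5 and Lemma 4.3]{OO}.
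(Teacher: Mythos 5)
Your proposal is correct and follows essentially the same strategy as the paper: reduce via Theorem 4.1 to the cases $\star=\ddot{s}_t,\ aq\ddot{s}_2,\ aq,\ q_2$, then inductively build a $\star$-presentation of $\vec{\bm f}'$ together with comparison homotopy equivalences $e_{r,s}:C'_{r,s}\to C_{r,s}$ compatible with the $j_{r,s}$, using the homotopy extension property to adjust the extensions $\overline{f_r}'$ and the mapping-cone comparison maps $\Phi$ to control the attaching maps $g_{r,s}$. The only organizational difference is that the paper first proves (Lemma 6.2) that one may replace a single $f_i$ by a homotopic map at a time and then chains these replacements, which localizes the bookkeeping to three cases ($i=n$, $i=1$, $2\le i\le n-1$) instead of carrying all the homotopies $H_i$ simultaneously as you propose; your version works but requires every $C_{r,s}$ and every $e_{r,s}$ to be nontrivially modified at once.
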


For $\vec{\bm f}=(f_n,\dots,f_1)\in\mathrm{Rep}(\vec{\bm \alpha})$ and 
$i\in\{1,2,\dots,n\}$, 
let $\vec{\bm f}_i\in\mathrm{Rep}(\vec{\bm \alpha})$ denote a sequence 
obtained from $\vec{\bm f}$ by replacing $f_i$ with $f_i'$ such that $f_i\simeq f_i'$, 
for example $\vec{\bm f}_2=(f_n,\dots,f_3,f'_2,f_1)$ with $f'_2\simeq f_2$. 

\begin{lemma}
If $\vec{\bm f}\in\mathrm{Rep}(\vec{\bm \alpha})$, then $\{\vec{\bm f}\,\}^{(\star)}_{\vec{\bm m}}=\{\vec{\bm f}_i\,\}^{(\star)}_{\vec{\bm m}}$ for all $\star$ and $i$. 
\end{lemma}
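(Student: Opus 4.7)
The plan is to reduce to a small collection of essential cases via Theorem~4.1 and Corollary~4.2(4), and then, given a homotopy $H:f_i\simeq f_i'$, transform an arbitrary $\star$-presentation of $\vec{\bm f}$ into a $\star$-presentation of $\vec{\bm f}_i$ whose representative is homotopic to the original. By symmetry of the roles of $f_i$ and $f_i'$, it suffices to prove the inclusion $\{\vec{\bm f}\,\}^{(\star)}_{\vec{\bm m}}\subset\{\vec{\bm f}_i\,\}^{(\star)}_{\vec{\bm m}}$; and by the inclusions recorded in (3.1) together with Theorem~4.1(2)--(5), it suffices to handle the essential symbols $\star\in\{aq\ddot{s}_2,\,aq,\,\ddot{s}_t\}$ (plus $\star=q_2$, treated analogously). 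The fundamental tool is the homotopy extension property (HEP), available at every stage because each $j_{r,s}$ is a free cofibration.

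I distinguish three subcases according to $i$. When $i=n$, the map $f_n$ enters only through its extension $\overline{f_n}:\Sigma^{m_n}C_{n,n-1}\to X_{n+1}$; HEP extends $\Sigma^{m_n}H$ to a homotopy from $\overline{f_n}$ to some extension $\overline{f_n'}$ of $f_n'$, so replacing $\overline{f_n}$ by $\overline{f_n'}$ and leaving all other data untouched produces a $\star$-presentation of $\vec{\bm f}_n$ whose representative is manifestly homotopic to $\overline{f_n}\circ\widetilde{\Sigma}^{m_n}g_{n,n-1}$. When $2\le i<n$, HEP first gives an extension $\overline{f_i'}$ of $f_i'$ and a homotopy $J:\overline{f_i}\simeq\overline{f_i'}$; applying Proposition~3.3 of \cite{OO} inductively (as in the proofs of Theorem~4.1 and Proposition~4.3 above), $J$ yields compatible homotopy equivalences $e_{r,s}:C'_{r,s}\simeq C_{r,s}$ at every level $r>i$, respecting each $j_{r,s}$. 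These let me rebuild the new data $\mathscr{S}'_r,\overline{f_r}',\Omega'_r$ (or $\mathscr{A}'_r$) for $r>i$ by precomposing the old extensions with the appropriate $\Sigma^{m_r}e_{r,r}$, and a direct tracking argument then shows $\overline{f_n}'\circ\widetilde{\Sigma}^{m_n}g'_{n,n-1}\simeq\overline{f_n}\circ\widetilde{\Sigma}^{m_n}g_{n,n-1}$. The case $i=1$ proceeds identically, starting from the standard homotopy equivalence $X_2\cup_{f_1}C\Sigma^{m_1}X_1\simeq X_2\cup_{f_1'}C\Sigma^{m_1}X_1$ induced by $H$ relative to $X_2$.

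The main obstacle is the propagation bookkeeping in the case $2\le i<n$: at each subsequent level $r>i$ one must verify that the transported structure or quasi-structure still satisfies the reduction and normalization constraints imposed by the specific symbol $\star$ (reduced structures must remain reduced under pullback by $e_{r,r}$, the identification $\Omega'_{r+1}=\widetilde{\widetilde{\Sigma}^{m_r}\Omega'_r}$ must survive in the $aq$-cases, and the compatibility conditions (3.2)--(3.5) must be respected throughout), while the cumulative discrepancy between the old and new representatives must remain a genuine homotopy rather than something weaker. This mirrors the propagation template executed repeatedly in Section~4, so no new ideas beyond HEP and Proposition~3.3 of \cite{OO} are required, but the careful level-by-level verification is where essentially all of the work lies.
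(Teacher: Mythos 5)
Your proposal follows essentially the same route as the paper's proof: the same reduction to the essential symbols via Theorem~4.1, the same symmetry argument to get one inclusion, the same three-way case split on $i$, and the same tools (HEP over the free cofibrations $j_{r,s}$ to move the extension $\overline{f_i}$ to an extension of $f_i'$, then Proposition~3.3 of \cite{OO} to produce the compatible equivalences $e_{r,s}$ and propagate them upward, precomposing the old extensions with $\Sigma^{m_r}e_{r,r}$). The level-by-level bookkeeping you flag as the remaining work is exactly what the paper carries out for $\star=aq\ddot{s}_2$.
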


From the lemma, the theorem is proved as follows: 
let $\vec{\bm f}'=(f_n',\dots,f_1')$ then 
$$
\{\vec{\bm f}\,\}^{(\star)}_{\vec{\bm m}}
=\{f_n,\dots,f_2,f_1'\,\}^{(\star)}_{\vec{\bm m}}=
\{f_n,\dots,f_3,f_2',f_1'\,\}^{(\star)}_{\vec{\bm m}}=\cdots
=\{\vec{\bm f}'\,\}^{(\star)}_{\vec{\bm m}}.
$$ 

\begin{proof}[Proof of Lemma 6.2] 
By Theorem 4.1, it suffices to prove the lemma for the cases 
$\star=\ddot{s}_t, aq\ddot{s}_2, aq, q_2$. 
We consider only the case $\star=aq\ddot{s}_2$, the other cases can be treated similarly or more easily. 
For simplicity we abbreviate $\{\ \}^{(aq\ddot{s}_2)}_{\vec{\bm m}}$ as $\{\ \}$. 
Let $\alpha\in\{\vec{\bm f}\,\}$, $\{\mathscr{S}_r,\overline{f_r},\Omega_r\,|\,2\le r\le n\}$ 
an $aq\ddot{s}_2$-presentation of $\vec{\bm f}=(f_n,\dots,f_1)$ 
with $\alpha=\overline{f_n}\circ  \Sigma ^{m_n}g_{n,n-1}\circ(1_{ \Sigma ^{m_{[n-1,1]}}X_1}\wedge\tau(\s^{m_n},\s^{n-2}))$, $\Omega_r=\{\omega_{r,s}\,|\,1\le s<r\}$ with $\omega_{r,1}=q'_{f_r}$, 
and $\Omega_{r+1}=\widetilde{\widetilde{ \Sigma }^{m_r}\Omega_r}$, 
that is, $\omega_{r+1,s+1}=\widetilde{\widetilde{ \Sigma }^{m_r}\omega_{r,s}}$. 
We are going to construct an $aq\ddot{s}_2$-presentation $\{\mathscr{S}_r',\overline{f_r}',\Omega_r'\,|\,2\le r\le n\}$ of 
$\vec{\bm f}_i$ with $\overline{f_n}'\circ  \Sigma ^{m_n}g_{n,n-1}'\circ(1_{ \Sigma ^{m_{[n-1,1]}}X_1}\wedge\tau(\s^{m_n},\s^{n-2}))=\alpha$. 
If this is done, then $\{\vec{\bm f}\,\}\subset\{\vec{\bm f}_i\}$, and by interchanging $\vec{\bm f}$ with $\vec{\bm f}_i$ 
each other we have $\{\vec{\bm f}_i\}\subset \{\vec{\bm f}\,\}$ so that $\{\vec{\bm f}\,\}=\{\vec{\bm f}_i\}$. 

We divide the proof into three cases: $i=n$; $i=1$; $2\le i\le n-1$. 

First we consider the case: $i=n$. 
Let $\vec{\bm f}_n=(f_n',f_{n-1},\dots,f_1)$ with $J^n:f_n\simeq f_n'$ and set $j=j_{n,n-2}\circ \cdots\circ j_{n,2}\circ j_{n,1}$. 
Since $j$ is a cofibration, there exists a map $H: \Sigma ^{m_n}C_{n,n-1}\times I\to X_{n+1}$ 
which makes the following diagram commutative. 
$$
\xymatrix{
&   \Sigma ^{m_n}X_n\times I \ar[dr]_-{ \Sigma ^{m_n}j\times 1_I} \ar@/^3mm/[drr]^-{J^n} & &\\
 \Sigma ^{m_n}X_n \ar[ur]^-{i_0^{ \Sigma ^{m_n}X_n}} \ar[dr]_-{ \Sigma ^{m_n}j} & &  \Sigma ^{m_n}C_{n,n-1}\times I \ar[r]^-H & X_{n+1}\\
&   \Sigma ^{m_n}C_{n,n-1} \ar[ur]^-{i_0^{ \Sigma ^{m_n}C_{n,n-1}}} \ar@/_/[urr]_-{\overline{f_n}} & &
}
$$
Let $\{\mathscr{S}_r',\overline{f_r}',\Omega_r'\,|\,2\le r\le n\}$ be the 
family obtained from $\{\mathscr{S}_r,\overline{f_r},\Omega_r\,|\,2\le r\le n\}$ by replacing 
$\overline{f_n}$ with $H_1$. 
Then the new family is an $aq\ddot{s}_2$-presentation of $\vec{\bm f}_n$ such that it represents 
$\alpha=H_1\circ  \Sigma ^{m_n}g_{n,n-1}\circ(1_{ \Sigma ^{m_{[n-1,1]}}X_1}\wedge\tau(\s^{m_n},\s^{n-2}))\in\{\vec{\bm f}_n\}$. 
Hence $\{\vec{\bm f}\,\}=\{\vec{\bm f}_n\}$. 

Secondly we consider the case: $i=1$. 
Let $\vec{\bm f}_1=(f_n,\dots,f_2,f_1')$ with $J^1:f_1\simeq f_1'$. 
We define $\mathscr{S}_r',\overline{f_r}',\Omega_r'$, and $e_{r,s}:C_{r,s}'\simeq C_{r,s}$ inductively for $r=2,3,\dots,n$ as follows. 
Set 
\begin{gather*}
\mathscr{S}_2'=( \Sigma ^{m_1}X_1;X_2,X_2\cup_{f_1'}C \Sigma ^{m_1}X_1;f_1';i_{f_1'}),\\
e_{2,2}=\Phi(f_1',f_1,1_{ \Sigma ^{m_1}X_1},1_{X_2};-J^1):C_{2,2}'=X_2\cup_{f_1'}C \Sigma ^{m_1}X_1\to C_{2,2}=X_2\cup_{f_1}C \Sigma ^{m_1}X_1, \\
e_{2,1}=1_{X_2},\quad \overline{f_2}'=\overline{f_2}\circ  \Sigma ^{m_2}e_{2,2}: \Sigma ^{m_2}C_{2,2}'\to X_3, \quad \omega_{2,1}'=q'_{f_1'},\ \Omega_2'=\{\omega_{2,1}'\}. 
\end{gather*}
Then $e_{2,2}\circ j_{2,1}'=j_{2,1}$, and $\omega_{2,1}'\simeq \omega_{2,1}\circ(e_{2,2}\cup Ce_{2,1})$ 
by Proposition~3.3(2) of \cite{OO} and so $(e_{2,2}\cup Ce_{2,1})\circ\omega_{2,1}'^{-1}\simeq \omega_{2,1}^{-1}$. 
Set $\mathscr{S}_3'=(\widetilde{ \Sigma }^{m_2}\mathscr{S}_2')(\overline{f_2}',\widetilde{ \Sigma }^{m_2}\Omega_2')$ and $\Omega_3'=\widetilde{\widetilde{ \Sigma }^{m_2}\Omega_2'}$. 
Then $C_{3,s}'=C_{3,s}$ for $s=1,2$. 
Set 
\begin{align*}
e_{3,3}&=1_{X_3}\cup C \Sigma ^{m_2}e_{2,2}: C_{3,3}'=X_3\cup_{\overline{f_2}'}C \Sigma ^{m_2}C_{2,2}'\to C_{3,3}=X_3\cup_{\overline{f_2}}C \Sigma ^{m_2}C_{2,2},\\
e_{3,s}&=1_{C_{3,s}}\ (s=1,2),\quad \overline{f_3}'=\begin{cases} \overline{f_3}\circ  \Sigma ^{m_3}e_{3,2} & n=3\\ \overline{f_3}\circ  \Sigma ^{m_3}e_{3,3} & n\ge 4\end{cases}.
\end{align*}
Then we have 
\begin{align*}
g_{3,2}'&=(\overline{f_2}'\cup C1_{ \Sigma ^{m_2}X_2})\circ(\widetilde{ \Sigma }^{m_2}\omega_{2,1}')^{-1}\\
&=(\overline{f_2}\cup C1_{ \Sigma ^{m_2}X_2})\circ ( \Sigma ^{m_2}e_{2,2}\cup C1_{ \Sigma ^{m_2}X_2})\circ(\widetilde{ \Sigma }^{m_2}\omega_{2,1}')^{-1}\\
&\simeq (\overline{f_2}\cup C1_{ \Sigma ^{m_2}X_2})\circ(\widetilde{ \Sigma }^{m_2}\omega_{2,1})^{-1}=g_{3,2},\\
\omega_{3,1}&\circ(e_{3,2}\cup Ce_{3,1})=\omega_{3,1}=\omega_{3,1}',\quad 
\omega_{3,2}\circ(e_{3,3}\cup Ce_{3,2})\simeq\omega_{3,2}'.
\end{align*}
When $n=3$, $\{\mathscr{S}_r',\overline{f_r}',\Omega_r'\,|\,r=2,3\}$ is an $aq\ddot{s}_2$-presentation of $(f_3,f_2,f_1')$ such that 
$\overline{f_3}'\circ \Sigma ^{m_3} g_{3,2}'\circ(1_{ \Sigma ^{m_2} \Sigma ^{m_1}X_1}\wedge\tau(\s^{m_3},\s^1))=\alpha$. 
When $n\ge 4$, set $\mathscr{S}_4'=(\widetilde{ \Sigma }^{m_3}\mathscr{S}_3')(\overline{f_3}',\widetilde{ \Sigma }^{m_3}\Omega_3')$ and $\Omega_4'=\widetilde{\widetilde{ \Sigma }^{m_3}\Omega_3'}$. 
Then $C_{4,s}'=C_{4,s}$ for $1\le s\le 3$. 
Set 
$$
e_{4,s}=\begin{cases} 1_{C_{4,s}} & 1\le s\le 3\\ 1_{X_4}\cup C \Sigma ^{m_3}e_{3,3} & s=4\end{cases},\quad 
\overline{f_4}'=\begin{cases} \overline{f_4}\circ \Sigma ^{m_4} e_{4,3} & n=4\\ \overline{f_4}\circ  \Sigma ^{m_4}e_{4,4} & n\ge 5\end{cases}.
$$
We have $g_{4,1}'=g_{4,1}$, $g_{4,2}'=g_{4,2}$, and 
\begin{align*}
g_{4,3}'&=(\overline{f_3}'\cup C1_{ \Sigma ^{m_3}C_{3,2}'})\circ(\widetilde{ \Sigma }^{m_3}\omega_{3,2}')^{-1}\\
&=(\overline{f_3}\cup C1_{ \Sigma ^{m_3}C_{3,2}})\circ( \Sigma ^{m_3}e_{3,3}\cup C \Sigma ^{m_3}e_{3,2})\circ(\widetilde{ \Sigma }^{m_3}\omega_{3,2}')^{-1}\\
&\simeq(\overline{f_3}\cup C1_{ \Sigma ^{m_3}C_{3,2}})
\circ(\widetilde{ \Sigma }^{m_3}\omega_{3,2})^{-1}=g_{4,3}
\end{align*}
If $n=4$, then 
$$
\overline{f_4}'\circ \Sigma ^{m_4} g_{4,3}'\circ(1_{ \Sigma ^{m_{[3,1]}}X_1}\wedge\tau(\s^{m_3},\s^2))\simeq \overline{f_4}\circ  \Sigma ^{m_4}g_{4,3}\circ(1_{ \Sigma ^{m_{[3,1]}}X_1}\wedge\tau(\s^{m_3},\s^2))
$$ and so 
$\{\vec{\bm f}\,\}^{(aq\ddot{s}_2)}=\{\vec{\bm f_1}\,\}^{(aq\ddot{s}_2)}$. 
Continuing the process, we obtain a desired $aq\ddot{s}_2$-presentation of $\vec{\bm f}_1$. 

Thirdly we consider the case: $2\le i\le n-1$. 
We prove only the case $i=2$, because other cases can be proved similarly. 
Let $\vec{\bm f}_2=(f_n,\dots,f_3,f_2',f_1)$ with $J^2:f_2\simeq f_2'$. 
Set $\mathscr{S}_2'=\mathscr{S}_2$ and $\Omega_2'=\Omega_2$. 
Then $C_{2,s}'=C_{2,s}$, $j_{2,1}'=j_{2,1}=i_{f_1}$, and $\omega_{2,1}'=\omega_{2,1}=q_{f_1}'$. 
Set $e_{2,s}=1_{C_{2,s}}$ for $1\le s\le 2$. 
Since $ \Sigma ^{m_2}j_{2,1}= \Sigma ^{m_2}i_{f_1}: \Sigma ^{m_2}X_2\to \Sigma ^{m_2} C_{2,2}= \Sigma ^{m_2}(X_2\cup_{f_1}C \Sigma ^{m_1}X_1)$ is a free cofibration, there exists $H^2: \Sigma ^{m_2}C_{2,2}\times I\to X_3$ 
such that $H^2\circ i_0^{ \Sigma ^{m_2}C_{2,2}}=\overline{f_2}$ and $H\circ ( \Sigma ^{m_2}i_{f_1}\times 1_I)=J^2$. 
Then the following diagram is commutative. 
$$
\xymatrix{
&  \Sigma ^{m_2}X_2\times I \ar[dr]_-{ \Sigma ^{m_2}i_{f_1}\times 1_I} \ar@/^6mm/[drr]_-{J^2} & & \\
 \Sigma ^{m_2}X_2 \ar[dr]_-{ \Sigma ^{m_2}i_{f_1}} \ar[ur]^-{i_0} & &  \Sigma ^{m_2}(X_2\cup_{f_1}CX_1)\times I \ar[r]^-{H^2} & X_3\\
&  \Sigma ^{m_2}(X_2\cup_{f_1}C \Sigma ^{m_1}X_1) \ar[ur]^-{i_0} \ar@/_/[urr]_-{\overline{f_2}} & &
}
$$
Set $\overline{f_2}'=H^2_1: \Sigma ^{m_2}C_{2,2}'= \Sigma ^{m_2}C_{2,2}\to X_3$. 
Set 
\begin{align*}
&\mathscr{S}_3'=(\widetilde{ \Sigma }^{m_2}\mathscr{S}_2')(\overline{f_2}',\widetilde{ \Sigma }^{m_2}\Omega_2'),\quad \Omega_3'=\widetilde{\widetilde{ \Sigma }^{m_2}\Omega_2'},\\
&e_{3,3}=\Phi(\overline{f_2}',\overline{f_2},1_{ \Sigma ^{m_2}C_{2,2}},1_{X_3};-H^2)\\
&\hspace{2cm}:C_{3,3}'=X_3\cup_{\overline{f_2}'}C \Sigma ^{m_2}C_{2,2}'
\to C_{3,3}=X_3\cup_{\overline{f_2}}C \Sigma ^{m_2}C_{2,2},\\
&e_{3,2}=\Phi(f_2',f_2,1_{ \Sigma ^{m_2}X_2},1_{X_3};-J^2)\\
&\hspace{2cm}:C_{3,2}'=X_3\cup_{f_2'}C \Sigma ^{m_2}X_2\to C_{3,2}=X_3\cup_{f_2}C \Sigma ^{m_2}X_2,\\
&e_{3,1}=1_{X_3}:C_{3,1}'\to C_{3,1},\quad 
\overline{f_3}'=\begin{cases} \overline{f_3}\circ  \Sigma ^{m_3}e_{3,2} & n=3\\ \overline{f_3}\circ  \Sigma ^{m_3}e_{3,3} & n\ge 4\end{cases}.
\end{align*}
Then $e_{3,3}\circ j_{3,2}'=j_{3,2}\circ e_{3,2}$, $e_{3,2}\circ j_{3,1}'=j_{3,2}\circ e_{3,1}$, and $e_{3,1}\circ g_{3,1}'=f_2'\simeq f_2=g_{3,1}$. 
We will prove 
$$
e_{3,2}\circ g_{3,2}'\simeq g_{3,2}\quad  i.e.\quad e_{3,2}\circ(\overline{f_2}'\cup C1_{ \Sigma ^{m_2}X_2})\simeq \overline{f_2}\cup C1_{ \Sigma ^{m_2}X_2}.
$$
We have
\begin{align*}
&e_{3,2}\circ (\overline{f_2}'\cup C1_{ \Sigma ^{m_2}X_2})\\
&\simeq\Phi (f_2',f_2,1_{ \Sigma ^{m_2}X_2},1_{X_3};-J^2)\circ \Phi ( \Sigma ^{m_2}i_{f_1},f_2',1_{ \Sigma ^{m_2}X_2},\overline{f_2}';1_{f_2'})\\
&\hspace{6cm} (\text{by \cite[Proposition 3.3(3)]{OO}})\\
&\simeq\Phi ( \Sigma ^{m_2}i_{f_1},f_2,1_{ \Sigma ^{m_2}X_2},\overline{f_2}';((-J^2)\bar\circ 1_{1_{ \Sigma ^{m_2}X_2}})\bullet(1_{1_{X_3}}\bar\circ 1_{f_2'}))\\ 
&\hspace{6cm}(\text{by \cite[Proposition 3.3(1)(d)]{OO}})\\
&\simeq \Phi( \Sigma ^{m_2}i_{f_1},f_2,1_{ \Sigma ^{m_2}X_2},\overline{f_2}';-J^2)\\
&\hspace{2cm} (\text{by \cite[Proposition 3.3(5)]{OO} 
and $((-J^2)\bar\circ 1_{1_{ \Sigma ^{m_2}X_2}})\bullet(1_{1_{X_3}}\bar\circ 1_{f_2'})\overset{ \Sigma ^{m_2}X_2}{\simeq}-J^2$}).
\end{align*}
We define $\widetilde{J}^t:  \Sigma ^{m_2}X_2\times I\to X_3$ for $t\in I$ by $\widetilde{J}^t(z,u)=(-J^2)(z,t+u-tu)$. 
Then $\widetilde{J}^t:(-H^2)_t\circ \Sigma ^{m_2} i_{f_1}\simeq f_2\circ 1_{ \Sigma ^{m_2}X_2}$ and so 
\begin{align*}
&\Phi( \Sigma ^{m_2}i_{f_1},f_2,1_{ \Sigma ^{m_2}X_2},\overline{f_2}';-J^2)\\
&\qquad\simeq\Phi( \Sigma ^{m_2}i_{f_1},f_2,1_{ \Sigma ^{m_2}X_2},\overline{f_2};1_{f_2})\ (\text{by \cite[Proposition 3.3(4)]{OO}})\\
&\qquad\simeq \overline{f_2}\cup C1_{ \Sigma ^{m_2}X_2}\ (\text{by \cite[Proposition 3.3(3)]{OO}}).
\end{align*}
Therefore $e_{3,2}\circ(\overline{f_2}'\cup C1_{ \Sigma ^{m_2}X_2})\simeq \overline{f_2}\cup C1_{ \Sigma ^{m_2}X_2}$ as desired. 
When $n=3$, $\{\mathscr{S}_r',\overline{f_r}',\Omega_r'\,|\,r=2,3\}$ is an $aq\ddot{s}_2$-presentation of $(f_3,f_2',f_1)$ such that 
$\overline{f_3}'\circ g_{3,2}'=\overline{f}_3\circ e_{3,2}\circ g_{3,2}'\simeq \overline{f_3}\circ g_{3,2}$ 
so that $\{\vec{\bm f}\,\}^{(aq\ddot{s}_2)}=\{\vec{\bm f_2}\,\}^{(aq\ddot{s}_2)}$. 
When $n\ge 4$, by repeating the process above, we have $\mathscr{S}_r'$, $\overline{f_r}'$, $\Omega_r'$, and 
$e_{r,s}:C_{r,s}'\simeq C_{r,s}$ for $3\le r\le n$ such that 
\begin{gather*}
\mathscr{S}_r'=(\widetilde{ \Sigma }^{m_{r-1}}\mathscr{S}_{r-1}')(\overline{f_{r-1}}',\widetilde{ \Sigma }^{m_{r-1}}\Omega_{r-1}'),\ \Omega_r'=\widetilde{\widetilde{ \Sigma }^{m_{r-1}}\Omega_{r-1}'},\\
C_{r,s}'=C_{r,s},\ e_{r,s}=1_{C_{r,s}}\ (1\le s\le r-2),\quad j_{r,s}'=j_{r,s}\ (1\le s\le r-3),\\
e_{r,s+1}\circ j_{r,s}'=j_{r,s}\circ e_{r,s}\ (1\le s\le r-1),\\
e_{r,s}\circ g_{r,s}'\simeq g_{r,s}\ (1\le s\le r-1),\\
\overline{f_r}'=\begin{cases} \overline{f_n}\circ  \Sigma ^{m_n}e_{n,n-1} & r=n\\
\overline{f_r}\circ  \Sigma ^{m_r}e_{r,r} & r<n\end{cases}.
\end{gather*}
Therefore $\{\mathscr{S}_r',\overline{f_r}',\Omega_r'\,|\,2\le r\le n\}$ is an $aq\ddot{s}_2$-presentation of $\vec{{\bm f}_2}$ 
such that $\overline{f_n}'\circ  \Sigma ^{m_n}g_{n,n-1}'=\overline{f_n}\circ \Sigma ^{m_n}e_{n,n-1}\circ \Sigma ^{m_n}g_{n,n-1}'\simeq\overline{f_n}\circ \Sigma ^{m_n}g_{n,n-1}$. 
This ends the proof for the case of $i=2$ and completes the proof of Lemma 6.2 (for the case of $\star=aq\ddot{s}_2$). 
\end{proof}

\section{3-fold brackets}
In this section we prove (1.12). 
\begin{thm}
$\{f_3,f_2,f_1\}_{(m_3,m_2,m_1)}^{(\ddot{s}_t)}=\{f_3,f_2,f_1\}_{(m_3,m_2,m_1)}^{(aq\ddot{s}_2)}=\{f_3,f_2, \Sigma ^{m_2}f_1\}_{m_3}$. 
\end{thm}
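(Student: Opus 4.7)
The plan is to prove the two equalities in turn, taking advantage of the collapse of coherence data when $n=3$.

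For the first equality $\{f_3,f_2,f_1\}^{(\ddot{s}_t)}_{\vec{\bm m}}=\{f_3,f_2,f_1\}^{(aq\ddot{s}_2)}_{\vec{\bm m}}$, I would argue that $\ddot{s}_t$- and $aq\ddot{s}_2$-presentations for $n=3$ rest on essentially the same data. In both cases $\mathscr{S}_2$ is the reduced iterated mapping cone with $\Omega_2=\{q'_{f_1}\}$ (since the only admissible reduced structure is $\mathscr{A}_2=\{1_{C_{2,2}}\}$, whose induced quasi-structure is $\Omega_2$), and $\mathscr{S}_3=(\widetilde{\Sigma}^{m_2}\mathscr{S}_2)(\overline{f_2},\widetilde{\Sigma}^{m_2}\Omega_2)$ is the same iterated mapping cone in either case. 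Any $aq\ddot{s}_2$-presentation admits a reduced structure $\mathscr{A}_3$ realising $\Omega_3=\widetilde{\widetilde{\Sigma}^{m_2}\Omega_2}$, and conversely any $\ddot{s}_t$-presentation can be converted to an $aq\ddot{s}_2$-presentation by replacing its $\omega_{3,2}$ with the canonical one; the resulting change in $g_{3,2}$ is by a self-equivalence of $\Sigma\Sigma^{m_{[2,1]}}X_1$ that can be absorbed by homotoping $\overline{f_3}$. Together with Theorem 4.1(4), which for $n=3$ gives $\{\vec{\bm f}\,\}^{(\ddot{s}_t)}_{\vec{\bm m}}\circ\Gamma=\{\vec{\bm f}\,\}^{(aq\ddot{s}_2)}_{\vec{\bm m}}\circ\Gamma$, this yields the desired equality of sets.

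For the second equality $\{f_3,f_2,f_1\}^{(aq\ddot{s}_2)}_{\vec{\bm m}}=\{f_3,f_2,\Sigma^{m_2}f_1\}_{m_3}$, I would explicitly match the two constructions. An $aq\ddot{s}_2$-presentation for $n=3$ is precisely a pair of extensions $\overline{f_2}:\Sigma^{m_2}(X_2\cup_{f_1}C\Sigma^{m_1}X_1)\to X_3$ of $f_2$ and $\overline{f_3}:\Sigma^{m_3}(X_3\cup_{f_2}C\Sigma^{m_2}X_2)\to X_4$ of $f_3$. Via the canonical homeomorphism $\psi^{m_2}_{f_1}:\Sigma^{m_2}X_2\cup_{\Sigma^{m_2}f_1}C\Sigma^{m_2+m_1}X_1\approx\Sigma^{m_2}(X_2\cup_{f_1}C\Sigma^{m_1}X_1)$, the extension $\overline{f_2}$ is equivalent to a null-homotopy of $f_2\circ\Sigma^{m_2}f_1$, and $\overline{f_3}$ to a null-homotopy of $f_3\circ\Sigma^{m_3}f_2$; this pair is exactly the data defining the classical bracket $\{f_3,f_2,\Sigma^{m_2}f_1\}_{m_3}$. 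The representative $\overline{f_3}\circ\widetilde{\Sigma}^{m_3}g_{3,2}$, with $g_{3,2}\simeq(\overline{f_2}\cup C1_{\Sigma^{m_2}X_2})\circ(\widetilde{\Sigma}^{m_2}q'_{f_1})^{-1}$, then matches, after tracing through the twist $\tau(\s^{m_3},\s^1)$ and the identification $\psi^{m_2}_{f_1}$, Toda's classical representative; the translation of conventions is recorded in \cite[Remark 3.1]{OO1}.

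The main obstacle I foresee is the bookkeeping of the coordinate-swap maps $\tau(\s^a,\s^b)$ and the canonical homeomorphisms $\psi^{m_i}_{f_j}$ intertwined in the formulas for $g_{3,2}$ and $\widetilde{\Sigma}^{m_3}g_{3,2}$. These are all homeomorphisms and do not alter the bracket as a set of homotopy classes, but they must be traced carefully to recognise the classical representative without introducing spurious signs or permutation factors. Since Theorem 7.1 is an equality of sets (not merely up to sign), every such discrepancy must cancel; the relevant cancellations follow from $\tau(\s^a,\s^b)\circ\tau(\s^b,\s^a)=1$ together with Corollary 4.2, which guarantees that the bracket is invariant under precomposition with the coordinate swaps in question.
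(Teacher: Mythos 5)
Your overall strategy matches the paper's: the first equality is, as you say, essentially a matter of unwinding the definitions for $n=3$, and the second is proved by the dictionary (via $\psi^{m_2}_{f_1}$, $\psi^{m_3}_{f_2}$, \cite[(4.2)]{OO} and \cite[Lemma 2.4]{OO1}) between extensions over mapping cones and null homotopies of composites, which is exactly the paper's argument in both directions. Two auxiliary points in your first paragraph are off, though neither is needed. First, $g_{3,2}$ is determined by $\omega_{2,1}$ (through $(\widetilde{\Sigma}^{m_2}\omega_{2,1})^{-1}$), not by $\omega_{3,2}$; the top-level structure $\mathscr{A}_3$ or quasi-structure $\Omega_3$ plays no role whatsoever in the defining element $\overline{f_3}\circ\widetilde{\Sigma}^{m_3}g_{3,2}$, so there is nothing to ``replace'' and no self-equivalence to ``absorb into $\overline{f_3}$'' --- the correct observation is simply that both kinds of presentation are built from the identical data $(\mathscr{S}_2,\ \Omega_2=\{q'_{f_1}\},\ \overline{f_2},\ \overline{f_3})$ and yield the identical representative. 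Second, the appeal to Theorem 4.1(4) is logically invalid as stated: $\{\vec{\bm f}\,\}^{(\ddot{s}_t)}_{\vec{\bm m}}\circ\Gamma=\{\vec{\bm f}\,\}^{(aq\ddot{s}_2)}_{\vec{\bm m}}\circ\Gamma$ does not imply equality of the two brackets themselves (from $A\circ\Gamma=B\circ\Gamma$ one cannot conclude $A=B$), so it cannot be used to ``yield the desired equality of sets''; fortunately the direct definitional argument already suffices and the extra step should simply be deleted. Your second paragraph is sound; for the converse inclusion $\{f_3,f_2,\Sigma^{m_2}f_1\}_{m_3}\subset\{f_3,f_2,f_1\}^{(aq\ddot{s}_2)}_{\vec{\bm m}}$ one must explicitly assemble a presentation from a given pair of null homotopies $(A_2,A_1)$, which your dictionary does provide, matching the paper's construction.
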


\begin{cor}
Let $f_i:\Sigma^{m_i}X_i\to X_{i+1}\ (0\le i\le 3)$ be maps such that 
$f_{i+1}\circ \Sigma^{m_{i+1}}f_i\simeq *\ (0\le i\le 2)$. 
Then for $\star=aq\ddot{s}_2,\ddot{s}_t$ we have
\begin{align*}
\{f_3,f_2,f_1\}^{(\star)}_{(m_3,m_2,m_1)}\circ \Sigma^{m_3+m_2+m_1+1}f_0
&=(-1)^{m_3+1}\big(f_3\circ \Sigma^{m_3}\{f_2,\Sigma^{m_2}f_1,\Sigma^{m_{[2,1]}}f_0\}\big)\\
&\supset (-1)^{m_3+1}\big(f_3\circ \Sigma^{m_3}\{f_2,f_1,f_0\}^{(\star)}_{(m_2,m_1,m_0)}\big),
\end{align*}
where the inclusion is the identity when $m_2=0$. 
\end{cor}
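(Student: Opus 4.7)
The plan is to reduce the assertion to one about the classical (subscripted) $3$-fold Toda bracket via Theorem 7.1, and then invoke the classical Toda identity $\{\alpha,\beta,\gamma\}\circ\Sigma\delta=-\alpha\circ\{\beta,\gamma,\delta\}$ (valid modulo indeterminacy when $\alpha\beta,\beta\gamma,\gamma\delta$ are all null-homotopic; see \cite{T} or \cite{OO1}), together with the suspension formula for $3$-fold classical Toda brackets.

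For the asserted equality, I would first apply Theorem 7.1 to rewrite the left-hand side as $\{f_3,f_2,\Sigma^{m_2}f_1\}_{m_3}\circ\Sigma^{m_3+m_2+m_1+1}f_0$, and then unfold the subscripted bracket (using the convention recorded after (1.12)) into the unsubscripted classical bracket $\{f_3,\Sigma^{m_3}f_2,\Sigma^{m_3+m_2}f_1\}\circ\Sigma(\Sigma^{m_3+m_2+m_1}f_0)$. Because the three pairwise composites $f_3\circ\Sigma^{m_3}f_2$, $\Sigma^{m_3}f_2\circ\Sigma^{m_3+m_2}f_1$, and $\Sigma^{m_3+m_2}f_1\circ\Sigma^{m_3+m_2+m_1}f_0$ are all null (by suspending the given homotopies $f_{i+1}\circ\Sigma^{m_{i+1}}f_i\simeq\ast$), the classical Toda identity yields $-f_3\circ\{\Sigma^{m_3}f_2,\Sigma^{m_3+m_2}f_1,\Sigma^{m_3+m_2+m_1}f_0\}$. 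The suspension formula for $3$-fold Toda brackets, iterated $m_3$ times, then pulls the common $\Sigma^{m_3}$ outside with sign $(-1)^{m_3}$, producing $(-1)^{m_3+1}\,f_3\circ\Sigma^{m_3}\{f_2,\Sigma^{m_2}f_1,\Sigma^{m_{[2,1]}}f_0\}$ as claimed. For the remaining $\supset$ containment, another invocation of Theorem 7.1 gives $\{f_2,f_1,f_0\}^{(\star)}_{(m_2,m_1,m_0)}=\{f_2,f_1,\Sigma^{m_1}f_0\}_{m_2}$, which is the subscripted classical bracket $\{f_2,\Sigma^{m_2}f_1,\Sigma^{m_{[2,1]}}f_0\}_{m_2}$; since a subscripted classical Toda bracket is a subset of its unsubscripted counterpart and equals it when the subscript is $0$, post-composing with $f_3\circ\Sigma^{m_3}(-)$ and multiplying by the sign yields the stated containment, with equality when $m_2=0$.

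The main obstacle is sign and coordinate-swap bookkeeping. Theorem 5.1 only furnishes a one-sided inclusion for the suspension of a higher Toda bracket, so the step pulling $\Sigma^{m_3}$ outside depends on the classical fact that in the $3$-fold case this inclusion is in fact an equality when the relevant indeterminacies match (as they do under our null-homotopy hypotheses). Separately, one must track the twist maps $\tau(\s^a,\s^b)$ absorbed into the definitions of $\widetilde{\Sigma}^\ell(-)$ and of $\{\cdot\}_{m_3}$ carefully enough to confirm that the accumulated sign of $-1$ (from the Toda identity) and $(-1)^{m_3}$ (from the suspension identity) combine precisely to the asserted $(-1)^{m_3+1}$, rather than some other unit.
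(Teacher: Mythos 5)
Your outer steps match the paper: the reduction of the subscripted bracket $\{f_3,f_2,f_1\}^{(\star)}_{(m_3,m_2,m_1)}$ to the classical $\{f_3,f_2,\Sigma^{m_2}f_1\}_{m_3}$ via Theorem~7.1, and the final containment via Theorem~7.1 for the inner bracket together with the subscripted-versus-unsubscripted comparison (the paper cites Proposition~3.8 for this). The gap is in the middle equality. The paper obtains
$\{f_3,f_2,\Sigma^{m_2}f_1\}_{m_3}\circ\Sigma^{m_3+1}\Sigma^{m_{[2,1]}}f_0=(-1)^{m_3+1}\big(f_3\circ\Sigma^{m_3}\{f_2,\Sigma^{m_2}f_1,\Sigma^{m_{[2,1]}}f_0\}\big)$
in a single step from Toda's Proposition~1.4, which is formulated precisely for the \emph{subscripted} bracket $\{\alpha,\beta,\gamma\}_{t}$. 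You instead drop the subscript $m_3$, apply the unsubscripted Toda identity, and then re-suspend. Both auxiliary moves are only one-sided: $\{f_3,f_2,\Sigma^{m_2}f_1\}_{m_3}\subset\{f_3,\Sigma^{m_3}f_2,\Sigma^{m_{[3,2]}}f_1\}$ (the subscripted bracket admits only $m_3$-fold suspended null-homotopies of the middle composite, so it is generally a proper subset with smaller indeterminacy), while the suspension formula gives $\Sigma^{m_3}\{f_2,\Sigma^{m_2}f_1,\Sigma^{m_{[2,1]}}f_0\}\subset(-1)^{m_3}\{\Sigma^{m_3}f_2,\Sigma^{m_{[3,2]}}f_1,\Sigma^{m_{[3,1]}}f_0\}$, a containment pointing the other way. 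Chaining these shows only that \emph{both} sides of the asserted equality lie in the common superset $(-1)^{m_3+1}\,f_3\circ\{\Sigma^{m_3}f_2,\Sigma^{m_{[3,2]}}f_1,\Sigma^{m_{[3,1]}}f_0\}$; two subsets of a common set need not coincide, so the equality is not established.

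Your proposed repair --- that these inclusions become equalities ``when the relevant indeterminacies match, as they do under our null-homotopy hypotheses'' --- is not justified. Whether $\Sigma\{\beta,\gamma,\delta\}=-\{\Sigma\beta,\Sigma\gamma,\Sigma\delta\}$, or whether $\{\alpha,\beta,\gamma\}_{t}=\{\alpha,\beta,\gamma\}_{0}$, is governed by the surjectivity of the relevant suspension homomorphisms on mapping sets (compare the surjectivity hypotheses that Theorem~4.1 and Corollary~4.2 must impose for analogous equalities), and this has nothing to do with the hypothesis that the consecutive composites are null-homotopic; in general the indeterminacies do not match. The correct route is the one the paper takes: quote Toda's Proposition~1.4 for the subscripted bracket directly, which delivers the equality and the sign $(-1)^{m_3+1}$ without ever passing through the unsubscripted bracket.
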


\begin{prob}
Is there a sequence $(f_3,f_2,f_1,f_0)$ such that it satisfies the assumption of 
Corollary 7.2 and the inclusion in 7.2 is not the identity? 
\end{prob}

\begin{proof}[Proof of Theorem 7.1]
The first equality holds from definitions. 
Let $\alpha\in\{f_3,f_2,f_1\}^{(aq\ddot{s}_2)}_{(m_3,m_2,m_1)}$ and 
$\{\mathscr{S}_r,\overline{f_r},\Omega_r\,|\,r=2,3\}$ an $aq\ddot{s}_2$-presentation of $\vec{\bm f}$ with $\alpha=\overline{f_3}\circ \widetilde{\Sigma}^{m_3}g_{3,2}$. 
Then 
\begin{gather*}
\mathscr{S}_2=( \Sigma ^{m_1}X_1;X_2,X_2\cup_{f_1}C \Sigma ^{m_1}X_1;f_1;i_{f_1}),\ \overline{f_2}: \Sigma ^{m_2}(X_2\cup_{f_1}C \Sigma ^{m_1}X_1)\to X_3,\\
\Omega_2=\{\omega_{2,1}\},\ \omega_{2,1}=q_{f_1}':X_2\cup_{f_1}C \Sigma ^{m_1}X_1\cup CX_2\simeq \Sigma  \Sigma ^{m_1}X_1,\\
\mathscr{S}_3=( \Sigma ^{m_2}\mathscr{S}_2)(\overline{f_2},\widetilde{ \Sigma }^{m_2}\Omega_2),\ \overline{f_3}: \Sigma ^{m_3}(X_3\cup_{f_2}C \Sigma ^{m_2}X_2)\to X_4,\ 
\Omega_3=\widetilde{ \Sigma }^{m_2}\Omega_2.
\end{gather*}
Take $A_1:f_2\circ \Sigma ^{m_2}f_1\simeq *$ and $A_2:f_3\circ \Sigma ^{m_3}f_2\simeq *$ such that 
$\overline{f_2}\circ\psi^{m_2}_{f_1}=[f_2,A_1, \Sigma ^{m_2}f_1]$ and $\overline{f_3}\circ\psi^{m_3}_{f_2}=[f_3,A_2, \Sigma ^{m_3}f_2]$. 
Then $g_{3,2}\simeq(f_2,A_1, \Sigma ^{m_1}f_1)$ by \cite[(4.2)]{OO}. 
Therefore \begin{align*}
\overline{f_3}\circ\widetilde{\Sigma}^{m_3}g_{3,2}&=\overline{f_3}\circ  \Sigma ^{m_3}g_{3,2}\circ (1_{ \Sigma ^{m_{[2,1]}}X_1}\wedge\tau(\s^{m_3},\s^1))\\
&\simeq [f_3,A_2, \Sigma ^{m_3}f_2]\circ(\psi^{m_3}_{f_2})^{-1}\circ  \Sigma ^{m_3}(f_2,A_1, \Sigma ^{m_2}f_1)\circ 
(1_{ \Sigma ^{m_{[2,1]}}X_1}\wedge\tau(\s^{m_3},\s^1))\\
&=[f_3,A_2, \Sigma ^{m_3}f_2]\circ( \Sigma ^{m_3}f_2,\widetilde{ \Sigma }^{m_3}A_1, \Sigma ^{m_{[3,2]}}f_1)\quad(\text{by \cite[Lemma 2.4]{OO1}})\\
&\in\{f_3,f_2, \Sigma ^{m_2}f_1\}_{m_3}.
\end{align*}
Hence $\{f_3,f_2,f_1\}^{(aq\ddot{s}_2)}_{(m_3,m_2,m_1)}\subset\{f_3,f_2, \Sigma ^{m_2}f_1\}_{m_3}$. 

Next we prove $\{f_3,f_2, \Sigma ^{m_2}f_1\}_{m_3}\subset\{f_3,f_2,f_1\}^{(aq\ddot{s}_2)}_{(m_3,m_2,m_1)}$. 
Let $\alpha\in\{f_3,f_2, \Sigma ^{m_2}f_1\}_{m_3}$. 
Then $\alpha=[f_3,A_2, \Sigma ^{m_3}f_2]\circ( \Sigma ^{m_3}f_2,\widetilde{ \Sigma }^{m_3}A_1, \Sigma ^{m_{[3, 2]}}f_1)$ for some 
$A_2:f_3\circ \Sigma ^{m_3}f_2\simeq *$ and $A_1:f_2\circ \Sigma ^{m_2}f_1\simeq *$. 
By \cite[Lemma 2.4]{OO1}, $\alpha=[f_3,A_2, \Sigma ^{m_3}f_2]\circ(\psi^{m_3}_{f_2})^{-1}\circ \Sigma ^{m_3}(f_2,A_1, \Sigma ^{m_2}f_1)\circ(1_{ \Sigma ^{m_{[2,1]}}X_1}\wedge\tau(\s^{m_3},\s^1))$. 
Set 
\begin{align*}
\overline{f_2}&=[f_2,A_1, \Sigma ^{m_2}f_1]\circ(\psi^{m_2}_{f_1})^{-1} :  \Sigma ^{m_2}(X_2\cup_{f_1}C \Sigma ^{m_1}X_1)\to X_3,\\
g_{3,2}&=(f_2,A_1, \Sigma ^{m_2}f_1): \Sigma  \Sigma ^{m_{[2,1]}}X_1\to X_3\cup_{f_2}C \Sigma ^{m_2}X_2,\\
\overline{f_3}&=[f_3,A_2, \Sigma ^{m_3}f_2]\circ(\psi^{m_3}_{f_2})^{-1}: \Sigma ^{m_3}(X_3\cup_{f_2}C \Sigma ^{m_2}X_2)\to X_4,\\
\mathscr{S}_2&=( \Sigma ^{m_1}X_1;X_2,X_2\cup_{f_1}C \Sigma ^{m_1}X_1;f_1;i_{f_1}),\quad 
\omega_{2,1}=q'_{f_1},\quad \Omega_2=\{\omega_{2,1}\},\\
\mathscr{S}_3&=( \Sigma ^{m_2}\mathscr{S}_2)(\overline{f_2},\widetilde{ \Sigma }^{m_2}\Omega_2)\\
&=( \Sigma ^{m_2}X_2, \Sigma  \Sigma ^{m_{[2,1]}}X_1;X_3,X_3\cup_{f_2}C \Sigma ^{m_2}X_2,X_3\cup_{\overline{f_2}}C \Sigma ^{m_2}(X_2\cup_{f_1}C \Sigma ^{m_1}X_1);f_2,g_{3,2};\\
&\hspace{4cm}i_{f_2},1_{X_3}\cup C \Sigma ^{m_2}i_{f_1}),\quad \Omega_3=\widetilde{\widetilde{ \Sigma }^{m_2}\Omega_2}.
\end{align*}
Then $\{\mathscr{S}_r,\overline{f_r},\Omega_r\,|\,2\le r\le 3\}$ is an $aq\ddot{s}_2$-presentation of $\vec{\bm f}$ and it represents $\alpha$. 
Hence $\{f_3,f_2, \Sigma ^{m_2}f_1\}_{m_3}\subset\{f_3,f_2,f_1\}^{(aq\ddot{s}_2)}_{(m_3,m_2,m_1)}$. 
Thus  $\{f_3,f_2, \Sigma ^{m_2}f_1\}_{m_3}=\{f_3,f_2,f_1\}^{(aq\ddot{s}_2)}_{(m_3,m_2,m_1)}$. 
\end{proof}

\begin{proof}[Proof of Corollary 7.2]
Let $\star=aq\ddot{s}_2, \ddot{s}_t$. Then we have
\begin{align*}
&\{f_3,f_2,f_1\}^{(\star)}_{(m_3,m_2,m_1)}\circ \Sigma^{m_3+m_2+m_1+1}f_0\\
&\ =\{f_3,f_2,\Sigma^{m_2}f_1\}_{m_3}\circ \Sigma^{m_3+1}\Sigma^{m_{[2,1]}}f_0\quad(\text{by Theorem 7.1})\\
&\ =(-1)^{m_3+1}\big(f_3\circ \Sigma^{m_3}\{f_2,\Sigma^{m_2}f_1,\Sigma^{m_{[2,1]}}f_0\}\big)\quad(\text{by \cite[Proposition 1.4]{T}})\\
&\ \supset (-1)^{m_3+1}\big(f_3\circ \Sigma^{m_3}\{f_2,f_1,f_0\}^{(\star)}_{(m_2,m_1,m_0)}\big)
\quad(\text{by Proposition 3.8}),
\end{align*}
where the inclusion is the identity when $m_2=0$ by Theorem 7.1. 
This ends the proof. 
\end{proof}

\section{4-fold brackets}
In this section we prove (1.13). 

\begin{thm}
When $n=4$, we have
\begin{align*}
\{\vec{\bm f}\}^{(\ddot{s}_t)}_{\vec{\bm m}}&=\bigcup\{f_4,[f_3,A_2,\Sigma^{m_3}f_2],(\Sigma^{m_3}f_2,\widetilde{\Sigma}^{m_3}A_1,\Sigma^{m_{[3,2]}}f_1)\}_{m_4}
\circ \Sigma(1_{\Sigma^{m_{[3,1]}}X_1}\wedge\tau(\s^{m_4},\s^1))\\
&=\bigcup (-1)^{m_4}\{f_4,[f_3,A_2,\Sigma^{m_3}f_2],(\Sigma^{m_3}f_2,\widetilde{\Sigma}^{m_3}A_1,\Sigma^{m_{[3,2]}}f_1)\}_{m_4}
\end{align*}
where the unions $\bigcup$ are taken 
over all triples $\vec{\bm A}=(A_3,A_2,A_1)$ such that $(\vec{\bm f};\vec{\bm A})$ is admissible. 
\end{thm}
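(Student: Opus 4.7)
My plan is to mirror the argument used for the $3$-fold case (Theorem~7.1), extended by one extra layer of depth so as to capture the admissibility of a triple $\vec{\bm A}=(A_3,A_2,A_1)$. For the inclusion ``$\subset$'', I will start with $\alpha\in\{\vec{\bm f}\}^{(\ddot{s}_t)}_{\vec{\bm m}}$ and fix an $\ddot{s}_t$-presentation $\{\mathscr{S}_r,\overline{f_r},\mathscr{A}_r\,|\,2\le r\le 4\}$ realizing $\alpha=\overline{f_4}\circ\widetilde{\Sigma}^{m_4}g_{4,3}$. Because the structures $\mathscr{A}_r$ and the cone components are all reduced, the pointed extensions $\overline{f_2}$ and $\overline{f_3}$ yield canonical null-homotopies $A_1:f_2\circ\Sigma^{m_2}f_1\simeq *$ and $A_2:f_3\circ\Sigma^{m_3}f_2\simeq *$ satisfying $\overline{f_2}\circ\psi^{m_2}_{f_1}=[f_2,A_1,\Sigma^{m_2}f_1]$ and $\overline{f_3}\circ\psi^{m_3}_{f_2}=[f_3,A_2,\Sigma^{m_3}f_2]$, exactly as in the proof of Theorem~7.1. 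The reduced structure $\mathscr{A}_4$ then supplies the admissibility condition at depth $3$, and $\overline{f_4}$ provides $A_3:f_4\circ\Sigma^{m_4}f_3\simeq *$, so that $(\vec{\bm f};\vec{\bm A})$ becomes admissible.

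The central computation is to identify $\overline{f_4}\circ\widetilde{\Sigma}^{m_4}g_{4,3}$ with a representative of the classical $3$-fold bracket. Applying the $n=3$ argument at depth $2$ gives $g_{3,2}\simeq (f_2,A_1,\Sigma^{m_2}f_1)$, and then unwinding $g_{4,3}=(\overline{f_3}\cup C1_{\Sigma^{m_3}C_{3,2}})\circ(\widetilde{\Sigma}^{m_3}\omega_{3,2})^{-1}$ together with $\overline{f_3}\circ\psi^{m_3}_{f_2}=[f_3,A_2,\Sigma^{m_3}f_2]$ and \cite[Lemma~2.4]{OO1} (to push suspensions through twists) expresses $g_{4,3}$, up to the twist $\Sigma(1_{\Sigma^{m_{[3,1]}}X_1}\wedge\tau(\s^{m_4},\s^1))$, as a parenthesized triple whose first entry is $[f_3,A_2,\Sigma^{m_3}f_2]$ and whose third entry is $(\Sigma^{m_3}f_2,\widetilde{\Sigma}^{m_3}A_1,\Sigma^{m_{[3,2]}}f_1)$, the middle entry being the null-homotopy provided by admissibility at depth~$2$. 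Post-composing with $\overline{f_4}$ rewritten via $A_3$ as $[f_4,A_3,\Sigma^{m_4}f_3]$ then exhibits the result as an element of $\{f_4,[f_3,A_2,\Sigma^{m_3}f_2],(\Sigma^{m_3}f_2,\widetilde{\Sigma}^{m_3}A_1,\Sigma^{m_{[3,2]}}f_1)\}_{m_4}\circ \Sigma(1_{\Sigma^{m_{[3,1]}}X_1}\wedge\tau(\s^{m_4},\s^1))$.

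The reverse inclusion is obtained by reading this construction backwards: given admissible $\vec{\bm A}$ and a representative of the claimed $3$-fold Toda bracket, set $\overline{f_2}:=[f_2,A_1,\Sigma^{m_2}f_1]\circ(\psi^{m_2}_{f_1})^{-1}$ and $\overline{f_3}:=[f_3,A_2,\Sigma^{m_3}f_2]\circ(\psi^{m_3}_{f_2})^{-1}$, choose reduced structures $\mathscr{A}_r$ at each stage (possible by \cite[Remark~5.5]{OO}), and pick $\overline{f_4}$ realizing the given representative; the admissibility conditions guarantee exactly the existence of the required reduced structures. The second equality follows from the degree of the coordinate swap $\tau(\s^{m_4},\s^1)$, giving $\Sigma(1\wedge\tau(\s^{m_4},\s^1))=(-1)^{m_4}$ on the suspension, combined with homogeneity of the classical Toda bracket under post-composition by units. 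The main obstacle will be the bookkeeping in the middle paragraph: ensuring that, once all the identifications $\psi$, $\widetilde{\Sigma}$ and coordinate twists are tracked, the iterated extension $g_{4,3}$ and its post-composition with $\overline{f_4}$ match the nested bracket/parenthesis representative cleanly, and that the existence of reduced $\mathscr{A}_3,\mathscr{A}_4$ corresponds precisely to admissibility of $(\vec{\bm f};\vec{\bm A})$ rather than to a strictly stronger or weaker condition.
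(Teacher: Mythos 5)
Your overall strategy coincides with the paper's: extract $A_1,A_2$ from $\overline{f_2},\overline{f_3}$ via the homeomorphisms $\psi$, identify $g_{3,2}\simeq(f_2,A_1,\Sigma^{m_2}f_1)$, exhibit $\overline{f_4}\circ\widetilde{\Sigma}^{m_4}g_{4,3}$ as an element of a classical $3$-fold bracket, and reverse the construction for ``$\supset$''. But two of the steps you defer to ``bookkeeping'' are where the actual content lies, and as stated they would not go through. First, in the forward direction the outer null homotopy is \emph{not} obtained by rewriting $\overline{f_4}$ via $A_3$ as $[f_4,A_3,\Sigma^{m_4}f_3]$: that map is defined on $\Sigma^{m_4}X_4\cup C\Sigma^{m_{[4,3]}}X_3$, whereas what is needed is the extension of $f_4$ over $\Sigma^{m_4}C_{4,3}=\Sigma^{m_4}(X_4\cup C\Sigma^{m_3}C_{3,2})$, i.e.\ a null homotopy $D$ of $f_4\circ\Sigma^{m_4}\overline{f_3}^2$ read off from $\overline{f_4}\circ\psi^{m_4}_{\overline{f_3}^2}$; likewise the middle null homotopy $B$ of $\overline{f_3}^2\circ\widetilde{\Sigma}^{m_3}g_{3,2}$ comes from $\overline{f_3}\circ(\widetilde{\Sigma}^{m_3}a_{3,2})^{-1}$, not from admissibility. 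The triple $A_3$ enters only to certify that $(\vec{\bm f};\vec{\bm A})$ is admissible (via the relation $f_4\circ\Sigma^{m_4}[f_3,A_2,\Sigma^{m_3}f_2]\simeq *$, the paper's (8.3)); it does not produce the representative. After this one still has only $\alpha\in\{f_4,\overline{f_3}^2,\widetilde{\Sigma}^{m_3}g_{3,2}\}_{m_4}\circ\Sigma(1\wedge\tau)$, and the passage to the stated bracket with middle entry $[f_3,A_2,\Sigma^{m_3}f_2]$ and last entry $(\Sigma^{m_3}f_2,\widetilde{\Sigma}^{m_3}A_1,\Sigma^{m_{[3,2]}}f_1)$ requires an \emph{equality} of Toda brackets, which the paper gets from $\overline{f_3}^2={\overline{f_3}^2}'\circ(\psi^{m_3}_{f_2})^{-1}$, (8.3), Toda's juggling formulas, and a check that the indeterminacies agree; a containment alone would not suffice for the theorem's equality.

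Second, in the reverse direction your formula $\overline{f_3}:=[f_3,A_2,\Sigma^{m_3}f_2]\circ(\psi^{m_3}_{f_2})^{-1}$ only defines the restriction $\overline{f_3}^2$ on $\Sigma^{m_3}C_{3,2}$; an $\ddot{s}_t$-presentation requires $\overline{f_3}$ on all of $\Sigma^{m_3}C_{3,3}$, and this extension must be built as $[\overline{f_3}^2,B,\widetilde{\Sigma}^{m_3}g_{3,2}]$ from the specific null homotopy $B$ occurring in the chosen representative of the $3$-fold bracket (similarly $\overline{f_4}$ must be built from the null homotopy $F$), or else the resulting element of $\{\vec{\bm f}\}^{(\ddot{s}_t)}_{\vec{\bm m}}$ need not equal the given bracket element. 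Also, the existence of reduced structures $\mathscr{A}_3,\mathscr{A}_4$ is automatic from the cofibre property (Lemma~4.3(2) of the cited paper) once the presentation is assembled; it is not the counterpart of admissibility. Admissibility is what guarantees the null homotopies exist at all, i.e.\ that the brackets in the union are nonempty. Your sign computation for the second displayed equality is correct.
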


\begin{cor}
If in addition $m_0$ is a non negative integer and 
$f_0:\Sigma^{m_0}X_0\to X_1$ is a map such that $\{f_2,f_1,f_0\}^{(\ddot{s}_t)}_{(m_2,m_1,m_0)}=\{0\}$, 
then 
$$
\{f_4,f_3,f_2,f_1\}^{(\ddot{s}_t)}_{(m_4,m_3,m_2,m_1)}\circ \Sigma^{m_4+\cdots+m_1+2}f_0\subset 
f_4\circ \Sigma^{m_4}\{f_3,\Sigma^{m_3}f_2,\Sigma^{m_{[3,2]}}f_1,\Sigma^{m_{[3,1]}}f_0\}^{(\ddot{s}_t)}.
$$
\end{cor}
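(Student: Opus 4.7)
The strategy is to reduce the statement to Corollary~7.2 — equivalently, to Toda's classical slide formula \cite[Proposition 1.4]{T} — by using Theorem~8.1 to expand both sides in terms of classical 3-fold Toda brackets.

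First I would unpack the LHS: given $\alpha\in\{f_4,f_3,f_2,f_1\}^{(\ddot{s}_t)}_{\vec{\bm m}}$, Theorem~8.1 produces an admissible triple $(A_3,A_2,A_1)$ for $(f_4,f_3,f_2,f_1)$ and a representative $\alpha'\in\{f_4,\phi,\psi\}_{m_4}$ of the classical bracket, where $\phi=[f_3,A_2,\Sigma^{m_3}f_2]$ and $\psi=(\Sigma^{m_3}f_2,\widetilde{\Sigma}^{m_3}A_1,\Sigma^{m_{[3,2]}}f_1)$, such that $\alpha=(-1)^{m_4}\alpha'$. Via Theorem~7.1, the hypothesis $\{f_2,f_1,f_0\}^{(\ddot{s}_t)}_{(m_2,m_1,m_0)}=\{0\}$ becomes $\{f_2,f_1,\Sigma^{m_1}f_0\}_{m_2}=\{0\}$, so there is some $A_0\colon f_1\circ\Sigma^{m_1}f_0\simeq *$ and, for the fixed $A_1$ above, $[f_2,A_1,\Sigma^{m_2}f_1]\circ(\Sigma^{m_2}f_1,\widetilde{\Sigma}^{m_2}A_0,\Sigma^{m_{[2,1]}}f_0)\simeq *$.

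Next I would construct admissible data for the RHS bracket by setting $B_3=A_2$, $B_2=\widetilde{\Sigma}^{m_3}A_1$, $B_1=\widetilde{\Sigma}^{m_{[3,2]}}A_0$, and verifying that $(B_3,B_2,B_1)$ is admissible for the sequence $(f_3,\Sigma^{m_3}f_2,\Sigma^{m_{[3,2]}}f_1,\Sigma^{m_{[3,1]}}f_0)$ with all four subscripts $0$. The $i=1$ admissibility equation reduces to $\phi\circ\psi\simeq *$, which is the $i=1$ admissibility of $(A_3,A_2,A_1)$; the $i=2$ equation is, up to the canonical twists $1\wedge\tau$, the $m_3$-fold suspension of the null composite from the previous paragraph. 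Theorem~8.1 then embeds
$$
\widetilde{K}:=\{f_3,[\Sigma^{m_3}f_2,B_2,\Sigma^{m_{[3,2]}}f_1],(\Sigma^{m_{[3,2]}}f_1,B_1,\Sigma^{m_{[3,1]}}f_0)\}_0
$$
into $K:=\{f_3,\Sigma^{m_3}f_2,\Sigma^{m_{[3,2]}}f_1,\Sigma^{m_{[3,1]}}f_0\}^{(\ddot{s}_t)}$.

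The main obstacle is the final comparison. Writing $\Sigma^{|\vec{\bm m}|+2}f_0=\Sigma^{m_4+1}(\Sigma^{m_{[3,1]}+1}f_0)$ and applying the classical slide formula to $\alpha'\in\{f_4,\phi,\psi\}_{m_4}$, one expects
$$
\{f_4,\phi,\psi\}_{m_4}\circ\Sigma^{m_4+1}(\Sigma^{m_{[3,1]}+1}f_0)=(-1)^{m_4+1}\bigl(f_4\circ\Sigma^{m_4}\{\phi,\psi,\Sigma^{m_{[3,1]}+1}f_0\}_0\bigr).
$$
Two points must be checked: (i) the null-homotopy $\psi\circ\Sigma^{m_{[3,1]}+1}f_0\simeq *$ required to define the right-hand classical bracket — this is obtained by observing that the quotient map $W:=\Sigma^{m_3}X_3\cup C\Sigma^{m_{[3,2]}}X_2\to\Sigma\Sigma^{m_{[3,2]}}X_2$ carries $\psi$ to $\Sigma\Sigma^{m_{[3,2]}}f_1$, so the projected composite $\Sigma\Sigma^{m_{[3,2]}}(f_1\circ\Sigma^{m_1}f_0)$ is null via $\Sigma\widetilde{\Sigma}^{m_{[3,2]}}A_0$, and this projection-level null-homotopy lifts through the cofibre sequence defining $W$ to a null-homotopy of the full composite; and (ii) the identification $\{\phi,\psi,\Sigma^{m_{[3,1]}+1}f_0\}_0=\widetilde{K}$ after unwinding the canonical homeomorphism $\psi^{m_3}_{f_2}$ and the twist $1\wedge\tau(\s^{m_{[3,1]}},\s^1)$ intrinsic to Theorem~8.1. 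Granting these, $\alpha\circ\Sigma^{|\vec{\bm m}|+2}f_0=(-1)^{m_4}(-1)^{m_4+1}f_4\circ\Sigma^{m_4}\widetilde{\gamma}=-f_4\circ\Sigma^{m_4}\widetilde{\gamma}$ for some $\widetilde{\gamma}\in\widetilde{K}\subset K$. Since each summand in the union describing $K$ via Theorem~8.1 is a classical 3-fold Toda bracket and is therefore closed under negation, $-\widetilde{\gamma}\in K$, and the desired containment $\alpha\circ\Sigma^{|\vec{\bm m}|+2}f_0\in f_4\circ\Sigma^{m_4}K$ follows.
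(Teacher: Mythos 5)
Your overall strategy --- expand the left-hand side via Theorem 8.1, slide $f_0$ across with Toda's Proposition 1.4, and recognize the result inside the right-hand side via Theorem 8.1 again --- is the same as the paper's, but two of your steps fail as written. First, in (i) you justify $\psi\circ\Sigma^{m_{[3,1]}+1}f_0\simeq *$ by noting that the projection $q\colon W\to\Sigma\Sigma^{m_{[3,2]}}X_2$ sends this composite to $\pm\Sigma\Sigma^{m_{[3,2]}}(f_1\circ\Sigma^{m_1}f_0)\simeq *$ and asserting that this null-homotopy ``lifts through the cofibre sequence.'' That inference is invalid: $q\circ g\simeq *$ only shows that $g$ factors up to homotopy through $i\colon\Sigma^{m_3}X_3\to W$, and the factoring map is (up to sign) an element of the Toda bracket $\{\Sigma^{m_3}f_2,\Sigma^{m_{[3,2]}}f_1,\Sigma^{m_{[3,1]}}f_0\}$; it is exactly here that the hypothesis $\{f_2,f_1,f_0\}^{(\ddot{s}_t)}_{(m_2,m_1,m_0)}=\{0\}$ must enter, yet your argument for (i) never invokes it --- if (i) followed from $f_1\circ\Sigma^{m_1}f_0\simeq *$ alone, the bracket $\{\phi,\psi,\delta\}$ would always be defined, which is false. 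The paper sidesteps this by performing the shifting moves first (Toda 1.2(i),(ii) together with $(\Sigma^{m_3}f_2,\widetilde{\Sigma}^{m_3}A_1,\Sigma^{m_{[3,2]}}f_1)\circ q_{\Sigma^{m_{[3,2]}}f_1}\simeq i_{\Sigma^{m_3}f_2}\circ[\Sigma^{m_3}f_2,\widetilde{\Sigma}^{m_3}A_1,\Sigma^{m_{[3,2]}}f_1]$), so that the vanishing needed for Proposition 1.4 is literally an element of $\{f_2,f_1,\Sigma^{m_1}f_0\}_{m_2}=\{0\}$. For the same reason your (ii) --- the identification $\{\phi,\psi,\Sigma^{m_{[3,1]}+1}f_0\}_0=\widetilde{K}$ ``after unwinding a homeomorphism and a twist'' --- is not routine: converting a bracket of shape (extension, coextension, suspension) into one of shape (map, extension, coextension) is precisely the chain of 1.2(i)/(ii) steps in the paper's proof, and those steps yield only inclusions, not the equality you assert.

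Second, your disposal of the residual sign is wrong. You end with $-f_4\circ\Sigma^{m_4}\widetilde{\gamma}$ and claim $-\widetilde{\gamma}\in K$ because a classical $3$-fold Toda bracket is ``closed under negation.'' It is not: such a bracket is a coset $x+H$ of its indeterminacy subgroup, and $-(x+H)=-x+H$ coincides with $x+H$ only when $2x\in H$, which fails in general; nor does the paper claim negation-invariance for $\{\cdot\}^{(\ddot{s}_t)}$ (Corollary 4.2(1) covers only the $q$-type brackets). In the paper the sign comes out exactly because the twist $\Sigma(1_{\Sigma^{m_1}X_1}\wedge\tau(\s^1,\s^{m_{[4,2]}}))$, of degree $(-1)^{m_4+m_3+m_2}$, introduced when $\Sigma^{|\vec{\bm m}|+2}f_0$ is put into the correct suspension coordinates, cancels the sign produced by Proposition 1.4. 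Both gaps are repairable, but only by carrying out the coordinate and extension/coextension bookkeeping that your write-up defers.
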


The theorem consists of the following two relations. 
\begin{gather}
\begin{split}
&\{\vec{\bm f}\}^{(\ddot{s}_t)}_{\vec{\bm m}}\subset \bigcup\{f_4,[f_3,A_2,\Sigma^{m_3}f_2],(\Sigma^{m_3}f_2,\widetilde{\Sigma}^{m_3}A_1,\Sigma^{m_{[3,2]}}f_1)\}_{m_4}\\
&\hspace{6cm}\circ \Sigma(1_{\Sigma^{m_{[3,1]}}X_1}\wedge\tau(\s^{m_4},\s^1)),
\end{split}\\
\begin{split}
&\{\vec{\bm f}\}^{(\ddot{s}_t)}_{\vec{\bm m}}\supset \bigcup\{f_4,[f_3,A_2,\Sigma^{m_3}f_2],(\Sigma^{m_3}f_2,\widetilde{\Sigma}^{m_3}A_1,\Sigma^{m_{[3,2]}}f_1)\}_{m_4}\\
&\hspace{6cm}\circ \Sigma(1_{\Sigma^{m_{[3,1]}}X_1}\wedge\tau(\s^{m_4},\s^1)).
\end{split}
\end{gather}

\begin{proof}[Proof of (8.1)]
Let $\alpha\in\{\vec{\bm f}\}^{(\ddot{s}_t)}_{\vec{\bm m}}$ and $\{\mathscr{S}_r,\overline{f_r},\mathscr{A}_r\,|\,r=2,3,4\}$ an $\ddot{s}_t$-presentation of $\vec{\bm f}$ 
with $\alpha=\overline{f_4}\circ\widetilde{\Sigma}^{m_4}g_{4,3}$. 
Set $\overline{f_2}'=\overline{f_2}\circ \psi^{m_2}_{f_1}:\Sigma^{m_2}X_2\cup_{\Sigma^{m_2}f_1}C\Sigma^{m_{[2,1]}}X_1\to X_3$. 
We take $A_1:f_2\circ \Sigma^{m_2}f_1\simeq *$ satisfying $\overline{f_2}'=[f_2,A_1,\Sigma^{m_2}f_1]$. 
Since $\widetilde{\Sigma}^{m_2}a_{2,1}=(\psi^{m_2}_{f_1})^{-1}$ by definition, we have 
$\overline{f_2}=\overline{f_2}'\circ \widetilde{\Sigma}^{m_2}a_{2,1}$. 
Since $\widetilde{\Sigma}^{m_2}\omega_{2,1}=q'_{\Sigma^{m_2}f_1}\circ(\widetilde{\Sigma}^{m_2}a_{2,1}\cup C1_{\Sigma^{m_2}X_2})$, we have
\begin{align*}
g_{3,2}&=(\overline{f_2}\cup C1_{\Sigma^{m_2}X_2})\circ(\widetilde{\Sigma}^{m_2}\omega_{2,1})^{-1}=(\overline{f_2}'\cup C1_{\Sigma^{m_2}X_2})\circ(q'_{\Sigma^{m_2}f_1})^{-1}\\
&\simeq (f_2,A_1,\Sigma^{m_2}f_1)\quad(\text{by \cite[(4.2)]{OO}}).
\end{align*} 
Set ${\overline{f_3}^2}'=\overline{f_3}^2\circ\psi^{m_3}_{f_2}(=\overline{f_3}^2\circ(\widetilde{\Sigma}^{m_3}a_{3,1})^{-1})$. 
We take $A_2:f_3\circ \Sigma^{m_3}f_2\simeq *$ such that ${\overline{f_3}^2}'=[f_3,A_2,\Sigma^{m_3}f_2]$. 
We have 
\begin{align*}
* &\simeq \overline{f_3}\circ \Sigma^{m_3}j_{3,2}\circ \Sigma^{m_3}g_{3,2}\circ(1_{\Sigma^{m_{[2,1]}}X_1}\wedge\tau(\s^{m_3},\s^1))\\
&={\overline{f_3}^2}'\circ (\psi^{m_3}_{f_2})^{-1}\circ \Sigma^{m_3}g_{3,2}\circ(1_{\Sigma^{m_{[2,1]}}X_1}\wedge\tau(\s^{m_3},\s^1))\\
&\simeq {\overline{f_3}^2}'\circ (\psi^{m_3}_{f_2})^{-1}\circ \Sigma^{m_3}(f_2,A_1,\Sigma^{m_2}f_1)\circ (1_{\Sigma^{m_{[2,1]}}X_1}\wedge\tau(\s^{m_3},\s^1))\\
&= {\overline{f_3}^2}'\circ(\Sigma^{m_3}f_2,\widetilde{\Sigma}^{m_3}A_1,\Sigma^{m_{[3,2]}}f_1)\circ(1_{\Sigma^{m_{[2,1]}}X_1}\wedge\tau(\s^1,\s^{m_3}))\circ(1_{\Sigma^{m_{[2,1]}}X_1}\wedge\tau(\s^{m_3},\s^1))\\
&={\overline{f_3}^2}'\circ(\Sigma^{m_3}f_2,\widetilde{\Sigma}^{m_3}A_1,\Sigma^{m_{[3,2]}}f_1)\\
&=[f_3,A_2,\Sigma^{m_3}f_2]\circ(\Sigma^{m_3}f_2,\widetilde{\Sigma}^{m_3}A_1,\Sigma^{m_{[3,2]}}f_1).
\end{align*}
We have $g_{4,2}=(\overline{f_3}^2\cup C1_{\Sigma^{m_3}X_3})\circ(\widetilde{\Sigma}^{m_3}\omega_{3,1})^{-1}\simeq(f_3,A_2,\Sigma^{m_3}f_2)$ by \cite[(4.2)]{OO}. 
Since $\overline{f_4}^2\circ\psi^{m_4}_{f_3}:\Sigma^{m_4}X_4\cup_{\Sigma^{m_4}f_3}C\Sigma^{m_{[4,3]}}X_3\to X_5$ is an extension of $f_4$, there exists $A_3:f_4\circ \Sigma^{m_4}f_3\simeq *$ 
such that $[f_4,A_3,\Sigma^{m_4}f_3]\circ(\psi^{m_4}_{f_3})^{-1}=\overline{f_4}^2$. 
We have
\begin{align*}
*&\simeq \overline{f_4}\circ \Sigma^{m_4}j_{4,2}\circ \Sigma^{m_4}g_{4,2}\circ
(1_{\Sigma^{m_{[3,2]}}X_2}\wedge\tau(\s^{m_4},\s^1))\\
&=\overline{f_4}^2\circ \Sigma^{m_4}g_{4,2}\circ(1_{\Sigma^{m_{[3,2]}}X_2}\wedge\tau(\s^{m_4},\s^1))\\
&=[f_4,A_3,\Sigma^{m_4}f_3]\circ(\psi^{m_4}_{f_3})^{-1}\circ \Sigma^{m_4}g_{4,2}\circ (1_{\Sigma^{m_{[3,2]}}X_2}\wedge\tau(\s^{m_4},\s^1))\\
&\simeq [f_4,A_3,\Sigma^{m_4}f_3]\circ(\psi^{m_4}_{f_3})^{-1}\circ \Sigma^{m_4}(f_3,A_2,\Sigma^{m_3}f_2)\circ (1_{\Sigma^{m_{[3,2]}}X_2}\wedge\tau(\s^{m_4},\s^1))\\
&= [f_4,A_3,\Sigma^{m_4}f_3]\circ(\Sigma^{m_4}f_3,\widetilde{\Sigma}^{m_4}A_2,\Sigma^{m_{[4,3]}}f_2).
\end{align*}
Hence $(\vec{\bm f};A_3,A_2,A_1)$ is admissible and so 
\begin{equation}
f_4\circ \Sigma^{m_4}{\overline{f_3}^2}'\simeq *
\end{equation}
since 
\begin{align*}
&f_4\circ \Sigma^{m_4}{\overline{f_3}^2}'=f_4\circ \Sigma^{m_4}[f_3,A_2,\Sigma^{m_3}f_2]=
[f_4,A_3,\Sigma^{m_4}f_3]\circ i_{\Sigma^{m_4}f_3}\circ \Sigma^{m_4}[f_3,A_2,\Sigma^{m_3}f_2]\\
&=[f_4,A_3,\Sigma^{m_4}f_3]\circ i_{\Sigma^{m_4}f_3}\circ [\Sigma^{m_4}f_3,\widetilde{\Sigma}^{m_4}A_2,\Sigma^{m_{[4,3]}}f_2]\circ(\psi^{m_4}_{\Sigma^{m_3}f_2})^{-1}\ 
 (\text{by \cite[Lemma 2.4]{OO1}})\\
&\simeq [f_4,A_3,\Sigma^{m_4}f_3]\circ (\Sigma^{m_4}f_3,\widetilde{\Sigma}^{m_4}A_2,\Sigma^{m_{[4,3]}}f_2)
\circ q_{\Sigma^{m_{[4,3]}}f_2}\circ (\psi^{m_4}_{\Sigma^{m_3}f_2})^{-1}\\
&\hspace{6cm}(\text{by \cite[Proposition 5.11]{Og} or \cite[Lemma 3.6]{OO1}})\\
&\simeq *.
\end{align*}

Let $(\widetilde{\Sigma}^{m_3}a_{3,2})^{-1}\in\mathrm{TOP}^{\Sigma^{m_3}C_{3,2}}(i_{\widetilde{\Sigma}^{m_3}g_{3,2}},\Sigma^{m_3}j_{3,2})$ be a homotopy inverse of the homotopy equivalence 
$\widetilde{\Sigma}^{m_3}a_{3,2}\in\mathrm{TOP}^{\Sigma^{m_3}C_{3,2}}(\Sigma^{m_3}j_{3,2},i_{\widetilde{\Sigma}^{m_3}g_{3,2}})$ in the category $\mathrm{TOP}^{\Sigma^{m_3}C_{3,2}}$. 
Set $\overline{f_3}'=\overline{f_3}\circ(\widetilde{\Sigma}^{m_3}a_{3,2})^{-1}$. 
Then $\overline{f_3}'$ is an extension of $\overline{f_3}^2$ and so there is a null homotopy 
$B:\overline{f_3}^2\circ\widetilde{\Sigma}^{m_3}g_{3,2}\simeq *$ such that 
$\overline{f_3}'=[\overline{f_3}^2,B,\widetilde{\Sigma}^{m_3}g_{3,2}]$. 
We have $\overline{f_3}'\circ\widetilde{\Sigma}^{m_3}a_{3,2}\overset{\Sigma^{m_3}C_{3,2}}{\simeq}\overline{f_3}$. 
The following diagram is homotopy commutative so that 
$g_{4,3}\simeq (\overline{f_3}^2,B,\widetilde{\Sigma}^{m_3}g_{3,2})$ by \cite[(4.2)]{OO}. 
$$
\xymatrix{
\Sigma^{m_3}C_{3,3}\cup C\Sigma^{m_3}C_{3,2} \ar@/^5mm/[rrd]_-{\widetilde{\Sigma}^{m_3}\omega_{3,2}}
\ar[rd]_-{\widetilde{\Sigma}^{m_3}a_{3,2}\cup C1_{\Sigma^{m_3}C_{3,2}}} \ar[dd]_-{\overline{f_3}\cup C1}& &\\
& (\Sigma^{m_3}C_{3,2}\cup_{}C\Sigma\Sigma^{m_{[3,1]}}X_1)\cup C\Sigma^{m_3}C_{3,2} \ar[r]^-{q'} 
\ar[ld]_-{\overline{f_3}'\cup C1} & \Sigma^2\Sigma^{m_{[3,1]}}X_1\ar@/^5mm/[lld]_-{g_{4,3}}\\
X_4\cup_{\overline{f_3}^2}C\Sigma^{m_3}C_{3,2} & &
}
$$
Set $\overline{f_4}'=\overline{f_4}\circ\psi^{m_4}_{\overline{f_3}^2}:\Sigma^{m_4}X_4\cup_{\Sigma^{m_4}\overline{f_3}^2}C\Sigma^{m_{[4,3]}}C_{3,2}$. 
Then $\overline{f_4}'$ is an extension of $\Sigma^{m_4}\overline{f_3}^2$. 
Hence there is a homotopy $D:f_4\circ \Sigma^{m_4}\overline{f_3}^2\simeq *$ such 
that $\overline{f_4}'=[f_4,D,\Sigma^{m_4}\overline{f_3}^2]$. 
We have
\begin{align*}
&\overline{f_4}\circ\widetilde{\Sigma}^{m_4}g_{4,3}
=\overline{f_4}\circ \Sigma^{m_4}g_{4,3}\circ(1_{\Sigma^{m_{[3,1]}}X_1}\wedge\tau(\s^{m_4},\s^2))\\
&\simeq \overline{f_4}'\circ(\psi^{m_4}_{\overline{f_3}^2})^{-1}\circ \Sigma^{m_4}(\overline{f_3}^2,B,\widetilde{\Sigma}^{m_3}g_{3,2})\circ(1_{\Sigma^{m_{[3,1]}}X_1}\wedge\tau(\s^{m_4},\s^2))\\
&=\overline{f_4}'\circ(\psi^{m_4}_{\overline{f_3}^2})^{-1}\circ(\psi^{m_4}_{\overline{f_3}^2})
\circ(\Sigma^{m_4}\overline{f_3}^2,\widetilde{\Sigma}^{m_4}B,\Sigma^{m_4}\widetilde{\Sigma}^{m_3}g_{3,2})\\
&\hspace{1cm}\circ 
(1_{\Sigma\Sigma^{m_{[3,1]}}X_1}\wedge\tau(\s^1,\s^{m_4})\circ(1_{\Sigma^{m_{[3,1]}}X_1}\wedge\tau(\s^{m-4},\s^2))\quad(\text{by \cite[Lemma 2.4]{OO1}})\\
&=\overline{f_4}'\circ(\Sigma^{m_4}\overline{f_3}^2,\widetilde{\Sigma}^{m_4}B,\Sigma^{m_4}\widetilde{\Sigma}^{m_3}g_{3,2})\circ \Sigma(1_{\Sigma^{m_{[3,1]}}X_1}\wedge\tau(\s^{m_4},\s^1))\\
&\in\{f_4,\overline{f_3}^2,\widetilde{\Sigma}^{m_3}g_{3,2}\}_{m_4}\circ 
\Sigma(1_{\Sigma^{m_{[3,1]}}X_1}\wedge\tau(\s^{m_4},\s^1))\\
&\overset{\#}{=}\{f_4,{\overline{f_3}^2}',(\psi^{m_3}_{f_2})^{-1}\circ\widetilde{\Sigma}^{m_3}g_{3,2}\}_{m_4}\circ 
\Sigma(1_{\Sigma^{m_{[3,1]}}X_1}\wedge\tau(\s^{m_4},\s^1))\\
&=\{f_4,{\overline{f_3}^2}',(\psi^{m_3}_{f_2})^{-1}\circ \Sigma^{m_3}g_{3,2}\circ (1_{\Sigma^{m_{[2,1]}}X_1}\wedge\tau(\s^{m_3},\s^1))\}_{m_4}
\circ \Sigma(1_{\Sigma^{m_{[3,1]}}X_1}\wedge\tau(\s^{m_4},\s^1))\\
&=\{f_4,{\overline{f_3}^2}',(\psi^{m_3}_{f_2})^{-1}\circ \Sigma^{m_3}(f_2,A_1,\Sigma^{m_2}f_1)\circ (1_{\Sigma^{m_{[2,1]}}X_1}\wedge\tau(\s^{m_3},\s^1))\}_{m_4}\\
&\hspace{6cm}\circ \Sigma(1_{\Sigma^{m_{[3,1]}}X_1}\wedge\tau(\s^{m_4},\s^1))\\
&=\{f_4,[f_3,A_2,\Sigma^{m_3}f_2],(\Sigma^{m_3}f_2,\widetilde{\Sigma}^{m_3}A_1,\Sigma^{m_{[3,2]}}f_1)\}_{m_4}
\circ \Sigma(1_{\Sigma^{m_{[3,1]}}X_1}\wedge\tau(\s^{m_4},\s^1))\\
&\hspace{6cm} (\text{by \cite[Lemma 2.4]{OO1}}),
\end{align*}
where the equality $\overset{\#}{=}$ holds because of the following reasons: 
$\overline{f_3}^2={\overline{f_3}^2}'\circ(\psi^{m_3}_{f_2})^{-1}$; (8.3); $\supset$ holds 
by Proposition 1.2(ii) of \cite{T}; indeterminacies of two brackets are the same.  
This proves (8.1).
\end{proof}

In order to prove (8.2) we need Lemma 8.3 below. 
We omit its proof, because it is an easy consequence of definitions. 

\begin{lemma}
If maps $X\overset{f}{\to}Y\overset{g}{\to}Z$ with a null homotopy 
$H:g\circ f\simeq *$ and a homeomorphism $h:Y\approx Y'$ 
are given, then we have 
\begin{gather}
[g,H,f]=[g\circ h^{-1},H,h\circ f]\circ(h\cup C1_X):Y\cup_f CX\to Z,\nonumber\\ 
(1_Z\cup Ch)\circ (g,H,f)=(g\circ h^{-1},H,h\circ f):EX\to Z\cup_{g\circ h^{-1}}CY'.
\end{gather}
\end{lemma}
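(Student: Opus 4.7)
The plan is to verify each of the two identities by unfolding the definitions of the extension $[g,H,f]:Y\cup_f CX\to Z$ and the coextension $(g,H,f):EX\to Z\cup_g CY$ from \cite{OO}, and then comparing both sides of each equation piecewise on their respective domains.

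First I will handle the equation $[g,H,f]=[g\circ h^{-1},H,h\circ f]\circ(h\cup C1_X)$. Observe that since $h:Y\approx Y'$ is a homeomorphism, $h\cup C1_X:Y\cup_f CX\approx Y'\cup_{h\circ f}CX$ is well defined (the attaching conditions match because $(h\circ f)(x)=h(f(x))$ and $h$ is applied to both the image of $f$ and to $Y$). Moreover, the null homotopy $H$ serves both extensions because $(g\circ h^{-1})\circ(h\circ f)=g\circ f$, so $H:(g\circ h^{-1})\circ(h\circ f)\simeq *$ as well. Now I will check the identity on $Y$ and on $CX$ separately: on $y\in Y$ the right hand side equals $[g\circ h^{-1},H,h\circ f](h(y))=g\circ h^{-1}(h(y))=g(y)=[g,H,f](y)$, and on a cone point $x\wedge t\in CX$ both sides are $H(x,t)$ by the definition of the extension. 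This exhausts $Y\cup_f CX$ and establishes the first identity.

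Next I will handle $(1_Z\cup Ch)\circ(g,H,f)=(g\circ h^{-1},H,h\circ f)$. The map $1_Z\cup Ch:Z\cup_g CY\to Z\cup_{g\circ h^{-1}}CY'$ is well defined since the attaching equation $h(y)\wedge 0=(g\circ h^{-1})(h(y))=g(y)$ matches. Writing out the standard formula for the coextension, $(g,H,f)(x\wedge t)$ equals $f(x)\wedge(1-2t)\in CY$ for $t\le 1/2$ and $H(x,2t-1)\in Z$ for $t\ge 1/2$. Applying $1_Z\cup Ch$ sends the first piece to $h(f(x))\wedge(1-2t)\in CY'$ and leaves the second piece in $Z$ untouched. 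Comparing with the direct formula for $(g\circ h^{-1},H,h\circ f)(x\wedge t)$ — which on the cone side reads $(h\circ f)(x)\wedge(1-2t)\in CY'$ and on the $Z$ side reads $H(x,2t-1)$ — the two maps agree on the nose.

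The only subtle point is the bookkeeping of the attaching data and verifying that $h\cup C1_X$ and $1_Z\cup Ch$ are well defined with the claimed codomains; once these identifications are in place, no essential obstacle remains, which is precisely why the paper leaves the verification to the reader.
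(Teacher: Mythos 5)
Your proof is correct and is exactly the verification the paper has in mind: the paper states this lemma with the remark ``We omit its proof, because it is an easy consequence of definitions,'' and your piecewise unfolding of the extension $[g,H,f]$ on $Y$ and on $CX$, and of the coextension $(g,H,f)$ on the two halves of the suspension coordinate, together with the well-definedness checks for $h\cup C1_X$ and $1_Z\cup Ch$, supplies precisely that omitted verification.
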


\begin{proof}[Proof of (8.2)]
Suppose that $(\vec{\bm f};A_3,A_2,A_1)$ is admissible. 
Then 
\begin{align*}
&f_4\circ \Sigma^{m_4}[f_3,A_2,\Sigma^{m_3}f_2]\\
&=f_4\circ[\Sigma^{m_4}f_3,\widetilde{\Sigma}^{m_4}A_2, \Sigma^{m_{[4,3]}}f_2]
\circ(\psi^{m_4}_{\Sigma^{m_3}f_2})^{-1}\quad(\text{by \cite[Lemma 2.4]{OO1}})\\
&=[f_4,A_3,\Sigma^{m_4}f_3]\circ i_{\Sigma^{m_4}f_3}\circ [\Sigma^{m_4}f_3,\widetilde{\Sigma}^{m_4}A_2, \Sigma^{m_{[4,3]}}f_2]\circ(\psi^{m_4}_{\Sigma^{m_3}f_2})^{-1}\\
&\simeq [f_4,A_3,\Sigma^{m_4}f_3]\circ (\Sigma^{m_4}f_3,\widetilde{\Sigma}^{m_4}A_2, \Sigma^{m_{[4,3]}}f_2)
\circ q_{\Sigma^{m_{[4,3]}}f_2}\circ (\psi^{m_4}_{\Sigma^{m_3}f_2})^{-1}\\
&\hspace{6cm} (\text{by \cite[Proposition 5.11]{Og} or \cite[Lemma 3.6]{OO1}})\\
&\simeq *
\end{align*}
and so $\{f_4,[f_3,A_2,\Sigma^{m_3}f_2],(\Sigma^{m_3}f_2,\widetilde{\Sigma}^{m_3}A_1,\Sigma^{m_{[3,2]}}f_1)\}_{m_4}$ is not empty. 
Let $\alpha$ be any element of the last Toda bracket. 
Then there are null homotopies $F:f_4\circ \Sigma^{m_4}[f_3,A_2,\Sigma^{m_3}f_2]\simeq *$ 
and 
$B:[f_3,A_2,\Sigma^{m_3}f_2]\circ(\Sigma^{m_3}f_2,\widetilde{\Sigma}^{m_3}A_1,\Sigma^{m_{[3,2]}}f_1)\simeq *$ 
such that 
$\alpha=[f_4,F,\Sigma^{m_4}[f_3,A_2,\Sigma^{m_3}f_2]]\circ(\Sigma^{m_4}[f_3,A_2,\Sigma^{m_3}f_2],\widetilde{\Sigma}^{m_4}B,\Sigma^{m_4}(\Sigma^{m_3}f_2,\widetilde{\Sigma}^{m_3}A_1,\Sigma^{m_{[3,2]}}f_1))$. 
We will prove 
$$\alpha\circ \Sigma(1_{\Sigma^{m_{[3,1]}}X_1}\wedge\tau(\s^{m_4},\s^1))\in\{\vec{\bm f}\}^{(\ddot{s}_t)}_{\vec{\bm m}}. 
$$
Set 
\begin{gather*}
\mathscr{S}_2=(\Sigma^{m_1}X_1;X_2,X_2\cup_{f_1}C\Sigma^{m_1}X_1;f_1;i_{f_1}),\ 
a_{2,1}=1_{C_{2,2}},\ \mathscr{A}_2=\{a_{2,1}\},\ \omega_{2,1}=q'_{f_1},\\
\overline{f_2}'=[f_2,A_1,\Sigma^{m_2}f_1]:\Sigma^{m_2}X_2\cup_{\Sigma^{m_2}f_1}C\Sigma^{m_{[2,1]}}X_1\to X_3.
\end{gather*}
Then $\widetilde{\Sigma}^{m_2}a_{2,1}=(\psi^{m_2}_{f_1})^{-1}:\Sigma^{m_2}C_{2,2}\to \Sigma^{m_2}X_2\cup_{\Sigma^{m_2}f_1}C\Sigma^{m_{[2,1]}}X_1$ by definition. 
Set 
\begin{gather*}
\overline{f_2}=\overline{f_2}'\circ(\psi^{m_2}_{f_1})^{-1}=\overline{f_2}'\circ\widetilde{\Sigma}^{m_2}a_{2,1}:\Sigma^{m_2}C_{2,2}\to X_3,\\
\mathscr{S}_3=(\widetilde{\Sigma}^{m_2}\mathscr{S}_2)(\overline{f_2},\widetilde{\Sigma}^{m_2}\mathscr{A}_2),\ a_{3,1}=1_{C_{3,2}}.
\end{gather*}
Then $g_{3,2}\simeq (f_2,A_1,\Sigma^{m_2}f_1)$ by \cite[(4.2)]{OO}. 
Since $j_{3,2}:C_{3,2}=X_3\cup_{f_2}C\Sigma^{m_2}X_2\to C_{3,3}=X_3\cup_{\overline{f_2}}C\Sigma^{m_2}C_{2,2}$ is a homotopy cofibre of $g_{3,2}$, it follows from \cite[Lemma 4.3(2)]{OO} that $j_{3,2}$ is a homotopy cofibre of $(f_2,A_1,\Sigma^{m_2}f_1)$ so that there is a homotopy equivalence 
$a_{3,2}\in\mathrm{TOP}^{C_{3,2}}(j_{3,2},i_{(f_2,A_1,\Sigma^{m_2}f_1)})$ such that 
$\mathscr{A}_3=\{a_{3,1}, a_{3,2}\}$ is a structure on $\mathscr{S}_3$. 
We may suppose/take $g_{3,2}=(f_2,A_1,\Sigma^{m_2}f_1)$ by \cite[Lemma 4.3(2), Remark 5.5(1)]{OO}. 
We have
\begin{align*}
\widetilde{\Sigma}^{m_3}g_{3,2}&=\Sigma^{m_3}g_{3,2}\circ(1_{\Sigma^{m_{[2,1]}}X_1}\wedge\tau(\s^{m_3},\s^1))\\
&= \Sigma^{m_3}(f_2,A_1,\Sigma^{m_2}f_1)\circ(1_{\Sigma^{m_{[2,1]}}X_1}\wedge\tau(\s^{m_3},\s^1))\\
&=\psi^{m_3}_{f_2}\circ (\Sigma^{m_3}f_2,\widetilde{\Sigma}^{m_3}A_1,\Sigma^{m_{[3,2]}}f_1)\quad 
(\text{by \cite[Lemma 2.4]{OO1}}).
\end{align*}
Set
\begin{gather*}
{\overline{f_3}^2}'=[f_3,A_2,\Sigma^{m_3}f_2]
:\Sigma^{m_3}X_3\cup_{\Sigma^{m_3}f_2}C\Sigma^{m_{[3,2]}}X_2\to X_4,\\
\overline{f_3}^2=[f_3,A_2,\Sigma^{m_3}f_2]\circ
(\psi^{m_3}_{f_2})^{-1}:\Sigma^{m_3}C_{3,2}\to X_4.
\end{gather*}
We have 
\begin{align*}
\overline{f_3}^2\circ\widetilde{\Sigma}^{m_3}g_{3,2}&={\overline{f_3}^2}'\circ(\psi^{m_3}_{f_2})^{-1}\circ\widetilde{\Sigma}^{m_3}g_{3,2}\\
&= [f_3,A_2,\Sigma^{m_3}f_2]\circ (\Sigma^{m_3}f_2,\widetilde{\Sigma}^{m_3}A_1,\Sigma^{m_{[3,2]}}f_1)\\
&\simeq *.
\end{align*}
Take $B:\overline{f_3}^2\circ\widetilde{\Sigma}^{m_3}g_{3,2}\simeq *$ and set 
$$
\overline{f_3}=[\overline{f_3}^2,B,\widetilde{\Sigma}^{m_3}g_{3,2}]:\Sigma^{m_3}C_{3,3}\to X_4.
$$
Then $\overline{f_3}\circ \Sigma^{m_3}j_{3,2}=\overline{f_3}^2$. 
Set 
$$
\mathscr{S}_4=(\widetilde{\Sigma}^{m_3}\mathscr{S}_3)(\overline{f_3},\widetilde{\Sigma}^{m_3}\mathscr{A}_3),\quad a_{4,1}=1_{C_{4,2}}.
$$ 
Then $g_{4,2}\simeq (f_3,A_2,\Sigma^{m_3}f_2)$ and $g_{4,3}\simeq (\overline{f_3}^2,B,\widetilde{\Sigma}^{m_3}g_{3,2})$ by \cite[(4.2)]{OO1}. 
Since $j_{4,2}$ is a homotopy cofibre of $g_{4,2}$, it follows from \cite[Lemma 4.3(2)]{OO} that $j_{4,2}$ is a homotopy cofibre of $(f_3,A_2,\Sigma^{m_3}f_2)$ so that there is a homotopy equivalence $a_{4,2}\in\mathrm{TOP}^{C_{4,2}}(j_{4,2},i_{(f_3,A_2,\Sigma^{m_3}f_2)})$ 
and we may suppose/take $g_{4,2}=(f_3,A_2,\Sigma^{m_3}f_2)$ by \cite[Lemma 4.3(2), Remark 5.5(1)]{OO}. 
Let $a_{4,3}:C_{4,4}\to C_{4,3}\cup_{g_{4,3}}C\Sigma^2\Sigma^{m_{[3,1]}}X_1$ be any homotopy equivalence such that $a_{4,3}\circ j_{4,3}=i_{g_{4,3}}$. 
Then the set $\mathscr{A}_4=\{a_{4,i}\,|\,i=1,2,3\}$ is a structure on $\mathscr{S}_4$.  
We have
\begin{align*}
f_4\circ \Sigma^{m_4}\overline{f_3}^2&=[f_4,A_3,\Sigma^{m_4}f_3]\circ i_{\Sigma^{m_4}f_3}\circ \Sigma^{m_4}[f_3,A_2,\Sigma^{m_3}f_2]\\
&=[f_4,A_3,\Sigma^{m_4}f_3]\circ i_{\Sigma^{m_4}f_3}\circ [\Sigma^{m_4}f_3,\widetilde{\Sigma}^{m_4}A_2,\Sigma^{m_{[4,3]}}f_2]\circ(\psi^{m_4}_{\Sigma^{m_3}f_2})^{-1}\\
&\simeq [f_4,A_3,\Sigma^{m_4}f_3]\circ (\Sigma^{m_4}f_3,\widetilde{\Sigma}^{m_4}A_2,\Sigma^{m_{[4,3]}}f_2)
\circ(\psi^{m_4}_{\Sigma^{m_3}f_2})^{-1}\\
&\simeq *.
\end{align*}
Then the map
$$
\overline{f_4}=[f_4,F,\Sigma^{m_4}[f_3,A_2,\Sigma^{m_3}f_2]]\circ (1_{\Sigma^{m_4}X_4}\cup C\Sigma^{m_4}(\psi^{m_3}_{f_2})^{-1})\circ(\psi^{m_4}_{\overline{f_3}^2})^{-1}:\Sigma^{m_4}C_{4,3}\to X_5
$$
is an extension of $f_4$ to $\Sigma^{m_4}C_{4,3}$. 
Hence we have an $\ddot{s}_t$-presentation 
$\{\mathscr{S}_r,\overline{f_r},\mathscr{A}_r\,|\,2\le r\le 4\}$ of $\vec{\bm f}$. 
We have 
$$
(\overline{f_3}^2,B,\widetilde{\Sigma}^{m_3}g_{3,2})=(1_{X_4}\cup C\psi^{m_3}_{f_2})
\circ({\overline{f_3}^2}',B,(\Sigma^{m_3}f_2,\widetilde{m_3}A_1,\Sigma^{m_{[3,2]}}f_1))
$$
by (8.4). 
We have 
\begin{align*}
&\overline{f_4}\circ\widetilde{\Sigma}^{m_4}g_{4,3}\\
&=[f_4,F,\Sigma^{m_4}[f_3,A_2,\Sigma^{m_3}f_2]]\circ (1_{\Sigma^{m_4}X_4}\cup C\Sigma^{m_4}(\psi^{m_3}_{f_2})^{-1})\circ(\psi^{m_4}_{\overline{f_3}^2})^{-1}\\
&\hspace{1cm}\circ \Sigma^{m_4}g_{4,3}\circ(1_{\Sigma^{m_{[3,1]}}X_1}\wedge\tau(\s^{m_4},\s^2))\\
&\simeq [f_4,F,\Sigma^{m_4}[f_3,A_2,\Sigma^{m_3}f_2]]\circ (1_{\Sigma^{m_4}X_4}\cup C\Sigma^{m_4}(\psi^{m_3}_{f_2})^{-1})\circ(\psi^{m_4}_{\overline{f_3}^2})^{-1}\\
&\hspace{1cm}\circ \Sigma^{m_4}(\overline{f_3}^2,B,\widetilde{\Sigma}^{m_3}g_{3,2})\circ 
(1_{\Sigma^{m_{[3,1]}}X_1}\wedge\tau(\s^{m_4},\s^2))\\
&=[f_4,F,\Sigma^{m_4}[f_3,A_2,\Sigma^{m_3}f_2]]\circ (1_{\Sigma^{m_4}X_4}\cup C\Sigma^{m_4}(\psi^{m_3}_{f_2})^{-1})\circ(\psi^{m_4}_{\overline{f_3}^2})^{-1}\\
&\hspace{1cm}\circ \Sigma^{m_4}(1_{X_4}\cup C\psi^{m_3}_{f_2})\circ \Sigma^{m_4}
({\overline{f_3}^2}',B,(\Sigma^{m_3}f_2,\widetilde{\Sigma}^{m_3}A_1,\Sigma^{m_{[3,2]}}f_1))\\
&\hspace{1cm}\circ(1_{\Sigma^{m_{[3,1]}}X_1}\wedge\tau(\s^{m_4},\s^2))\\
&=[f_4,F,\Sigma^{m_4}[f_3,A_2,\Sigma^{m_3}f_2]]\circ (1_{\Sigma^{m_4}X_4}\cup C\Sigma^{m_4}(\psi^{m_3}_{f_2})^{-1})\circ(\psi^{m_4}_{\overline{f_3}^2})^{-1}\\
&\hspace{1cm}\circ \Sigma^{m_4}(1_{X_4}\cup C\psi^{m_3}_{f_2})
\circ\psi^{m_4}_{{\overline{f_3}^2}'}\\
&\hspace{1cm}\circ(\Sigma^{m_4}{\overline{f_3}^2}',\widetilde{\Sigma}^{m_4}B,\Sigma^{m_4}(\Sigma^{m_3}f_2,\widetilde{\Sigma}^{m_3}A_1,\Sigma^{m_{[3,2]}}f_1))\circ(1_{\Sigma\Sigma^{m_{[3,1]}}X_1}\wedge\tau(\s^1,\s^{m_4}))
\\
&\hspace{1cm}\circ (1_{\Sigma^{m_{[3,1]}}X_1}\wedge\tau(\s^{m_4},\s^2))\\
&=[f_4,F,\Sigma^{m_4}[f_3,A_2,\Sigma^{m_3}f_2]]\circ 
(\Sigma^{m_4}{\overline{f_3}^2}',\widetilde{\Sigma}^{m_4}B,\Sigma^{m_4}(\Sigma^{m_3}f_2,\widetilde{\Sigma}^{m_3}A_1,\Sigma^{m_{[3,2]}}f_1))\\
&\hspace{1cm}\circ \Sigma(1_{\Sigma^{m_{[3,1]}}X_1}\wedge\tau(\s^{m_4},\s^1)).
\end{align*}
This completes the proof of (8.2).
\end{proof}

\begin{proof}[Proof of Corollary 8.2]
Take $A_0:f_1\circ \Sigma^{m_1}f_0\simeq *$ arbitrarily. 
Then $ q_{f_1}\circ(f_1,A_0,\Sigma^{m_1}f_0)\simeq -\Sigma\Sigma^{m_1}f_0$ by definitions, and so 
$\Sigma\Sigma^{m_1}f_0\simeq q_{f_1}\circ (-(f_1,A_0,\Sigma^{m_1}f_0))$. 
By Theorem~8.1 we have
\begin{align*}
&\{f_4,f_3,f_2,f_1\}^{(\ddot{s}_t)}_{\vec{\bm m}}\circ \Sigma(1_{\Sigma^{m_1}X_1}\wedge\tau(\s^1,\s^{m_{[4,2]}}))\circ \Sigma\Sigma^{m_{[4,2]}}\Sigma\Sigma^{m_1} f_0\\
&=\bigcup \{f_4,[f_3,A_2,\Sigma^{m_3}f_2],(\Sigma^{m_3}f_2,\widetilde{\Sigma}^{m_3}A_1,\Sigma^{m_{[3,2]}}f_1)\}_{m_4}\circ \Sigma(1_{\Sigma^{m_{[3,1]}}X_1}\wedge\tau(\s^{m_4},\s^1))\\
&\hspace{2cm}\circ \Sigma(1_{\Sigma^{m_1}X_1}\wedge\tau(\s^1,\s^{m_{[4,2]}}))\circ \Sigma\Sigma^{m_{[4,2]}}\Sigma\Sigma^{m_1} f_0
\end{align*}
where $\bigcup$ is taken over all $(A_3,A_2,A_1)$ such that $(f_4,f_3,f_2,f_1;A_3,A_2,A_1)$ is  admissible. 
Since 
\begin{align*}
&\Sigma(1_{\Sigma^{m_{[3,1]}}X_1}\wedge\tau(\s^{m_4},\s^1))
\circ \Sigma(1_{\Sigma^{m_1}X_1}\wedge\tau(\s^1,\s^{m_{[4,2]}}))\circ \Sigma\Sigma^{m_{[4,2]}}\Sigma\Sigma^{m_1} f_0\\
&\simeq \Sigma\Sigma^{m_4}\big(q_{\Sigma^{m_{[3,2]}}f_1}\circ(\psi^{m_{[3,2]}}_{f_1})^{-1}\big)\circ 
\Sigma\Sigma^{m_{[4,2]}}(-(f_1,A_0,\Sigma^{m_1}f_0))
\end{align*}
we have
\begin{align*}
&\{f_4,[f_3,A_2,\Sigma^{m_3}f_2],(\Sigma^{m_3}f_2,\widetilde{\Sigma}^{m_3}A_1,\Sigma^{m_{[3,2]}}f_1)\}_{m_4}\circ \Sigma(1_{\Sigma^{m_{[3,1]}}X_1}\wedge\tau(\s^{m_4},\s^1))\\
&\hspace{2cm}\circ \Sigma(1_{\Sigma^{m_1}X_1}\wedge\tau(\s^1,\s^{m_{[4,2]}}))\circ \Sigma\Sigma^{m_{[4,2]}}\Sigma\Sigma^{m_1} f_0\\
&=\{f_4,[f_3,A_2,\Sigma^{m_3}f_2],(\Sigma^{m_3}f_2,\widetilde{\Sigma}^{m_3}A_1,\Sigma^{m_{[3,2]}}f_1)\}_{m_4}\\
&\hspace{2cm}\circ \Sigma\Sigma^{m_4}\big(q_{\Sigma^{m_{[3,2]}}f_1}\circ(\psi^{m_{[3,2]}}_{f_1})^{-1}\big)\circ 
\Sigma\Sigma^{m_{[4,2]}}(-(f_1,A_0,\Sigma^{m_1}f_0))\\
&\subset \{f_4,[f_3,A_2,\Sigma^{m_3}f_2],(\Sigma^{m_3}f_2,\widetilde{\Sigma}^{m_3}A_1,\Sigma^{m_{[3,2]}}f_1)
\circ q_{\Sigma^{m_{[3,2]}}f_1}\}_{m_4}\\
&\hspace{2cm}\circ \Sigma\Sigma^{m_4}(\psi^{m_{[3,2]}}_{f_1})^{-1}\circ \Sigma\Sigma^{m_{[4,2]}}(-(f_1,A_0,\Sigma^{m_1}f_0))\\
&\hspace{3cm} (\text{by \cite[Proposition 1.2(i)]{T}})\\
&=\{f_4,[f_3,A_2,\Sigma^{m_3}f_2],i_{\Sigma^{m_3}f_2}\circ[\Sigma^{m_3}f_2,\widetilde{\Sigma}^{m_3}A_1,\Sigma^{m_{[3,2]}}f_1]\}_{m_4}\\
&\hspace{1cm}\circ \Sigma\Sigma^{m_4}\big((\psi^{m_{[3,2]}}_{f_1})^{-1}\circ \Sigma^{m_{[3,2]}}(f_1,A_0,\Sigma^{m_1}f_0)\big)\circ \Sigma\Sigma^{m_{[4,2]}}(-1_{\Sigma\Sigma^{m_{[1,0]}}X_0})\\
&\hspace{3cm}(\text{by \cite[Proposition(5.11)]{Og} or \cite[Lemma 3.6]{OO1}})\\
&\subset\{f_4,[f_3,A_2,\Sigma^{m_3}f_2]\circ i_{\Sigma^{m_3}f_2},[\Sigma^{m_3}f_2,\widetilde{\Sigma}^{m_3}A_1,\Sigma^{m_{[3,2]}}f_1]\}_{m_4}\\
&\hspace{1cm}\circ \Sigma\Sigma^{m_4}\big((\Sigma^{m_{[3,2]}}f_1,\widetilde{\Sigma}^{m_{[3,2]}}A_0,\Sigma^{m_{[3,1]}}f_0)\circ (1_{\Sigma^{m_{[1,0]}}X_0}\wedge\tau(\s^1,\s^{m_{[3,2]}})\big)\\
&\hspace{1cm}\circ \Sigma\Sigma^{m_{[4,2]}}(-1_{\Sigma\Sigma^{m_{[1,0]}}X_0})\\
&\hspace{3cm}(\text{by \cite[Proposition 1.2(ii)]{T} and \cite[Lemma 2.4]{OO1}})\\
&=\{f_4,f_3,[\Sigma^{m_3}f_2,\widetilde{\Sigma}^{m_3}A_1,\Sigma^{m_{[3,2]}}f_1]\}_{m_4}
\circ \Sigma\Sigma^{m_4}(\Sigma^{m_{[3,2]}}f_1,\widetilde{\Sigma}^{m_{[3,2]}}A_0,\Sigma^{m_{[3,1]}}f_0)\\
&\hspace{1cm}\circ \Sigma\Sigma^{m_4}\big((1_{\Sigma^{m_{[1,0]}}X_0}\wedge\tau(\s^1,\s^{m_{[3,2]}}))\circ \Sigma^{m_{[3,2]}}(-1_{\Sigma\Sigma^{m_{[1,0]}}X_0})\big)\\
&=(-1)^{m_4+1}\Big(f_4\circ \Sigma^{m_4}\{f_3,[\Sigma^{m_3}f_2,\widetilde{\Sigma}^{m_3}A_1,\Sigma^{m_{[3,2]}}f_1],(\Sigma^{m_{[3,2]}}f_1,\widetilde{\Sigma}^{m_{[3,2]}}A_0,\Sigma^{m_{[3,1]}}f_0)\}\Big)\\
&\hspace{1cm}\circ \Sigma\Sigma^{m_4}\big((1_{\Sigma^{m_{[1,0]}}X_0}\wedge\tau(\s^1,\s^{m_{[3,2]}}))\circ \Sigma^{m_{[3,2]}}(-1_{\Sigma\Sigma^{m_{[1,0]}}X_0})\big)\\
&(\text{by the assumption $\{f_2,f_1,f_0\}^{(\ddot{s}_t)}_{(m_2,m_1,m_0)}=\{0\}$, \cite[Lemma 2.4]{OO1}, and \cite[Proposition 1.4]{T}})\\
&=(-1)^{m_4+m_3+m_2}\Big(f_4\circ \Sigma^{m_4}\{f_3,[\Sigma^{m_3}f_2,\widetilde{\Sigma}^{m_3}A_1,\Sigma^{m_{[3,2]}}f_1],(\Sigma^{m_{[3,2]}}f_1,\widetilde{\Sigma}^{m_{[3,2]}}A_0,\Sigma^{m_{[3,1]}}f_0)\}\Big)\\
&\hspace{2cm}(\text{the equality holds under suitable identifications of respective spaces})\\
&\subset(-1)^{m_4+m_3+m_2}\big(f_4\circ \Sigma^{m_4}\{f_3,\Sigma^{m_3}f_2,\Sigma^{m_{[3,2]}}f_1,\Sigma^{m_{[3,1]}}f_0\}^{(\ddot{s}_t)}\big),
\end{align*} 
where the last containment follows from Theorem 8.1 and the fact that 
$(f_3,f_2,f_1,f_0;A_2,A_1,A_0)$ is admissible. 
Hence
\begin{align*}
&\{\vec{\bm f}\,\}^{(\ddot{s}_t)}_{\vec{\bm m}}\circ \Sigma(1_{\Sigma^{m_1}X_1}\wedge\tau(\s^1,\s^{m_{[4,2]}}))\circ \Sigma\Sigma^{m_{[4,2]}}\Sigma\Sigma^{m_1}f_0\\
&\qquad\subset(-1)^{m_4+m_3+m_2}\big(f_4\circ \Sigma^{m_4}\{f_3,\Sigma^{m_3}f_2,\Sigma^{m_{[3,2]}}f_1,\Sigma^{m_{[3,1]}}f_0\}^{(\ddot{s}_t)}\big).
\end{align*}
By the usual identifications of spaces \cite[Section 2]{OO}, the map $\Sigma(1_{\Sigma^{m_1}X_1}\wedge\tau(\s^1,\s^{m_{[4,2]}}))$ is a self homeomorphism of $\Sigma^{|\vec{\bm m}|+2}X_1$ of the degree $(-1)^{m_4+m_3+m_2}$. 
Hence, if we denote the map $\Sigma\Sigma^{m_{[4,2]}}\Sigma\Sigma^{m_1}f_0$ by $\Sigma^{|\vec{\bm m}|+2}f_0$, then we have 
$$
\{\vec{\bm f}\}^{(\ddot{s}_t)}_{\vec{\bm m}}\circ \Sigma^{|\vec{\bm m}|+2}f_0
\subset f_4\circ \Sigma^{m_4}\{f_3,\Sigma^{m_3}f_2,\Sigma^{m_{[3,2]}}f_1, \Sigma^{m_{[3,1]}}f_0\}^{(\ddot{s}_t)}.
$$
This completes the proof of Corollary 8.2. 
\end{proof}

\section{Non-empty brackets}
Similar assertions to the following proposition can be seen in \cite{M} (for $n=4$) 
and \cite{W} (for the non subscripted case). 

\begin{prop}
\begin{enumerate}
\item If $\{f_{n-1},\dots,f_1\}^{(q)}_{(m_{n-1},\dots,m_1)}\ni 0$ and $\{f_n,\dots,f_k\}^{(aq\ddot{s}_2)}_{(m_n,\dots,m_k)}=\{0\}$ for all $k$ with $2\le k<n$, then 
$\{\vec{\bm f}\}^{(\star)}_{\vec{\bm m}}$ is not empty for all $\star$.
\item If $\{f_n,\dots,f_2\}^{(q)}_{(m_n,\dots,m_2)}\ni 0$ and $\{f_k,\dots,f_1\}^{(aq\ddot{s}_2)}_{(m_k,\dots,m_1)}=\{0\}$ for all $k$ with $2\le k<n$, then 
$\{\vec{\bm f}\}^{(\star)}_{\vec{\bm m}}$ is not empty for all $\star$.
\end{enumerate}
\end{prop}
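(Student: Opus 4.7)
The plan is to exhibit, in each case, a $q$-presentation of $\vec{\bm f}$; non-emptiness for every $\star$ then follows from Corollary~4.2(4).

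For (1), I would start from a $q$-presentation of $(f_{n-1},\dots,f_1)$ realising $0$ and extend it by one more layer. Given such a presentation $\{\mathscr{S}_r,\overline{f_r},\Omega_r\mid 2\le r\le n-1\}$ with $\overline{f_{n-1}}\circ\widetilde{\Sigma}^{m_{n-1}}g_{n-1,n-2}\simeq *$, the null-homotopy lets me extend $\overline{f_{n-1}}$ across the cone attached along $g_{n-1,n-2}$ to a map $\overline{\overline{f_{n-1}}}\colon\Sigma^{m_{n-1}}C_{n-1,n-1}\to X_n$; then $\mathscr{S}_n=(\widetilde{\Sigma}^{m_{n-1}}\mathscr{S}_{n-1})(\overline{\overline{f_{n-1}}},\widetilde{\Sigma}^{m_{n-1}}\Omega_{n-1})$ together with any quasi-structure $\Omega_n$ completes the cone tower. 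It remains to produce $\overline{f_n}\colon\Sigma^{m_n}C_{n,n-1}\to X_{n+1}$ extending $f_n$, which I would do inductively by building $h_s\colon\Sigma^{m_n}C_{n,s}\to X_{n+1}$ with $h_1=f_n$ for $s=1,\dots,n-1$. The obstruction to extending $h_s$ to $h_{s+1}$ is the homotopy class of $h_s\circ\Sigma^{m_n}g_{n,s}$; the key step is to reinterpret it as a bracket value by observing that the cone tower $\{C_{n,t}\mid t\le s\}$, together with $(h_2,\dots,h_s)$ and the extensions $\overline{f_r}$ for $n-s+1\le r\le n-1$, assembles (after relabelling) into an $aq\ddot{s}_2$-presentation of the subsequence $(f_n,\dots,f_{n-s})$ whose bracket value is $h_s\circ\Sigma^{m_n}g_{n,s}$ up to the standard suspension twists $1\wedge\tau(\s^a,\s^b)$. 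Since $\{f_n,\dots,f_{n-s}\}^{(aq\ddot{s}_2)}_{(m_n,\dots,m_{n-s})}=\{0\}$ by hypothesis, the obstruction vanishes, $h_{s+1}$ exists, and $\overline{f_n}=h_{n-1}$ yields the desired $q$-presentation of $\vec{\bm f}$.

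For (2), I would run the construction in the opposite direction. Starting with $\mathscr{S}_2=(\Sigma^{m_1}X_1;X_2,X_2\cup_{f_1}C\Sigma^{m_1}X_1;f_1;i_{f_1})$ and $\Omega_2=\{q'_{f_1}\}$, I extend each $f_r$ to $\overline{f_r}\colon\Sigma^{m_r}C_{r,r}\to X_{r+1}$ inductively for $r=2,\dots,n-1$; the same reinterpretation principle identifies each extension obstruction with an element of $\{f_r,\dots,f_1\}^{(aq\ddot{s}_2)}_{(m_r,\dots,m_1)}=\{0\}$, which is zero by hypothesis. For the final step, I invoke $0\in\{f_n,\dots,f_2\}^{(q)}_{(m_n,\dots,m_2)}$ to get a $q$-presentation of $(f_n,\dots,f_2)$ whose top extension is null-homotopic along the corresponding attaching map, and use the null-homotopy to produce an extension one cone further. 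This extension must then be identified with a map on $\Sigma^{m_n}C_{n,n-1}$ of the full presentation, which I do by comparing the two cone towers through Proposition~3.1(2) and the homotopy invariance of brackets (Theorem~6.1).

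The main obstacle in both parts is the identification of each extension obstruction with an element of the relevant higher bracket: at every inductive step one must exhibit an $aq\ddot{s}_2$-presentation of the appropriate subsequence whose bracket value matches the obstruction modulo the pervasive suspension twists $1\wedge\tau(\s^a,\s^b)$. For (2) there is the additional task of reconciling the cone tower built from the bottom up with the one produced by the hypothesised sub-presentation of $(f_n,\dots,f_2)$, a coherence calculation of the same flavour as those carried out in Sections~3 and~4.
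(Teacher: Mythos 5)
The paper gives no proof of this proposition beyond the remark that it can be obtained by modifying the proof of \cite[(1.10)]{OO}, and your cell-by-cell obstruction argument --- building the cone tower from the $q$-presentation containing $0$, identifying each extension obstruction with a value of the bracket of the corresponding subsequence, and killing it by the vanishing hypothesis, then concluding via Corollary~4.2(4) --- is exactly the intended modification. The one point to keep in view when writing out the details is that each obstruction must be located in the $aq\ddot{s}_2$-bracket of the subsequence up to precomposition with homotopy equivalences (not merely in its $q$-bracket), since Theorem~4.1(5) relates the $q$- and $aq\ddot{s}_2$-brackets only through possibly non-invertible elements of $\Gamma'$; your observation that the sub-tower of the reduced iterated mapping cone at the top level assembles, after adjusting quasi-structures by equivalences, into an $aq\ddot{s}_2$-presentation of $(f_n,\dots,f_{n-s})$ is precisely what handles this.
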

\begin{proof}
A proof can be obtained easily by modifying the proof of \cite[(1.10)]{OO}. 
We omit the details. 
\end{proof}

\section{Subscripted stable higher Toda brackets $\langle\theta_n,\dots,\theta_1\rangle^{(\star)}_{\vec{\bm m}}$}
The purpose of this section is to define the {\it subscripted stable higher Toda bracket}  and prove a stable analogy to Proposition 3.8 (i.e.(1.1)). 

For any space $X$ and any integer $k\ge 1$, we regard $\Sigma^kX$ as 
a co-H-space with a comultiplication 
$\Sigma^kX=X\wedge \s^k\overset{1_X\wedge\mu}{\longrightarrow}X\wedge (\s^k\vee\s^k)=\Sigma^kX\vee \Sigma^kX$, 
where $\mu$ is a co-H-multiplication on $\s^k$. 
Note that $\mu$ is unique up to homotopy 
for $k\ge 2$ (cf. Proposition 2 and Remark 3 of \cite{AG}). 
We denote by $\langle X,Y\rangle$ the group of all stable maps 
$X\to Y$, that is, the limit of the sequence
$$
\begin{CD}
[X,Y]@>\Sigma>>[\Sigma X,\Sigma Y]@>\Sigma>>[\Sigma^2X,\Sigma^2Y]@>\Sigma>>\cdots .
\end{CD}
$$

Let $\vec{\bm \theta}=(\theta_n,\dots,\theta_1)$ be a sequence of stable elements 
$\theta_i\in\langle \Sigma^{m_i}X_i,X_{i+1}\rangle$ $(1\le i\le n)$. 
A {\it representative} of $\vec{\bm \theta}$ is a sequence 
$\overrightarrow{\bm f^\ell}=(f^\ell_n,\dots,f^\ell_1)$ 
of maps $f^\ell_i:\Sigma^\ell \Sigma^{m_i}X_i\to \Sigma^\ell X_{i+1}$ 
such that $f^\ell_i$ represents $\theta_i$ for all $i$. 
We denote by $\mathrm{Rep}(\vec{\bm \theta})$ the set of representatives of $\vec{\bm \theta}$. 

Given $\vec{\bm \theta}$, we will define $\langle \Sigma^{\vec{\bm m}}\vec{\bm \theta}\,\rangle^{(\star)}$ and 
$\langle\vec{\bm \theta}\,\rangle^{(\star)}_{\vec{\bm m}}$ which satisfy
\begin{equation}
\langle \Sigma^{n-2}\Sigma^{m_{[n,1]}}X_1,X_{n+1}\rangle\supset\langle \Sigma^{\vec{\bm m}}\vec{\bm \theta}\,\rangle^{(\star)}\supset \langle\vec{\bm \theta}\,\rangle^{(\star)}_{\vec{\bm m}},
\end{equation}
where the second containment is a stable analogy to Proposition 3.8 
and it is identity when $\vec{\bm m}=(0,\dots,0)$. 

Let $\overrightarrow{\bm f^{\ell}}\in
\mathrm{Rep}(\vec{\bm \theta})$. 
We set 
$$
f_i^{(\ell)}=f_i^\ell\circ(1_{X_i}\wedge\tau(\s^\ell,\s^{m_i})):\Sigma^{m_i}\Sigma^\ell X_i\to \Sigma^\ell X_{i+1}.
$$
By definitions, we have 
\begin{align*}
\widetilde{\Sigma}^r f^{(\ell)}_i&=
\Sigma^r f^{(\ell)}_i\circ(1_{\Sigma^\ell X_i}\wedge\tau(\s^r,\s^{m_i}))\\
&=\Sigma^r f^\ell_i \circ (1_{X_i}\wedge\tau(\s^\ell\wedge\s^r,\s^{m_i})):\Sigma^{m_i}\Sigma^r\Sigma^\ell X_i\to \Sigma^r\Sigma^\ell X_{i+1},
\end{align*}
and $\widetilde{\Sigma}^s\widetilde{\Sigma}^rf^{(\ell)}_i=\widetilde{\Sigma}^{s+r}f^{(\ell)}_i$ under the identification $\Sigma^s\Sigma^r=\Sigma^{s+r}$, that is, $\s^r\wedge\s^s=\s^{s+r}$ (see \cite[\S2]{OO}). 
We set 
$$\widetilde{\Sigma}^r\overrightarrow{\bm f^{(\ell)}}=(\widetilde{\Sigma}^rf^{(\ell)}_n,\dots,\widetilde{\Sigma}^rf^{(\ell)}_1).
$$ 
Then $\widetilde{\Sigma}^s\widetilde{\Sigma}^r \overrightarrow{\bm f^{(\ell)}}=\widetilde{\Sigma}^{s+r}\overrightarrow{\bm f^{(\ell)}}$ for $r,s\ge 0$. 
As defined in the introduction we set 
$$
m_{[n,n+1]}=0.
$$ 
We define
\begin{align*}
\widehat{\Sigma}^rf^{(\ell)}_i&=\Sigma^{m_{[n,i+1]}}\widetilde{\Sigma}^r f^{(\ell)}_i:\Sigma^{m_{[n,i]}}\Sigma^r\Sigma^\ell X_i\to \Sigma^{m_{[n,i+1]}}\Sigma^r\Sigma^\ell X_{i+1},\\
\widehat{\Sigma}^r \overrightarrow{{\bm f^{(\ell)}}}&=(\widehat{\Sigma}^rf^{(\ell)}_n,\dots,\widehat{\Sigma}^rf^{(\ell)}_1).
\end{align*}
For $r\ge 0$, we set 
\begin{align*}
\Gamma(r,\ell)&=[\Sigma^r\Sigma^\ell \Sigma^{n-2}\Sigma^{m_{[n,1]}}X_1,\Sigma^r \Sigma^\ell X_{n+1}],\\
A(r,\ell)&=\{\widehat{\Sigma}^r \overrightarrow{{\bm f^{(\ell)}}}\}^{(\star)}\circ(1_{X_1}\wedge\tau(\s^{m_{[n,1]}}\wedge\s^{n-2},\s^\ell\wedge\s^r)),\\
B(r,\ell)&=\{\widetilde{\Sigma}^r\overrightarrow{\bm f^{(\ell)}}\}^{(\star)}_{\vec{\bm m}}
\circ(1_{X_1}\wedge\tau(\s^{m_{[n,1]}}\wedge\s^{n-2},\s^\ell\wedge\s^r)).
\end{align*}

We are going to proceed in the following order. 

\begin{lemma}
\begin{enumerate}
\item $\Gamma(r,\ell)\supset A(r,\ell)\supset B(r,\ell)$. 
\item $\Sigma^s\Gamma(r,\ell)\subset\Gamma(s+r,\ell)$, $\Sigma^s A(r,\ell)\subset A(s+r,\ell)$, and $\Sigma^s B(r,\ell)\subset B(s+r,\ell)$. 
\item $\displaystyle{\lim_{r\to\infty}\Gamma(r,\ell)\supset\lim_{r\to\infty}A(r,\ell)\supset\lim_{r\to\infty}B(r,\ell)}$.
\end{enumerate}
\end{lemma}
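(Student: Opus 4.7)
The plan is to derive all three parts from Proposition~3.8 and Theorem~5.1, supplemented by the identity $\widetilde{\Sigma}^s\widetilde{\Sigma}^r=\widetilde{\Sigma}^{s+r}$ noted just above the lemma statement.

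For part~(1), the containment $\Gamma(r,\ell)\supset A(r,\ell)$ will be immediate from the definitions: the sequence $\widehat{\Sigma}^r\overrightarrow{\bm f^{(\ell)}}$ goes from $\Sigma^{m_{[n,1]}}\Sigma^r\Sigma^\ell X_1$ to $\Sigma^r\Sigma^\ell X_{n+1}$ (using the convention $m_{[n,n+1]}=0$), so $\{\widehat{\Sigma}^r\overrightarrow{\bm f^{(\ell)}}\}^{(\star)}$ lies in $[\Sigma^{n-2}\Sigma^{m_{[n,1]}}\Sigma^r\Sigma^\ell X_1,\Sigma^r\Sigma^\ell X_{n+1}]$, and postcomposing with the indicated transposition lands it in $\Gamma(r,\ell)$. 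For $A(r,\ell)\supset B(r,\ell)$, I will apply Proposition~3.8 to the sequence $\widetilde{\Sigma}^r\overrightarrow{\bm f^{(\ell)}}$: the $k$-th entry of the right-hand sequence in that proposition is $\Sigma^{m_{[n,k+1]}}\widetilde{\Sigma}^r f^{(\ell)}_k=\widehat{\Sigma}^r f^{(\ell)}_k$ by the very definition of $\widehat{\Sigma}^r$, so the right-hand bracket coincides with $\{\widehat{\Sigma}^r\overrightarrow{\bm f^{(\ell)}}\}^{(\star)}$. Postcomposition of both sides with the common transposition then yields the desired inclusion.

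For part~(2), the inclusion $\Sigma^s\Gamma(r,\ell)\subset\Gamma(s+r,\ell)$ is definitional. For $\Sigma^sB(r,\ell)\subset B(s+r,\ell)$ I will apply Theorem~5.1 with $\vec{\bm f}$ replaced by $\widetilde{\Sigma}^r\overrightarrow{\bm f^{(\ell)}}$ and with its parameter $\ell$ replaced by $s$: this gives that $\Sigma^s\{\widetilde{\Sigma}^r\overrightarrow{\bm f^{(\ell)}}\}^{(\star)}_{\vec{\bm m}}$ is contained in $\{\widetilde{\Sigma}^{s+r}\overrightarrow{\bm f^{(\ell)}}\}^{(\star)}_{\vec{\bm m}}$ composed with a transposition. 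Postcomposing with the transposition appearing in the definition of $B(r,\ell)$ and rearranging the resulting composite of transpositions produces exactly the transposition entering the definition of $B(s+r,\ell)$, possibly up to a sign that is absorbed by the equality version of Theorem~5.1. The proof of $\Sigma^sA(r,\ell)\subset A(s+r,\ell)$ is analogous, applying the non-subscripted case of Theorem~5.1 to $\widehat{\Sigma}^r\overrightarrow{\bm f^{(\ell)}}$; an extra transposition appears in order to permute $\Sigma^s$ past each $\Sigma^{m_{[n,i]}}$, but it too is absorbed into a sign.

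Part~(3) is then an immediate consequence of~(2): the three sequences $\{\Gamma(r,\ell)\}_r$, $\{A(r,\ell)\}_r$, $\{B(r,\ell)\}_r$ form directed systems under the suspension maps, the inclusions between them are preserved at each finite stage by~(1), and so they pass to the colimits. The only genuinely delicate point is the transposition bookkeeping inside~(2), and I expect this to be the main obstacle — but it is entirely routine once the smash-product conventions of~\cite{OO} are kept carefully in mind.
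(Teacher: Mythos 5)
Your parts (1) and (3), together with the $\Gamma$- and $B$-inclusions in part (2), follow the paper's proof exactly: $\Gamma(r,\ell)\supset A(r,\ell)$ is definitional, $A(r,\ell)\supset B(r,\ell)$ is Proposition 3.8 applied to $\widetilde{\Sigma}^r\overrightarrow{\bm f^{(\ell)}}$ (whose associated starred sequence is precisely $\widehat{\Sigma}^r\overrightarrow{\bm f^{(\ell)}}$), and $\Sigma^sB(r,\ell)\subset B(s+r,\ell)$ is Theorem 5.1 combined with $\widetilde{\Sigma}^s\widetilde{\Sigma}^r=\widetilde{\Sigma}^{s+r}$; there the two transpositions compose exactly to the one defining $B(s+r,\ell)$, with no sign left over.

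The gap is in the inclusion $\Sigma^sA(r,\ell)\subset A(s+r,\ell)$. After applying the non-subscripted suspension theorem ((6.3.1) of \cite{OO}) to $\widehat{\Sigma}^r\overrightarrow{\bm f^{(\ell)}}$ you are left with the bracket $\{\Sigma^s\widehat{\Sigma}^r\overrightarrow{\bm f^{(\ell)}}\}^{(\star)}$, whereas $A(s+r,\ell)$ is built from $\{\widehat{\Sigma}^{s+r}\overrightarrow{\bm f^{(\ell)}}\}^{(\star)}$. These are brackets of two genuinely different sequences: the $i$-th map of the first has source $\Sigma^s\Sigma^{m_{[n,i]}}\Sigma^r\Sigma^\ell X_i$, that of the second has source $\Sigma^{m_{[n,i]}}\Sigma^s\Sigma^r\Sigma^\ell X_i$, and they are intertwined by the transpositions $b_i=1_{\Sigma^r\Sigma^\ell X_i}\wedge\tau(\s^{m_{[n,i]}},\s^s)$ at every stage $1\le i\le n+1$ (with $b_{n+1}$ the identity since $m_{[n,n+1]}=0$). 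Because the $X_i$ are arbitrary well-pointed spaces, these are homeomorphisms between different smash orderings, not degree-$(\pm1)$ self-maps, so they cannot be ``absorbed into a sign.'' What is actually needed is a naturality statement for the bracket under a commuting ladder of homeomorphisms $\vec{\bm b}:\vec{\bm f}\to\vec{\bm f'}$, namely $\{\vec{\bm f'}\}^{(\star)}\circ\Sigma^{n-2}b_1=b_{n+1}\circ\{\vec{\bm f}\}^{(\star)}$. This is Lemma A.2 of the paper; it is not formal, and its proof requires constructing a $\star$-presentation of $\vec{\bm f}$ from one of $\vec{\bm f'}$ by conjugating the whole tower of mapping cones. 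With that lemma one obtains the exact identity (10.2), $\{\widehat{\Sigma}^{s+r}\overrightarrow{\bm f^{(\ell)}}\}^{(\star)}=\{\Sigma^s\widehat{\Sigma}^r\overrightarrow{\bm f^{(\ell)}}\}^{(\star)}\circ\Sigma^{n-2}(1_{\Sigma^r\Sigma^\ell X_1}\wedge\tau(\s^s,\s^{m_{[n,1]}}))$, after which your remaining transposition bookkeeping does go through.
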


\begin{lemma}
Each limit in Lemma 10.1(3) does not depend on $\ell$. 
In particular $\displaystyle{\lim_{r\to\infty}\Gamma(r,\ell)}
=\langle \Sigma^{n-2}\Sigma^{m_{[n,1]}}X_1, X_{n+1}\rangle$. 
\end{lemma}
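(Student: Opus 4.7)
The plan is to prove the two assertions of the lemma in sequence. The second assertion (the identification of $\lim_{r\to\infty}\Gamma(r,\ell)$ with $\langle \Sigma^{n-2}\Sigma^{m_{[n,1]}}X_1,X_{n+1}\rangle$) will follow from the first by specializing to $\ell=0$, where the system defining $\Gamma(r,0)$ is literally the suspension sequence appearing in the definition of $\langle-,-\rangle$. The heart of the proof is therefore the $\ell$-independence of each of the three colimits in Lemma 10.1(3).

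For $\Gamma(r,\ell)$ I would argue by reindexing. Writing $W=\Sigma^{n-2}\Sigma^{m_{[n,1]}}X_1$, the canonical twist homeomorphisms $1_W\wedge\tau(\s^\ell,\s^r)$ and $1_{X_{n+1}}\wedge\tau(\s^\ell,\s^r)$ produce for each $r$ a bijection $\Gamma(r,\ell)\cong\Gamma(r+\ell,0)$ which commutes with the suspension maps in $r$ up to a sign $(-1)^\ell$ depending only on $\ell$. Consequently the colimit $\lim_{r\to\infty}\Gamma(r,\ell)$ coincides with the colimit of the cofinal subsequence $\Gamma(\ell,0)\to\Gamma(\ell+1,0)\to\cdots$ of the defining system of $\langle W,X_{n+1}\rangle$. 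Since shifting the starting index of a sequential colimit does not affect its value, we obtain $\lim_{r\to\infty}\Gamma(r,\ell)=\langle W,X_{n+1}\rangle$, which is independent of $\ell$.

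For $A(r,\ell)$ and $B(r,\ell)$ the key tool is Theorem 5.1. Applied to the sequence $\widetilde{\Sigma}^r\overrightarrow{\bm f^{(\ell)}}$, it yields
\begin{equation*}
\Sigma\{\widetilde{\Sigma}^r\overrightarrow{\bm f^{(\ell)}}\}^{(\star)}_{\vec{\bm m}}
\subset (-1)^{|\vec{\bm m}|+n}\{\widetilde{\Sigma}^{r+1}\overrightarrow{\bm f^{(\ell)}}\}^{(\star)}_{\vec{\bm m}},
\end{equation*}
so the suspensions of the bracket sets are compatible with increasing $r$. Next, by inspecting the definitions of $f_i^{(\ell)}=f_i^\ell\circ(1_{X_i}\wedge\tau(\s^\ell,\s^{m_i}))$ and $\widetilde{\Sigma}^rf_i^{(\ell)}=\Sigma^rf_i^\ell\circ(1_{X_i}\wedge\tau(\s^\ell\wedge\s^r,\s^{m_i}))$, one checks that $\widetilde{\Sigma}^{r+1}\overrightarrow{\bm f^{(\ell)}}$ and $\widetilde{\Sigma}^r\overrightarrow{\bm f^{(\ell+1)}}$ differ only by the canonical swap of the two suspension factors, which by Theorem 6.1 (homotopy invariance) and the explicit action of such a twist inside a bracket induces an isomorphism $B(r+1,\ell)\cong B(r,\ell+1)$ compatible with further suspension. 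Hence the two sequential systems $\{B(r,\ell)\}_r$ and $\{B(r,\ell+1)\}_r$ are cofinal in each other (up to a universal sign absorbed in the group structure), so their colimits agree. The same argument applied to the non subscripted brackets $\{\widehat{\Sigma}^r\overrightarrow{\bm f^{(\ell)}}\}^{(\star)}$ handles $A(r,\ell)$.

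The main obstacle I anticipate is the careful bookkeeping of the twist homeomorphisms $\tau(\s^a,\s^b)$ that accumulate when interchanging $\Sigma^\ell$, $\Sigma^r$, the $\Sigma^{m_i}$, and $\Sigma^{n-2}$. At each step one picks up a canonical sign, and one must verify that these signs either cancel globally or are absorbed into the group structure of the limit so as to yield well-defined equalities of subsets of $\langle\Sigma^{n-2}\Sigma^{m_{[n,1]}}X_1,X_{n+1}\rangle$, rather than merely equalities up to sign; this is precisely where the invariance statements of Corollary 4.2 and Theorem 5.1 (the equality $\{\widetilde{\Sigma}^\ell\vec{\bm f}\,\}^{(\star)}_{\vec{\bm m}}\circ(1\wedge\tau)=(-1)^{(|\vec{\bm m}|+n)\ell}\{\widetilde{\Sigma}^\ell\vec{\bm f}\,\}^{(\star)}_{\vec{\bm m}}$) must be invoked to see that twist and sign are, stably, the same operation.
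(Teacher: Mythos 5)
Your treatment of $\Gamma(r,\ell)$ is fine and is essentially what the paper does (it dismisses that case as obvious, by the same cofinality observation). The gap is in your treatment of $A(r,\ell)$ and $B(r,\ell)$: you assert that $\widetilde{\Sigma}^{r+1}\overrightarrow{\bm f^{(\ell)}}$ and $\widetilde{\Sigma}^{r}\overrightarrow{\bm f^{(\ell+1)}}$ ``differ only by the canonical swap of the two suspension factors,'' but that presupposes that the representative chosen at level $\ell+1$ is (a twist of) the suspension of the one chosen at level $\ell$. By Definition, $\overrightarrow{\bm f^{\ell+1}}\in\mathrm{Rep}(\vec{\bm\theta})$ is an \emph{independently chosen} sequence of maps $f^{\ell+1}_i:\Sigma^{\ell+1}\Sigma^{m_i}X_i\to\Sigma^{\ell+1}X_{i+1}$ whose only relation to $f^\ell_i$ is that both stabilize to $\theta_i$. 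Since $[\Sigma^{\ell+1}\Sigma^{m_i}X_i,\Sigma^{\ell+1}X_{i+1}]\to\langle\Sigma^{m_i}X_i,X_{i+1}\rangle$ need not be injective, $f^{\ell+1}_i$ and $\Sigma f^\ell_i$ (suitably twisted) need not be homotopic at level $\ell+1$; they only become homotopic after some further finite suspension. Consequently your claimed level-by-level identification $B(r+1,\ell)\cong B(r,\ell+1)$ fails in general, and Theorem 6.1 cannot be applied at that stage because its hypothesis (actual homotopy of the entries) is not yet available.

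The repair is exactly the device the paper uses: given two levels $\ell$ and $k$ with chosen representatives, pick natural numbers $L,K$ with $L+\ell=K+k$ and $\Sigma^L f^\ell_i\simeq\Sigma^K f^k_i$ for all $i$ (possible because both represent $\theta_i$ and there are finitely many $i$), check by an explicit diagram that this forces $\widetilde{\Sigma}^{L+r}f^{(\ell)}_i\simeq\widetilde{\Sigma}^{K+r}f^{(k)}_i$ and $\widehat{\Sigma}^{L+r}f^{(\ell)}_i\simeq\widehat{\Sigma}^{K+r}f^{(k)}_i$, and only then invoke homotopy invariance (Theorem 6.1 for the subscripted bracket, and the corresponding invariance from the previous paper for the unsubscripted one) to conclude $B(L+r,\ell)=B(K+r,k)$ and $A(L+r,\ell)=A(K+r,k)$. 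Combined with Lemma 10.1(2) this identifies the colimits. Your twist-and-sign bookkeeping concerns are legitimate but secondary; the essential missing idea is the ``suspend until the two representatives become homotopic'' step, without which the lemma (and hence the well-definedness of Definition 10.3) is not established.
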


\begin{defi}
$\langle \Sigma^{\vec{\bm m}}\vec{\bm \theta}\,\rangle^{(\star)}=\lim_{r\to\infty}A(r,\ell)$ and 
$\langle\vec{\bm \theta}\,\rangle^{(\star)}_{\vec{\bm m}}=\lim_{r\to\infty}B(r,\ell)$ 
which is called the {\it subscripted stable higher Toda bracket}. 
\end{defi}

\begin{prop} (10.1) and the last assertion just after (10.1) hold. 
\end{prop}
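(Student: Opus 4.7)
The strategy is to verify Lemmas 10.1 and 10.2 in sequence, after which Proposition 10.4 follows by direct inspection. For Lemma 10.1(1), the containment $A(r,\ell)\subset\Gamma(r,\ell)$ is essentially definitional: the non-subscripted bracket $\{\widehat{\Sigma}^r\overrightarrow{\bm f^{(\ell)}}\,\}^{(\star)}$ lives in $[\Sigma^{n-2}\Sigma^{m_{[n,1]}}\Sigma^r\Sigma^\ell X_1,\Sigma^r\Sigma^\ell X_{n+1}]$ by construction, and composition with $1_{X_1}\wedge\tau(\s^{m_{[n,1]}}\wedge\s^{n-2},\s^\ell\wedge\s^r)$ places the result inside $\Gamma(r,\ell)$. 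The crucial inclusion $B(r,\ell)\subset A(r,\ell)$ is exactly Proposition 3.8 (i.e.\ (1.1)) applied to the sequence $\widetilde{\Sigma}^r\overrightarrow{\bm f^{(\ell)}}$ with subscript $\vec{\bm m}$: the maps $\Sigma^{m_{[n,i+1]}}\widetilde{\Sigma}^rf_i^{(\ell)}$ appearing on the right of (1.1) are by definition $\widehat{\Sigma}^rf_i^{(\ell)}$, and the twist factor composed on the right of both $A(r,\ell)$ and $B(r,\ell)$ is identical, so the containment is preserved verbatim after composing.

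For Lemma 10.1(2), the inclusion $\Sigma^s\Gamma(r,\ell)\subset\Gamma(s+r,\ell)$ is immediate from the definition of suspension of a homotopy class. For $A$ and $B$, I would apply Theorem 5.1 to the non-subscripted bracket $\{\widehat{\Sigma}^r\overrightarrow{\bm f^{(\ell)}}\,\}^{(\star)}$ and the subscripted bracket $\{\widetilde{\Sigma}^r\overrightarrow{\bm f^{(\ell)}}\,\}^{(\star)}_{\vec{\bm m}}$ respectively. The verification rests on the identity $\widetilde{\Sigma}^s\widetilde{\Sigma}^rf_i^{(\ell)}=\widetilde{\Sigma}^{s+r}f_i^{(\ell)}$ already recorded in \S10 and on the obvious compatibility of $\tau$ with iterated suspensions; the extra twist $1_{\cdot}\wedge\tau(\s^{|\vec{\bm m}|+n-2},\s^s)$ produced by Theorem~5.1 is absorbed into the twist defining $A(s+r,\ell)$ (resp.\ $B(s+r,\ell)$), since the composite of two adjacent $\tau$-permutations of suspension coordinates is again a coordinate permutation $\tau$. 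Part (3) is then a formal consequence of (1) and (2) by passage to the colimit over $r$. Lemma 10.2 then reduces to two observations: $\lim_r\Gamma(r,\ell)=\langle\Sigma^{n-2}\Sigma^{m_{[n,1]}}X_1,X_{n+1}\rangle$ is the definition of the stable group, and for the limits of $A$ and $B$ one compares representatives $\overrightarrow{\bm f^\ell}$ and $\overrightarrow{\bm f^{\ell+1}}$ of $\vec{\bm\theta}$; since $\Sigma\overrightarrow{\bm f^{(\ell)}}$ (with appropriate coordinate twists) is itself a representative at level $\ell+1$, Theorem 6.1 (homotopy invariance) identifies the corresponding brackets, making the colimits canonically isomorphic.

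Proposition 10.4 assembles immediately: the chain of containments (10.1) is the direct limit of Lemma 10.1(1), and the identification of the outer term as $\langle\Sigma^{n-2}\Sigma^{m_{[n,1]}}X_1,X_{n+1}\rangle$ is Lemma 10.2. The final assertion, that the second containment becomes an equality when $\vec{\bm m}=(0,\dots,0)$, follows at once: then $m_{[n,i+1]}=0$ for every $i$, so $\widehat{\Sigma}^rf_i^{(\ell)}=\widetilde{\Sigma}^rf_i^{(\ell)}$, and $\{\cdot\}^{(\star)}_{(0,\dots,0)}=\{\cdot\}^{(\star)}$ by Remark~3.4(1), whence $A(r,\ell)=B(r,\ell)$ already at each finite stage. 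The main obstacle will be the bookkeeping of coordinate twists: one must verify with care that the twist $1_{X_1}\wedge\tau(\s^{m_{[n,1]}}\wedge\s^{n-2},\s^\ell\wedge\s^r)$ built into the definitions of $A(r,\ell)$ and $B(r,\ell)$ is exactly compensated by the twist produced by Theorem~5.1 on the suspended sequence, so that the resulting composite again has the form prescribed by $A(s+r,\ell)$ or $B(s+r,\ell)$. No conceptually new ingredient is needed beyond Proposition 3.8, Theorem 5.1, and Theorem 6.1.
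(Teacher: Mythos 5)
Your proposal is correct and follows essentially the same route as the paper: the containments in (10.1) are assembled from Lemma 10.1 (with $A(r,\ell)\supset B(r,\ell)$ coming from Proposition 3.8 and the suspension step from Theorem 5.1 together with the identification $\lim_r\Gamma(r,\ell)=\langle\Sigma^{n-2}\Sigma^{m_{[n,1]}}X_1,X_{n+1}\rangle$ of Lemma 10.2), and the final assertion reduces to $A(r,\ell)=B(r,\ell)$ when $\vec{\bm m}=(0,\dots,0)$ exactly as in the paper. The one ``twist-bookkeeping'' point you flag for $A(r,\ell)$ is precisely equation (10.2), which the paper settles by invoking Lemma A.2 on naturality of the bracket under coordinate-permutation homeomorphisms.
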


\begin{proof}[Proof of Lemma 10.1]
(1) We have $\Gamma(r,\ell)\supset A(r,\ell)$ by definitions, 
and $A(r,\ell)\supset B(r,\ell)$ by Proposition 3.8. 

(2) We have 
\begin{align*}
\Sigma^s B(r,\ell)\subset\{\widetilde{\Sigma}^s\widetilde{\Sigma}^r\overrightarrow{\bm f^{(\ell)}}\}^{(\star)}_{\vec{\bm m}}&\circ(1_{\Sigma^r\Sigma^\ell X_1}\wedge\tau(\s^{m_{[n,1]}}\wedge\s^{n-2},\s^s))\\
&\circ \Sigma^s(1_{X_1}\wedge\tau(\s^{m_{[n,1]}}\wedge\s^{n-2},\s^\ell\wedge\s^r)) \quad (\text{by (5.1)})\\
=\{\widetilde{\Sigma}^{s+r}\overrightarrow{\bm f^{(\ell)}}\}^{(\star)}_{\vec{\bm m}}&\circ (1_{X_1}\wedge\tau(\s^{m_{[n,1]}}\wedge\s^{n-2},\s^\ell\wedge\s^r\wedge\s^s))=B(s+r,\ell).
\end{align*}
By \cite[(6.3.1)]{OO}, we have 
\begin{align*}
\Sigma^s A(r,\ell)\subset\{\Sigma^s \widehat{\Sigma}^r\overrightarrow{{\bm f^{(\ell)}}}\}^{(\star)}&\circ(1_{\Sigma^{m_{[n,1]}}\Sigma^r\Sigma^\ell X_1}\wedge\tau(\s^{n-2},\s^s))\\
&\circ \Sigma^s(1_{X_1}\wedge\tau(\s^{m_{[n,1]}}\wedge\s^{n-2},\s^\ell\wedge\s^r)).
\end{align*}
We will prove that the right hand term of the above relation equals to $A(s+r,\ell)$. 
It suffices to prove
\begin{equation}
\{\widehat{\Sigma}^{s+r}\overrightarrow{{\bm f^{(\ell)}}}\}^{(\star)}
=\{\Sigma^s\widehat{\Sigma}^r\overrightarrow{{\bm f^{(\ell)}}}\}^{(\star)}
\circ \Sigma^{n-2}(1_{\Sigma^r\Sigma^\ell X_1}\wedge\tau(\s^s,\s^{m_{[n,1]}}))
\end{equation}
because if (10.2) holds then 
\begin{align*}
A(s+r,\ell)&=\{\widehat{\Sigma}^{s+r}\overrightarrow{{\bm f^{(\ell)}}}\}^{(\star)}
\circ (1_{X_i}\wedge\tau(\s^{m_{[n,1]}}\wedge\s^{n-2},\s^\ell\wedge\s^r\wedge\s^s))\\
&=\{\Sigma^s\widehat{\Sigma}^r\overrightarrow{{\bm f^{(\ell)}}}\}^{(\star)}\circ
 \Sigma^{n-2}(1_{\Sigma^r\Sigma^\ell X_1}\wedge\tau(\s^s,\s^{m_{[n,1]}}))\\
&\qquad\circ(1_{X_1}\wedge\tau(\s^{m_{[n,1]}}\wedge\s^{n-2},\s^\ell\wedge \s^r\wedge\s^s))\\
&=\{\Sigma^s \widehat{\Sigma}^r\overrightarrow{{\bm f^{(\ell)}}}\}^{(\star)}
\circ(1_{\Sigma^{m_{[n,1]}}\Sigma^r\Sigma^\ell X_1}\wedge\tau(\s^{n-2},\s^s))\\
&\qquad\circ \Sigma^s(1_{X_1}\wedge\tau(\s^{m_{[n,1]}}\wedge\s^{n-2},\s^\ell\wedge\s^r)).
\end{align*}
Therefore we will prove (10.2). 
Set $b_i=1_{\Sigma^r\Sigma^\ell X_i}\wedge\tau(\s^{m_{[n,i]}},\s^s)$. 
Then, as is easily seen, the following equality holds: 
$$
\widehat{\Sigma}^{s+r}f^{(\ell)}_i\circ b_i=b_{i+1}\circ \Sigma^s \widehat{\Sigma}^rf^{(\ell)}_i.
$$ 
Hence it follows from Lemma A.2 that 
$$
\{\widehat{\Sigma}^{s+r}\overrightarrow{{\bm f^{(\ell)}}}\}^{(\star)}\circ \Sigma^{n-2}b_1=b_{n+1}\circ\{\Sigma^s\widehat{\Sigma}^r\overrightarrow{{\bm f^{(\ell)}}}\}^{(\star)}
$$
that is, 
$$
\{\widehat{\Sigma}^{s+r}\overrightarrow{{\bm f^{(\ell)}}}\}^{(\star)}\circ 
\Sigma^{n-2}(1_{\Sigma^r\Sigma^\ell X_1}\wedge(\s^{m_{[n,1]}},\s^s))
=\{\Sigma^s\widehat{\Sigma}^r\overrightarrow{{\bm  f^{(\ell)}}}\}^{(\star)}
$$
so that 
$$
\{ \widehat{\Sigma}^{s+r}\overrightarrow{{\bm f^{(\ell)}}}\}^{(\star)}
=\{\Sigma^s\widehat{\Sigma}^r\overrightarrow{{\bm  f^{(\ell)}}} \}^{(\star)}
\circ \Sigma^{n-2}(1_{\Sigma^r\Sigma^\ell X_1}\wedge\tau(\s^s,\s^{m_{[n,1]}})).
$$
This proves (10.2) and ends the proof of (2). 

(3) This follows immediately from (1) and (2).  
\end{proof}

\begin{proof}[Proof of Lemma 10.2]
Obviously $\lim_{r\to\infty}\Gamma(r,\ell)=\langle \Sigma^{n-2}\Sigma^{m_{[n,1]}} X_1,X_{n+1}\rangle$ which does not depend on $\ell$. 
Take $(f^k_n,\dots,f^k_1)\in\mathrm{Rep}(\vec{\bm \theta})$. 
Then there exist natural numbers $L, K$ such that $L+\ell=K+k$ and $\Sigma^Lf^\ell_i\simeq \Sigma^Kf^k_i$ for all $i$, where we used the identification 
$\s^\ell\wedge \s^L=\s^{\ell+L}=\s^{k+K}=\s^k\wedge\s^K$ in 
\cite[\S2]{OO}. In the rest of the proof, we will use freely such identifications. 

Set 
\begin{align*}
h&=\Sigma^{K+r}(1_{X_i}\wedge\tau(\s^{m_i},\s^k))\circ \Sigma^{L+r}(1_{X_i}\wedge\tau(\s^\ell,\s^{m_i}))\\
&: \Sigma^{L+r}\Sigma^{m_i}\Sigma^\ell X_i\to \Sigma^{K+r}\Sigma^{m_i}\Sigma^k X_i
\end{align*}
and consider the following diagram:
$$
\begin{CD}
\Sigma^{m_i}\Sigma^{L+r}\Sigma^\ell X_i @>1_{\Sigma^\ell X_i}\wedge\tau(\s^{L+r},\s^{m_i})>> 
 \Sigma^{L+r}\Sigma^{m_i}\Sigma^\ell X_i @>\Sigma^{L+r}f^{(\ell)}_i>> \Sigma^{L+r}\Sigma^\ell X_{i+1}\\
@| @VV h V @|\\
\Sigma^{m_i}\Sigma^{K+r}\Sigma^k X_i @>>1_{\Sigma^k X_i}\wedge\tau(\s^{K+r},\s^{m_i})> 
\Sigma^{K+r}\Sigma^{m_i}\Sigma^k X_i @>>\Sigma^{K+r}f^{(k)}_i>  \Sigma^{K+r}\Sigma^k X_{i+1}
\end{CD}
$$
The compositions of horizontal two maps are $\widetilde{\Sigma}^{L+r}f^{(\ell)}_i$ and 
$\widetilde{\Sigma}^{K+r}f^{(k)}_i$, respectively. 
The first square is strictly commutative. 
The second square is homotopy commutative, since 
\begin{align*}
\Sigma^{L+r}f^{(\ell)}_i&=\Sigma^{L+r}f^\ell_i\circ \Sigma^{L+r}(1_{X_i}\wedge\tau(\s^\ell,\s^{m_i}))\\
&\simeq \Sigma^{K+r}f^k_i\circ \Sigma^{L+r}(1_{X_i}\wedge\tau(\s^\ell,\s^{m_i}))\\
&=\Sigma^{K+r}f^{(k)}_i\circ \Sigma^{K+r}(1_{X_i}\wedge\tau(\s^{m_i},\s^k))\circ \Sigma^{L+r}(1_{X_i}\wedge\tau(\s^\ell,\s^{m_i}))\\
&=\Sigma^{K+r}f^{(k)}_i\circ h.
\end{align*}
Hence 
\begin{equation}
\widetilde{\Sigma}^{L+r} f^{(\ell)}_i\simeq\widetilde{\Sigma}^{K+r}f^{(k)}_i. 
\end{equation}
Therefore $\{\widetilde{\Sigma}^{L+r}\overrightarrow{\bm f^{(\ell)}}\}^{(\star)}_{\vec{\bm m}}=\{\widetilde{\Sigma}^{K+r}\overrightarrow{\bm f^{(k)}}\}^{(\star)}_{\vec{\bm m}}$ 
by Theorem 6.1 so that $B(L+r,\ell)=B(K+r,k)$. 
Hence $\lim_{r\to\infty}B(r,\ell)$ does not depend on $\ell$. 

By definitions and (10.3), we have
\begin{equation} 
\widehat{\Sigma}^{L+r}f^{(\ell)}_i\simeq \widehat{\Sigma}^{K+r}f^{(k)}_i.
\end{equation}
We have
\begin{align*}
A(L+r,\ell)&=\{\widehat{\Sigma}^{L+r}\overrightarrow{{\bm f^{(\ell)}}}\}^{(\star)}\circ(1_{X_1}\wedge\tau(\s^{m_{[n,1]}}\wedge\s^{n-2},\s^\ell\wedge \s^{L+r}))\\
&=\{\widehat{\Sigma}^{K+r}\overrightarrow{{\bm f^{(k)}}}\}^{(\star)}\circ(1_{X_1}\wedge\tau(\s^{m_{[n,1]}}\wedge\s^{n-2},\s^k\wedge \s^{K+r}))\\
&\hspace{3cm} (\text{by (10.4) and Theorem 6.4.1 of \cite{OO}})\\
&=A(K+r,k).
\end{align*}
Hence $\Sigma^LA(r,\ell)\subset A(L+r,\ell)=A(K+r,k)\supset \Sigma^KA(r,k)$. 
Therefore $\lim_{r\to\infty}A(r,\ell)=\lim_{r\to\infty}A(r,k)$. 
This completes the proof of Lemma 10.2.
\end{proof}

Thus we can set Definition 10.3 here by Lemmas 10.1 and 10.2. 

\begin{proof}[Proof of Proposition 10.4] 
(10.1) follows from Lemma 10.1, Lemma 10.2, and Definition 10.3. 
If $\vec{\bm m}=(0,\dots,0)$ then $A(r,\ell)=B(r,\ell)$ and so 
$\langle \Sigma^{(0,\dots,0)}\vec{\bm \theta}\,\rangle^{(\star)}=\langle\vec{\bm \theta}\,\rangle^{(\star)}_{(0,\dots,0)}$, where $\langle \vec{\bm \theta}\,\rangle^{(\star)}_{(0,\dots,0)}$ is equal to 
$\langle\vec{\bm \theta}\,\rangle^{(\star)}$ which was defined and denoted by $\{\vec{\bm \theta}\,\}^{(\star)}$ in \cite[\S6.9]{OO}. 
This proves the last assertion just after (10.1) and completes the proof of Proposition 10.4. 
\end{proof}

We expect that the stable brackets are unstable under suitable conditions. 

\begin{prob}
If we work in the category of finite CW complexes with vertexes as base points, 
then are the stabilizations
$\Sigma^\infty : A(r,\ell)\to \langle \Sigma^{\vec{\bm m}}\vec{\bm \theta}\,\rangle^{(\star)}$ and 
$\Sigma^\infty : B(r,\ell)\to \langle \vec{\bm \theta}\,\rangle^{(\star)}_{\vec{\bm m}}$ 
bijections for sufficiently large $r$? 
\end{prob}

\begin{prop}
If $\langle \Sigma^{|\vec{\bm m}|+n-2}X_1,X_{n+1}\rangle$ is finite, then 
Problem 10.5 is affirmative.
\end{prop}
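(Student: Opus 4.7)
The plan is to combine the Freudenthal suspension theorem with the finiteness hypothesis to force the filtered systems $\{A(r,\ell)\}_r$ and $\{B(r,\ell)\}_r$ to stabilize, and then read off bijectivity from the universal property of the colimit (Definition 10.3). Throughout, write $Y=\Sigma^{|\vec{\bm m}|+n-2}X_1$ and $Z=X_{n+1}$, and recall from Lemma 10.2 that $\displaystyle{\lim_{r\to\infty}\Gamma(r,\ell)=\langle Y,Z\rangle}$.

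First I would show that there exists $R_0$ such that, for every $r\ge R_0$, the suspension $\Sigma\colon \Gamma(r,\ell)\to\Gamma(r+1,\ell)$ is a bijection. Since we are in the category of finite CW complexes with vertex base points, $Y$ has finite dimension and the connectivity of $\Sigma^{r+\ell}Z$ grows without bound with $r$; Freudenthal therefore makes $\Sigma$ eventually surjective, and the hypothesis that $\langle Y,Z\rangle$ is finite forces a surjective tower of finite sets to stabilize. Consequently $\Sigma^\infty\colon \Gamma(r,\ell)\xrightarrow{\cong}\langle Y,Z\rangle$ for every $r\ge R_0$.

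Next I would transport this to the subsets. The composites
\[
A(r,\ell)\hookrightarrow \Gamma(r,\ell)\xrightarrow{\Sigma^\infty}\langle Y,Z\rangle,\qquad
B(r,\ell)\hookrightarrow \Gamma(r,\ell)\xrightarrow{\Sigma^\infty}\langle Y,Z\rangle
\]
factor through $\langle \Sigma^{\vec{\bm m}}\vec{\bm \theta}\rangle^{(\star)}$ and $\langle \vec{\bm \theta}\rangle^{(\star)}_{\vec{\bm m}}$ respectively by construction; for $r\ge R_0$ the second map is a bijection, so both factorizations are injective. It remains to establish surjectivity for large $r$. Here I would use that $\langle \Sigma^{\vec{\bm m}}\vec{\bm \theta}\rangle^{(\star)}$ and $\langle \vec{\bm \theta}\rangle^{(\star)}_{\vec{\bm m}}$ are finite subsets of $\langle Y,Z\rangle$; by Definition 10.3 each of their elements is represented by some class in $A(r_\beta,\ell)$ (resp.\ $B(r_\beta,\ell)$), and since there are only finitely many such $\beta$, the maximum $R_1$ of the finitely many $r_\beta$'s is finite. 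Setting $R=\max(R_0,R_1)$, the stabilization $\Sigma^\infty$ is both injective (previous step) and surjective (onto the stable bracket) on $A(r,\ell)$ and $B(r,\ell)$ for every $r\ge R$, which is exactly Problem~10.5.

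The main obstacle is the Freudenthal step in the first paragraph, but it is standard once one notes that the hypothesis forces the surjective tower $\Gamma(r,\ell)\to\Gamma(r+1,\ell)\to\cdots$ of finite sets to be eventually constant. The rest is formal bookkeeping with the filtered-colimit description of the stable brackets given in Definition 10.3 and the inclusions already recorded in Lemma 10.1.
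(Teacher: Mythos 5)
Your argument is essentially the paper's: the suspension theorem makes $\Sigma\colon\Gamma(r,\ell)\to\Gamma(r+1,\ell)$ bijective for large $r$ (so $\Sigma^\infty$ is injective on $A(r,\ell)$ and $B(r,\ell)$), while finiteness of $\langle \Sigma^{|\vec{\bm m}|+n-2}X_1,X_{n+1}\rangle$ forces the increasing chains of images $\Sigma^\infty A(r,\ell)$ and $\Sigma^\infty B(r,\ell)$ to become constant, which is surjectivity; your ``take the maximum of the finitely many $r_\beta$'' is the same observation, since by Lemma 10.1(2) an element represented in $A(r_\beta,\ell)$ stays represented in $A(r,\ell)$ for all $r\ge r_\beta$. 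One small repair to your first paragraph: you should invoke the bijective range of the suspension isomorphism theorem directly (as the paper does, with $r_0=|\vec{\bm m}|+n+\dim X_1-(2\cdot\mathrm{conn}X_{n+1}+\ell+2)$), because eventual surjectivity of the tower together with finiteness of its colimit does not by itself make the individual sets $\Gamma(r,\ell)$ finite, so the ``surjective tower of finite sets'' step is not justified as written.
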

\begin{proof}
Set $r_0=|\vec{\bm m}|+n+\dim X_1-(2\cdot\mathrm{conn}X_{n+1}+\ell+2)$, where 
$\mathrm{conn}X_{n+1}$ is the connectivity of $X_{n+1}$. 
Then $\Sigma:\Gamma(r,\ell)\to \Gamma(r+1,\ell)$ is an isomorphism for $r\ge r_0$ 
by the homotopy suspension isomorphism theorem. 

Set $G_r=\Sigma^\infty B(r,\ell)$. 
Then $G_0\subset\cdots\subset G_r\subset G_{r+1}\subset\cdots$ is a sequence of subsets of $\langle \Sigma^{|\vec{\bm m}|+n-2}X_1,X_{n+1}\rangle$. 
It follows from the assumption that the sequence is stable, that is, 
there is an integer $r_0'$ such that $G_r=G_{r_0'}$ for all $r\ge r_0'$. 
Hence $\Sigma:B(r,\ell)\to B(r+1,\ell)$ is a bijection for $r\ge\max\{r_0,r_0'\}$. 
Therefore $\Sigma^\infty:B(r,\ell)\to\langle\vec{\bm \theta}\,\rangle^{(\star)}_{\vec{\bm m}}$ is a bijection for $r\ge \max\{r_0,r_0'\}$. 
Similarly there is an integer $r_0''$ such that $\Sigma^\infty A(r,\ell)=\Sigma^\infty A(r_0'',\ell)$ 
for all $r\ge r_0''$. 
Hence $\Sigma^\infty :A(r,\ell)\to\langle \Sigma^{\vec{\bm m}}\vec{\bm \theta}\,\rangle^{(\star)}$ 
is a bijection for $r\ge\max\{r_0,r_0''\}$. 
This ends the proof.
\end{proof}

The following may be a reason why Toda \cite{T3,T} did not consider 
the subscripted stable $3$-fold bracket. 

\begin{lemma}
Suppose the following conditions. 
\begin{enumerate}
\item $n=3$ and $\star=aq\ddot{s}_2, \ddot{s}_t$. 
\item $X_1$ and $X_3$ are finite CW complexes with vertexes as base points. 
\end{enumerate}
Then $\langle \Sigma^{\overrightarrow{\bm m}}\overrightarrow{\bm \theta}\rangle^{(\star)}
=\langle\overrightarrow{\bm \theta}\rangle_{\overrightarrow{\bm m}}^{(\star)}$. 
\end{lemma}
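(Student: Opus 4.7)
The plan is to reduce both sides to classical Toda brackets via Theorem 7.1 and then compare them using a direct argument in the stable range afforded by the finite CW hypothesis. By Proposition 10.4 we already have $\langle\vec{\bm\theta}\rangle^{(\star)}_{\vec{\bm m}} \subset \langle\Sigma^{\vec{\bm m}}\vec{\bm\theta}\rangle^{(\star)}$, so only the reverse inclusion has to be proved. Fix $\overrightarrow{\bm f^\ell} \in \mathrm{Rep}(\vec{\bm \theta})$ and write $\phi_i = \widetilde{\Sigma}^r f^{(\ell)}_i$. For $n=3$ we then have the identifications
\begin{align*}
B(r,\ell) &= \{\phi_3,\phi_2,\phi_1\}^{(\star)}_{(m_3,m_2,m_1)}\circ\tau_B = \{\phi_3,\phi_2,\Sigma^{m_2}\phi_1\}_{m_3}\circ\tau_B,\\
A(r,\ell) &= \{\phi_3,\Sigma^{m_3}\phi_2,\Sigma^{m_3+m_2}\phi_1\}^{(\star)}_{(0,0,0)}\circ\tau_A = \{\phi_3,\Sigma^{m_3}\phi_2,\Sigma^{m_3+m_2}\phi_1\}\circ\tau_A,
\end{align*}
where both bracket expressions on the right are classical three-fold Toda brackets and the two applications of Theorem~7.1 give the second equality on each line.

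Next I would compare the two classical brackets. The subscripted bracket $\{\phi_3,\phi_2,\Sigma^{m_2}\phi_1\}_{m_3}$ is constructed by forming the cone sequence at the ``$\phi_2$-level'' and then suspending $m_3$ times, while the unsubscripted bracket $\{\phi_3,\Sigma^{m_3}\phi_2,\Sigma^{m_3+m_2}\phi_1\}$ is built from the cone sequence already at the $m_3$-suspended level. Toda's \cite[Proposition~1.4]{T} (the same formula that we invoked to prove Corollary~7.2 and Corollary~8.2) relates these two classical constructions by a sign $(-1)^{m_3}$ and a coordinate swap between a suspension $\mathrm{S}^{m_3}$ and a suspension $\mathrm{S}^1$ on the source. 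Both corrections are homotopy self-equivalences of $\Sigma^{|\vec{\bm m}|+1}(\Sigma^r\Sigma^\ell X_1)$, and upon passing to the stable group $\langle\Sigma^{|\vec{\bm m}|+1}X_1,X_4\rangle$ they become multiplication by $\pm 1$. Since $-\{\cdot\}^{(\star)}_{\vec{\bm m}} = \{\cdot\}^{(\star)}_{\vec{\bm m}}$ by Corollary~4.2(1), the sign is absorbed, and the comparison reduces to showing that the two bracket subsets (together with their indeterminacies) match after enough suspensions.

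Here the hypothesis that $X_1,X_3$ are finite CW complexes is brought in. By the Freudenthal suspension theorem, there is an $r_0$ such that for all $r\ge r_0$ the suspension maps $[\Sigma^{r+\ell}\Sigma^{|\vec{\bm m}|+1}X_1,\Sigma^{r+\ell}X_4]\to[\cdot,\cdot]$ become stable, and similarly the groups controlling the indeterminacies of the classical Toda bracket (maps from $\Sigma\Sigma^{|\vec{\bm m}|}X_1$ into $\Sigma^{m_3}X_3$ and from $X_3$ into $\Sigma^{r+\ell}X_4$) stabilize. At that point the subscripted and unsubscripted classical Toda brackets coincide as subsets (the cone constructions defining them become equivalent after enough suspensions, and both indeterminacies become the same ideal generated by $\theta_3$ and $\theta_1$ in the stable group). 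Therefore $\Sigma^\infty A(r,\ell) = \Sigma^\infty B(r,\ell)$ for $r\ge r_0$, giving the equality of the direct limits.

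The main obstacle I anticipate is the bookkeeping of the sign and the coordinate swap between the two classical brackets, and verifying explicitly that the indeterminacy subgroup of $\{\phi_3,\phi_2,\Sigma^{m_2}\phi_1\}_{m_3}$ matches that of $\{\phi_3,\Sigma^{m_3}\phi_2,\Sigma^{m_3+m_2}\phi_1\}$ once one suspends into the Freudenthal stable range; this is where the finite CW hypothesis is really used, rather than merely as a hand-wave. Once the sign/swap correspondence is nailed down and the stable range is secured, the equality of the two limits is immediate.
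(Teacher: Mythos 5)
Your proposal follows essentially the same route as the paper: reduce both $A(r,\ell)$ and $B(r,\ell)$ to classical three-fold Toda brackets via Theorem 7.1, and then observe that the subscripted bracket $\{\phi_3,\phi_2,\Sigma^{m_2}\phi_1\}_{m_3}$ and the unsubscripted bracket $\{\phi_3,\Sigma^{m_3}\phi_2,\Sigma^{m_{[3,2]}}\phi_1\}$ coincide for $r$ large because their indeterminacies agree in the Freudenthal stable range guaranteed by the finiteness of $X_1$ and $X_3$ (the paper even records the explicit bound $r\ge 3+m_2+m_1+\dim X_1-\ell-2\cdot\mathrm{conn}X_3$). The only quibble is your middle paragraph: the two classical brackets are related by a plain containment (the subscripted one has smaller indeterminacy), with no sign $(-1)^{m_3}$ or coordinate swap needed since both $A(r,\ell)$ and $B(r,\ell)$ carry the same terminal $\tau$-factor, so that detour through Toda's Proposition 1.4 is superfluous; your final indeterminacy comparison is what actually closes the argument, exactly as in the paper.
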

\begin{proof}
By definitions and Theorem 7.1, it suffices to prove that 
$\{\widehat{\Sigma}^r\overrightarrow{\bm f^{(\ell)}}\}^{(\star)}=\{\widetilde{\Sigma}^r\overrightarrow{\bm f^{(\ell)}}\}^{(\star)}_{\overrightarrow{\bm m}}$  
$$ 
i.e.\ \{\widetilde{\Sigma}^rf^{(\ell)}_3, \Sigma^{m_3}\widetilde{\Sigma}^r f^{(\ell)}_2, \Sigma^{m_{[3,2]}}\widetilde{\Sigma}^rf^{(\ell)}_1\}=\{\widetilde{\Sigma}^rf^{(\ell)}_3, \widetilde{\Sigma}^r f^{(\ell)}_2, \Sigma^{m_2}\widetilde{\Sigma}^rf^{(\ell)}_1\}_{m_3}
$$ for sufficiently large $r$. 
The last equality holds because two brackets have the same indeterminacies for 
sufficiently large $r$, for example, for $r\ge 3+m_2+m_1+\dim X_1-\ell-2\cdot\mathrm{conn}X_3$. 
\end{proof}

\begin{prob}
Is there an example such that $\langle \Sigma^{\overrightarrow{\bm m}}\overrightarrow{\bm \theta}\rangle^{(\star)}
\supsetneqq \langle\overrightarrow{\bm \theta}\rangle_{\overrightarrow{\bm m}}^{(\star)}$ for $n\ge 4$?
\end{prob}

\section{The $p$-fold stable bracket 
$\langle\alpha_1,\dots,\alpha_1\rangle^{(\star)}_{(2p-3,\dots,2p-3)}$}

In this section we frequently use the same notation for a map and its homotopy class. 

Let $p$ be an odd prime. 
Set $\displaystyle{\pi_k(\mathfrak{S})=\lim_{n\to\infty}\pi_{n+k}(\s^n)}$ and let $\pi_k(\mathfrak{S};p)$ be 
the $p$-primary part of $\pi_k(\mathfrak{S})$. 
Let $\alpha_1$ be a generator of $\pi_{2p-3}(\mathfrak{S};p)\cong\bZ_p$ 
(the cyclic group of order $p$). 
Let $\alpha_1(3):\s^{2p}=\Sigma^{2p-3}\s^3\to\s^3$ be a map which represents 
$\alpha_1$. 
Toda (see \cite{T2,T3,T}) showed that $\pi_{2p(p-1)-2}(\mathfrak{S};p)\cong \bZ_p$ and 
\begin{equation}
\pi_i(\mathfrak{S};p)\cong \begin{cases} \bZ_p & i=2j(p-1)-1 \ \text{for }1\le j<p\\
0 & \text{otherwise for }i<2p(p-1)-2\end{cases}.
\end{equation}
Let $\mathscr{P}^k:H^i(X;\bZ_p)\to H^{i+2k(p-1)}(X;\bZ_p)$ denote the Steenrod's 
power operation. 
We use the following Adem relation:
\begin{equation}
(\mathscr{P}^1)^k:=\overbrace{\mathscr{P}^1\cdots\mathscr{P}^1}^k=k!\mathscr{P}^k\ (k\ge 1).
\end{equation}

We will follow the argument of the previous section. 
For $i,k\ge 1$, set 
\begin{gather*}
X_i=\s^3, \ m_i=2p-3, \ \theta_i=\alpha_1\in\langle \Sigma^{m_i}X_i,X_{i+1}\rangle=\pi_{2p-3}(\mathfrak{S}),\\ 
\overrightarrow{\bm m}=(m_p,\dots,m_1),\ \overrightarrow{\bm \alpha_1}=(\theta_p,\dots,\theta_1),\\
\overrightarrow{\bm m}(k)=(m_k,\dots, m_1), \  
\overrightarrow{\bm \alpha_1}(k)=(\theta_k,\dots,\theta_1).
\end{gather*}
Then $(\overbrace{\alpha_1(3),\dots,\alpha_1(3)}^k)\in\mathrm{Rep}(\overrightarrow{\bm \alpha_1}(k))$. 
Therefore we set $\ell=0$ and $f^\ell_i=\alpha_1(3):\Sigma^{m_i}X_i\to X_{i+1}$ for all $i\ge 1$. 
Since $f^{(0)}_i=f^0_i=\alpha_1(3)$, we abbreviate $f^{(0)}_i$ and $f^0_i$ to $f_i$. 
Then  
\begin{align*}
\widetilde{\Sigma}^rf_i&=\Sigma^rf_i\circ(1_{\s^3}\wedge\tau(\s^r,\s^{2p-3})):\Sigma^{2p-3}\Sigma^r\s^3\to \Sigma^r\s^3,\\
\widehat{\Sigma}^rf_i&=\Sigma^{m_{[p,i+1]}}\widetilde{\Sigma}^rf_i : \Sigma^{m_{[p,i]}}\Sigma^r\s^3\to \Sigma^{m_{[p,i+1]}}\Sigma^r\s^3
\end{align*}
for $r\ge 0$ and $p\ge i\ge 1$, where $m_{[p,p+1]}=0$ by definition.  
The orders of $\widetilde{\Sigma}^rf_i$ and $\widehat{\Sigma}^rf_i$ are $p$ and 
\begin{align*}
[\Sigma^r\Sigma^{p-2}\Sigma^{m_{[p,1]}}\s^3,\Sigma^r\s^3]&=
[\s^3\wedge\s^{m_{[p,1]}}\wedge\s^{p-2}\wedge\s^r,\s^3\wedge\s^r]\\
&\supset 
A(r,0)=\{\widehat{\Sigma}^r\overrightarrow{\bm f}\}^{(\star)}
\circ(1_{\s^3}\wedge\tau(\s^{m_{[p,1]}}\wedge\s^{p-2},\s^r))\\
&\supset 
B(r,0)=\{\widetilde{\Sigma}^r\overrightarrow{\bm f}\}^{(\star)}_{\vec{\bm m}}\circ(1_{\s^3}\wedge\tau(\s^{m_{[p,1]}}\wedge \s^{p-2},\s^r)).
\end{align*} 
By Lemma 10.1, we have $\Sigma A(r,0)\subset A(r+1,0)$ and $\Sigma B(r,0)\subset B(r+1,0)$. 
By Definition~10.3, $\langle \Sigma^{\overrightarrow{\bm m}}\overrightarrow{\bm \alpha_1}\rangle^{(\star)}=\lim_{r\to\infty}A(r,0)$ and $\langle\overrightarrow{\bm \alpha_1}\rangle^{(\star)}_{\vec{\bm m}}=\lim_{r\to\infty}B(r,0)$. 
Hence 
$$
\pi_{2p(p-1)-2}(\mathfrak{S})=\langle \Sigma^{p-2}\Sigma^{m_{[p,1]}}\s^3,\s^3\rangle\supset
\langle \Sigma^{\overrightarrow{\bm m}}\overrightarrow{\bm \alpha_1}\rangle^{(\star)}
\supset \langle\overrightarrow{\bm \alpha_1}\rangle^{(\star)}_{\vec{\bm m}}.
$$

We will prove the following proposition (i.e. (1.16)) 
which contains \cite[Example 6.2.5]{OO}. 

\begin{prop}
For all $\star$,  
\begin{enumerate}
\item $\langle\overrightarrow{\bm \alpha_1}\rangle^{(\star)}_{\overrightarrow{\bm m}}$ contains an element of order $p$, and
\item every element of 
$\langle \Sigma^{\overrightarrow{\bm m}}\overrightarrow{\bm \alpha_1}\rangle^{(\star)}$ is of order a multiple of $p$.
\end{enumerate}
\end{prop}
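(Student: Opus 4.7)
The plan is to reduce to a single decoration $\star$, realize the bracket by an iterated mapping cone built from $\alpha_1$, and then detect non-triviality of elements via the Steenrod operation $\mathscr{P}^{p-1}$, exploiting the Adem relation (11.2). First, by Theorem 4.1 applied stably (through Definition 10.3 and Lemma 10.1), the brackets $\langle\overrightarrow{\bm\alpha_1}\rangle^{(\star)}_{\overrightarrow{\bm m}}$ and $\langle\Sigma^{\overrightarrow{\bm m}}\overrightarrow{\bm\alpha_1}\rangle^{(\star)}$ for different $\star$ differ by post-composition with stable self-equivalences of a sphere, hence by $\pm 1$ on $\pi_*(\mathfrak{S})$. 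Both statements (1) and (2) are therefore independent of $\star$, and it suffices to argue for $\star=\ddot{s}_t$.

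Next I would construct an $\ddot{s}_t$-presentation explicitly. Every intermediate sub-bracket $\langle\alpha_1,\dots,\alpha_1\rangle^{(\star)}_{(2p-3,\dots,2p-3)}$ of length $2\le k<p$ lands in $\pi_{2k(p-1)-2}(\mathfrak{S})$, whose $p$-primary part is trivial by (11.1); since every element of a bracket of $\alpha_1$'s is itself a multiple of $\alpha_1$ in an appropriate sense (all factors are $p$-torsion), these intermediate brackets contain $0$. Proposition 9.1 then guarantees, inductively, the existence of an $\ddot{s}_t$-presentation $\{\mathscr{S}_r,\overline{f_r},\mathscr{A}_r\}_{2\le r\le p}$, so that $\langle\overrightarrow{\bm\alpha_1}\rangle^{(\ddot{s}_t)}_{\overrightarrow{\bm m}}\ne\emptyset$. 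The resulting stabilized iterated cone $C_{p,p-1}$ is a CW spectrum with $p$ cells $e_0,\dots,e_{p-1}$, each attached to the preceding subcomplex by a suspension of $\alpha_1$.

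The key cohomological step is the standard computation in $H^*(C_{p,p-1};\mathbb{F}_p)$. Since $\alpha_1$ is detected by $\mathscr{P}^1$ in the two-cell complex $\s^{q}\cup_{\alpha_1}e^{q+2p-2}$, the duals $y_i$ of $e_i$ satisfy $y_{i+1}=\mathscr{P}^1 y_i$ for $0\le i\le p-2$, and then (11.2) gives
\begin{equation*}
y_{p-1}=(\mathscr{P}^1)^{p-1}y_0=(p-1)!\,\mathscr{P}^{p-1}y_0\neq 0.
\end{equation*}
For any $\beta\in\langle\Sigma^{\overrightarrow{\bm m}}\overrightarrow{\bm\alpha_1}\rangle^{(\ddot{s}_t)}$, a representative arises (stably) as $\overline{f_p}\circ\widetilde{\Sigma}^{m_p}g_{p,p-1}$ for some such presentation, so the mapping cone $C_\beta$ is built from the $(p-1)$-fold cone $C_{p-1,p-1}$ by attaching the top cell via $\beta$. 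The same Adem computation forces $\mathscr{P}^{p-1}$ to connect the bottom cell of $C_\beta$ to its top cell nontrivially. Since mod-$p$ cohomology of a two-cell complex depends only on the attaching map reduced mod $p$, if the $p$-primary component of $\beta$ vanished then $C_\beta$ would split mod $p$ into a wedge after suitable stabilization and $\mathscr{P}^{p-1}$ would factor trivially, a contradiction; this gives (2). Claim (1) follows because the indeterminacy of the subscripted bracket consists of compositions through some $\alpha_1$ and hence is $p$-torsion in $\pi_{2p(p-1)-2}(\mathfrak{S})$, whose $p$-primary part is $\mathbb{Z}_p$; any element in the bracket projects to a fixed nonzero class in $\mathbb{Z}_p$, and adjusting by the indeterminacy coset gives an element of order exactly $p$.

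The main obstacle is the cohomological step for part (2): one must justify rigorously that, for an element $\beta\in\langle\Sigma^{\overrightarrow{\bm m}}\overrightarrow{\bm\alpha_1}\rangle^{(\ddot{s}_t)}$ whose $p$-primary component vanished, the nontrivial Steenrod link in $H^*(C_\beta;\mathbb{F}_p)$ from bottom to top cell would collapse. The clean route is to apply the Bockstein/mod-$p$ reduction functor to the cofiber sequence, noting that a map of order prime to $p$ induces the zero map mod $p$ on cones after localization, whence the $(p-1)$-st iterated $\mathscr{P}^1$-link would necessarily split off — contradicting the Adem calculation above.
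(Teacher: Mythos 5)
Your detection mechanism (iterated $\mathscr{P}^1$'s versus the Adem relation $(\mathscr{P}^1)^p=p!\,\mathscr{P}^p=0$) and your reduction to $\star=\ddot{s}_t$ are exactly the paper's strategy, but the proposal has a genuine gap at the point where everything must be forced into the $p$-primary world. You assert that the intermediate brackets contain $0$ because ``every element of a bracket of $\alpha_1$'s is itself a multiple of $\alpha_1$ in an appropriate sense,'' and later that an arbitrary $\beta$ in the bracket can be analyzed via its mod-$p$ cofiber. Neither is justified: an arbitrary extension $\overline{f_k}$ occurring in a presentation lives in a finite group $[C_{k,k-1},X]$ that generally has components at primes other than $p$, so the resulting composites need not be $p$-torsion, and $\pi_{2k(p-1);p}(\mathfrak{S})=0$ for the relevant degrees says nothing about the prime-to-$p$ part. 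The paper's Lemmas 11.3 and 11.4 exist precisely to close this hole: at each stage one replaces an arbitrary extension by a representative of its $p$-primary component and uses the cofibration $j_{k,k-1}$ to deform it back into an honest extension; only then do the intermediate composites land in $\pi_{2k(p-1)-2}(\mathfrak{S};p)=0$ and vanish. Your appeal to Proposition 9.1 has the same problem, since its hypothesis is that the intermediate brackets equal $\{0\}$, not merely that their $p$-components vanish.

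Two further points would fail as written. For (2), the two-cell complex $C_\beta$ on $\beta\in\pi_{2p(p-1)-2}(\mathfrak{S})$ has its cells $2p(p-1)-1$ apart, an odd number, so no $\mathscr{P}^k$ connects them and the ``splitting of $C_\beta$'' argument cannot detect $\beta$; the detection must go through the $(p+1)$-cell complex $C_{p+1,p+1}$ obtained by extending $\overline{f_p}$ over $C_{p,p}$ (note also $C_{p,p-1}$ has $p-1$ cells, not $p$), and for an arbitrary element one must first split $\overline{f_p}=H_1+x$ with $H_1$ of $p$-power order and $x$ of order prime to $p$, apply the Steenrod contradiction to $H_1\circ g_{p,p-1}$, and read off that the order of $\beta$ is a multiple of $p$ from $\beta\simeq H_1\circ g_{p,p-1}+x\circ g_{p,p-1}$. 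For (1), the ``adjust by the indeterminacy coset'' step presumes the $p$-fold bracket is a coset of a $p$-torsion subgroup, which is not available for $n$-fold brackets with $n\ge 4$; the paper instead exhibits one explicit element of $p$-power order via the special presentation and shows it is nonzero, whence it has order exactly $p$ because the $p$-component of $\pi_{2p(p-1)-2}(\mathfrak{S})$ is $\bZ_p$.
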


If this holds, then the above two brackets do not contain $0$ so that 
the next corollary holds by Proposition 3.6 and \cite[Proposition 6.1.5]{OO}. 

\begin{cor}
The brackets 
$\langle \Sigma^{\overrightarrow{\bm m}(k)}\overrightarrow{\bm \alpha_1}(k)\rangle$ and $\langle \overrightarrow{\bm \alpha_1}(k)\rangle^{(\star)}_{\overrightarrow{\bm m}(k)}$ are empty for $k\ge p+1$ and all $\star$.
\end{cor}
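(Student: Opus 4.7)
The plan is to argue by contradiction, reducing each emptiness claim to the non-containment of $0$ in the length-$p$ brackets of Proposition~11.1, via the subbracket principle.

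First I would record the immediate consequence of Proposition~11.1(2): every element of $\langle \Sigma^{\overrightarrow{\bm m}}\overrightarrow{\bm \alpha_1}\rangle^{(\star)}$ has order a multiple of the prime $p$, while $0$ has order $1$, so $0\notin\langle \Sigma^{\overrightarrow{\bm m}}\overrightarrow{\bm \alpha_1}\rangle^{(\star)}$; the containment in (10.1) then gives $0\notin\langle\overrightarrow{\bm \alpha_1}\rangle^{(\star)}_{\overrightarrow{\bm m}}$ as well. This is the concrete input to be contradicted below.

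Next, suppose $\langle\overrightarrow{\bm \alpha_1}(k)\rangle^{(\star)}_{\overrightarrow{\bm m}(k)}$ is non-empty for some $k\ge p+1$. By Definition~10.3 this forces the unstable set $B(r,0)=\{\widetilde{\Sigma}^r\overrightarrow{\bm f}(k)\}^{(\star)}_{\overrightarrow{\bm m}(k)}\circ(1_{\s^3}\wedge\tau(\cdots))$ to be non-empty for all sufficiently large $r$, where $\overrightarrow{\bm f}(k)$ is the sequence of $k$ copies of $\alpha_1(3)$. Applying Proposition~3.6 to that length-$k$ non-empty bracket with the indices $(N,\ell)=(p,1)$ — permissible since $1\le 1<p\le k$ and $(p,1)\ne(k,1)$ precisely because $k\ge p+1$ — yields $0\in\{\widetilde{\Sigma}^r f_p,\dots,\widetilde{\Sigma}^r f_1\}^{(\star)}_{(m_p,\dots,m_1)}$ for each such $r$. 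Since the suspension map sends $0$ to $0$, passing to the stable limit gives $0\in\langle\overrightarrow{\bm \alpha_1}(p)\rangle^{(\star)}_{\overrightarrow{\bm m}(p)}=\langle\overrightarrow{\bm \alpha_1}\rangle^{(\star)}_{\overrightarrow{\bm m}}$, contradicting the first paragraph.

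For $\langle\Sigma^{\overrightarrow{\bm m}(k)}\overrightarrow{\bm \alpha_1}(k)\rangle^{(\star)}$ I would run the parallel argument, replacing $B(r,0)$ by $A(r,0)=\{\widehat{\Sigma}^r\overrightarrow{\bm f}(k)\}^{(\star)}\circ(\cdots)$ and replacing Proposition~3.6 by its unsubscripted analog \cite[Proposition~6.1.5]{OO}, which supplies the same kind of $0$-in-subbracket conclusion for ordinary higher Toda brackets. The same index choice $(p,1)$ produces $0$ inside the length-$p$ unsubscripted sub-bracket $\{\widehat{\Sigma}^rf_p,\dots,\widehat{\Sigma}^rf_1\}^{(\star)}$; stabilising gives $0\in\langle\Sigma^{\overrightarrow{\bm m}(p)}\overrightarrow{\bm \alpha_1}(p)\rangle^{(\star)}=\langle\Sigma^{\overrightarrow{\bm m}}\overrightarrow{\bm \alpha_1}\rangle^{(\star)}$, again contradicting the first paragraph. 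The only real obstacle I anticipate is bookkeeping: verifying that the length-$p$ sub-bracket isolated by Proposition~3.6 (respectively \cite[Proposition~6.1.5]{OO}) matches, after absorbing the $\tau$-shuffles of smash factors coming from the definitions of $\widetilde{\Sigma}^r$ and $\widehat{\Sigma}^r$, exactly the bracket appearing in Proposition~11.1 — a purely combinatorial check using the identities from Section~2 of \cite{OO} and requiring no new homotopy-theoretic input.
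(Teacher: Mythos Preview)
Your proposal is correct and follows essentially the same route as the paper: the paper's one-line argument deduces the corollary from Proposition~11.1 via Proposition~3.6 and \cite[Proposition~6.1.5]{OO}, and you have simply unpacked that sentence, making explicit the passage from the stable bracket down to an unstable $B(r,0)$ or $A(r,0)$, applying the sub-bracket principle there, and then stabilising back. Your observation that only part~(2) of Proposition~11.1 is needed (together with the containment (10.1)) is accurate, and the ``bookkeeping'' you flag for the unsubscripted case --- that the length-$p$ sub-bracket extracted from $\widehat{\Sigma}^r\overrightarrow{\bm f}(k)$ differs from the length-$p$ $\widehat{\Sigma}^r\overrightarrow{\bm f}$ only by a global $(k-p)(2p-3)$-fold suspension and $\tau$-shuffles, which preserve~$0$ under stabilisation --- is exactly the routine check the paper suppresses.
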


By Proposition 10.6, $\langle \Sigma^{\overrightarrow{\bm m}}\overrightarrow{\bm \alpha_1}\rangle^{(\star)}$ and $\langle\overrightarrow{\bm \alpha_1}\rangle^{(\star)}_{\overrightarrow{\bm m}}$ are unstable, that is, the stabilizations 
$\Sigma^\infty : A(r,0)\to \langle \Sigma^{\overrightarrow{\bm m}}\overrightarrow{\bm \alpha_1}\rangle^{(\star)}$ and 
$\Sigma^\infty : B(r,0)\to \langle\overrightarrow{\bm \alpha_1}\rangle^{(\star)}_{\overrightarrow{\bm m}}$ are bijections for sufficiently large $r$. 
Therefore in order to prove Proposition~11.1, it suffices to examine 
$\{\widehat{\Sigma}^r\overrightarrow{\bm f}\}^{(\star)}$ and 
$\{\widetilde{\Sigma}^r\overrightarrow{\bm f}\}^{(\star)}_{\overrightarrow{\bm m}}$ 
for sufficiently large $r$. 
After doing it, we can easily prove the following: 
When $p=3$ and $\star=aq\ddot{s}_2,\ddot{s}_t$, 
it follows from \cite[Chapter 13]{T} and 
Theorem 7.1 that $\langle\Sigma^{\overrightarrow{\bm m}}\overrightarrow{\bm \alpha_1}\rangle^{(\star)}$ and 
$\langle\overrightarrow{\bm \alpha_1}\rangle^{(\star)}_{\overrightarrow{\bm m}}$ consist of a single element 
so that they are equal. 
It is unclear if the similar assertion holds for $p>3$. 

\begin{lemma}
Let $r\ge 2p(p-1)-3$. 
\begin{enumerate}
\item There is an $\ddot{s}_t$-presentation $\{\mathscr{S}_k,\overline{\widehat{\Sigma}^r f_k},\mathscr{A}_k\,|\,2\le k\le p\}$ of $\widehat{\Sigma}^r\overrightarrow{\bm f}$ such that the order of $\overline{\widehat{\Sigma}^r f_k}$ is a non trivial power of $p$, that is, $p^s$ with $s\ge 1$. 
\item For any $\ddot{s}_t$-presentation $\{\mathscr{S}_k,\overline{\widehat{\Sigma}^rf_k},\mathscr{A}_k\,|\,2\le k\le p\}$ of $\widehat{\Sigma}^r\overrightarrow{\bm f}$, if we set 
$t_k=3+r+(p-k+1)(2p-3)$, then we have 
\begin{enumerate}
\item $\widetilde{H}^m(C_{k,k};\bZ_p)\cong\begin{cases} \bZ_p & m=t_k+2i(p-1)\ (0\le i<k\le p)\\
0 & \text{otherwise}
\end{cases}$. 
\item $\mathscr{P}^1:H^{t_k+2i(p-1)}(C_{k,k};\bZ_p)\cong H^{t_k+2(i+1)(p-1)}(C_{k,k};\bZ_p)$ for $0\le i\le k-2$ with $2\le k\le p$. 
\end{enumerate}
\item $\{\widehat{\Sigma}^r\overrightarrow{\bm f}\}^{(\star)}$ is not empty for all $\star$.
\end{enumerate}
\end{lemma}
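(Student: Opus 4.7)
The plan is to establish (1), (2), (3) by a coupled induction on $k$, in which the cohomological analysis of (2) provides the obstruction-theoretic input needed for the extensions in (1), and (3) follows as an immediate consequence of (1).

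The base case $k=2$ is transparent. The space $C_{2,2}$ is the mapping cone of $\widehat{\Sigma}^r f_1$, which is a stable representative of $\alpha_1 \in \pi_{2p-3}(\mathfrak{S};p)$, so its reduced $\bZ_p$-cohomology is $\bZ_p$ concentrated in degrees $t_2$ and $t_2 + 2(p-1)$, and $\mathscr{P}^1$ is an isomorphism between them by the defining property of $\alpha_1$. This yields (2) at $k=2$. To extend $\widehat{\Sigma}^r f_2$ over $C_{2,2}$, the composite $\widehat{\Sigma}^r f_2 \circ \widehat{\Sigma}^r f_1$ must be null; this composite lives in $\pi_{2(2p-3)}(\mathfrak{S}; p)$, and since $2(2p-3) = 4(p-1)-2$ is even while the only nonzero $p$-primary stems below $2p(p-1)-2$ sit in the odd degrees $2j(p-1)-1$, the group vanishes by (11.1). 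Thus an extension $\overline{\widehat{\Sigma}^r f_2}$ exists. Its order is finite and $p$-primary, because $\langle C_{2,2}, \s^{t_3}\rangle$ is $p$-primary in the stable range (each cell contributes a $p$-primary summand via (11.1)), and its order is at least $p$ because the restriction to the bottom cell is $\alpha_1$, of order exactly $p$.

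For the inductive step, assume (1) and (2) hold up to level $k$ and construct level $k+1$. The defining cofiber sequence for $C_{k+1,k+1}$ together with the inductive cohomology computation and the Steenrod hypothesis determines $\widetilde{H}^*(C_{k+1,k+1}; \bZ_p)$ in the stated form. The top $\mathscr{P}^1$-isomorphism is forced by the Adem relation (11.2): the $(k-1)$-fold iterate $(\mathscr{P}^1)^{k-1}$ equals $(k-1)!\,\mathscr{P}^{k-1}$, and since $k-1 \leq p-1$ we have $(k-1)! \not\equiv 0 \pmod{p}$; combined with the inductive non-triviality of the lower $\mathscr{P}^1$'s, all intermediate operations must be isomorphisms between one-dimensional $\bZ_p$-vector spaces. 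For the extension of $\widehat{\Sigma}^r f_{k+1}$ over $\Sigma^{m_{k+1}} C_{k+1,k}$, the obstruction is the composite with $g_{k+1,k}$, which represents an element of an iterated Toda bracket of $\alpha_1$'s. This bracket is non-empty by Proposition 9.1 applied inductively, and every element lies in $\pi_{2k(p-1)-2}(\mathfrak{S}; p)$, which is again zero by (11.1) provided $k+1 \leq p$. The extension therefore exists, and the same $p$-primary order argument as in the base case applies.

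Claim (3) then follows from (1), since the $\ddot{s}_t$-presentation constructed in (1) produces an element of $\{\widehat{\Sigma}^r \overrightarrow{\bm f}\}^{(\ddot{s}_t)}$, and Corollary 4.2(4) propagates non-emptiness to all $\star$. The hard part will be the inductive obstruction analysis in (1): one must ensure that the extensions can be chosen coherently with reduced structures $\mathscr{A}_k$ so that they assemble into a genuine $\ddot{s}_t$-presentation rather than a mere sequence of extensions, and that the stable-range $p$-primary control of $\langle C_{k,k}, \s^{t_{k+1}}\rangle$ holds uniformly in $k$. Both hinge on (2) combined with (11.1), and the sharp bound $k \leq p$ reflects exactly the divisibility of $p!$ that makes the Adem relation collapse at the next level, foreshadowing the emptiness assertion of Corollary 11.2.
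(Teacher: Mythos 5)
Your outline matches the paper's strategy at the top level (induct on $k$, kill the extension obstructions using the vanishing (11.1) of the even $p$-primary stems, deduce (3) from (1) via Corollary 4.2(4)), but two of your key steps do not hold as stated.

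First, the claim that the extension group is ``$p$-primary in the stable range (each cell contributes a $p$-primary summand via (11.1))'' is false. The group $[C_{2,2},\Sigma^{m_{[p,3]}}X]$ sits in an exact sequence whose outer terms are (stably) copies of $\pi_{2p-3}(\mathfrak{S})$ and $\pi_{2(2p-3)+1}(\mathfrak{S})$, and these are the full stable stems, not their $p$-components; already $\pi_{2p-3}(\mathfrak{S})$ has large prime-to-$p$ torsion. Consequently an arbitrary extension of $\widehat{\Sigma}^rf_k$ need not have $p$-power order, and this is not cosmetic: the vanishing of the obstruction $\overline{\widehat{\Sigma}^rf_k}^{\,s}\circ g_{k,s}$ at the next stage rests on that composite lying in the \emph{$p$-primary part} of an even-dimensional stable stem below degree $2p(p-1)-2$ (the whole stem is generally nonzero; only its $p$-part vanishes by (11.1)), and the only reason the composite is $p$-primary is that the partial extension was already arranged to have $p$-power order. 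The paper supplies exactly the missing device: take any extension $e$, let $e'$ represent its $p$-primary component in the finite abelian group $[C,\ \cdot\ ]$ (made a group via a homeomorphism compatible with the inclusion of the subcomplex), observe that the restriction of $e-e'$ to the subcomplex is simultaneously of order prime to $p$ and a difference of maps of $p$-power order, hence null, and then use the cofibration property to deform $e'$ into an honest extension of $p$-power order. Your base-case obstruction $\widehat{\Sigma}^rf_2\circ\widehat{\Sigma}^rf_1$ happens to be $p$-torsion because both factors are, but from the second stage onward your argument has no control over the order of the extensions, so neither claim (1) nor the inductive obstruction vanishing is justified.

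Second, your derivation of (2)(b) from the Adem relation is circular. Knowing $(\mathscr{P}^1)^{k-1}=(k-1)!\,\mathscr{P}^{k-1}$ with $(k-1)!$ a unit mod $p$ only says that $(\mathscr{P}^1)^{k-1}$ is an isomorphism if and only if $\mathscr{P}^{k-1}$ is, and neither is known in advance; the inductively known lower $\mathscr{P}^1$'s say nothing about the operation from the second-to-top class to the class carried by the new top cell. The correct source of the isomorphisms is geometric: by the construction of an $\ddot{s}_t$-presentation each successive two-cell subquotient of $C_{k,k}$ is stably the mapping cone of a representative of a nonzero element of $\pi_{2p-3}(\mathfrak{S};p)$, on which $\mathscr{P}^1$ is an isomorphism (the well-known fact quoted in the paper). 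The Adem relation $(\mathscr{P}^1)^p=p!\,\mathscr{P}^p=0$ enters only afterwards, in the proof of Proposition 11.1, to produce the contradiction; it cannot be used to manufacture the isomorphisms of (2)(b).
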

\begin{proof}
To prove (1), for simplicity, we set $X=\Sigma^r\s^3$. 
Set 
$$
\mathscr{S}_2=(\Sigma^{m_{[p,1]}}X;\Sigma^{m_{[p,2]}}X, \Sigma^{m_{[p,2]}}X\cup_{\widehat{\Sigma}^rf_1}C\Sigma^{m_{[p,1]}}X;\widehat{\Sigma}^rf_1;i_{\widehat{\Sigma}^rf_1})
$$
which is a reduced iterated mapping cone of depth $1$ with a structure $\mathscr{A}_2=\{1_{C_{2,2}}\}$. 
Since $\widehat{\Sigma}^rf_2\circ \widehat{\Sigma}^rf_1\simeq *$, the map $\widehat{\Sigma}^rf_2$ can be extended to $C_{2,2}$. 
Let $e_{2,2}:C_{2,2}\to \Sigma^{m_{[p,3]}}X$ be such an extension. 
We regard $[C_{2,2},\Sigma^{m_{[p,3]}}X]$ as a group such that the bijection 
$$(\psi^{m_{[p,2]}}_{\widetilde{\Sigma}^rf_1})^*:[\Sigma^{m_{[p,2]}}(X\cup_{\widetilde{\Sigma}^rf_1}C\Sigma^{m_1}X),\Sigma^{m_{m[p,3]}}X]\to[C_{2,2},\Sigma^{m_{[p,3]}}X]$$ is an isomorphism. 
Then $[C_{2,2},\Sigma^{m_{[p,3]}}X]$ is a finite abelian group. 
Let $i_1:X\to X\cup_{\widetilde{\Sigma}^rf_1}C\Sigma^{m_1}X$ be the inclusion. 
Then $\psi^{m_{[p,2]}}_{\widetilde{\Sigma}^rf_1}\circ j_{2,1}=\Sigma^{m_{[p,2]}}i_1$. 
Let $e_{2,2}'$ be a map representing the $p$-primary component of $e_{2,2}$. 
Set $x_{2,2}=e_{2,2}-e'_{2,2}$. 
Then $e'_{2,2}+x_{2,2}\simeq e_{2,2}$. 
We have 
\begin{align*}
e'_{2,2}\circ j_{2,1}+x_{2,2}\circ j_{2,1}&=e'_{2,2}\circ\psi^{-1}\circ \Sigma^{m_{[p,2]}}i_1
+x_{2,2}\circ\psi^{-1}\circ \Sigma^{m_{[p,2]}}i_1\\
&=(e'_{2,2}\circ\psi^{-1}+x_{2,2}\circ\psi^{-1})\circ \Sigma^{m_{[p,2]}}i_1\\
&\simeq (e'_{2,2}+x_{2,2})\circ\psi^{-1}\circ \Sigma^{m_{[p,2]}}i_1\quad(\text{since $(\psi^{-1})^*$ is an isomorphism})\\
&=(e'_{2,2}+x_{2,2})\circ j_{2,1}\simeq e_{2,1}\circ j_{2,1}=\widehat{\Sigma}^r f_2,
\end{align*}
where $\psi=\psi^{m_{[p,2]}}_{\widetilde{\Sigma}^rf_1}$. 
Since orders of $e'_{2,2}\circ j_{2,1}$ and $\widehat{\Sigma}^rf_2$ are powers of $p$, 
it follows that $x_{2,2}\circ j_{2,1}\simeq *$ so that there is a homotopy 
$G:e'_{2,2}\circ j_{2,1}\simeq \widehat{\Sigma}^rf_2$. 
Since $j_{2,1}$ is a cofibration, there is a homotopy $H:C_{2,2}\times I\to \Sigma^{m_{[p,3]}}X$ such that $H\circ (j_{2,1}\times 1_I)=G$ and $H_0=e'_{2,2}$. 
Set $\overline{\widehat{\Sigma}^rf_2}=H_1$. 
Then $\overline{\widehat{\Sigma}^rf_2}\simeq e'_{2,2}$ and $\overline{\widehat{\Sigma}^rf_2}\circ j_{2,1}=\widehat{\Sigma}^rf_2$. 
In particular the order of $\overline{\widehat{\Sigma}^rf_2}$ is a non trivial power of $p$. 
Set $\mathscr{S}_3=\mathscr{S}_2(\overline{\widehat{\Sigma}^rf_2},\mathscr{A}_2)$ and let 
$\mathscr{A}_3$ be a reduced structure on $\mathscr{S}_3$. 
Then $C_{3,s}=\Sigma^{m_{[p,3]}}X\cup_{\overline{\widehat{\Sigma}^rf_2}^{s-1}}CC_{2,s-1}\ (1\le s\le 3)$, 
$g_{2,1}=\widehat{\Sigma}^rf_2$, and $g_{3,2}:\Sigma\Sigma^{m_{[p,1]}}X\to C_{3,2}$. 
Since $j_{3,1}$ is a homotopy cofibre of $\widehat{\Sigma}^rf_2$ and $\widehat{\Sigma}^rf_3\circ \widehat{\Sigma}^rf_2\simeq *$, 
$\widehat{\Sigma}^rf_3$ can be extended to $C_{3,2}$. 
Let $e_{3,2}:C_{3,2}\to \Sigma^{m_{[p,4]}}X$ be such an extension. 
We regard $[C_{3,2},\Sigma^{m_{[p,4]}}X]$ as a group such that the bijection 
$$
(\psi^{m_{[p,3]}}_{\widetilde{\Sigma}^rf_2})^*:[\Sigma^{m_{[p,3]}}(X\cup_{\widetilde{\Sigma}^rf_2}C\Sigma^{m_2}X),\Sigma^{m_{[p,4]}}X]\to [C_{3,2},\Sigma^{m_{[p,4]}}X]
$$
is an isomorphism. 
Let $i_2:X\to X\cup_{\widetilde{\Sigma}^rf_2}C\Sigma^{m_2}X$ be the inclusion. 
Then $\psi^{m_{[p,3]}}_{\widetilde{\Sigma}^rf_2}\circ j_{3,1}=\Sigma^{m_{[p,3]}}i_2$. 
Let $e'_{3,2}$ be a map 
representing the $p$-primary part of $e_{3,2}$ in $[C_{3,2},\Sigma^{m_{[p,4]}}X]$. 
Set $x_{3,2}=e_{3,2}-e'_{3,2}$. 
Then $e'_{3,2}+x_{3,2}\simeq e_{3,2}$ and $e'_{3,2}\circ j_{3,1}+x_{3,2}\circ j_{3,1}=(e'_{3,2}+x_{3,2})\circ j_{3,1}\simeq e_{3,2}\circ j_{3,1}=\widehat{\Sigma}^rf_3$ by 
the same argument as above. 
Since orders of $e'_{3,2}\circ j_{3,1}$ and $\widehat{\Sigma}^rf_3$ are powers of $p$, it follows that $x_{3,2}\circ j_{3,1}\simeq *$ so that there is a homotopy $G:e'_{3,2}\circ j_{3,1}\simeq \widehat{\Sigma}^rf_3$. 
Since $j_{3,1}$ is a cofibration, there is a homotopy $H:C_{3,2}\times I\to \Sigma^{m_{[p,4]}}X$ such that $H\circ (j_{3,1}\times 1_I)=G$ and $H_0=e'_{3,2}$. 
Set $\overline{\widehat{\Sigma}^rf_3}^2=H_1$. 
Then $\overline{\widehat{\Sigma}^rf_3}^2\simeq e'_{3,2}$ and $\overline{\widehat{\Sigma}^rf_3}^2\circ j_{3,1}=\widehat{\Sigma}^rf_3$. 
In particular the order of $\overline{\widehat{\Sigma}^rf_3}^2$ is a non trivial power of $p$. 
If $p=3$, then, by setting $\overline{\widehat{\Sigma}^rf_3}=\overline{\widehat{\Sigma}^rf_3}^2$, 
$\{\mathscr{S}_k,\overline{\widehat{\Sigma}^rf_k},\mathscr{A}_k\,|\,k=2,3\}$ is a desired 
$\ddot{s}_t$-presentation of $\widehat{\Sigma}^r\overrightarrow{\bm f}$. 

Let $p>3$. 
Then $\overline{\widehat{\Sigma}^rf_3}^2\circ g_{3,2}$ is in the $p$-primary component of 
$[\Sigma\Sigma^{m_{[p,1]}}X,\Sigma^{m_{[p,4]}}X]$, that is, in $\pi_{2\cdot 3(p-1)-2}(\mathfrak{S};p)$. 
Hence $\overline{\widehat{\Sigma}^rf_3}^2\circ g_{3,2}\simeq *$ by (11.1). 
Since $j_{3,2}$ is a homotopy cofibre of $g_{3,2}$, it follows from \cite[Lemma 4.3(7)]{OO} that $\overline{\widehat{\Sigma}^rf_3}^2$ can be extended to $C_{3,3}$. 
Let $e_{3,3}$ be such an extension. 
By the definition of $\ddot{s}_t$-presentation, there is a homeomorphism 
$h:C_{3,3}\approx \Sigma^{m_{[p,3]}}(X\cup C\Sigma^{m_2}(X\cup_{\widetilde{\Sigma}^rf_1}C\Sigma^{m_1}X))$ 
such that $h\circ j_{3,2}\circ j_{3,1}=\Sigma^{m_{[p,3]}}i_3$, where $i_3:X\to X\cup C\Sigma^{m_2}(X\cup C\Sigma^{m_1}X)$ is the inclusion. 
We regard $[C_{3,3},\Sigma^{m_{[p,4]}}X]$ as a group such that the bijection 
$$
h^*:[\Sigma^{m_{[p,3]}}(X\cup C\Sigma^{m_2}(X\cup_{\widetilde{\Sigma}^rf_1}C\Sigma^{m_1}X)),\Sigma^{m_{[p,4]}}X]
\to [C_{3,3}, \Sigma^{m_{[p,4]}}X]
$$
is an isomorphism. 
Then $[C_{3,3},\Sigma^{m_{[p,4]}}X]$ is a finite abelian group. 
Let $e'_{3,3}$ be a map representing the $p$-primary component of $e_{3,3}$ in $[C_{3,3},\Sigma^{m_{[p,4]}}X]$. 
Set $x_{3,3}=e_{3,3}-e'_{3,3}$. 
Then $e'_{3,3}+x_{3,3}\simeq e_{3,3}$ and $e'_{3,3}\circ j_{3,2}+x_{3,3}\circ j_{3,2}\simeq (e'_{3,3}+x_{3,3})\circ j_{3,2}
\simeq e_{3,3}\circ j_{3,2}=\overline{\widehat{\Sigma}^rf_3}^2$ by the same argument as above. 
Comparing the orders, we have $x_{3,3}\circ j_{3,2}\simeq *$ so that 
there is a homotopy $G:e'_{3,3}\circ j_{3,2}\simeq \overline{\widehat{\Sigma}^rf_3}^2$. 
Since $j_{3,2}$ is a cofibration, there is a homotopy $H:C_{3,3}\times I\to \Sigma^{m_{[p,4]}}X$ such that $H\circ(j_{3,2}\times 1_I)=G$ and $H_0=e'_{3,3}$. 
Set $\overline{\widehat{\Sigma}^rf_3}=H_1$. 
Then $\overline{\widehat{\Sigma}^rf_3}\circ j_{3,2}=\overline{\widehat{\Sigma}^rf_3}^2$ and $\overline{\widehat{\Sigma}^rf_3}\simeq e'_{3,3}$. 
Hence the order of $\overline{\widehat{\Sigma}^rf_3}$ is a non trivial power of $p$. 
Set $\mathscr{S}_4=\mathscr{S}_3(\overline{\widehat{\Sigma}^rf_3},\mathscr{A}_3)$ and let 
$\mathscr{A}_4$ be a reduced structure on $\mathscr{S}_4$. 
By proceeding the process, we obtain a desired $\ddot{s}_t$-presentation 
$\{\mathscr{S}_k,\overline{\widehat{\Sigma}^rf_k},\mathscr{A}_k\,|\,2\le k\le p\}$ of 
$\widehat{\Sigma}^r\overrightarrow{\bm f}$, where $\overline{\widehat{\Sigma}^rf_k}:\begin{cases} C_{k,k}\to \Sigma^{m_{[p,k+1]}}X & 2\le k< p\\ C_{p,p-1}\to X & k=p\end{cases}$. 
By our construction, $C_{k,s}=\begin{cases} \Sigma^{m_{[p,k]}}X\cup CC_{k-1,s-1} & 2\le s\le k\le p\\ \Sigma^{m_{[p,k]}}X & 2\le k\le p\ \text{and } s=1\end{cases}$. 
This completes the proof of (1). 

(2)(a) is an easy consequence of the definition of $\ddot{s}_t$-presentation. 
Also (2)(b) follows easily from the well-known fact that if $e:\s^{n+2p-3}\to \s^n\ (n\ge 3)$ represents a non zero element of $\pi_{2p-3}(\mathfrak{S};p)$, then 
$\mathscr{P}^1:H^n(\s^n\cup_e C\s^{n+2p-3};\bZ_p)\cong H^{n+2(p-1)}(\s^n\cup_e C\s^{n+2p-3};\bZ_p)$. 

(3) follows from (1) and Corollary 4.2(4). 
\end{proof}

\begin{lemma}
Let $r\ge 2p(p-1)-3$. 
\begin{enumerate}
\item There is an $\ddot{s}_t$-presentation $\{\mathscr{S}_k,\overline{\widetilde{\Sigma}^rf_k}, 
\mathscr{A}_k\,|\,2\le k\le p\}$ of $\widetilde{\Sigma}^r\overrightarrow{\bm f}$ such that 
the order of $\overline{\widetilde{\Sigma}^rf_k}$ is a non-trivial power of $p$, that is, $p^s$ with $s\ge 1$.
\item For any $\ddot{s}_t$-presentation of $\widetilde{\Sigma}^r\overrightarrow{\bm f}$, 
we have
\begin{enumerate}
\item $\widetilde{H}^m(C_{k,k};\bZ_p)\cong\begin{cases}\bZ_p & m=3+r+2i(p-1)\, (0\le i< k\le p)\\ 0 &\text{otherwise}\end{cases}$. 
\item $\mathscr{P}^1:H^{3+r+2i(p-1)}(C_{k,k};\bZ_p)\cong H^{3+r+2(i+1)(p-1)}(C_{k,k};\bZ_p)$ for $0\le i\le k-2$ with $2\le k\le p$.
\end{enumerate}
\item $\{\widetilde{\Sigma}^r\overrightarrow{\bm f}\}^{(\star)}_{\vec{\bm m}}$ is not empty for 
all $\star$. 
\end{enumerate}
\end{lemma}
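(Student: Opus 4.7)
The plan is to mirror Lemma 11.3, adapted to the fact that every $\widetilde{\Sigma}^r f_i$ targets the single sphere $X := \Sigma^r\s^3$ of dimension $r+3$ (in contrast to the varying targets $\Sigma^{m_{[p,i+1]}}\Sigma^r\s^3$ of the $\widehat{\Sigma}^r f_i$ in 11.3). The crucial input is that each group $[\Sigma^{m_k} C_{k,s}, X]$ is a finite abelian group: every cell of $\Sigma^{m_k} C_{k,s}$ has dimension at least $r + m_k + 3 = r + 2p > r+3$, so every class is torsion, and the hypothesis $r \ge 2p(p-1)-3$ places the relevant stems inside the range of (11.1).

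For (1), I proceed by induction on $k$. Start with $\mathscr{S}_2 = (\Sigma^{m_1}X;\, X,\, X \cup_{\widetilde{\Sigma}^r f_1} C\Sigma^{m_1}X;\, \widetilde{\Sigma}^r f_1;\, i_{\widetilde{\Sigma}^r f_1})$ and the trivial reduced structure $\mathscr{A}_2 = \{1_{C_{2,2}}\}$. Given $\mathscr{S}_k$, $\overline{\widetilde{\Sigma}^r f_{k-1}}$, and reduced $\mathscr{A}_k$, I construct $\overline{\widetilde{\Sigma}^r f_k}$ by extending $\widetilde{\Sigma}^r f_k$ up the cone filtration $\Sigma^{m_k} C_{k,1} \subset \cdots \subset \Sigma^{m_k} C_{k,s_k}$, with $s_k = k$ if $k < p$ and $s_k = p-1$ if $k = p$. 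The obstruction for passing from stage $s$ to stage $s+1$ is the composite of the preceding extension with $\Sigma^{m_k} g_{k,s}$; it lies stably in $\pi_{2(s+1)(p-1)-2}(\mathfrak{S})$, and since $2(s+1)(p-1)-2$ is never of the form $2j(p-1)-1$, the $p$-primary part of this stem is zero by (11.1). The composite is itself $p$-primary (a post-composition with a $p$-primary map has $p$-power order), hence null-homotopic, so an extension over $\Sigma^{m_k} C_{k,s+1}$ exists. I then apply the $e \leadsto e' \leadsto \overline{\phantom{f}}$ trick of Lemma 11.3(1): extract the $p$-primary component $e'$ in the finite group $[\Sigma^{m_k} C_{k,s+1}, X]$, and use the cofibration $\Sigma^{m_k} j_{k,s}$ together with the fact that $\widetilde{\Sigma}^r f_k$ has order $p$ to adjust $e'$ by a homotopy so that its restriction to $\Sigma^{m_k} X_k$ is exactly $\widetilde{\Sigma}^r f_k$. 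Iterating yields $\overline{\widetilde{\Sigma}^r f_k}$ of non-trivial $p$-power order.

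Part (2)(a) is an induction on $k$ via the cofibre sequence $\Sigma^{m_{k-1}} C_{k-1,k-1} \to X_k \to C_{k,k} \to \Sigma\Sigma^{m_{k-1}} C_{k-1,k-1}$: because $\widetilde{H}^*(\Sigma^{m_{k-1}} C_{k-1,k-1};\bZ_p)$ is concentrated in degrees $\ge r+2p > r+3$, the attaching map $\overline{\widetilde{\Sigma}^r f_{k-1}}$ vanishes on reduced $\bZ_p$-cohomology, the long exact sequence splits additively, and the inductive hypothesis yields the listed cells. For (2)(b), the $\mathscr{P}^1$-isomorphism between $H^{r+3}(C_{k,k};\bZ_p)$ and $H^{r+2p+1}(C_{k,k};\bZ_p)$ is detected on the $2$-skeleton of $C_{k,k}$, namely the mapping cone of $\widetilde{\Sigma}^r f_{k-1}: \s^{r+2p} \to \s^{r+3}$ representing $\alpha_1$; the remaining isomorphisms ($1 \le i \le k-2$) follow by naturality of $\mathscr{P}^1$ under the quotient $C_{k,k} \to \Sigma\Sigma^{m_{k-1}} C_{k-1,k-1}$, which is a $\bZ_p$-cohomology isomorphism in degrees $> r+3$, combined with suspension invariance of $\mathscr{P}^1$ and the inductive hypothesis on $C_{k-1,k-1}$. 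Part (3) is then immediate: (1) supplies a non-empty $\{\widetilde{\Sigma}^r \overrightarrow{\bm f}\}^{(\ddot{s}_t)}_{\vec{\bm m}}$, and Corollary 4.2(4) propagates this to every $\star$. The main obstacle is the obstruction bookkeeping in (1): one must verify uniformly across the double induction on $k$ and $s$ that each degree $2(s+1)(p-1)-2$ avoids the non-zero $p$-primary stable stems of (11.1), and that $r \ge 2p(p-1)-3$ genuinely identifies every unstable composite with its stable value.
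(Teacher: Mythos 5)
Your proposal is correct and takes essentially the same route as the paper's proof: the same inductive extension up the cone filtration with obstructions killed by (11.1) in the stable range, the same extraction of the $p$-primary component in the finite group $[\Sigma^{m_k}C_{k,s},X]$ followed by the cofibration adjustment, and the same cofibre-sequence and $\alpha_1$-mapping-cone arguments for the cohomology and the $\mathscr{P}^1$ action. The only difference is organizational (a uniform double induction on $(k,s)$ versus the paper's explicit treatment of $k=2,3$ followed by ``repeating the process''), and your closing caveat about checking that every stem $2(s+1)(p-1)-2$ stays strictly below $2p(p-1)-2$ is exactly the bookkeeping that makes the argument go through.
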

\begin{proof}
(1). 
Set 
$\mathscr{S}_2=(\Sigma^{2p-3}\s^{3+r};\s^{3+r},\s^{3+r}\cup_{\widetilde{\Sigma}^rf_1}C\Sigma^{2p-3}\s^{3+r};\widetilde{\Sigma}^rf_1;i_{\widetilde{\Sigma}^rf_1})$. 
Then it is an iterated reduced mapping cone of depth $1$ with a structure 
$\mathscr{A}_2=\{1_{C_{2,2}}\}$. 
Since $\widetilde{\Sigma}^rf_2\circ \Sigma^{2p-3}\widetilde{\Sigma}^rf_1\simeq *$ by Toda \cite{T} and since 
$\Sigma^{2p-3}j_{2,1}$ is a homotopy cofibre of $\Sigma^{2p-3}\widetilde{\Sigma}^rf_1$, the map $\widetilde{\Sigma}^rf_2$ can be extended to $\Sigma^{2p-3}C_{2,2}$ by \cite[Lemma 4.3(7)]{OO}. 
Let $e_{2,2}$ be such an extension and 
$e'_{2,2}:\Sigma^{2p-3}C_{2,2}\to\s^{3+r}$ a map representing the 
$p$-primary component of $e_{2,2}$ in the finite stable group 
$[\Sigma^{2p-3}C_{2,2},\s^{3+r}]$. 
Set $x=e_{2,2}-e'_{2,2}$. 
Then $e'_{2,2}+x\simeq e_{2,2}$ and 
$e'_{2,2}\circ \Sigma^{2p-3}j_{2,1}+x\circ \Sigma^{2p-3}j_{2,1}\simeq e_{2,2}\circ \Sigma^{2p-3}j_{2,1}=\widetilde{\Sigma}^rf_2$. 
Since orders of $e'_{2,2}\circ \Sigma^{2p-3}j_{2,1}$ and $\widetilde{\Sigma}^rf_2$ are powers of $p$, 
it follows that $x\circ \Sigma^{2p-3}j_{2,1}\simeq *$ so that there is a homotopy 
$G:e'_{2,2}\circ \Sigma^{2p-3}j_{2,1}\simeq \widetilde{\Sigma}^rf_2$. 
Since $\Sigma^{2p-3}j_{2,1}$ is a cofibration, there is a homotopy $H:\Sigma^{2p-3}C_{2,2}\times I\to \s^{3+r}$ such that $H\circ (\Sigma^{2p-3}j_{2,1}\times 1_I)=G$ and $H_0=e'_{2,2}$. 
Set $\overline{\widetilde{\Sigma}^rf_2}=H_1$. 
Then $\overline{\widetilde{\Sigma}^rf_2}\simeq e'_{2,2}$ and $\overline{\widetilde{\Sigma}^rf_2}\circ \Sigma^{2p-3}j_{2,1}=\widetilde{\Sigma}^rf_2$. 
In particular the order of $\overline{\widetilde{\Sigma}^rf_2}$ is a non-trivial power of $p$. 
Set $\mathscr{S}_3=(\widetilde{\Sigma}^{2p-3}\mathscr{S}_2)(\overline{\widetilde{\Sigma}^rf_2},\widetilde{\Sigma}^{2p-3}\mathscr{A}_2)$ and let $\mathscr{A}_3$ be a reduced structure on $\mathscr{S}_3$. 
Then $C_{3,s}=\s^{3+r}\cup_{}C\Sigma^{2p-3}C_{2,s-1}\ (1\le s\le 3)$, $g_{3,1}=\widetilde{\Sigma}^rf_2$, and $g_{3,2}=(\overline{\widetilde{\Sigma}^rf_2}\cup C1_{\Sigma^{2p-3}\s^{3+r}})\circ(\widetilde{\Sigma}^{2p-3}\omega_{2,1})^{-1}:\Sigma\Sigma^{2(2p-3)}\s^{3+r}\to C_{3,2}$. 
Since $\Sigma^{2p-3}j_{3,1}$ is a homotopy cofibre of $\Sigma^{2p-3}\widetilde{\Sigma}^rf_2$ by \cite[Lemma 4.3(5)]{OO} and $\widetilde{\Sigma}^rf_3\circ \Sigma^{2p-3}\widetilde{\Sigma}^rf_2\simeq *$, $\widetilde{\Sigma}^rf_3$ can be extended to $\Sigma^{2p-3}C_{3,2}$ by \cite[Lemma 4.3(7)]{OO}. 
Let $e_{3,2}$ be such an extension and $e'_{3,2}:\Sigma^{2p-3}C_{3,2}\to \s^{3+r}$ a map representing the $p$-primary component of $e_{3,2}$ in the finite stable group $[\Sigma^{2p-3}C_{3,2},\s^{3+r}]$. 
By the same method discussed above, there is a map $\overline{\widetilde{\Sigma}^rf_3}^2:\Sigma^{2p-3}C_{3,2}\to \s^{3+r}$ such that $\overline{\widetilde{\Sigma}^rf_3}^2\simeq e'_{3,2}$ and $\overline{\widetilde{\Sigma}^rf_3}^2\circ \Sigma^{2p-3}j_{3,1}=\widetilde{\Sigma}^rf_3$. 
In particular the order of $\overline{\widetilde{\Sigma}^rf_3}^2$ is a non-trivial power of $p$. 
If $p=3$, then $\{\mathscr{S}_k,\overline{\widetilde{\Sigma}^rf_k},\mathscr{A}_k\,|\,k=2,3\}$ is 
a desired $\ddot{s}_t$-presentation of $\widetilde{\Sigma}^r\overrightarrow{\bm f}$, where 
$\overline{\widetilde{\Sigma}^rf_3}=\overline{\widetilde{\Sigma}^rf_3}^2$. 

Let $p>3$. 
Then the homotopy class of $\overline{\widetilde{\Sigma}^rf_3}^2\circ\widetilde{\Sigma}^{2p-3}g_{3,2}$ 
is in the $p$-primary component of $[\Sigma\Sigma^{3(2p-3)}\s^{3+r},\s^{3+r}]$, that is, in 
$\pi_{2\cdot 3(p-1)-2}(\mathfrak{S};p)$. 
Hence $\overline{\widetilde{\Sigma}^rf_3}^2\circ\widetilde{\Sigma}^{2p-3}g_{3,2}\simeq *$ by (11.1). 
Since $\Sigma^{2p-3}j_{3,2}$ is a homotopy cofibre of $\widetilde{\Sigma}^{2p-3}g_{3,2}$ 
by Lemma A.1 and \cite[Lemma 4.3(5)]{OO}, it follows from \cite[Lemma 4.3(7)]{OO} that $\overline{\widetilde{\Sigma}^rf_3}^2$ can be extended to $\Sigma^{2p-3}C_{3,3}$. 
Let $e_{3,3}$ be such an extension and $e'_{3,3}$ a map representing the $p$-primary component of $e_{3,3}$ in the stable finite group 
$[\Sigma^{2p-3}C_{3,3},\s^{3+r}]$. 
Set $x=e_{3,3}-e'_{3,3}$. 
Then $e'_{3,3}\circ \Sigma^{2p-3}j_{3,2}+x\circ \Sigma^{2p-3}j_{3,2}\simeq e_{3,3}\circ \Sigma^{2p-3}j_{3,2}=\overline{\widetilde{\Sigma}^rf_3}^2$. 
Comparing orders, we have $x\circ \Sigma^{2p-3}j_{3,2}\simeq *$ so that there is a homotopy 
$G:e'_{3,3}\circ \Sigma^{2p-3}j_{3,2}\simeq \overline{\widetilde{\Sigma}^rf_3}^2$. 
Since $\Sigma^{2p-3}j_{3,2}$ is a cofibration, there is a homotopy 
$H:\Sigma^{2p-3}C_{3,3}\times I\to\s^{3+r}$ such that $H\circ (\Sigma^{2p-3}j_{3,2}\times 1_I)=G$ and $H_0=e'_{3,3}$. 
Set $\overline{\widetilde{\Sigma}^rf_3}=H_1$. 
Then $\overline{\widetilde{\Sigma}^rf_3}\circ \Sigma^{2p-3}j_{3,2}=\overline{\widetilde{\Sigma}^rf_3}^2$ and $\overline{\widetilde{\Sigma}^rf_3}\simeq e'_{3,3}$. 
Hence the order of $\overline{\widetilde{\Sigma}^rf_3}$ is a non-trivial power of $p$. 
Set $\mathscr{S}_4=(\widetilde{\Sigma}^{2p-3}\mathscr{S}_3)(\overline{\widetilde{\Sigma}^rf_3},\widetilde{\Sigma}^{2p-3}\mathscr{A}_3)$ and let $\mathscr{A}_4$ be a reduced structure on $\mathscr{S}_4$. 
By proceeding the process, we obtain  a desired $\ddot{s}_t$-presentation 
$\{\mathscr{S}_k,\overline{\widetilde{\Sigma}^rf_k},\mathscr{A}_k\,|\,2\le k\le p\}$ of $\widetilde{\Sigma}^r\overrightarrow{\bm f}$, where $\overline{\widetilde{\Sigma}^rf_k}:\begin{cases} \Sigma^{2p-3}C_{k,k}\to \s^{3+r} & 2\le k<p\\ \Sigma^{2p-3}C_{p,p-1}\to\s^{3+r} & k=p\end{cases}$. 
By our constructions, $\mathscr{S}_k$ is displayed as follows:
$$
\tiny{
\xymatrix{
\Sigma^{2p-3}\s^{3+r} \ar[d]^-{\widetilde{\Sigma}^rf_{k-1}=g_{k,1}}  & \cdots &\Sigma^{s-1}\Sigma^{s(2p-3)}\s^{3+r}\ar[d]^-{g_{k,s}} &\cdots & \Sigma^{k-2}\Sigma^{(k-1)(2p-3)}\s^{3+r} \ar[d]^-{g_{k,k-1}} & \\
\s^{3+r}\ar[r]^-{j_{k,1}}  &\cdots \ar[r] & C_{k,s} \ar[r]^-{j_{k,s}} &\cdots \ar[r] & C_{k,k-1}\ar[r]^-{j_{k,k-1}} & C_{k,k}
}
},
$$
where $C_{k,s}=\begin{cases}\s^{3+r}\cup_{\overline{\widetilde{E}^rf_{k-1}}^{s-1}}CE^{2p-3}C_{k-1,s-1} & p\ge k\ge s\ge 2\\ \s^{3+r} &p\ge k\ge 2\ \text{and }s=1\end{cases}$. 
This proves (1). 

(2)(a) is an easy consequence of the definition of $\ddot{s}_t$-presentation.  
Also (2)(b) follows easily from the well-known fact that if 
$e:\s^{n+2p-3}\to \s^n$ represents a non-zero element of $\pi_{2p-3}(\mathfrak{S};p)$ for $n\ge 3$, then $\mathscr{P}^1:H^n(\s^n\cup_e C\s^{n+2p-3};\bZ_p)\cong 
H^{n+2(p-1)}(\s^n\cup_e C\s^{n+2p-3};\bZ_p)$. 

(3) follows from (1) and Corollary 4.2(4). 
This completes the proof of Lemma 11.4. 
\end{proof}

\begin{proof}[Proof of Proposition 11.1(1)] 
In the proof, we assume $r\ge 2p(p-1)-3$. 

First we prove the assertion for $\star=\ddot{s}_t$. 
It suffices to prove that 
$\{\widetilde{\Sigma}^r\overrightarrow{\bm f}\}^{(\ddot{s}_t)}_{\vec{\bm m}}$ contains 
an element of order $p$. 
By Lemma 11.4(1), there is an $\ddot{s}_t$-presentation 
$\{\mathscr{S}_k,\overline{\widetilde{\Sigma}^rf_k},\mathscr{A}_k\, |\,2\le k\le p\}$ 
of $\widetilde{\Sigma}^r\overrightarrow{\bm f}$ such that the order of 
$\overline{\widetilde{\Sigma}^r f_k}$ is a nontrivial power of $p$ for all $k$.  
Hence the order of $\overline{\widetilde{\Sigma}^rf_p}\circ\widetilde{\Sigma}^{2p-3}g_{p.p-1}$ is a power of $p$. 
To induce a contradiction, suppose that $\overline{\widetilde{\Sigma}^r f_p}\circ\widetilde{\Sigma}^{2p-3}g_{p.p-1}\simeq *$. 
Since $\Sigma^{2p-3}j_{p,p-1}$ is a homotopy cofibre of $\widetilde{\Sigma}^{2p-3}g_{p,p-1}$ by Lemma~A.1 and \cite[Lemma~4.3(5)]{OO}, it follows from \cite[Lemma~4.3(7)]{OO} that there is a map $\widetilde{\widetilde{\Sigma}^rf_p}:\Sigma^{2p-3}C_{p,p}\to \s^{3+r}$ such that $\widetilde{\widetilde{\Sigma}^r f_p}\circ \Sigma^{2p-3}j_{p,p-1}=\overline{\widetilde{\Sigma}^rf_p}$. 
Hence $\widetilde{\widetilde{\Sigma}^r f_p}\circ \Sigma^{2p-3}(j_{p,p-1}\circ \cdots\circ j_{p,1})=\widetilde{\Sigma}^r f_p$. 
Set $\mathscr{S}_{p+1}=(\widetilde{\Sigma}^{2p-3}\mathscr{S}_p)(\widetilde{\widetilde{\Sigma}^rf_p},\widetilde{\Sigma}^{2p-3}\mathscr{A}_p)$. 
Then $C_{p+1,s+1}=\s^{3+r}\cup C\Sigma^{2p-3}C_{p,s}$ for $0\le s\le p$ so that 
\begin{equation}
C_{p+1,p+1}/C_{p+1,1}=\Sigma^{2(p-1)}C_{p,p}
\end{equation}
and 
$$
\widetilde{H}^m(C_{p+1,p+1};\bZ_p)\cong
\begin{cases} \bZ_p & m=3+r+2i(p-1)\ (0\le i\le p)\\
0 & \text{otherwise}
\end{cases}
$$ 
by the construction of $\mathscr{S}_{p+1}$. 
We have the following commutative square
$$
\begin{CD}
H^{3+r}(C_{p+1,2};\bZ_p)@>\mathscr{P}^1>>H^{3+r+2(p-1)}(C_{p+1,2};\bZ_p)\\
@A j^*_{p+1}A\cong A @A\cong A j^*_{p+1}A\\
H^{3+r}(C_{p+1,p+1};\bZ_p)@>\mathscr{P}^1>>H^{3+r+2(p-1)}(C_{p+1,p+1};\bZ_p)
\end{CD}
$$
where $j_{p+1}=j_{p+1,p}\circ \cdots\circ j_{p+1,3}\circ j_{p+1,2}$. 
Since $C_{p+1,2}=\s^{3+r}\cup_{\widetilde{\Sigma}^rf_p}C\Sigma^{2p-3}\s^{3+r}$, the first 
$\mathscr{P}^1$ of the square above is an isomorphism so that the second 
$\mathscr{P}^1$ is also an isomorphism. 
By Lemma 11.4(2)(b) we have $(\mathscr{P}^1)^{p-1}:H^{3+r}(C_{p,p};\bZ_p)\cong 
H^{3+r+2(p-1)^2}(C_{p,p};\bZ_p)$. 
Hence $(\mathscr{P}^1)^{p-1}:H^{3+r+2(p-1)}(C_{p+1,p+1};\bZ_p)\cong H^{3+r+2p(p-1)}(C_{p+1,p+1};\bZ_p)$ by (11.3) so that 
$(\mathscr{P}^1)^p:H^{3+r}(C_{p+1,p+1};\bZ_p)\cong H^{3+r+2p(p-1)}(C_{p+1,p+1};\bZ_p)$. 
This is a contradiction, since $(\mathscr{P}^1)^p=0$ by (11.2). 
Hence  $\overline{\widetilde{\Sigma}^r f_p}\circ\widetilde{\Sigma}^{2p-3}g_{p.p-1}$ 
is not null homotopic. 
Therefore the order of $\overline{\widetilde{\Sigma}^rf_p}\circ\widetilde{\Sigma}^{2p-3}g_{p,p-1}
\in\{\widetilde{\Sigma}^r\overrightarrow{\bm f}\}^{(\ddot{s}_t)}$ is a non trivial power of $p$, that is, just $p$, since $\{\widetilde{\Sigma}^r\overrightarrow{\bm f}\}^{(\ddot{s}_t)}_{\vec{\bm m}}
\subset[\Sigma^{p-2}\Sigma^{m_{[p,1]}}\Sigma^r\s^3,\Sigma^r\s^3]=\pi_{2p(p-1)-2}(\mathfrak{S})$ and 
the $p$ component of the last stable group is $\bZ_p$. 
This proves Proposition~11.1(1) for $\star=\ddot{s}_t$. 

For any $\star\ne\ddot{s}_t$, Proposition~11.1(1) follows from (3.1) and Theorem 4.1. 
\end{proof}

\begin{proof}[Proof of Proposition 11.1(2)] 
In the proof we assume $r\ge 2p(p-1)-3$. 

First we prove the assertion for $\star=\ddot{s}_t$. 
It suffices to to prove that every element of 
$\{\widehat{\Sigma}^r\overrightarrow{\bm f}\}^{(\ddot{s}_t)}$ is of order a multiple of $p$. 
Let 
$$
\alpha\in\{\widehat{\Sigma}^r\overrightarrow{\bm f}\}^{(\ddot{s}_t)}\subset[\Sigma^{p-2}\Sigma^{m_{[p,1]}}\Sigma^r\s^3,\Sigma^r\s^3]=\pi_{2p(p-1)-2}(\mathfrak{S}).
$$
Let $\{\mathscr{S}_k,\overline{\widehat{\Sigma}^rf_k},\mathscr{A}_k\, |\,2\le k\le p\}$ be an 
$\ddot{s}_t$-presentation of $\widehat{\Sigma}^r\overrightarrow{\bm f}$ such that 
$\alpha=\overline{\widehat{\Sigma}^rf_p}\circ g_{p,p-1}$. 
Set $X=\Sigma^r\s^3$ and $j_p'=j_{p,p-2}\circ\cdots\circ j_{p,2}\circ j_{p,1}:\Sigma^{2p-3}X
\to C_{p,p-1}$. 
By the definition of an $\ddot{s}_t$-presentation, there is a homeomorphism 
\begin{equation}
C_{p,p-1}\overset{h}{\leftarrow} \begin{cases} \Sigma^{m_3}(X\cup_{\widetilde{\Sigma}^rf_2}C\Sigma^{m_2}X) & p=3\\
\Sigma^{m_p}(X\cup C\Sigma^{m_{p-1}}(X\cup C\Sigma^{m_{p-2}}(\cdots\cup C\Sigma^{m_3}(X\cup_{\widetilde{\Sigma}^rf_2}C\Sigma^{m_2}X)\cdots))) & p>3\end{cases}.
\end{equation}
We denote by $\Sigma^{m_p}Y$ the space of the right hand side of (11.4). 
We can assume that $h$ satisfies $j'_p=h\circ \Sigma^{m_p}i$, where $i:X\to Y$ is the inclusion. 
We regard $[C_{p,p-1},X]$ as a group such that $h^*:[C_{p,p-1},X]\to[\Sigma^{m_p}Y,X]$ is an isomorphism. 
Then $[C_{p,p-1},X]$ is a finite abelian group. 
Let $e_p$ be a map representing the $p$-primary component of 
$\overline{\widehat{\Sigma}^rf_p}$ in $[C_{p,p-1},X]$. 
Set $x=\overline{\widehat{\Sigma}^rf_p}-e_p$. 
The order of $x$ in $[C_{p,p-1},X]$ is prime with $p$. 
We have 
\begin{align*}
x\circ j'_p+e_p\circ j'_p&=x\circ h\circ \Sigma^{m_p}i+e_p\circ h\circ \Sigma^{m_p}i=
(x\circ h+e_p\circ h)\circ \Sigma^{m_p}i\\
&\simeq (x+e_p)\circ h\circ \Sigma^{m_p}i\quad(\text{since $h^*$ is an isomorphism})\\
&=(x+e_p)\circ h\circ h^{-1}\circ j'_p=(x+e_p)\circ j'_p\\
&\simeq \overline{\widehat{\Sigma}^r f_p}\circ j'_p=\widehat{\Sigma}^r f_p.
\end{align*}
Comparing the orders, we have $x\circ j'_p\simeq *$ so that $e_p\circ j'_p\simeq \widehat{\Sigma}^rf_p$ and the order of $e_p$ is a non trivial power of $p$. 
Take a homotopy $G:e_p\circ j'_p\simeq \widehat{\Sigma}^rf_p$. 
Since $j'_p$ is a cofibration, there is a homotopy $H:C_{p,p-1}\times I\to X$ 
such that $H\circ (j'_p\times 1_I)=G$ and $H_0=e_p$. 
Then $H_1\circ j'_p=\widehat{\Sigma}^rf_p$ and $H_1\simeq e_p$ so that the order of $H_1$ is a non trivial power of $p$. 
Since the composite of 
$[\Sigma^{p-3}\Sigma^{m_{[p,1]}}X,\Sigma^{m_p-1}Y]\overset{\Sigma}{\to}[\Sigma^{p-2}\Sigma^{m_{[p,1]}}X,\Sigma^{m_p}Y]\overset{h_*}{\cong}[\Sigma^{p-2}\Sigma^{m_{[p,1]}}X, C_{p.p-1}]$ is an isomorphism by the assumption on $r$ 
so that $g_{p,p-1}\simeq h\circ \Sigma g'$ for some map $g':\Sigma^{p-3}\Sigma^{m_{[p,1]}}X\to \Sigma^{m_p-1}Y$. 
Then 
\begin{align*}
\overline{\widehat{\Sigma}^rf_p}\circ g_{p,p-1}&\simeq \overline{\widehat{\Sigma}^rf_p}\circ h\circ \Sigma g'\simeq (H_1+x)\circ h\circ \Sigma g'\\
&\simeq (H_1\circ h+x\circ h)\circ \Sigma g'\quad(\text{since $h^*$ is an isomorphism})\\
&=H_1\circ h\circ \Sigma g'+x\circ h\circ \Sigma g'\simeq H_1\circ g_{p,p-1}+x\circ g_{p,p-1}.
\end{align*}
That is, we have 
\begin{equation}
\overline{\widehat{\Sigma}^rf_p}\circ g_{p,p-1}\simeq H_1\circ g_{p,p-1}+x\circ g_{p,p-1}.
\end{equation}
To induce a contradiction, suppose that $H_1\circ g_{p,p-1}\simeq *$. 
Since $j_{p,p-1}$ is a homotopy cofibre of $g_{p,p-1}$, it follows from \cite[Lemma 4.3(7)]{OO} that there is a map $\widetilde{\widehat{\Sigma}^rf_p}:C_{p,p}\to X$ such that 
$\widetilde{\widehat{\Sigma}^rf_p}\circ j_{p,p-1}=H_1$. 
Hence $\widetilde{\widehat{\Sigma}^rf_p}\circ j_{p,p-1}\circ j'_p=H_1\circ j'_p=\widehat{\Sigma}^rf_p$. 
Set $\mathscr{S}_{p+1}=\mathscr{S}_p(\widetilde{\widehat{\Sigma}^rf_p},\mathscr{A}_p)$. 
Then $C_{p+1,s+1}=X\cup CC_{p,s}$ for $0\le s\le p$ so that 
$$
C_{p+1,p+1}/C_{p+1,1}=\Sigma C_{p,p}
$$
and 
$$
\widetilde{H}^m(C_{p+1,p+1};\bZ_p)\cong\begin{cases} \bZ_p & m=3+r+2i(p-1)\ (0\le i<p+1)\\ 0 & \text{otherwise}\end{cases}
$$
by the construction of $\mathscr{S}_{p+1}$. 
By the method in the proof of Proposition 11.1(1), we have 
$(\mathscr{P}^1)^p:H^{3+r}(C_{p+1,p+1};\bZ_p)\cong H^{3+r+2p(p-1)}(C_{p+1,p+1};\bZ_p)$. 
This contradicts to (11.2). 
Hence $H_1\circ g_{p,p-1}$ is not null homotopic and so its order is 
a non trivial power of $p$. 
Therefore the order of $\alpha=\overline{\widehat{\Sigma}^rf_p}\circ g_{p,p-1}$ is a multiple of $p$ by (11.5). 

For any $\star\ne\ddot{s}_t$, Proposition~11.1(2) follows from (3.1) and 
Theorem 4.1. 
\end{proof}

\begin{appendix}
\section{Addenda to our previous paper \cite{OO}}
The following should be contained in Lemma 4.3 of \cite{OO}. 

\begin{lemma}
Let us work in $\mathrm{TOP}^*$. 
If a map $j:Y\to Z$ is a homotopy cofibre of $f:X\to Y$ and $h:X'\to X$ is a homotopy equivalence, then $j$ is a homotopy cofibre of $f\circ h$. 
\end{lemma}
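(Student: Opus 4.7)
The plan is to exploit the standard characterization that $j:Y\to Z$ is a homotopy cofibre of $f:X\to Y$ precisely when there is a null-homotopy $H:j\circ f\simeq *$ whose induced map $\bar H:Y\cup_f CX\to Z$ (equal to $j$ on $Y$ and to $H$ along the cone coordinate) is a homotopy equivalence. Granting this, I would simply transport the null-homotopy along $h$ and verify that the resulting induced map remains a homotopy equivalence.

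Concretely, given a witness $H$ for $j$ being a homotopy cofibre of $f$, I would define $H'=H\circ(h\times 1_I):j\circ f\circ h\simeq *$. The induced map $\overline{H'}:Y\cup_{f\circ h}CX'\to Z$ then factors as
\[
Y\cup_{f\circ h}CX'\xrightarrow{\varphi}Y\cup_f CX\xrightarrow{\bar H}Z,
\qquad \varphi=1_Y\cup Ch,
\]
where $\varphi$ is the identity on $Y$ and the induced map $Ch$ on the attached cone. Since $\bar H$ is already a homotopy equivalence by hypothesis, it suffices to verify that $\varphi$ is one; then $\overline{H'}=\bar H\circ\varphi$ exhibits $j$ as a homotopy cofibre of $f\circ h$.

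The remaining step is an application of the gluing lemma to the two pushout squares defining the mapping cones of $f\circ h$ and $f$, connected by the maps $1_Y$, $h:X'\to X$, and $Ch:CX'\to CX$. The horizontal inclusions $X'\hookrightarrow CX'$ and $X\hookrightarrow CX$ are free (pointed) cofibrations, and the three comparison maps $1_Y$, $h$, and $Ch$ are all homotopy equivalences (the last because $h$ is). The standard gluing lemma for pushouts along cofibrations then yields that the induced map on pushouts, which is exactly $\varphi$, is a homotopy equivalence.

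The main obstacle is only cosmetic: ensuring the gluing lemma is applied in a form valid in $\mathrm{TOP}^*$. If a clean citation is not available, I would instead construct an explicit homotopy inverse $\psi=1_Y\cup Ch'$ of $\varphi$, where $h'$ is a pointed homotopy inverse of $h$, and assemble homotopies $\varphi\psi\simeq 1$ and $\psi\varphi\simeq 1$ by coning off chosen null-homotopies of $h\circ h'\simeq 1_X$ and $h'\circ h\simeq 1_{X'}$, invoking the cofibration property of the cone inclusion to extend rel $Y$. This is routine bookkeeping with no essential new idea.
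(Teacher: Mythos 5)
Your proof is correct, and it isolates the same essential point as the paper's: a homotopy equivalence $h$ induces a homotopy equivalence between the mapping cones of $f$ and $f\circ h$ compatible with the inclusions of $Y$, which one then composes with the equivalence witnessing that $j$ is a cofibre of $f$. The routes differ in direction and in the tool used. The paper works with the characterization ``there is a homotopy equivalence $a:Z\to Y\cup_f CX$ with $a\circ j=i_f$'' and produces the comparison map in the direction $Y\cup_f CX\to Y\cup_{f\circ h}CX'$ by choosing a homotopy inverse $h^{-1}$ and a homotopy $J:f\simeq f\circ h\circ h^{-1}$, then applying its $\Phi(\,\cdot\,;J)$ machinery from [OO, Proposition 3.3]; that machinery hands over the fact that the comparison map is a homotopy equivalence for free. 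You instead use the strictly commuting square to get $\varphi=1_Y\cup Ch:Y\cup_{f\circ h}CX'\to Y\cup_f CX$ and pay for the equivalence separately via the gluing lemma (or the explicit inverse $1_Y\cup Ch'$); this avoids choosing $h^{-1}$ and any homotopy-coherence bookkeeping, at the cost of invoking the gluing lemma in $\mathrm{TOP}^*$. One caveat: because your comparison map and your cofibre witness both point from the mapping cone toward $Z$, whereas the paper's definition asks for a strictly commuting equivalence $Z\to Y\cup_{f\circ h}CX'$ over $j\mapsto i_{f\circ h}$, you need one more routine step to land on that form: take a homotopy inverse $\psi$ of $\varphi$, deform it using the homotopy extension property of the free cofibration $i_f$ (or Dold's theorem, since $i_f$ and $i_{f\circ h}$ are cofibrations under $Y$) so that $\psi\circ i_f=i_{f\circ h}$ exactly, and set $a'=\psi\circ a$. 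This matters only because $j$ itself is not assumed to be a cofibration, so the two characterizations cannot be interchanged blindly; with that step added, your argument fully recovers the lemma as stated.
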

\begin{proof}
Let $a:Z\to Y\cup_f CX$ be a homotopy equivalence with $a\circ j=i_f$. 
Let $h^{-1}$ be a homotopy inverse of $h$. 
Take $J:f\simeq f\circ h\circ h^{-1}$. 
Then $\Phi(f,f\circ h,h^{-1},1_Y;J):Y\cup_f CX\to Y\cup_{f\circ h}CX'$ is a homotopy equivalence with $\Phi(f,f\circ h,h^{-1},1_Y;J)\circ i_f=i_{f\circ h}$. 
Hence $\Phi(f,f\circ h,h^{-1},1_Y;J)\circ a:Z\to Y\cup_{f\circ h}CX'$ is a homotopy equivalence with $\Phi(f,f\circ h,h^{-1},1_Y;J)\circ a\circ j=i_{f\circ h}$. 
This proves that $j$ is a homotopy cofibre of $f\circ h$. 
\end{proof}

We define a new system of unstable higher Toda brackets in $\mathrm{TOP}^*$. 
Let $\vec{\bm f}=(f_n,\dots,f_1)$ be a composable sequence of pointed maps 
$f_i:X_i\to X_{i+1}$ with $n\ge 2$.  
We define the unstable $n$-fold Toda bracket $\{\vec{\bm f}\}$ 
inductively as follows. 
$$
\{\vec{\bm f}\}=\begin{cases} \{f_2,f_1\}_{(0,0)} & n=2\\
\bigcup\{[f_3,A_2,f_2],(f_2,A_1,f_1)\}_{(0,0)} & n=3\\
\bigcup \{f_n,\dots,f_4,[f_3,A_2,f_2],(f_2,A_1,f_1)\} & n\ge 4\end{cases},
$$ 
where $\{g,f\}_{(0,0)}$ denotes the one point set of the homotopy class of $g\circ f$ for any maps $Z\overset{g}{\leftarrow}Y\overset{f}{\leftarrow}X$, and the union $\bigcup$ is taken over all pairs $(A_2,A_1)$ with $A_2:f_3\circ f_2\simeq *$ 
and $A_1:f_2\circ f_1\simeq *$ (of course if $A_2$ or $A_1$ does not exist, then it is the empty set). 
Notice that it is the classical Toda bracket for $n=3$. 
The new $n$-fold Toda bracket is a subset of $[\Sigma^{n-2}X_1,X_{n+1}]$. 
We will study this new system and its imitations elsewhere. 

In the rest of the appendix, we work in $\mathrm{TOP}^w$. 

The commutative diagram below shall be denoted by 
$\vec{\bm b}:\vec{\bm f}\to\vec{\bm f'}$, where $\vec{\bm b}=(b_{n+1},\dots,b_1)$ is 
a sequence of homeomorphisms. 
$$
\begin{CD}
X_{n+1}@<f_n<<\cdots@<<< X_{i+1}@<f_i<<X_i@<<<\cdots@<f_1<<X_1\\
@V\approx V b_{n+1}V @. @V\approx V b_{i+1}V @V\approx V b_i V @. @V\approx V b_1 V\\
X_{n+1}'@<f'_n<<\cdots@<<< X_{i+1}'@<f_i'<<X_i'@<<<\cdots@<f_1'<<X_1'
\end{CD}
$$

\begin{lemma}
For all $\vec{\bm b}:\vec{\bm f}\to\vec{\bm f'}$, we have 
$\{\vec{\bm f'}\}^{(\star)}\circ \Sigma^{n-2}b_1=b_{n+1}\circ\{\vec{\bm f}\}^{(\star)}$.
\end{lemma}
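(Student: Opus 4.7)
My plan is to transport an entire $\star$-presentation across the homeomorphisms $\vec{\bm b}$ by functoriality of mapping cones, which will set up a bijective correspondence between $\star$-presentations of $\vec{\bm f}$ and of $\vec{\bm f'}$. First, by the chains of (in)equalities in Theorem~4.1 and Corollary~4.2, it suffices to establish the identity for the basic symbols $\star\in\{aq\ddot{s}_2,\ddot{s}_t,aq,q_2\}$ (all other cases are derivable from these because the compositions $(1\wedge\tau)$ and $\Sigma^{m_{[\cdot]}}\mathscr{E}(\cdots)$ appearing in Theorem~4.1 are natural in the sequence $\vec{\bm f}$). Moreover, because $\vec{\bm b}^{-1}:\vec{\bm f'}\to\vec{\bm f}$ is also a commutative diagram of homeomorphisms, it suffices to prove one containment, say $b_{n+1}\circ\{\vec{\bm f}\,\}^{(\star)}\subset\{\vec{\bm f'}\,\}^{(\star)}\circ\Sigma^{n-2}b_1$.

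Fix $\alpha\in\{\vec{\bm f}\,\}^{(\star)}$ with a $\star$-presentation $\{\mathscr{S}_r,\overline{f_r},\Omega_r\text{ (or }\mathscr{A}_r)\,|\,2\le r\le n\}$ such that $\alpha=\overline{f_n}\circ g_{n,n-1}$. I will inductively construct a $\star$-presentation $\{\mathscr{S}_r',\overline{f_r}',\Omega_r'\text{ (or }\mathscr{A}_r')\,|\,2\le r\le n\}$ of $\vec{\bm f'}$ and homeomorphisms $e_{r,s}:C_{r,s}\approx C_{r,s}'$ satisfying $e_{r,1}=b_r$, $e_{r,s+1}\circ j_{r,s}=j_{r,s}'\circ e_{r,s}$, and $b_{r+1}\circ\overline{f_r}=\overline{f_r}'\circ e_{r,r}$ (replacing $r,r$ by $n,n-1$ when $r=n$). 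For $r=2$ set $\mathscr{S}_2'=(X_1';X_2',X_2'\cup_{f_1'}CX_1';f_1';i_{f_1'})$, $e_{2,1}=b_2$, $e_{2,2}=b_2\cup Cb_1$, $\omega_{2,1}'=q'_{f_1'}$, and $\overline{f_2}'=b_3\circ\overline{f_2}\circ e_{2,2}^{-1}$; the commutativity of $\vec{\bm b}$ guarantees $e_{2,2}\circ j_{2,1}=j_{2,1}'\circ e_{2,1}$. Inductively set $\mathscr{S}_{r+1}'=(\mathscr{S}_r')(\overline{f_r}',\Omega_r'\text{ or }\mathscr{A}_r')$, which forces $C_{r+1,s+1}'=X_{r+1}'\cup_{\overline{f_r}'^{\,s}}CC_{r,s}'$; define $e_{r+1,s+1}=b_{r+1}\cup Ce_{r,s}$ (a well-defined homeomorphism precisely because $b_{r+1}\circ\overline{f_r}^s=\overline{f_r}'^{\,s}\circ e_{r,s}$), and pick $\Omega_{r+1}'$ or $\mathscr{A}_{r+1}'$ by transport via the $e_{r+1,s}$'s, e.g. $\omega_{r+1,s+1}'=\Sigma e_{r,s}\circ\omega_{r+1,s+1}\circ(e_{r+1,s+2}\cup Ce_{r+1,s+1})^{-1}$ in the $aq$-family, and $a_{r+1,s}'=(e_{r+1,s+1}\cup C1)\circ a_{r+1,s}\circ e_{r+1,s+1}^{-1}$ for the $\ddot{s}_t$-family.

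Since each $b_i$ and each $e_{r,s}$ is a homeomorphism (not merely a homotopy equivalence), the conjugation preserves all the structural properties entering the definition of a $\star$-presentation: iterated-mapping-cone structure, reducedness of $\mathscr{S}_2$, reducedness of $\Omega_2$ or $\mathscr{A}_r$, and the compatibility $\Omega_{r+1}'=\widetilde{\widetilde{\Sigma}^{0}\Omega_r'}$ follows term-by-term from $\Omega_{r+1}=\widetilde{\widetilde{\Sigma}^0\Omega_r}$ via the natural homeomorphisms $\xi$, $q$, and $\psi$. A direct computation of $g_{n,n-1}'=(\overline{f_{n-1}}'\cup C1)\circ(\omega_{n-1,n-2}')^{-1}$ then gives
\[
\overline{f_n}'\circ g_{n,n-1}'\;=\;b_{n+1}\circ\overline{f_n}\circ g_{n,n-1}\circ(\Sigma^{n-2}b_1)^{-1},
\]
so $b_{n+1}\circ\alpha=(\overline{f_n}'\circ g_{n,n-1}')\circ\Sigma^{n-2}b_1\in\{\vec{\bm f'}\,\}^{(\star)}\circ\Sigma^{n-2}b_1$, as required. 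The main obstacle is purely bookkeeping: verifying that the transported $\omega_{r,s}'$ (respectively $a_{r,s}'$) really fit together into a quasi-structure (respectively structure) and that the reducedness hypotheses survive, but since everything is manipulated only by homeomorphisms, each such check reduces to an instance of the naturality of the constructions $\psi^\ell_f$, $\xi$, $q$, and $\widetilde{\widetilde{\phantom{X}}}$ already recorded in \cite{OO}.
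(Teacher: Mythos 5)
Your overall strategy is the same as the paper's: reduce to the basic symbols via Theorem 4.1, prove a single inclusion and obtain the other by applying the argument to $\vec{\bm b}^{-1}$, and transport a $\star$-presentation across homeomorphisms $e_{r,s}$ built inductively as $b_{r+1}\cup Ce_{r,s}$ starting from $e_{2,2}=b_2\cup Cb_1$. (The paper transports a presentation of $\vec{\bm f'}$ to one of $\vec{\bm f}$ rather than the reverse, but by the symmetry this is immaterial.)

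There is, however, one step that does not work as written, and it is exactly the point where the paper has to do something more delicate. Your formula $a_{r+1,s}'=(e_{r+1,s+1}\cup C1)\circ a_{r+1,s}\circ e_{r+1,s+1}^{-1}$, and more generally the claim that ``everything is manipulated only by homeomorphisms,'' presupposes that $e_{r,s}\cup C(\Sigma^{s-1}b_{r-s})$ descends to a homeomorphism $C_{r,s}\cup_{g_{r,s}}C\Sigma^{s-1}X_{r-s}\to C'_{r,s}\cup_{g'_{r,s}}C\Sigma^{s-1}X'_{r-s}$, i.e.\ that $e_{r,s}\circ g_{r,s}=g'_{r,s}\circ\Sigma^{s-1}b_{r-s}$ holds strictly. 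It does not: for $s\ge 2$ the map $g_{r,s}=(\overline{f_{r-1}}^{\,s}\cup C1)\circ(\omega_{r-1,s-1})^{-1}$ involves a homotopy inverse, so the compatibility with $g'_{r,s}$ is only a homotopy (this is precisely condition (2), $g'_{r,s}\circ \Sigma^{s-1}b_{r-s}\simeq e_{r,s}\circ g_{r,s}$, in the paper's proof), and the mapping cone on $g'_{r,s}$ is not literally a conjugate of the one on $g_{r,s}$. The repair is to choose a homotopy $J$ and use $\Phi(g'_{r,s},g_{r,s},\Sigma^{s-1}b_{r-s}^{-1},e_{r,s}^{-1};J)$ as in \cite[Proposition 3.3]{OO}; this is how the paper defines $a_{3,2}=\Phi\circ a'_{3,2}\circ e_{3,3}$ and then verifies $a_{3,2}\circ j_{3,2}=i_{g_{3,2}}$ so that the transported family really is a structure. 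With that substitution your induction goes through; note also that the concluding identity $\overline{f_n}'\circ g_{n,n-1}'=b_{n+1}\circ\overline{f_n}\circ g_{n,n-1}\circ(\Sigma^{n-2}b_1)^{-1}$ should be asserted only up to homotopy, which is all that is needed.
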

\begin{proof}
We will prove $\{\vec{\bm f'}\}^{(\star)}\circ \Sigma^{n-2}b_1\subset b_{n+1}\circ\{\vec{\bm f}\}^{(\star)}$. 
If this is done, then by considering $\overrightarrow{\bm b^{-1}}:\vec{\bm f'}\to\vec{\bm f}$ 
we have the opposite inclusion and so the assertion. 
Let $\alpha\in\{\vec{\bm f'}\}^{(\star)}$ and $\{\mathscr{S}_r',\overline{f_r'},\Omega_r'\ (\text{or }\mathscr{A}_r')\,|\,2\le r\le n\}$ a $\star$-presentation of $\vec{\bm f'}$ such that $\alpha=\overline{f'_n}\circ g'_{n,n-1}$. 
We will construct a $\star$-presentation $\{ \mathscr{S}_r,\overline{f_r},\Omega_r\ (\text{or }\mathscr{A}_r)\,|\,2\le r\le n\}$ of $\vec{\bm f}$ and homeomorphisms $e_{r,s}:C_{r,s}\to C'_{r,s}$ such that 
\begin{enumerate}
\item $j'_{r,s}\circ e_{r,s}=e_{r,s+1}\circ j_{r,s}$ for $1\le s<r\le n$,
\item $g'_{r,s}\circ \Sigma^{s-1}b_{r-s}\simeq e_{r,s}\circ g_{r,s}$ for $1\le s<r\le n$, 
\item $\overline{f'_r}^s\circ e_{r,s}=b_{r+1}\circ\overline{f_r}^s$ for $1\le s\le r<n$ or $1\le s<r=n$.
\end{enumerate}
If this is done, then 
$b_{n+1}\circ\overline{f_n}\circ g_{n,n-1}\simeq\overline{f_n'}\circ g'_{n,n-1}\circ \Sigma^{n-2}b_1$ 
so that $\alpha\circ \Sigma^{n-2}b_1\in b_{n+1}\circ\{\vec{\bm f}\}^{(\star)}$ and 
$\{\vec{\bm f'}\}^{(\star)}\circ \Sigma^{n-2}b_1\subset b_{n+1}\circ\{\vec{\bm f}\}^{(\star)}$ 
as desired. 

We will give a construction for only $\star=\ddot{s}_t$, because other cases 
can be done similarly or more easily. 
Then $\mathscr{A}'_r=\{a'_{r,s}\,|\,1\le s<r\}$ is a reduced structure on $\mathscr{S}_r'$. 
Set 
\begin{gather*}
\mathscr{S}_2=(X_1;X_2,X_2\cup_{f_1}CX_1;f_1;i_{f_1}),\ \mathscr{A}_2=\{1_{C_{2,2}}\},\ 
e_{2,1}=b_2:C_{2,1}=X_2\to X_2'=C_{2,1}',\\ 
e_{2,2}=b_2\cup Cb_1:C_{2,2}=X_2\cup_{f_1}CX_1\to X_2'\cup_{f'_1}CX_1'=C'_{2,2},\\ \overline{f_2}=b_3^{-1}\circ\overline{f_2'}\circ e_{2,2}:C_{2,2}\to X_3.
\end{gather*} 
Then $\overline{f_2}\circ j_{2,1}=f_2$. 
Hence we have $\mathscr{S}_2,\overline{f_2},\mathscr{A}_2$. 
Set $\mathscr{S}_3=\mathscr{S}_2(\overline{f_2},\mathscr{A}_2)$ and  
\begin{gather*}
e_{3,1}=b_3,\ e_{3,2}=b_3\cup Cb_2:X_3\cup CX_2=C_{3,2}\to X_3'\cup CX_2'=C'_{3,2},\\
e_{3,3}=b_3\cup Ce_{2,2}:C_{3,3}=X_3\cup_{\overline{f_2}} CC_{2,2}\to X_3'\cup_{\overline{f'_2}} CC_{2,2}'=C_{3,3}'.
\end{gather*}
We have (1) and (3) for $r=2$ by definitions, and also (2) for $r=2$ as follows
\begin{align*}
e_{3,2}\circ g_{3,2}&=e_{3,2}\circ(\overline{f_2}\cup C1_{X_2})\circ(q'_{f_1})^{-1}
=(\overline{f'_2}\cup C1_{X_2'})\circ(e_{2,2}\cup Cb_2)\circ (q'_{f_1})^{-1}\\
&\simeq (\overline{f_2'}\cup C1_{X_2'})\circ (q'_{f_1'})^{-1}\circ \Sigma b_1=g'_{3,2}\circ \Sigma b_1.
\end{align*}
Take $J:e_{3,2}^{-1}\circ g'_{3,2}\simeq g_{3,2}\circ \Sigma b_1^{-1}$ and set 
\begin{align*}
\Phi&=\Phi(g'_{3,2},g_{3,2},\Sigma b_1^{-1},e_{3,2}^{-1};J):C_{3,2}''\cup_{g'_{3,2}}C\Sigma X_1'\to C_{3,2}\cup_{g_{3,2}}C\Sigma X_1,\\
a_{3,2}&=\Phi\circ a'_{3,2}\circ e_{3,3}:C_{3,3}\to C_{3,2}\cup_{g_{3,2}}C\Sigma X_1.
\end{align*}
Then $a_{3,2}$ is a homotopy equivalence and 
\begin{align*}
a_{3,2}\circ j_{3,2}&=\Phi\circ a'_{3,2}\circ e_{3,2}\circ j_{3,2}=\Phi\circ a'_{3,2}\circ j'_{3,2}\circ e_{3,2}=\Phi\circ i_{g'_{3,2}}\circ e_{3,2}\\
&=i_{g_{3,2}}\circ e_{3,2}^{-1}\circ e_{3,2}=i_{g_{3,2}}.
\end{align*}
Hence $\mathscr{A}_3:=\{1_{C_{3,2}}, a_{3,2}\}$ is a structure on $\mathscr{S}_3$. 
We set 
$$
\overline{f_3}=\begin{cases} b_4^{-1}\circ \overline{f_3'}\circ e_{3,2}:C_{3,2}\to X_4 & n=3\\
b_4^{-1}\circ \overline{f'_3}\circ e_{3,3} : C_{3,3}\to X_4 & n\ge 4\end{cases}.
$$
Then $\overline{f_3}^2\circ j_{3,1}=f_3$ for $n\ge 3$, and $\overline{f_3}\circ j_{3,2}=\overline{f_3}^2$ for $n\ge 4$. 
Hence we obtain a desired $\ddot{s}_t$-presentation of $\vec{\bm f}$ when $n=3$. 
When $n\ge 4$, by repeating the above process, we have a desired $\ddot{s}_t$-presentation $\{\mathscr{S}_r,\overline{f_r},\mathscr{A}_r\,|\,2\le r\le n\}$ of $\vec{\bm f}$. 
We omit details. 
\end{proof}
\end{appendix}


\begin{thebibliography}{OO}
\bibitem[{\bf OO}]{OO} H. \=Oshima and K. \=Oshima, Unstable higher Toda brackets, arXiv:1811.10824, 
Math. J. Okayama Univ. {\bf 62} (2020), 27--86. 
\bibitem[{\bf 1}]{AG} M. Arkowitz and M. Golasinski, Co-H-structures on Moore spaces of type $(G,2)$, Can. J. Math. {\bf 46} (1994), 673--686.
\bibitem[{\bf 2}]{C} J. M. Cohen, The decomposition of stable homotopy, Ann. of Math. {\bf 87} (1968), 305--320. 
\bibitem[{\bf 3}]{DKP} T. tom Dieck, K. H. Kamps and D. Puppe, Homotopietheorie, Lecture Notes in Math. {\bf 157}, Springer-Verlag, 1970. 
\bibitem[{\bf 4}]{M} M. Mimura, On the generalized Hopf homomorphism and 
the higher composition, Part I, J. Math. Kyoto Univ. {\bf 4} (1964), 171--190.
\bibitem[{\bf 5}]{Og} K. \^{O}guchi, 
A generalization of secondary composition and its applications, J. Fac. Sci. Univ. Tokyo {\bf 10} (1963), 29--79.

\bibitem[{\bf 6}]{OO1} H. \=Oshima and K. \=Oshima, Quasi tertiary compositions and a Toda bracket in homotopy groups of $\mathrm{SU}(3)$, 
Math. J. Okayama Univ. {\bf 57} (2015), 13--78 .
\bibitem[{\bf 7}]{S1} A. Str{\o}m, Note on cofibrations, Math. Scand. {\bf 19} (1966), 11--14.
\bibitem[{\bf 8}]{S2} A. Str{\o}m, Note on cofibrations II, Math. Scand. {\bf 22} (1969), 130--142. 
\bibitem[{\bf 9}]{S3} A. Str{\o}m, The homotopy category is a homotopy category, Arch. Math. {\bf 23} (1972), 435--441.
\bibitem[{\bf 10}]{T2} H. Toda, $p$-primary components of homotopy groups III. 
Stable groups of the spheres, Mem. Coll. Sci. Univ. Kyoto, Ser. A {\bf 31} (1958), 191--210. 
\bibitem[{\bf 11}]{T3} H. Toda, $p$-primary components of homotopy groups IV. Compositions and toric constructions, 
Mem. Coll. Sci. Univ. Kyoto, Ser. A {\bf 32} (1959), 297--332. 
\bibitem[{\bf 12}]{T} H. Toda, 
Composition methods in homotopy groups of spheres, Ann. of Math. Studies {\bf 49}, Princeton, 1962.
\bibitem[{\bf 13}]{W} G. Walker, Toda brackets and the odd primary homotopy of complex Stiefel manifolds, Manchester Centre for Pure Mathematics, preprint No. 1994/02.
\end{thebibliography}
\end{document}